\numberwithin{equation}{section}
\newcommand{\N}{\mathbb{N}}
\newcommand{\R}{\mathbb{R}}
\newcommand{\pnorm}[2]{\lVert#1\rVert_{#2}}
\newcommand{\abs}[1]{\left\lvert#1\right\rvert}
\newcommand{\iprod}[2]{\left\langle#1,#2\right\rangle}
\renewcommand{\epsilon}{\varepsilon}
\renewcommand{\d}[1]{\mathrm{d}#1}
\DeclareMathOperator*{\argmin}{argmin}
\DeclareMathOperator*{\argmax}{argmax}
\newcommand{\beq}{\begin{equation}}
\newcommand{\eeq}{\end{equation}}
\newcommand{\beqa}{\begin{equation} \begin{aligned}}
\newcommand{\eeqa}{\end{aligned} \end{equation}}
\newcommand{\beqas}{\begin{equation*} \begin{aligned}}
\newcommand{\eeqas}{\end{aligned} \end{equation*}}
\newcommand{\bit}{\begin{itemize}}
	\newcommand{\eit}{\end{itemize}}
\newcommand{\bmat}{\begin{bmatrix}}
	\newcommand{\emat}{\end{bmatrix}}
\theoremstyle{definition}\newtheorem{problem}{Problem}[section]
\theoremstyle{definition}
\theoremstyle{remark}\newtheorem{assumption}{Assumption}
\theoremstyle{remark}\newtheorem{smooth}{Smoothness Assumption}
\theoremstyle{remark}
\theoremstyle{remark}\newtheorem{remark}[problem]{Remark}
\theoremstyle{definition}
\theoremstyle{plain}\newtheorem{theorem}[problem]{Theorem}
\theoremstyle{plain}\newtheorem{lemma}[problem]{Lemma}
\theoremstyle{plain}\newtheorem{proposition}[problem]{Proposition}
\theoremstyle{plain}\newtheorem{corollary}[problem]{Corollary}
\theoremstyle{plain}
\begin{document}

\begin{frontmatter}
\title{Multivariate convex regression: \\ global risk bounds and adaptation}
\runtitle{Risk bounds for Multivariate Convex Regression}

\begin{aug}
	\author{\fnms{Qiyang} \snm{Han}\ead[label=e1]{royhan@uw.edu}}
	\and
	\author{\fnms{Jon A.} \snm{Wellner}\thanksref{t2}\ead[label=e2]{jaw@stat.washington.edu}}
	\ead[label=u1,url]{http://www.stat.washington.edu/jaw/}
	
	\thankstext{t2}{Supported in part by NSF Grant DMS-1104832 and NI-AID grant R01 AI029168}
	\runauthor{Han and Wellner}
	
	\affiliation{University of Washington}
	
	\address{Department of Statistics, Box 354322\\
		University of Washington\\
		Seattle, WA 98195-4322\\
		\printead{e1}}
	
	\address{Department of Statistics, Box 354322\\
		University of Washington\\
		Seattle, WA 98195-4322\\
		\printead{e2}\\
		\printead{u1}}
\end{aug}

\begin{abstract}
In this paper, we study the problem of estimating a multivariate 
convex function defined on a convex body in a regression setting with 
random design. 
We are interested in the attainability of optimal rates of convergence under a squared global continuous $l_2$ loss in the multivariate setting $(d\geq 2)$. 
One crucial fact is that the minimax risks depend heavily 
on the shape of the support of the regression function. 
It is shown that the global minimax risk is on the order of 
$n^{-2/(d+1)}$ when the support is sufficiently smooth, 
but that the rate  $n^{-4/(d+4)}$ is achieved automatically when the support is a polytope. Such tremendous differences in rates are due to difficulties in estimating the regression function near the boundary of smooth regions.

We then study the natural bounded least squares estimators (BLSE):  we 
show that the BLSE nearly attains the optimal rates of convergence in 
low dimensions, while suffering rate-inefficiency in high dimensions. 
Remarkably, we show that the BLSE adapts nearly parametrically to polyhedral functions when the support is polyhedral in low dimensions by a local entropy method. We also show that the boundedness constraint cannot be dropped when risk is assessed via continuous $l_2$ loss.

Given rate suboptimality of the BLSE in higher dimensions, we further study rate-efficient adaptive estimation procedures. 
Two general model selection methods are developed to provide sieved adaptive estimators (SAE) that achieve nearly optimal rates of convergence for particular ``regular'' classes of convex functions, while maintaining nearly parametric rate-adaptivity to polyhedral functions in arbitrary dimensions. Interestingly, the uniform boundedness constraint is unnecessary when risks are measured in discrete $l_2$ norms. As a byproduct, we obtain nearly rate-optimal adaptive estimators for unknown convex sets from noisy support function measurements in arbitrary dimensions. 
\end{abstract}

\begin{keyword}[class=MSC]
\kwd[Primary ]{62G07}
\kwd{62H12}
\kwd[; secondary ]{62G05}
\kwd{62G20}
\end{keyword}

\begin{keyword}
\kwd{convex regression}
\kwd{high dimension}
\kwd{risk bounds}
\kwd{minimax risk}
\kwd{curvature}
\kwd{adaptive estimation}
\kwd{sieve methods}
\kwd{model selection}
\kwd{shape constraints}
\kwd{empirical process}
\kwd{convex geometry}
\end{keyword}

\end{frontmatter}

\newpage
\tableofcontents

\section{Introduction}\label{section:introduction}

\subsection{Overview}
Nonparametric estimation under convexity constraints has received much attention in recent years. In this paper, we study the the problem of estimating an unknown convex function $f_0$ on a convex body $\Omega\subset \R^d$ from observations $(X_i,Y_i)_{i=1}^n$ where $X_1,\ldots,X_n$ are i.i.d. according to a probability law $\nu$ on $\Omega$, and $Y_i|X_i$ follows the model
\beqa\label{model}
Y_i=f_0(X_i)+\epsilon_i,\quad \textrm{ for all } i=1,\cdots,n.
\eeqa
Here the $\epsilon_i$'s are i.i.d. mean zero errors with variance $\sigma^2$. This is a random design regression model. We are interested in determining the optimal rates of convergence for estimating the unknown convex $f_0$ based on random observations from the above model under the natural associated continuous $l_2$ norm for the probability measure $\nu$ defined by
\beqa\label{eqn:l_nu}
l^2_\nu(f_0,g):=\int_{\Omega}(f_0-g)^2\ \d{\nu}.
\eeqa

Convex nonparametric regression has a long history.
\cite{hildreth1954point,hanson1976consistency} studied least squares estimation in the case of dimension $d=1$. In the multidimensional case,
\cite{matzkin1991semiparametric,kuosmanen2008representation,lim2012consistency} and others studied different aspects of the problem in more restricted setups, before
\cite{seijo2011nonparametric} studied the  statistical properties of least squares estimation and related computational techniques in a general setting. Global convexity also proves useful in faithfully selecting relevant variables under sparse additive modelling in the high-dimensional setting in \cite{xu2014faithful}.

The rates of convergence for convexity/concavity restricted estimators have been investigated primarily in dimension $1$. From a global point of view, \cite{dumbgen2004consistency} showed that the supremum loss of convex least squares estimators (LSEs) on any compacta within the domain is of order $(\log n/n)^{2/5}$ (no squaring). \cite{guntuboyina2013global} established global risk bounds of order $n^{-4/5}$ modulo logarithmic factors under squared discrete $l_2$ norm for the LSE on $\Omega=[0,1]$ in the regression setup with almost equi-distributed design points. The interesting feature is that, the LSEs are nearly parametrically rate-adaptive to piecewise linear convex functions. In a different setting of density estimation, \cite{doss2013global} concluded the global rates of convergence of order no worse than $n^{-2/5}$ (no squaring) under Hellinger metric for the maximum likelihood estimators (MLEs) of $\log$- and $s$-concave densities.

From a local point of view, \cite{mammen1991nonparametric} established rates of convergence on the order of $n^{-2/5}$ (no squaring) at fixed smooth points for LSEs in the regression setup. The pointwise limit distribution theory of the MLEs of convex decreasing densities \cite{groeneboom2001estimation}, log-concave densities \cite{balabdaoui2009limit} and R\'enyi divergence estimators for $s$-concave densities \cite{han2015approximation} follows this same rate of $n^{-2/5}$ (no squaring) at such smooth points. Adaptive phenomenon is also observed at a local scale in \cite{chen2014convex} and \cite{cai2011framework} with various degrees of differentiability assumptions. 

Such phenomenon as global and local adaptation have been found also in estimation procedures with monotonicity constraints, see e.g. 
 \cite{zhang2002risk}, 
 \cite{chatterjee2015risk}, \cite{bellec2015sharp}. In particular, \cite{chatterjee2015risk} characterized the global adaptive nature of the LSEs with general shape restrictions induced by conic inequalities in terms of the statistical dimension of the cone. This covers isotonic ($1$-monotone), convex ($2$-monotone) and general $k$-monotone regression problems. However, their conic inequalities require a strong order relationship between the design points, and thus render extension to high dimensions difficult.

In higher dimensions ($d\geq 2$), rates of convergence for estimating convex functions are far less well understood.  \cite{lim2014convergence} and \cite{balazs2015near} studied least squares estimation over the class of uniformly Lipschitz and uniformly bounded convex functions on $[0,1]^d$. In the presence of such restrictions, the (slightly weaker) results readily follow from classical entropy bounds (cf. Corollary 2.7.10 \cite{van1996weak}) and empirical process theory. In a related problem of estimating convex sets in higher dimensions, it is shown in \cite{guntuboyina2012optimal} that estimation of an unknown convex set via support functions enjoys minimax optimal rates of convergence on the order of $n^{-4/(d+3)}$ under discrete squared $l_2$ norm. On the other hand, \cite{brunel2014adaptive} showed that in the setting of estimating the support of a uniform density known to be convex, the optimal rates of convergence under Nikodym metric\footnote{The Nikodym metric between two measurable sets $K,K'$ is defined as $\abs{K\Delta K'}$.} is of order $\log n/n$ when the support is a polytope, and $n^{-2/(d+1)}(d\geq 2)$ when the support is a general convex body. 

In the setting of multivariate density estimation with convexity constraints, \cite{seregin2010nonparametric} derived a minimax lower bound on the order of $n^{-2/(d+4)}$ (no squaring) for estimating a concave-transformed density at a fixed point under curvature conditions. More recently, \cite{kim2014global} show that estimating log-concave densities via the MLEs yields different rates from the conjectured rates as above in low dimensions and the rates conjectured in \cite{seregin2010nonparametric}. The key observation in their paper is that the bracketing entropy of a suitable subclass of log-concave densities is on the order of $\max\{\epsilon^{-d/2},\epsilon^{-(d-1)}\}$ rather than the  $\epsilon^{-d/2}$ in higher dimensions as conjectured in \cite{seregin2010nonparametric}. 
The new entropy estimate gives global minimax risks on the order of  $n^{-1/(d+1)}$ (no squaring) for $d\geq 2$, which is strictly worse than the pointwise rate $n^{-2/(d+4)}$ (no squaring).  The larger entropy $\epsilon^{-(d-1)}$ exhibited in \cite{kim2014global} actually comes from uniform densities with smooth boundaries. Similar ideas have been explored further in \cite{gao2015entropy} where it is shown that the metric entropy of convex functions on a polyhedral region $\Omega$ differs significantly from the metric entropy of convex functions
on a domain $\Omega$ with a smooth boundary such as a ball.  This quickly leads to the conjecture that the smoothness of the boundary of the domain $\Omega$ plays a significant role in determining the degree of difficulty in estimation of a convex function defined on some set $\Omega \subset \R^d$, especially for higher dimensions
$d$.  

In this paper we investigate this issue in detail.  We adopt a minimax approach and show that the difficulty in estimating 
a convex function $f_0$ in the regression framework with a random design depends heavily on the  smoothness 
of the support of $f_0$. We first show that, the global minimax risks for convex regression under squared $l_\nu$ loss as defined in (\ref{eqn:l_nu}) are generally on the order of $n^{-2/(d+1)}$ for smooth supports (to be defined in Section \ref{section:notation}), while a faster rate of $n^{-4/(d+4)}$ is possible when the support is a polytope. Such sharp differences in global minimax risk are due to boundary perturbations of smooth supports that lead to a class of least favorable regression functions to be distinguished from the true one.

We then turn to study a variant of LSEs studied by \cite{seijo2011nonparametric}, with a uniform bound constraint, which we call bounded least squares estimators (BLSE). The uniform boundedness constraint, as we shall see in Section \ref{section:discussion_uniform_bound}, cannot be relaxed in studying risk bounds under random design. We summarize our risk bounds for the BLSE in squared $l_\nu$ norm obtained in the following table.
\begin{center}
	\begin{tabular}{ |c|c|c|c| } 
		\hline
		$(\Omega,f_0)$ & $(\mathscr{P}_k, \mathcal{P}_{m_{f_0}}(\Gamma))$ & $(\mathscr{P}_k,\mathcal{C}(\Gamma))$  & $(\mathscr{C}, \mathcal{C}(\Gamma))$\\
		\hline
		$d=1$ & $n^{-1}(\log n)^{5/4}$ & \multicolumn{2}{|c|}{$n^{-4/5}$}\\
		\hline
		$d=2$ & \multirow{2}{*}{$n^{-1}(\log n)^{d(d+4)/4}$} & \multirow{2}{*}{$n^{-4/(d+4)}$}           & $n^{-2/3}\log n$\\ 
		\cline{1-1}
		\cline{4-4}
		$d=3$ &                        &                                & $n^{-1/2}\log n$\\ 
		\hline
		$d=4$ & $n^{-1}(\log n)^{10}$                  & $n^{-1/2}\log n$                   & \multirow{2}{*}{$n^{-1/(d-1)}$}\\ 
		\cline{1-3}
		$d\geq 5$ & $n^{-4/d}(\log n)^{d+4}$           & $n^{-2/d}$                &                     \\ 
		\hline
	\end{tabular}
\end{center}
Notation can be found in Section \ref{section:notation}. To summarize, the BLSEs behave differently for different shapes of support and the true regression functions in that adaptive estimation occurs when (1) the support is polytopal with consequent smaller entropy of the class of convex functions; (2) the support is polytopal and the regression function is polyhedral. This is in agreement with the adaptive properties obtained in \cite{brunel2013adaptive,brunel2014adaptive,cai2015adaptive} in that the epigraph of such a regression function is of polyhedral type. In particular, nearly parametric risks in dimensions $d\leq 4$ when the support is polytopal and the regression function is polyhedral are established by a local entropy method, as we shall discuss in detail in Section \ref{section:lse}. It is natural to wonder if adaptation occurs when the support is a general convex body and the regression function is polyhedral. We conjecture that the answer is negative within the current methods via local entropy. For further discussion see Section \ref{section: discussion_adaptive_smooth}. 

It is worthwhile to note that,  when the support is polytopal, the BLSEs achieve nearly optimal risks for $d\leq 4$, while such optimality only holds for $d\leq 3$ when the support is a general smooth convex body. 
Such rate inefficiency is also observed in \cite{birge1993rates} in the context of density estimation via minimum constrast estimators for H\"olderian classes, and conjectured for the MLEs of log-concave densities in \cite{seregin2010nonparametric} in higher dimensions.

Given rate-suboptimality of the BLSEs, we further study rate-efficient adaptive estimation procedures. We show that the notion of `pseudo-dimension' coined in \cite{pollard1990empirical} (see also Section \ref{section:sieved_lse}) effectively characterizes the complexity for the low-dimensional models, i.e. polyhedral functions, within the class of multivariate convex functions. We then develop general model selection methods, from which two different types of sieved adaptive estimators (SAE) are studied and shown to achieve nearly optimal rates of convergence while being rate-adaptive simultaneously to all these low-dimensional models up to universal constants. Risks for these SAEs are both considered in continuous and discrete $l_2$ norms. Interestingly, the uniform boundedness constraint is not necessary when the discrete $l_2$ norm is used. See Theorems \ref{thm:risk_fixed_model} and \ref{thm:lepski_generic} for precise statements.

Applying these methods to the multivariate convex regression setup, we show that the risks of the SAEs are on the order of $\log n/n$ for polyhedral functions and $n^{-4/(d+4)}(\log n)^{\gamma_d}$ for uniformly Lipschitz (regular) convex functions for some $\gamma_d>0$, whatever the shape of the support. This is not a contradiction with the global minimax risk $n^{-2/(d+1)}$ for smooth domains since the faster rate $n^{-4/(d+4)} ( \log n )^{\gamma_d}$ is only achieved when the regression 
function behaves nicely near the boundary of the domain, a setting which excludes the global least favorable case. The BLSE is unlikely to be rate-adaptive for such regular classes since the Lipschitz behavior of the BLSE near the boundary can be arbitrarily bad; see the further discussion in Section \ref{section: discussion_adaptive_smooth}.

As a byproduct of our general framework, we obtain a nearly rate-optimal estimator for an unknown convex set from support function measurements that adapts simultaneously to all polytopes with nearly parametric rate. This gives a solution to this problem in arbitrary dimensions; the case $d=2$ was previous considered by \cite{cai2015adaptive}.

The rest of the paper is organized as follows. We study the global minimax risks in Section \ref{section:minimax_rates}. Section \ref{section:lse} is devoted to risk bounds for the BLSEs. The model selection methods and the associated SAEs, are presented in Section \ref{section:sieved_lse} with some further results discussed in Appendix \ref{section:sieved_sobolev}. Related issues and problems are discussed in Section \ref{section:discussion}. For clarity of presentation, proofs are relegated to Appendices \ref{section:proof_intro}-\ref{section:technical_lemma}. Auxiliary results from empirical process theory and convex geometry are collected in Appendix \ref{section:auxiliary_results}.

\subsection{Notation and conventions}\label{section:notation}

$\pnorm{\cdot}{p}$ denotes the $p$-norm for an Euclidean vector and $\pnorm{\cdot}{}$ is usually understood as $\pnorm{\cdot}{2}$. $B_p(x,r)$ denotes the $l_p$ ball of radius $r$ centered at $x$ in $\R^d$. $B_d$ is an abbreviation for $B_2(0,1)$. $\Delta_d:=\{x\in \R^d:x_i\geq 0,\sum_i x_i\leq 1\}$ is used for the canonical simplex in $\R^d$. The volume of a measurable set $A$ in Lebesgue measure is usually denoted $\abs{A}$. The symbols $:=$ and $\equiv$ are used for definitions. $\mathbb{P}$ and $\mathbb{E}$ are sometimes abused (in proofs) for \emph{outer} probability and expectation to handle possible measurability issues.

For a probability measure $\nu$ on $\Omega$, we denote the \emph{continuous} $l_2$ metric under $\nu$ by $l_\nu$ as defined in (\ref{eqn:l_nu}), while $l_2$ is used when $\nu$ is Lebesgue measure $\lambda\equiv\lambda_d$. We assume that $\nu$ is absolutely continuous with respect to Lebesgue measure $\lambda$, and write $\nu_{\max}\equiv\sup_{x \in \Omega}\d{\nu}/\d{\lambda}(x)$ and $\nu_{\min}\equiv\inf_{x \in \Omega}\d{\nu}/\d{\lambda}(x)$. For $\underline{X}^n=(X_1,\cdots,X_n)\in \R^{d\times n}$, define the \emph{discrete} $l_2$ metric by $
l^2_{\underline{X}^n}(f,g):=n^{-1}\sum_{i=1}^n \big(f(X_i)-g(X_i)\big)^2$.

\subsubsection{Conventions on constants}
$C_{x}$ will denote a generic constant that depends only on $x$, which may change from line to line unless otherwise specified. $a\lesssim_{x} b$ and $a\gtrsim_x b$ mean $a\leq C_x b$ and $a\geq C_x b$ respectively, and $a\asymp_x b$ means $a\lesssim_{x} b$ and $a\gtrsim_x b$. $C$ is understood as an absolute constant unless otherwise specified. Constants in theorems are stated in German letters (e.g. $\mathfrak{c,C,k,K}$). $\mathfrak{C}$ will denote a generic constant with specified dependence whose value may change from line to line. For two real numbers $a,b$, $a\vee b:=\max\{a,b\}$ and $a\wedge b:=\min\{a,b\}$.

\subsubsection{Conventions on convex bodies}
Let $\mathscr{P}_k$ denote the collection of polytopes with at most $k$ simplices. For a polytope $\Omega \in \mathscr{P}_k$, we call $\Omega=\cup_{i=1}^k \Omega_i$ a \emph{simplical decomposition} of $\Omega$ if all the $\Omega_i$'s are simplices with non-overlapping interiors. Let $\mathscr{C}$ denote the set of all smooth convex bodies in $\R^d$\footnote{Here 'smooth' will mean that Assumption \ref{smooth2} (below) holds.}. Note in dimension $d=1$, $\mathscr{C}=\mathscr{P}_1$. The width of a convex body $\Omega$ is denoted by $w(\Omega):=\sup_{x,y \in \Omega}\pnorm{x-y}{2}$. 
A convex body $\Omega$ is \emph{smooth} if the following two conditions are satisfied:
\begin{smooth}\label{smooth1}
For $\epsilon>0$ small enough, there exist disjoint caps $\{C_i\}_{i=1}^m$ such that $\abs{C_i}\lesssim_d \epsilon\abs{\Omega}$ and $m\lesssim_d (\epsilon\abs{\Omega})^{-(d-1)/(d+1)}$.
\end{smooth}

This is a slightly stronger version of the \emph{Economic Covering Theorem} (cf. Theorem \ref{thm:economic_cap_covering}) studied in the convex geometry literature, where we require $C_i$ to be caps instead of simple convex sets. See also Remark \ref{rmk:num_cover_boundary}.

Now we state our second assumption. A sequence of simplices $\{D_i\}_{i=1}^\infty$ is called \emph{admissible} if their interiors are pairwise disjoint. Let $
S(t,\{D_i\}_{i=1}^\infty ;\Omega):=\min\{m \in \N: \abs{\Omega\setminus \cup_{i=1}^m D_i}\leq t\abs{\Omega}\}$.
Now the \emph{simplicial approximation number} is defined by $
S(t;\Omega):=\inf_{\{D_i\}_{i=1}^\infty}S(t,\{D_i\}_{i=1}^\infty ;\Omega)$
where the infimum is taken over all admissible sequences.
\begin{smooth}\label{smooth2}
The \emph{simplicial approximation number} $S(t,\Omega)$ satisfies the growth condition
\beqa\label{limit:smooth_condition_2}
\limsup_{t \to 0} t^{(d-1)/2}S(t,\Omega)<\infty.
\eeqa
\end{smooth}

The power $(d-1)/2$ is natural in the sense that it agrees with \cite{bronshteyn1975approximation}: Any convex body can be approximated by a polytope with $n$ vertices within Hausdorff distance no more than $O(n^{2/(d-1)})$. Here we require the approximation to hold in a sense so that such a bound is valid constructively.
\begin{lemma}\label{lemma:smooth}
Any ellipsoid satisfies Smoothness Assumptions \ref{smooth1} and \ref{smooth2}.
\end{lemma}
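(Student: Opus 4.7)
The plan is to reduce both assumptions to the case of the Euclidean ball $B_d$ via affine invariance, and then to verify each by an explicit construction on $B_d$. Every ellipsoid can be written as $E=T(B_d)$ for some invertible affine $T$; since $T$ carries halfspaces to halfspaces and simplices to simplices while preserving disjointness of interiors and all Lebesgue volume ratios, both Smoothness Assumptions \ref{smooth1} and \ref{smooth2} are invariant under such $T$. It therefore suffices to take $\Omega=B_d$.

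For Smoothness Assumption \ref{smooth1} on $B_d$, the natural object to use is the spherical cap $C(u,h):=\{x\in B_d:\iprod{x}{u}\geq 1-h\}$, for which a direct computation gives $|C(u,h)|\asymp_d h^{(d+1)/2}$ and whose footprint on $S^{d-1}$ has angular radius $\asymp \sqrt{h}$. Given small $\epsilon>0$, I would set $h\asymp_d\epsilon^{2/(d+1)}$ so that $|C(u,h)|\lesssim_d \epsilon|B_d|$, and take $\{u_1,\ldots,u_m\}$ to be a maximal $K\sqrt{h}$-separated subset of $S^{d-1}$ with $K=K(d)$ chosen large enough that the caps $C(u_i,h)$ are pairwise disjoint (a routine angular calculation). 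Standard sphere-packing bounds then yield $m\lesssim_d h^{-(d-1)/2}\lesssim_d \epsilon^{-(d-1)/(d+1)}$, which produces the required family.

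For Smoothness Assumption \ref{smooth2} on $B_d$, the plan is to construct the admissible sequence $\{D_i\}_{i=1}^\infty$ by dyadic refinement of an inscribed polytope. Start from a coarse triangulation of $S^{d-1}$ into $N_0\lesssim_d 1$ spherical simplices (for instance, the facets of an inscribed cross-polytope), and list as $D_1,\ldots,D_{N_0}$ the cones from the origin to these facets; their union is an inscribed polytope $P^{(0)}$. At each level $k\geq 1$, subdivide every spherical facet of level $k-1$ into $2^{d-1}$ sub-facets by edge-midpoint refinement projected onto $S^{d-1}$, and let $P^{(k)}\supset P^{(k-1)}$ be the refined inscribed polytope. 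Triangulate the shell $P^{(k)}\setminus P^{(k-1)}$ into $O_d(2^{k(d-1)})$ simplices and append them to the sequence. The $D_i$'s then have pairwise disjoint interiors since simplices from different levels live in disjoint shells. A curvature estimate gives $|B_d\setminus P^{(K)}|\asymp_d 2^{-2K}$: each of the $\asymp 2^{K(d-1)}$ level-$K$ spherical facets has angular diameter $\asymp 2^{-K}$ and misses a cap of volume $\asymp 2^{-K(d+1)}$. Given $t>0$, choosing the least $K$ with $2^{-2K}\leq t$ makes the total number of simplices used $\sum_{k\leq K}O_d(2^{k(d-1)})=O_d(2^{K(d-1)})=O_d(t^{-(d-1)/2})$, yielding \eqref{limit:smooth_condition_2}.

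The main obstacle will be the bookkeeping for the shell triangulation at level $k$: each wedge in $P^{(k)}\setminus P^{(k-1)}$ sitting over an old facet is bounded by $O_d(1)$ hyperplanes (the old facet, the $2^{d-1}$ new sub-facets, and shared lateral hyperplanes), and one must decompose each such wedge into $O_d(1)$ simplices whose interiors are pairwise disjoint and disjoint from previously chosen simplices. Because the midpoint refinement is combinatorially uniform across all old facets, it suffices to produce one such decomposition in local coordinates and transfer it to every wedge, giving the count $O_d(2^{(k-1)(d-1)}\cdot 2^{d-1})=O_d(2^{k(d-1)})$ used above.
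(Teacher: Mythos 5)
Your proof is correct and for Smoothness Assumption \ref{smooth1} it is essentially identical to the paper's: both reduce to $B_d$ by affine transformation, take spherical caps of height $\eta$ with volume $\asymp_d\eta^{(d+1)/2}$, set $\eta \asymp_d \epsilon^{2/(d+1)}$, and use a packing of $S^{d-1}$ to extract $m\lesssim_d \epsilon^{-(d-1)/(d+1)}$ disjoint caps. The one genuine difference is Smoothness Assumption \ref{smooth2}: the paper gives no argument of its own and simply cites ``the same argument in Section 2.8 of \cite{gao2015entropy},'' whereas you build the admissible sequence of $d$-simplices explicitly by dyadic refinement of an inscribed polytope, with the volume estimate $|B_d\setminus P^{(K)}|\asymp_d 2^{-2K}$ and simplex count $O_d(2^{K(d-1)})$ giving $S(t;B_d)\lesssim_d t^{-(d-1)/2}$. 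That self-contained construction is a reasonable alternative to the citation, and your scaling bookkeeping ($\theta^{d+1}$ volume per cap of angular radius $\theta$, $2^{K(d-1)}$ facets, Freudenthal-type $2^{d-1}$-fold subdivision with $O_d(1)$ simplices per wedge) is sound. Two small remarks: (i) the paper dispatches $d=1$ separately as trivial, while your spherical-cap and cross-polytope constructions tacitly assume $d\geq 2$, so you should state that reduction explicitly; and (ii) the ``affine invariance'' of Assumption \ref{smooth1} as written involves the non-scale-invariant quantity $(\epsilon|\Omega|)^{-(d-1)/(d+1)}$, so it is worth a word that the bound you prove on $B_d$ transfers with the same $d$-dependent constant after normalizing $|\Omega|$, which is how the paper uses it too.
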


\subsubsection{Conventions on convex functions}
For a multivariate real-valued function $f:\R^d \to \R$, let $\pnorm{f}{L}\equiv L(f)\equiv\sup_{x\neq y}{\abs{f(x)-f(y)}}/{\pnorm{x-y}{2}}$ denote the Lipschitz constant for $f$.  $\pnorm{f}{l_p}$ will denote the standard $l_p$ norm ($p\geq 1$).

We denote the class of all convex functions that are bounded by $\Gamma$ in $l_p$ norm, whose Lipschitz constants are bounded by $L$ and whose domains are contained in $\Omega$ by $\mathcal{C}_p(\Gamma,L;\Omega)$. Dependence on the domain is often suppressed. Dependence on $p,\Gamma,L$ is also suppressed when they equal $\infty$\footnote{For example, $\mathcal{C}(\Gamma,L)=\mathcal{C}_\infty(\Gamma,L)$ and $\mathcal{C}(\Gamma)=\mathcal{C}_\infty(\Gamma,\infty)$.}. We also let $\mathcal{P}_m(\Gamma)$ be the collection of polyhedral convex functions $f \in \mathcal{C}(\Gamma)$ with at most $m$ facets\footnote{Here by a facet of a polyhedral convex function $f$ we mean any $d$-dimensional polytope within $\Omega$ on which $f$ is affine.}. Alternatively, we can represent $f \in \mathcal{P}_m(\Gamma)$ as
$ f(x)\equiv \max_{i=1,\cdots,m} \big(a_i^T x+b_i\big) $ for some $\{(a_i,b_i)\in \R^d\times \R\}_{i=1}^d$ 
so that $\pnorm{f}{l_\infty}\leq \Gamma$. Similarly, we often simply denote $\mathcal{P}_m(\infty)$ by $\mathcal{P}_m$. 

For a given support $\Omega$, we call the class of polyhedral convex functions as the \emph{simple class}, the class of all convex functions with pre-specified uniformly bounded Lipschitz constant as the \emph{regular class}.

\subsubsection{Conventions on entropy numbers}
Let $(\mathcal{F},\pnorm{\cdot}{})$ be a subset of the normed space of real functions $f:\mathcal{X}\to \R$. The metric entropy number $\mathcal{N}(\epsilon,\mathcal{F},\pnorm{\cdot}{})$ is the minimal number of $\epsilon$-balls in $\pnorm{\cdot}{}$ norm needed to cover $\mathcal{F}$, while the bracketing entropy number $\mathcal{N}_{[\,]}(\epsilon,\mathcal{F},\pnorm{\cdot}{})$ is the minimum number of $\epsilon$-brackets needed to cover $\mathcal{F}$. By an $\epsilon$-bracket we mean the subset of functions $f \in \mathcal{F}$ determined by a pair of functions $l\leq u$ as follows: $[l,u]:=\{f\in\mathcal{F}:l\leq f\leq u\}$ with $\pnorm{l-u}{}\leq \epsilon$.

\section{Global minimax risks}\label{section:minimax_rates}
We will be interested in the global minimax risk defined by
\beqa\label{eqn:generic_minimax_rate}
R_\nu(n;\mathcal{F}):=\inf_{\hat{f}_n}\sup_{f \in \mathcal{F}}\mathbb{E}_f l_\nu^2(f,\hat{f}_n),
\eeqa
where $\mathcal{F}$ is the function class of interest, and the infimum runs over all possible estimators based on the observations $(X_i,Y_i)_{i=1}^n$.

\subsection{Minimax risk upper bounds}

We first derive a general minimax upper bound.

\begin{theorem}\label{thm:generic_minimax_upper_bound_exponential}
	Suppose $\mathcal{F}$ is uniformly bounded by $\Gamma$, and the errors $\{\epsilon_i\}$ are independently sub-Gaussian with parameter $\sigma^2$: $\mathbb{E}e^{u\epsilon_i}\leq \exp\big(u^2\sigma^2/2\big)$. Let the rate function be defined by
	\beqa
	r_n:=\inf_{\delta>0}\bigg(\frac{1}{\mathfrak{z}_0n}\log  N(\delta)+34\delta^2\bigg),
	\eeqa
	where $N(\delta)\geq \mathcal{N}(\delta,\mathcal{F},l_\nu)$ for all $\delta>0$.
	Then there exists an estimator $\hat{f}_n \in \mathcal{F}$ such that for any $t>0$,
	\beqas
	\sup_{f_0 \in \mathcal{F}}\mathbb{P}\big(n(l_\nu^2(\hat{f}_n,f_0)-r_n)>t\big)\leq \exp(-\mathfrak{z}_0 t),
	\eeqas
	Here the constant $\mathfrak{z}_0$ is defined via (\ref{ineq:minimax_exponential_bound_zeta}).
\end{theorem}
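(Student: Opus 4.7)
The plan is to obtain $\hat f_n$ by a classical discretization-plus-selection scheme, essentially least squares restricted to an optimally chosen $\delta$-net of $\mathcal{F}$. For fixed $\delta>0$ I would take a minimal $l_\nu$-covering $\mathcal{F}_\delta = \{f_1, \ldots, f_{N(\delta)}\} \subset \mathcal{F}$ of $\mathcal{F}$ by balls of radius $\delta$, set
\[
\hat f_{n,\delta} \;:=\; \argmin_{f \in \mathcal{F}_\delta}\ \frac{1}{n}\sum_{i=1}^{n}\big(Y_i - f(X_i)\big)^2,
\]
and eventually choose $\delta = \delta_n$ to be a (near-)minimizer in the definition of $r_n$. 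For the target $f_0 \in \mathcal{F}$, pick $f^* \in \mathcal{F}_\delta$ with $l_\nu(f^*, f_0) \leq \delta$.

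The core technical step rewrites the basic inequality $n^{-1}\sum(Y_i - \hat f_{n,\delta}(X_i))^2 \leq n^{-1}\sum(Y_i - f^*(X_i))^2$, using $Y_i = f_0(X_i) + \epsilon_i$, as
\[
l^2_{\underline X^n}(\hat f_{n,\delta}, f_0) \;\leq\; l^2_{\underline X^n}(f^*, f_0) \;+\; \frac{2}{n}\sum_{i=1}^n \epsilon_i\big(\hat f_{n,\delta}(X_i) - f^*(X_i)\big).
\]
Conditional on $X_1,\ldots,X_n$, the sub-Gaussian assumption makes $n^{-1/2}\sum \epsilon_i(f_k(X_i)-f^*(X_i))$ sub-Gaussian with variance proxy $\sigma^2\, l^2_{\underline X^n}(f_k,f^*)$, so a union bound over the $N(\delta)$ net elements combined with a peeling (slicing) argument in $l_\nu(f_k, f^*)$ converts the linear dependence on $l_{\underline X^n}(\hat f_{n,\delta}, f^*)$ via an arithmetic-geometric inequality into a small quadratic piece that can be absorbed into the left-hand side, plus an additive $O(\delta^2)$ from the net resolution and an additive $O((\log N(\delta) + t)/n)$ from the exponential tail. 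A separate uniform Hoeffding argument on the bounded class $\{(f-g)^2 : f, g \in \mathcal{F}_\delta\}$, whose elements take values in $[0, 4\Gamma^2]$, controls the discrepancy between $l^2_{\underline X^n}$ and $l^2_\nu$ uniformly over the net with exponential tails as well. Assembling these pieces produces a bound of the form
\[
\mathbb{P}\Big(l^2_\nu(\hat f_{n,\delta}, f_0) > 34\delta^2 + (\log N(\delta) + t)/(\mathfrak{z}_0 n)\Big) \leq \exp(-\mathfrak{z}_0 t),
\]
and substituting $\delta = \delta_n$ gives the claimed tail bound with $\hat f_n := \hat f_{n,\delta_n}$.

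The main obstacle is combining the sub-Gaussian concentration of the noise cross term with the Hoeffding concentration on the squared differences into a single clean display that yields an exponential tail in $t$ together with the specific constant $34$ in front of $\delta^2$. The peeling must be calibrated so that each shell's union-bound cost is compatible with the sub-Gaussian variance proxy and with the absorption step, and the uniform boundedness by $\Gamma$ is used crucially both to justify Hoeffding on squared differences and to ensure the peeling terminates at $O(\Gamma)$. The overall strategy is standard; the work lies in the bookkeeping that produces the precise constants $34$ and $\mathfrak{z}_0$.
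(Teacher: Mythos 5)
Your overall scheme (discretize, sieve least squares on the $\delta$-net, compare $\hat f_{n,\delta}$ with the nearest net point $f^\ast$, union bound, choose $\delta=\delta_n$) matches the paper, but the engine you run inside is genuinely different. You decompose the basic inequality into a noise cross term, which you control by sub-Gaussian concentration plus a union bound and absorb by AM--GM, and a separate Hoeffding step on the class $\{(f-g)^2\}$ to transfer $l^2_{\underline X^n}$ to $l^2_\nu$ on both sides; the design and noise randomness are handled by two distinct concentration events intersected at the end. The paper instead computes a single Chernoff bound on $M_n(f)-M_n(f^\ast)=\sum_i Z_i$, exploiting that the $Z_i$'s are i.i.d.\ over the \emph{joint} randomness of $(X_i,\epsilon_i)$, so the bound factorizes as $N\bigl(\mathbb{E}\exp(-uZ_1)\bigr)^n$ with no conditioning and no explicit discrete-to-continuous norm transfer. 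The key trick, which you would not naturally reach from your route, is the linearization $e^u\leq 1+\tfrac{\sigma^2}{5\Gamma^2}(1-e^{-5\Gamma^2/\sigma^2})u$ on $[-5\Gamma^2/\sigma^2,0]$; this converts the exponential moment of $Z_1$ into a linear expression in $l^2_\nu(f,f^\ast)$ and $l^2_\nu(f^\ast,f_0)$ and directly yields the $(1-\mathfrak{z}_0\epsilon^2+34\mathfrak{z}_0\delta^2)^n$ form, whence the constants $34$ and $\mathfrak{z}_0$ drop out. Your approach would give a correct theorem of the same flavor, but with constants depending on how the two concentration levels are balanced, and it requires more bookkeeping (peeling is actually not needed once you have a finite net and AM--GM); the paper's proof is shorter because the convexity bound does the work of the Hoeffding/norm-comparison step implicitly. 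Both arguments crucially use uniform boundedness by $\Gamma$: you for Hoeffding on $(f-g)^2$ and for terminating the peeling, the paper for the interval on which the exponential is linearized.
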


The proof is a generalization of the method of sieves by progressively choosing `theoretical' sieves constructed via knowledge of the metric entropy of the function class to be estimated. As a direct corollary, we obtain
\begin{corollary}\label{cor:generic_minimax_upper_bound}
$R_\nu(n;\mathcal{F})\leq r_n+\frac{1}{\mathfrak{z}_0 n}$.
\end{corollary}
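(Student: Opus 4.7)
The plan is to simply convert the sub-exponential tail bound from Theorem \ref{thm:generic_minimax_upper_bound_exponential} into a bound in expectation, via the standard integration identity for non-negative random variables. Since the Corollary asserts a bound on $R_\nu(n;\mathcal{F}) = \inf_{\hat{f}_n} \sup_{f_0 \in \mathcal{F}} \mathbb{E}_{f_0} l_\nu^2(f_0,\hat{f}_n)$, the natural strategy is to exhibit a single estimator (the one produced by Theorem \ref{thm:generic_minimax_upper_bound_exponential}) that achieves the claimed expectation bound uniformly over $\mathcal{F}$.

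Concretely, I would first write
\[
\mathbb{E}_{f_0}\bigl[l_\nu^2(\hat{f}_n,f_0)\bigr] = r_n + \mathbb{E}_{f_0}\bigl[l_\nu^2(\hat{f}_n,f_0) - r_n\bigr] \leq r_n + \mathbb{E}_{f_0}\bigl[\bigl(l_\nu^2(\hat{f}_n,f_0) - r_n\bigr)_+\bigr],
\]
where $(\cdot)_+$ denotes the positive part. The remaining expectation is a non-negative quantity, so the layer-cake identity gives
\[
\mathbb{E}_{f_0}\bigl[\bigl(l_\nu^2(\hat{f}_n,f_0) - r_n\bigr)_+\bigr] = \frac{1}{n}\int_0^\infty \mathbb{P}\bigl(n(l_\nu^2(\hat{f}_n,f_0) - r_n) > t\bigr)\, \d{t}.
\]
Plugging in the tail bound $\mathbb{P}(n(l_\nu^2(\hat{f}_n,f_0)-r_n)>t)\leq e^{-\mathfrak{z}_0 t}$ supplied by Theorem \ref{thm:generic_minimax_upper_bound_exponential} and evaluating $\int_0^\infty e^{-\mathfrak{z}_0 t}\,\d{t} = 1/\mathfrak{z}_0$ yields the estimate $\mathbb{E}_{f_0}[l_\nu^2(\hat{f}_n,f_0)] \leq r_n + 1/(\mathfrak{z}_0 n)$. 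Taking the supremum over $f_0 \in \mathcal{F}$ (the bound is uniform in $f_0$) and then passing to the infimum over all estimators on the left-hand side delivers the inequality for $R_\nu(n;\mathcal{F})$.

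There is essentially no obstacle here, since the work is concentrated in Theorem \ref{thm:generic_minimax_upper_bound_exponential}; the only mild point to verify is measurability of the estimator so that the layer-cake formula applies cleanly, but this is handled by the convention in Section \ref{section:notation} that $\mathbb{E}$ is interpreted as outer expectation when needed. Hence the corollary reduces to a one-line integration of the exponential tail.
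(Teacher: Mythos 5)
Your proof is correct and follows essentially the same route as the paper's: both bound $R_\nu$ by the risk of the estimator from Theorem \ref{thm:generic_minimax_upper_bound_exponential}, express that risk via the layer-cake formula, split off the contribution up to $r_n$, and integrate the exponential tail to produce the $1/(\mathfrak{z}_0 n)$ term. The only cosmetic difference is that you write the split as $r_n + \mathbb{E}[(l_\nu^2(\hat f_n,f_0)-r_n)_+]$ whereas the paper splits the tail integral at $u=r_n$; these are the same computation.
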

Typically $r_n$ is of larger order than $1/n$ and hence the right hand side of the display is on the order of $r_n$. 

Now we shall use the above results to establish a minimax risk upper bound for the convex regression problem. This is a direct consequence of Corollary \ref{cor:generic_minimax_upper_bound} in view of the entropy result Lemma \ref{lem:entropy_cvx_func}.
\begin{theorem}[Minimax risk upper bounds]\label{thm:upper_bound_poly}
	For a polytopal domain  $\Omega \in \mathscr{P}_k$, we have
	\beqas
	R_\nu(n;\mathcal{C}(\Gamma))\leq \mathfrak{C}_{d,\abs{\Omega},\Gamma,\sigma,\nu}(k/n)^{4/(d+4)}.
	\eeqas
	For a smooth domain $\Omega$, we have
	\beqas
	R_\nu(n;\mathcal{C}(\Gamma))\leq \mathfrak{C}_{d,\abs{\Omega},\Gamma,\sigma,\nu}
	\begin{cases}
		n^{-2/3} (\log n) & d=2;\\
		n^{-2/(d+1)} & d\geq 3.
	\end{cases}
	\eeqas
	Here the conclusion for $d=2$ holds for $n$ large enough. Explicit forms for the constants can be found in (\ref{const:minimax_upper_bound_1}) and (\ref{const:minimax_upper_bound_2}).
\end{theorem}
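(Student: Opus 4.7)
The plan is to combine Corollary~\ref{cor:generic_minimax_upper_bound} with the entropy estimates of Lemma~\ref{lem:entropy_cvx_func} and then optimize the resulting rate function $r_n$. A preliminary reduction: because $\nu$ has Lebesgue density bounded by $\nu_{\max}$, one has $l_\nu \leq \sqrt{\nu_{\max}}\, l_2$, so that $\mathcal{N}(\epsilon,\mathcal{C}(\Gamma),l_\nu) \leq \mathcal{N}(\epsilon/\sqrt{\nu_{\max}},\mathcal{C}(\Gamma),l_2)$. Hence it suffices to control the metric entropy in the Lebesgue $l_2$ norm and absorb $\nu_{\max}$ into the final constant.

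For the polytopal case $\Omega \in \mathscr{P}_k$, Lemma~\ref{lem:entropy_cvx_func} provides a bound of the form
\beqas
\log \mathcal{N}(\epsilon, \mathcal{C}(\Gamma), l_2) \leq \mathfrak{C}_{d,\abs{\Omega},\Gamma}\, k\, \epsilon^{-d/2},
\eeqas
obtained by summing simplex-wise convex-function entropies over a simplicial decomposition. Substituting into $r_n = \inf_{\delta>0}\{(\mathfrak{z}_0 n)^{-1}\log N(\delta) + 34\delta^2\}$ and balancing the two terms via $k\delta^{-d/2}/n \asymp \delta^2$ gives $\delta_\star \asymp (k/n)^{2/(d+4)}$ and $r_n \asymp (k/n)^{4/(d+4)}$. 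Since this dominates $1/(\mathfrak{z}_0 n)$, Corollary~\ref{cor:generic_minimax_upper_bound} yields the first claim.

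For a smooth domain, Lemma~\ref{lem:entropy_cvx_func} gives the boundary-dominated bound
\beqas
\log \mathcal{N}(\epsilon, \mathcal{C}(\Gamma), l_2) \leq \mathfrak{C}_{d,\abs{\Omega},\Gamma}\cdot
\begin{cases} \epsilon^{-1}\log(1/\epsilon), & d=2,\\ \epsilon^{-(d-1)}, & d\geq 3,\end{cases}
\eeqas
which reflects the fact that near a smooth boundary one pays a surface-like rate $\epsilon^{-(d-1)}$ rather than the volumetric rate $\epsilon^{-d/2}$. For $d\geq 3$, balancing $\delta^{-(d-1)}/n \asymp \delta^2$ gives $\delta_\star \asymp n^{-1/(d+1)}$ and $r_n \asymp n^{-2/(d+1)}$. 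For $d=2$ one picks $\delta_\star \asymp n^{-1/3}$, so that $\log(1/\delta_\star) \asymp \log n$ and the two terms balance at $n^{-2/3}\log n$; this choice is admissible only once $n$ is large enough that $\delta_\star$ lies in the range where the entropy bound is sharp, which accounts for the "$n$ large enough" qualifier.

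The only nontrivial ingredient is the entropy bound of Lemma~\ref{lem:entropy_cvx_func}: this is where the Smoothness Assumptions~\ref{smooth1}--\ref{smooth2} (via, e.g., the Economic Covering Theorem and $S(t;\Omega)$) or the simplicial decomposition in the polytopal case actually feed in and distinguish the two regimes. Everything else is elementary calculus to solve the balancing equation, plus bookkeeping of the constants $\nu_{\max}$, $\abs{\Omega}$, $\Gamma$ and $\mathfrak{z}_0(\sigma)$ to assemble $\mathfrak{C}_{d,\abs{\Omega},\Gamma,\sigma,\nu}$. I therefore expect no serious obstacle once Lemma~\ref{lem:entropy_cvx_func} is in hand.
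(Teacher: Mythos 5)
Your approach is precisely the paper's: reduce $l_\nu$ to $l_2$ via $\nu_{\max}$, feed the entropy bounds of Lemma~\ref{lem:entropy_cvx_func} into the rate function $r_n$ of Corollary~\ref{cor:generic_minimax_upper_bound}, and balance the two terms. The polytopal case and the smooth case for $d\geq 3$ are carried out correctly.

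There is, however, a slip in the $d=2$ case. You quote the smooth-domain entropy as $\epsilon^{-1}\log(1/\epsilon)$, but Lemma~\ref{lem:entropy_cvx_func} actually gives $(\abs{\Omega}^{1/2}\Gamma\epsilon^{-1})\bigl|\log(\abs{\Omega}^{1/2}\Gamma\epsilon^{-1})\bigr|^{3/2}$, i.e.\ the logarithmic factor has exponent $3/2$, not $1$. With the correct exponent and your choice $\delta_\star \asymp n^{-1/3}$, the bias term is $\delta_\star^2 \asymp n^{-2/3}$ while the entropy term is $(\mathfrak{z}_0 n)^{-1}\delta_\star^{-1}(\log(1/\delta_\star))^{3/2} \asymp n^{-2/3}(\log n)^{3/2}$; the two do not balance and the resulting $r_n$ is $n^{-2/3}(\log n)^{3/2}$, strictly weaker than the stated $n^{-2/3}\log n$. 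The fix is to actually optimize $\delta$: with $\delta_\star \asymp n^{-1/3}(\log n)^{1/2}$ one has $\delta_\star^2 \asymp n^{-2/3}\log n$ and also $n^{-1}\delta_\star^{-1}(\log(1/\delta_\star))^{3/2} \asymp n^{-2/3}\log n$, so both terms match and $r_n \asymp n^{-2/3}\log n$, as claimed. The remainder of your argument and the constant bookkeeping are in order.
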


\subsection{Minimax risk lower bounds}

In this section, errors will be assumed i.i.d. Gaussian, i.e. $\epsilon_i\sim \mathcal{N}(0,\sigma^2)$.

\subsubsection{General class}
We consider global minimax risk lower bounds for two types of supports: (1) polytopes; (2) smooth convex bodies. 

\begin{theorem}\label{thm:minimax_lower_bound_poly}
For a polyhedral domain $\Omega \in \mathscr{P}_k$,  we have
\beqas
R_\nu(n;\mathcal{C}(\Gamma))\geq \mathfrak{C}_{d,\Omega,\Gamma,\sigma,\nu} n^{-4/(d+4)}.
\eeqas
An explicit form for the constant can be found in (\ref{const:minimax_lower_bound_poly}).
\end{theorem}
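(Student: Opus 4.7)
The plan is to apply Assouad's lemma to a hypercube of local, convexity-preserving perturbations of a strongly convex base function. Since $\Omega$ has nonempty interior, I would first fix a Euclidean ball $B_2(x_\ast,\rho)\subset\Omega$ and take $f_\ast(x):=\alpha\pnorm{x-x_\ast}{2}^2$ with $\alpha>0$ chosen small enough that $\pnorm{f_\ast}{\infty}\leq\Gamma/2$ on $\Omega$, so that $\nabla^2 f_\ast=2\alpha I_d\succ 0$. Next, fix once and for all a nonzero $C^2$ bump $\phi:\R^d\to[-1,0]$ supported in $B_d$ with $K_\phi:=\sup_x\pnorm{\nabla^2\phi(x)}{\mathrm{op}}<\infty$. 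For scale parameters $h,\eta>0$ to be chosen later, pack $m\asymp_d(\rho/h)^d$ disjoint balls $B_2(x_i,h)\subset B_2(x_\ast,\rho)$ and define, for each $\tau\in\{0,1\}^m$,
\beqas
f_\tau(x):=f_\ast(x)+\eta\sum_{i=1}^m \tau_i\,\phi\!\big((x-x_i)/h\big).
\eeqas

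The three items to verify are: (i) each bump has Hessian of operator norm at most $\eta K_\phi/h^2$, so $\eta\leq \alpha h^2/K_\phi$ keeps $\nabla^2 f_\tau\succeq\alpha I_d\succ 0$ and hence $f_\tau$ convex; (ii) since the bumps have disjoint supports and $\eta\to 0$, $\pnorm{f_\tau}{\infty}\leq\pnorm{f_\ast}{\infty}+\eta\leq\Gamma$ for $n$ large, so $f_\tau\in\mathcal{C}(\Gamma)$; (iii) disjointness of the bump supports together with $\nu_{\min}\leq\d{\nu}/\d{\lambda}\leq\nu_{\max}$ and a change of variables yields
\beqas
c_1\nu_{\min}\eta^2 h^d\cdot H(\tau,\tau')\leq l_\nu^2(f_\tau,f_{\tau'})\leq c_1\nu_{\max}\eta^2 h^d\cdot H(\tau,\tau')
\eeqas
with $c_1:=\pnorm{\phi}{L_2(B_d)}^2$ and $H$ the Hamming distance. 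Under i.i.d.\ Gaussian errors, $\mathrm{KL}\big(P_{f_\tau}^{\otimes n}\,\big\|\,P_{f_{\tau'}}^{\otimes n}\big)=nl_\nu^2(f_\tau,f_{\tau'})/(2\sigma^2)$, so Assouad's lemma gives $R_\nu(n;\mathcal{C}(\Gamma))\gtrsim m\,\nu_{\min}\eta^2 h^d$ as long as the KL between Hamming-neighbouring pairs is bounded by an absolute constant, i.e.\ $n\nu_{\max}\eta^2 h^d/\sigma^2\lesssim 1$. Taking $h:=c_2 n^{-1/(d+4)}$ with $c_2$ small enough to meet this KL budget and $\eta:=\alpha h^2/K_\phi$, one obtains $m\eta^2 h^d\asymp\eta^2\asymp h^4\asymp n^{-4/(d+4)}$, yielding the claimed bound with $\mathfrak{C}_{d,\Omega,\Gamma,\sigma,\nu}$ absorbing $\rho,\alpha,\nu_{\min},\nu_{\max},\sigma$ and the absolute quantities $c_1,c_2,K_\phi$.

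The main obstacle is the coupling of two competing constraints: convexity imposes the pointwise Hessian bound $\eta\lesssim h^2$, while information-theoretic distinguishability imposes the KL budget $n\eta^2 h^d\lesssim 1$; eliminating $\eta$ pins down $h\asymp n^{-1/(d+4)}$ and fixes the exponent $4/(d+4)$. The polyhedral assumption $\Omega\in\mathscr{P}_k$ is not exploited beyond the existence of an inscribed Euclidean ball, so the very same construction would give an $n^{-4/(d+4)}$ lower bound on any $\Omega$ with nonempty interior; the genuinely larger rate $n^{-2/(d+1)}$ that holds for smooth $\Omega$ (shown elsewhere in the paper) will require a different, boundary-based construction that is unavailable in the polyhedral setting here.
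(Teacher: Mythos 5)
Your construction is correct and is essentially the same as the paper's: a strongly convex quadratic base, a hypercube of localized convexity-preserving bump perturbations inside a fixed convex subset of $\Omega$, Assouad's lemma, and the exponent $4/(d+4)$ pinned down by trading the Hessian constraint $\eta\lesssim h^2$ against the KL budget $n\eta^2 h^d\lesssim 1$. The only substantive difference is in packaging: the paper works over the simplicial decomposition $\Omega=\cup_{i=1}^k\Omega_i$ with inscribed hypercubes $R_i\subset\Omega_i$ and a concrete $g_0(x)=\frac{1}{20d}\sum_i\sin^3(\pi x_i)$, which buys an explicit constant (\ref{const:minimax_lower_bound_poly}) whose dependence on $k$ and the geometry of the $R_i$ is made visible; you instead pack disjoint balls in a single inscribed ball $B_2(x_\ast,\rho)$ with an abstract $C^2$ bump $\phi$, which is leaner but only gives a constant depending on $\rho$, $\alpha$, $K_\phi$, $\|\phi\|_{L_2}$. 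Your closing remark is also accurate and matches the paper's own discussion: this interior construction works for any $\Omega$ with nonempty interior, and the polyhedral hypothesis is only relevant in that it \emph{prevents} the slower smooth-boundary lower bound $n^{-2/(d+1)}$ (which for $d\geq 3$ dominates $n^{-4/(d+4)}$) from kicking in.
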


\begin{theorem}\label{thm:minimax_lower_bound_cvxbody}
Let $d\geq 2$. Suppose the domain $\Omega$ satisfies Smoothness Assumption \ref{smooth1}. Then
\beqas
R_\nu(n;\mathcal{C}(\Gamma))\geq \mathfrak{C}_{d,\Omega,\Gamma,\sigma,\nu}n^{-2/(d+1)}.
\eeqas
An explicit form for the constant can be found in (\ref{const:minimax_lower_bound_smooth}).
\end{theorem}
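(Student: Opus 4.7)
The plan is to deduce the lower bound via Fano's inequality applied to a combinatorial family of convex functions indexed by independent perturbations on disjoint boundary caps. Given a small parameter $\epsilon>0$ to be calibrated, I would first use Smoothness Assumption~\ref{smooth1} together with the standard cap-geometry of smooth bodies to produce $m\asymp_d \epsilon^{-(d-1)/(d+1)}$ disjoint caps $C_1,\ldots,C_m\subset\Omega$ with $\abs{C_i}\asymp_d \epsilon\abs{\Omega}$. Writing each as $C_i=\Omega\cap\{x:\langle u_i,x\rangle\geq t_i\}$ with unit vector $u_i$ and cap depth $\delta_i:=\max_{x\in\Omega}\langle u_i,x\rangle - t_i\asymp (\epsilon\abs{\Omega})^{2/(d+1)}$, define the affine ramps
\[
\phi_i(x)\ :=\ \frac{\Gamma}{\delta_i}\bigl(\langle u_i,x\rangle-t_i\bigr)_+, \qquad f_\omega\ :=\ \sum_{i=1}^m \omega_i\,\phi_i \quad (\omega\in\{0,1\}^m).
\]
Each $\phi_i$ is globally convex on $\R^d$, vanishes on $\Omega\setminus C_i$, and takes values in $[0,\Gamma]$. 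By disjointness of the caps, $f_\omega$ is a sum of convex functions with $0\leq f_\omega\leq \Gamma$ on $\Omega$, hence $f_\omega\in \mathcal{C}(\Gamma)$.

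Next I would estimate the pairwise $l_\nu$-separation and the Gaussian Kullback--Leibler divergence. A cap-geometry computation shows $\int \phi_i^2\,\d{\nu} \asymp \nu_{\min}\Gamma^2\abs{C_i}\asymp \Gamma^2\epsilon\abs{\Omega}$: the top-half sub-cap $\{x\in C_i:\langle u_i,x\rangle - t_i\geq \delta_i/2\}$ carries a positive fraction of $\abs{C_i}$ for smooth bodies, and on it $\phi_i^2\geq \Gamma^2/4$. By disjoint support,
\[
l_\nu^2(f_\omega,f_{\omega'})\ =\ \sum_{i:\omega_i\neq\omega_i'} \int \phi_i^2\,\d{\nu}\ \asymp\ \Gamma^2\epsilon\abs{\Omega}\cdot d_H(\omega,\omega'),
\]
and for i.i.d.\ Gaussian noise the $n$-sample KL is $(n/2\sigma^2)l_\nu^2(f_\omega,f_{\omega'})\lesssim (n\nu_{\max}\Gamma^2\epsilon\abs{\Omega}/\sigma^2)\,d_H(\omega,\omega')$. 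Invoking Varshamov--Gilbert, I would extract $\mathcal{M}\subset\{0,1\}^m$ with $\abs{\mathcal{M}}\geq 2^{m/8}$ and minimum Hamming distance $\geq m/8$; then for $\omega\neq\omega'\in\mathcal{M}$,
\[
l_\nu^2(f_\omega,f_{\omega'})\ \gtrsim\ m\Gamma^2\epsilon\abs{\Omega}\ \asymp\ \nu_{\min}\Gamma^2\abs{\Omega}\,\epsilon^{2/(d+1)}.
\]

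Applying Fano's inequality, a testing minimax lower bound of order $\epsilon^{2/(d+1)}$ follows provided $\max\mathrm{KL}\lesssim \log\abs{\mathcal{M}}\asymp m$, i.e.\ $n\Gamma^2\epsilon\abs{\Omega}/\sigma^2\lesssim 1$. Balancing with $\epsilon\asymp \sigma^2/(n\Gamma^2\abs{\Omega})$ (valid for $n$ sufficiently large so that Smoothness Assumption~\ref{smooth1} applies; smaller $n$ are absorbed into the constant) yields
\[
\rho^2\ \asymp\ \nu_{\min}\Gamma^{2(d-1)/(d+1)}\abs{\Omega}^{(d-1)/(d+1)}\sigma^{4/(d+1)}\,n^{-2/(d+1)},
\]
and the standard estimation-to-testing reduction converts this into the claimed minimax lower bound for $R_\nu(n;\mathcal{C}(\Gamma))$.

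The main obstacle is the geometric step in the $l_\nu^2$ computation: one must verify that $\int \phi_i^2\,\d{\nu}\asymp \Gamma^2\abs{C_i}$ rather than the weaker $\Gamma^2\delta_i\abs{C_i}$ that crude pointwise bounds would give. This relies on the smooth-body cap-volume scaling $\abs{C_i}\asymp \delta_i^{(d+1)/2}$, so that a constant fraction of $\abs{C_i}$ lies within the top-half sub-cap where $\phi_i$ is already comparable to $\Gamma$; precisely this geometric fact fails for polytopal $\Omega$, which explains the faster rate there. A secondary bookkeeping point is extracting from Smoothness Assumption~\ref{smooth1} the matching lower bound $m\gtrsim_d \epsilon^{-(d-1)/(d+1)}$ on the number of disjoint caps (rather than only the upper bound), which follows from standard surface-area and packing considerations on $\partial\Omega$.
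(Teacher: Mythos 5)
Your construction (affine ramps supported on the disjoint boundary caps produced by Smoothness Assumption~\ref{smooth1}) is identical to the paper's, and your identification of the key geometric step — that for a smooth body the top-half sub-cap carries a constant fraction of $\abs{C_i}$, so $\int\phi_i^2\,\d\nu\asymp\Gamma^2\nu(C_i)$, which is exactly what fails for polytopal domains — is the right one. The only difference is that you route the combinatorial reduction through Fano's inequality with a Varshamov--Gilbert subfamily, whereas the paper applies Assouad's lemma directly to the full hypercube; these are interchangeable here and give the same $n^{-2/(d+1)}$ rate, so this is essentially the paper's argument.
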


Notably, the least favorable functions $\{f_\tau\}$ achieving the rate $n^{-4/(d+4)}$ for polytopal domains in Theorem \ref{thm:minimax_lower_bound_poly} and the class $\{f_\tau\}$ yielding the rate $n^{-2/(d+1)}$ for smooth domains in Theorem \ref{thm:minimax_lower_bound_cvxbody} are radically different. In fact, the class $\{f_\tau\}$ yielding the rate $n^{-4/(d+4)}$ involves perturbations of a reference convex function in the interior of the domain while maintaining sufficient curvature to ensure convexity so that the resulting rate corresponds to the rate in the classical case of a function space with smoothness index $2$. On the other hand, the slower rate in Theorem \ref{thm:minimax_lower_bound_cvxbody} involves showing that a smooth boundary allows more 
perturbations than in the interior, and thus the boundary behavior ultimately drives the slower rate.

\subsubsection{Simple class}
It is not difficult to establish the general lower bound on the order of $1/n$ under squared $l_\nu$ norm, so we shall examine the case where a slower minimax rate is possible. We shall illustrate this by considering the minimax rates for polyhedral functions supported on a smooth region.

\begin{theorem}\label{thm:minimax_lower_bounds_simple}
Let $d\geq 2$. Suppose the domain $\Omega$ satisfies Smoothness Assumption \ref{smooth1}. Then for $n\geq \mathfrak{n}_0$ with $\mathfrak{n}_0$ being some constant depending on $k,d,\sigma,\Gamma,\nu$, it holds that
\beqas
R_\nu(n,\mathcal{P}_k)\geq \mathfrak{C}_d \frac{\sigma^2\nu_{\min}}{\nu_{\max}}\frac{k\log n}{n}.
	\eeqas
\end{theorem}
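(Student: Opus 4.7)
The plan is to apply Fano's inequality to a family of polyhedral functions $\{f_S\}$ indexed by $(k-1)$-subsets of a polynomially-large collection of disjoint caps near the boundary of $\Omega$. The logarithmic factor beyond the parametric $k/n$ rate will arise from the combinatorial richness $\log\binom{N}{k-1}\gtrsim_d k\log n$ of such subsets.

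Concretely, fix $\epsilon\asymp_d (\log n)/n$ and use Smoothness Assumption~\ref{smooth1} to produce $N\asymp_d(\epsilon|\Omega|)^{-(d-1)/(d+1)}$ disjoint caps $C_1,\ldots,C_N\subset\Omega$ with $|C_j|\asymp_d\epsilon|\Omega|$. Each cap equals $C_j=\Omega\cap H_j$ for a halfspace $H_j=\{v_j^\top x\geq t_j\}$, so the affine function $\ell_j(x):=v_j^\top x-t_j$, rescaled so that $\sup_{x \in \Omega}\ell_j(x)=h$, satisfies $\ell_j^+=\ell_j\mathbf{1}_{C_j}$. For each $S\in\binom{[N]}{k-1}$ set
\[
f_S(x):=\max\bigl\{0,\,\ell_j(x):j\in S\bigr\}\in\mathcal{P}_k,\qquad \pnorm{f_S}{l_\infty}\leq h.
\]
Disjointness of the caps gives the clean identity $l_\nu^2(f_S,f_{S'})=\sum_{j\in S\triangle S'}\int_{C_j}\ell_j^2\,\d\nu$, and a cross-section argument for affine ramps on caps yields $\int_{C_j}\ell_j^2\,\d\lambda\asymp_d h^2|C_j|$, so that
\[
c_d\,\nu_{\min}|S\triangle S'|h^2\epsilon|\Omega|\leq l_\nu^2(f_S,f_{S'})\leq C_d\,\nu_{\max}|S\triangle S'|h^2\epsilon|\Omega|.
\]

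A constant-weight Gilbert--Varshamov argument yields $M$ codewords $S_1,\ldots,S_M\in\binom{[N]}{k-1}$ with pairwise symmetric difference at least $(k-1)/2$ and $\log M\gtrsim_d k\log(N/k)\gtrsim_d k\log n$ for $n\geq\mathfrak{n}_0(k,d)$. Under Gaussian noise with random design, the joint $n$-sample KL divergence equals $\frac{n}{2\sigma^2}l_\nu^2(\cdot,\cdot)$; choosing $h^2\epsilon=\kappa\sigma^2\log n/(n\nu_{\max}|\Omega|)$ with $\kappa=\kappa(d)$ sufficiently small forces $\mathrm{KL}\leq\tfrac12\log M$ while keeping $h^2\asymp_d\sigma^2/\nu_{\max}$ constant in $n$, so that $h\leq\Gamma$ for $n\geq\mathfrak{n}_0$ depending on $(k,d,\sigma,\Gamma,\nu)$. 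Fano's inequality then delivers
\[
R_\nu(n,\mathcal{P}_k)\gtrsim\delta^2\bigl(1-\mathrm{KL}/\log M-\log 2/\log M\bigr)\gtrsim_d\frac{\sigma^2\nu_{\min}}{\nu_{\max}}\cdot\frac{k\log n}{n},
\]
where $\delta^2\asymp_d k\,\nu_{\min}h^2\epsilon|\Omega|$ is the $l_\nu^2$-separation of the codewords.

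The principal technical obstacle is the two-sided cap integral estimate $\int_{C_j}\ell_j^2\,\d\lambda\asymp_d h^2|C_j|$ uniformly over the $N$ caps. The upper bound is immediate from $0\leq\ell_j\leq h$; the lower bound requires that $\ell_j$ attain values of order $h$ on a sub-cap of proportional Lebesgue measure, which follows by slicing $C_j$ by hyperplanes parallel to $\partial H_j$ and using that the cross-section areas are bounded below on the inner half of the cap---a ``regularity'' property of the caps guaranteed by Smoothness Assumption~\ref{smooth1} that must be made explicit. A secondary concern is the tight accounting of the constant-weight Gilbert--Varshamov constant and the $(d-1)/(d+1)$ cap-count exponent, whose interplay fixes the constant $\kappa(d)$ and hence the explicit $d$-dependence of $\mathfrak{C}_d$ in the statement.
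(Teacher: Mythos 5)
Your argument is correct, but it takes a genuinely different route from the paper's. The paper applies Theorem~2.7 of \cite{tsybakov2008introduction} to $M\asymp m/k$ hypotheses, each the sum of $k$ cap ramps over a \emph{disjoint} block of caps, so that $l_\nu^2(f_i,f_j)\asymp k\Gamma^2\nu\epsilon$ while $\log M\asymp\log n$. You instead take a constant-weight Gilbert--Varshamov code $\{S_1,\ldots,S_M\}\subset\binom{[N]}{k-1}$ with $\log M\gtrsim k\log(N/k)$ and minimum symmetric difference $\gtrsim k$, and close with Fano. The difference is consequential: in the paper's scheme both the KL radius and the separation $\delta^2$ scale linearly in $k$ while the information budget $\log M\asymp\log n$ does not, so the balancing condition $(n/\sigma^2)k\Gamma^2\nu_{\max}\epsilon\lesssim\alpha\log M$ forces $k\epsilon\lesssim(\sigma^2/\Gamma^2\nu_{\max})(\log n)/n$ and the factor of $k$ in $\delta^2\asymp k\Gamma^2\nu_{\min}\epsilon$ cancels, leaving only $(\sigma^2\nu_{\min}/\nu_{\max})(\log n)/n$. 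Your coding step inflates $\log M$ by the missing factor of $k$, so the constraint on $\epsilon$ is not suppressed by $1/k$ and the separation retains its $k$; as far as I can tell, your route is the one that actually produces the $k\log n/n$ dependence stated in the theorem, whereas the paper's displayed choice of $\epsilon$ does not.

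Two smaller points. First, since your $h^2\asymp\sigma^2/\nu_{\max}$ is independent of $n$, the clause ``$h\leq\Gamma$ for $n\geq\mathfrak{n}_0$'' is vacuous as written; do as the paper does and fix $h:=\Gamma$, absorbing the constraint into $\epsilon=c_d\sigma^2\log n/(\Gamma^2\nu_{\max}|\Omega|n)$, which keeps $h^2\epsilon|\Omega|$ at the required order and makes $h\leq\Gamma$ automatic. Second, the ``principal technical obstacle'' you flag --- the two-sided cap integral $\int_{C_j}\ell_j^2\,\d\lambda\asymp h^2|C_j|$ --- needs no regularity hypothesis beyond convexity: the superlevel set $\{\ell_j\geq h/2\}$ is the inner half-cap, and by (\ref{ineq:blown_up_volume_estimate}) applied with $\lambda=2$ its Lebesgue measure is at least $2^{-d}|C_j|$, so $\int_{C_j}\ell_j^2\geq 2^{-d-2}h^2|C_j|$. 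The construction in the proof of Theorem~\ref{thm:minimax_lower_bound_cvxbody}, which your proposal reuses, normalizes the ramps in exactly this way, so this step is already covered.
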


Note that $\mathfrak{n}_0$ depends on $\Gamma$.

\section{Least squares estimation}\label{section:lse}

\subsection{The estimator}
In the convex regression setting, the least squares estimator (LSE) given observation $(X_i,Y_i)_{i=1}^n$ is
\beqa
\hat{f}_n^{\mathrm{LS}}:=\argmin_{f \in \mathcal{C}} \sum_{i=1}^n(Y_i-f(X_i))^2.
\eeqa
By a canonical construction (see also (\ref{eqn:lse_constraint_cvxreg})), such LSEs exist and are consistent in view of \cite{seijo2011nonparametric} in the sense that $\hat{f}^{\mathrm{LS}}_n$ converges uniformly on any compact set in the interior of the domain of the true regression function $f_0$. Here we shall study the \emph{bounded} LSE with the constraint that $f \in \mathcal{C}(\Gamma)$ with some specified $\Gamma$. It is shown in Section \ref{section:discussion_uniform_bound} that a boundedness condition is necessary in studying the risk for the LSE for convex regression due to the bad behavior of the estimator near the boundary. 

The bounded least squares estimators (BLSEs) can also be formulated as follows.
\beqa\label{eqn:cvx_lse_def}
\min_{\{y_i\},\{g_i\}} &\quad \sum_{i=1}^n (Y_i-y_i)^2\\
\mathrm{subject \,to} &\quad y_j\geq y_i+g_i^T(X_j-X_i),\\
&\quad y_i \geq -\Gamma, -y_i\geq -\Gamma,  \qquad\qquad\qquad\qquad\textrm{ for all }i,j=1,\cdots,n,\\
&\quad \Gamma \geq y_i+g_i^T(v-X_i),\textrm{ for all }v\in \partial\Omega.
\eeqa
When the support is known to be a polytope with vertices $\{v_i\}_{i=1}^k$, then the last condition of (\ref{eqn:cvx_lse_def}) can be replaced with
\beqa
\Gamma \geq y_i+g_i^T(v_l-X_i),\textrm{ for all }l=1,\ldots,k, \textrm{ and } i=1,\ldots,n.
\eeqa
This is a quadratic programming (QP); see page 338 in \cite{boyd2004convex} for more  details.

The existence of the solution $\{\hat{y}_i\}$ and $\{\hat{g}_i\}$ of (\ref{eqn:cvx_lse_def}) is clear, and we will use these estimated interpolating function values and subgradients to define a canonical estimator as follows:
\beqa\label{eqn:lse_constraint_cvxreg}
\hat{f}^{\mathrm{LS}}_n(x):= \max_{i=1,\cdots,n}(\hat{y}_i+\hat{g}_i^T(x-X_i)).
\eeqa
For notational convenience, we suppress explicit dependence of this estimator on the uniform bound $\Gamma$. As we shall see, the uniform bound $\Gamma$ does not affect the rates of convergence  as long as it exceeds $\pnorm{f_0}{\infty}$.

\subsection{Risk bounds via local entropy}
The rates of convergence of the least squares estimators (LSEs) in the regression setting are well studied in the empirical process literature, see for example \cite{birge1993rates,van1996weak,van2000empirical}. In particular, the local geometry near the true regression function, measured via the size of the local metric/bracketing entropy, drives the rates of convergence for LSEs. 
In our specific random design setting, we shall first strengthen the results of Theorem 9.1 \cite{van2000empirical} for \emph{risk bounds} in the fixed design setting and discussions in page 335 \cite{van1996weak} for \emph{rates of convergence} in the random design, to \emph{risk bounds} for LSEs in the general regression setting with random design.

To fix notation, suppose $Y_i=f_0(X_i)+\epsilon_i$, and $X_i$'s are i.i.d. from a probability measure $\nu$. The LSE $\hat{f}_n^{\mathrm{LS}}$ is then defined to be the minimizer of $\sum_{i=1}^n(Y_i-f(X_i))^2$ over $\mathcal{F}$. We assume that the LSEs exist for simplicity.

\begin{theorem}\label{thm:risk_bounds_random_design}
	Let $\mathcal{F}$ be a function class uniformly bounded by some $\Gamma\geq 1/2$.  Let the errors $\epsilon_i$ be i.i.d. subexponential with $\mathbb{E}\exp(2\Gamma \abs{\epsilon_1})\leq \Phi_{\Gamma}^2/2$. Let $\bar{\mu}_\Gamma:=\Phi_\Gamma \vee 4\sqrt{2}\Gamma \exp(4\Gamma^2)$ and $\underline{\mu}_\Gamma:=\Phi_\Gamma \wedge 4\sqrt{2}\Gamma \exp(4\Gamma^2)$. Set
	\beqa\label{cond:risk_bounds_entropy_integral}
	J_{[\,]}(r,f_0,l_\nu)\geq \int_{r^2/192\bar{\mu}_\Gamma }^{2r}\sqrt{\log \mathcal{N}_{[\,]}(\epsilon,S(f_0,r),l_\nu)}\ \d{\epsilon}
	\eeqa
	where $S(f_0,r)\equiv S(f_0,r,l_\nu):=\{f \in \mathcal{F}:l_\nu(f,f_0)\leq r\}$. 
	Suppose $J_{[\,]}(r,f_0,l_\nu)/r^2$ is non-increasing on $(0,\infty)$, and
	\beqa\label{cond:risk_bounds_random_design}
	\sqrt{1/96}\vee (\Gamma/48)\leq \underline{\mu}_\Gamma \leq  \bar{\mu}_\Gamma  \leq \sqrt{n}r_n/18C.
	\eeqa
	Then
	\beqas
	\mathbb{P}\left(l_\nu(\hat{f}_n^{\mathrm{LS}},f_0)>r\right)\leq 2C\sum_{j\geq 1}\exp\left(-\frac{2^{2j}nr^2}{1296C^2\bar{\mu}_\Gamma^2}\right)
	\eeqas
	holds for all $r>r_n$, where $r_n$ is chosen such that 
	\beqas
	\frac{J_{[\,]}(r_n,f_0,l_\nu)}{\sqrt{n}r_n^2}\leq \frac{1}{13C\bar{\mu}_\Gamma}.
	\eeqas
	Consequently, 
	\beqa\label{ineq:risk_bounds_generic}
	\mathbb{E}\big[l_\nu^2(\hat{f}_n^{\mathrm{LS}},f_0)\big]\leq r_n^2 +\frac{864C^3\bar{\mu}_\Gamma^2}{n}.
	\eeqa
	The constant $C$ is taken from Lemma \ref{lem:inequality_sup_empirical_process}.
\end{theorem}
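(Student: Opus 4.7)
The plan is to follow the standard peeling and bracketing-chaining strategy for least-squares risk bounds, adapted to random design and subexponential errors. The basic inequality, obtained from $\sum_i(Y_i-\hat f_n^{\mathrm{LS}}(X_i))^2\le\sum_i(Y_i-f_0(X_i))^2$ after substituting $Y_i=f_0(X_i)+\epsilon_i$ and splitting off $\nu(\hat f_n^{\mathrm{LS}}-f_0)^2$ from the empirical mean, reads
\[
 l_\nu^2(\hat f_n^{\mathrm{LS}},f_0)\le (\nu-\mathbb{P}_n)(\hat f_n^{\mathrm{LS}}-f_0)^2+\tfrac{2}{n}\textstyle\sum_{i=1}^n\epsilon_i(\hat f_n^{\mathrm{LS}}(X_i)-f_0(X_i)).
\]
Thus everything reduces to uniform control, over the local shells $S(f_0,r)=\{f\in\mathcal{F}:l_\nu(f,f_0)\le r\}$, of the two empirical-process quantities
\[
 T_1(r):=\sup_{f\in S(f_0,r)}|(\mathbb{P}_n-\nu)(f-f_0)^2|,\qquad T_2(r):=\sup_{f\in S(f_0,r)}\bigl|\tfrac{1}{n}\textstyle\sum_i\epsilon_i(f-f_0)(X_i)\bigr|.
\]

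Next I peel: for fixed $r>r_n$, decompose $\{l_\nu(\hat f_n^{\mathrm{LS}},f_0)>r\}$ into shells $\mathcal{S}_j=\{2^{j-1}r<l_\nu(\hat f_n^{\mathrm{LS}},f_0)\le 2^jr\}$, $j\ge1$, on which the basic inequality forces $(2^{j-1}r)^2\le T_1(2^jr)+2T_2(2^jr)$. For $T_1$, the functions $(f-f_0)^2$ are bounded by $(2\Gamma)^2$ with $l_\nu$-variance $\le4\Gamma^2r^2$ on $S(f_0,r)$, so Lemma \ref{lem:inequality_sup_empirical_process} (a Bernstein-type bracketing maximal inequality) yields mean bound of order $\Gamma\,J_{[\,]}(r,f_0,l_\nu)/\sqrt n$ with a sub-Gaussian deviation tail. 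For $T_2$ I bracket $f-f_0$ in $l_\nu$ and then multiply by $\epsilon$; the subexponential moment $\mathbb{E}\exp(2\Gamma|\epsilon_1|)\le\Phi_\Gamma^2/2$ together with $\pnorm{f-f_0}{\infty}\le2\Gamma$ endows the multiplier increments with an Orlicz-$\psi_1$ envelope controlled by $\bar\mu_\Gamma$ (the second term $4\sqrt2\,\Gamma\exp(4\Gamma^2)$ in the definition of $\bar\mu_\Gamma$ comes from a Cauchy--Schwarz/Orlicz conversion between the $\psi_1$-norm of $\epsilon\cdot(f-f_0)$ and the $l_\nu$-size of $f-f_0$). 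The same chaining machinery then produces $\mathbb{E}\,T_2(r)\lesssim \bar\mu_\Gamma\,J_{[\,]}(r,f_0,l_\nu)/\sqrt n$, and a Talagrand-type concentration inequality for subexponential empirical processes (the other conclusion of Lemma \ref{lem:inequality_sup_empirical_process}) produces the exponential tail $\exp(-2^{2j}nr^2/(1296C^2\bar\mu_\Gamma^2))$ claimed in the theorem.

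The defining condition $J_{[\,]}(r_n)/(\sqrt n r_n^2)\le 1/(13C\bar\mu_\Gamma)$ closes the peeling: since $r\mapsto J_{[\,]}(r,f_0,l_\nu)/r^2$ is nonincreasing, $J_{[\,]}(2^jr)/(2^jr)^2\le 2^{-j}J_{[\,]}(r)/r^2$ for $r\ge r_n$, so on each $\mathcal{S}_j$ the combined chaining mean sits well below $(2^{j-1}r)^2$; a union bound in $j$ gives the stated tail inequality, and integrating $\mathbb{E}\,l_\nu^2\le r_n^2+\int_{r_n^2}^\infty\mathbb{P}(l_\nu^2>t)\,\d{t}$ with the geometric series $\sum_j\exp(-a\cdot 4^j)$ yields the additive remainder $864C^3\bar\mu_\Gamma^2/n$.

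The main obstacle is the $\epsilon$-weighted process $T_2$: one must track explicit constants through a chaining bound for a \emph{multiplier} process with subexponential weights, matching the specific thresholds $1/192$, $1/13$, $1/18$ in the hypotheses. The natural route is to form $l_\nu$-brackets $[l_j,u_j]$ for $f-f_0$, produce $\epsilon$-weighted brackets $[\epsilon\cdot l_j,\epsilon\cdot u_j]$ whose $\psi_1$-width is controlled via the envelope moment assumption on $\epsilon$, and then invoke a generic chaining inequality on the resulting process; the auxiliary lower bound $\underline\mu_\Gamma\ge\sqrt{1/96}\vee\Gamma/48$ is exactly what is required to ensure the lower integration limit $r^2/(192\bar\mu_\Gamma)$ in $J_{[\,]}$ is compatible with the variance-scale at which chaining terminates. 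A secondary technical point is the passage from $l_\nu$-brackets for $f-f_0$ to brackets for $(f-f_0)^2$ and for $\epsilon(f-f_0)$, handled by standard envelope-doubling and Cauchy--Schwarz bookkeeping.
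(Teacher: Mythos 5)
Your proposal follows essentially the same route as the paper: form the basic inequality for the LSE, peel over shells $S_j(r)$, split into the centered quadratic process and the $\epsilon$-weighted multiplier process, convert $l_\nu$-brackets into Bernstein-norm brackets via the subexponential moment bound (this is exactly where the paper's $\Phi_\Gamma$ and $4\sqrt{2}\Gamma\exp(4\Gamma^2)$ factors arise), apply Lemma \ref{lem:inequality_sup_empirical_process} on each shell, and integrate the tail. One minor slip: non-increasingness of $J_{[\,]}(r)/r^2$ only gives $J_{[\,]}(2^jr)/(2^jr)^2\le J_{[\,]}(r)/r^2$, not the extra factor $2^{-j}$ you claim; fortunately the argument does not need it, since the exponential decay in $j$ comes from the deviation (sub-Gaussian) part of Lemma \ref{lem:inequality_sup_empirical_process}, not from shrinkage of the mean bound.
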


\begin{remark}
We note that this result does not necessarily follow directly from Theorem 9.1 \cite{van2000empirical} since we want to work with bracketing entropy in \emph{continuous} norm, rather than the discrete norm. We also note that (\ref{cond:risk_bounds_random_design}) is actually very weak; in fact we can first require $\Gamma, \Phi_\Gamma$ large enough to ensure the inequality in the far left holds, and then require $n$ large enough to ensure the inequality in the far right holds since typically $\sqrt{n}r_n \to \infty$ at least with logarithmic rates as $n \to \infty$. The risk bound (\ref{ineq:risk_bounds_generic}) has two terms; typically the first term dominates the second for $n$ large.
\end{remark}

Now in order to derive the rate results, since the $l_2$ and $l_\nu$ metrics are essentially the same under our assumptions on $\d{\nu}/\d{\lambda}$, it suffices to study the local geometry of $S(f_0,\delta)$ in terms of $\mathcal{N}_{[\,]}(\epsilon,S(f_0,\delta),l_2)$ for any $\delta>0$. We shall establish the following key estimate for local entropy. 

\begin{lemma}\label{lem:local_entropy_estimate}
	For any convex function $f_0$ defined on $\Omega \in \mathscr{P}_k$, and any $g_0 \in \mathcal{P}_m$, it holds that
	\beqas
	\log\mathcal{N}_{[\,]}(\epsilon,S(f_0,r),l_2)&\leq \mathfrak{C}_d m(m\vee k)^{d(d+4)/4}(2r^2+l_2^2(f_0,g))^{d/4}\\
	&\quad\times \epsilon^{-d/2}\bigg(\log\frac{\mathfrak{C}_d(m\vee k)^{d}(\Gamma^2\vee w(\Omega)^2L^2(g))\abs{\Omega}}{\epsilon^2}\bigg)^{d(d+4)/4}
	\eeqas
	with $\epsilon\leq \mathfrak{C}_{d,0}\min_{1\leq i\leq m}\sqrt{\abs{\Omega_i}}\Gamma$. Here $\{\Omega_i\}$ is a partition of $\Omega$ for which $g$ is affine on each $\Omega_i$. 
\end{lemma}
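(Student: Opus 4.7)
The plan is to decompose the domain into pieces on which $g_0$ is affine, reduce the problem to bracketing convex functions whose $L^2$ norm on each piece is controlled, and then aggregate via a union bound over a discretization of how the $L^2$ mass is distributed among pieces.

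First I would refine the given simplicial decomposition $\Omega=\cup_{j=1}^k\Omega_j$ using the projections of the facet boundaries of $g_0$. Since $g_0\in\mathcal{P}_m$, its domains of affinity induce a polyhedral partition of $\Omega$ whose common refinement with $\{\Omega_j\}$ can be triangulated into at most $M\lesssim_d (m\vee k)^{d}$ simplices $\{D_l\}_{l=1}^M$ such that $g_0$ is affine on each $D_l$ and each $D_l\subseteq\Omega_i$ for some $i\le k$. For any $f\in S(f_0,r)$, the function $h:=f-g_0$ is convex on each $D_l$ (convex minus affine), uniformly bounded in $L^\infty$ by $\Gamma\vee w(\Omega)L(g_0)$ on $\Omega$, and satisfies
\[
\sum_{l=1}^M \|h\|_{L^2(D_l)}^2=\|f-g_0\|_{L^2(\Omega)}^2\le 2r^2+2\,l_2^2(f_0,g_0),
\]
where the continuous $l_2$ and $l_\nu$ norms are comparable via $\nu_{\min},\nu_{\max}$.

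Second I would use the local bracketing entropy bound for bounded convex functions on a simplex with a prescribed $L^2$ budget: for convex $h$ on $D_l$ with $\|h\|_{L^\infty(D_l)}\le B$ and $\|h\|_{L^2(D_l)}\le t_l$, a rescaling/peeling argument against the classical Gao--Wellner bracketing bound $\log\mathcal{N}_{[\,]}(\epsilon,\mathcal{C}(B;D_l),L^2)\lesssim_d (B|D_l|^{1/2}/\epsilon)^{d/2}$ yields a local estimate of the form
\[
\log\mathcal{N}_{[\,]}(\epsilon_l,\{h\text{ convex on }D_l:\|h\|_{L^2(D_l)}\le t_l\},L^2(D_l))\lesssim_d \Bigl(\frac{t_l}{\epsilon_l}\Bigr)^{d/2}\!(\log\text{-factor})^{d(d+4)/4},
\]
valid for $\epsilon_l\le \mathfrak{C}_{d,0}\sqrt{|D_l|}\,B$. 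To cover $S(f_0,r)$ at level $\epsilon$, I would distribute the $\epsilon^2$ budget across the $M$ pieces with allocation proportional to $t_l^{d/(d+2)}$ (the entropy-minimizing choice), giving total entropy bounded by a constant times $M\cdot(T/\epsilon)^{d/2}$ where $T^2:=2r^2+2l_2^2(f_0,g_0)$, then collapsing the polynomial dependence on $M=(m\vee k)^d$ to match $m(m\vee k)^{d(d+4)/4}$. Finally, a discretization argument over the possible $(t_1,\ldots,t_M)$ on a dyadic grid of $T^2$ contributes only another logarithmic factor and is absorbed into the stated exponent.

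The main obstacle I expect is the local-entropy estimate in the second step with the $L^2$-norm constraint rather than the $L^\infty$-norm constraint: the standard Gao--Wellner argument is formulated for uniformly bounded convex functions, so I need a peeling/scaling reduction that translates an $L^2$-ball constraint into an effective $L^\infty$ scale of order $t_l^{2/(d+2)}|D_l|^{-1/(d+2)}$ while still covering the boundary layer where convex functions can spike. Getting the logarithmic factor sharp enough to match the exponent $d(d+4)/4$, and ensuring the condition $\epsilon\le \mathfrak{C}_{d,0}\min_i\sqrt{|\Omega_i|}\,\Gamma$ is strong enough to allow this scaling on every $D_l$, are the delicate steps; the rest is bookkeeping through the optimization over mass allocations and the union bound over the discretization.
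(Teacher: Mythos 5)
Your high-level decomposition---pass to the affine pieces of $g_0$, subtract the affine function, control each piece separately, then reassemble and translate from a ball around $g_0$ to a ball around $f_0$---is precisely the route the paper takes (its Lemma \ref{lem:local_entropy_estimate} proof). Two of your refinements are unnecessary: the paper works directly with the $m$ polyhedral regions $\{\Omega_i\}$ of affinity (each is a polytope with $O_d((m\vee k)^d)$ facets, which is what feeds into the $(m\vee k)^{d(d+4)/4}$ factor), and it simply relaxes the $L^2(\Omega_i)$ constraint on each piece to the global radius $r$ while splitting $\epsilon\to\epsilon/\sqrt{m}$ across pieces; there is no optimization over an allocation $(t_1,\dots,t_M)$ and no union bound over a discretized grid of allocations. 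Dropping that machinery streamlines the argument without losing the stated rate.

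The genuine gap is the one you flag yourself: the local bracketing entropy bound for convex functions on a simplex under an $L^2$ budget (your ``second step''). The ``peeling/scaling reduction that translates an $L^2$-ball constraint into an effective $L^\infty$ scale of order $t_l^{2/(d+2)}|D_l|^{-1/(d+2)}$'' is not a proof, and it is not the right mechanism: a single effective-$L^\infty$ rescaling cannot produce the $(\log(1/\eta))^{d(d+4)/4}$ factor and cannot handle boundary spikes, since no $L^\infty$ control follows from an $L^2$ bound for convex functions near $\partial D_l$. What the paper actually uses is its Lemma \ref{lem:boundary_entropy_parallelotope} (a bracketing-entropy version of Theorem 2.6 of Guntuboyina, 2014): cover a simplex by finitely many parallelotopes $A_i$, shrink each by a factor $\eta$ to $A_i(\eta)$ so that the $L^2$-constrained convex functions are effectively $L^\infty$-controlled there, and bound the $L^2$-contribution of the boundary layer $\Omega\setminus\cup_i A_i(\eta)$ using the auxiliary uniform bound $\Gamma$ by choosing $\eta\asymp\epsilon^2/(\Gamma^2|\Omega|)$. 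That choice of $\eta$ is exactly where the $(\log(\cdot/\epsilon^2))^{d(d+4)/4}$ factor and the restriction $\epsilon\lesssim_{d}\min_i\sqrt{|\Omega_i|}\,\Gamma$ come from. You should either cite that local entropy result or reproduce its boundary-layer argument; without it, the proposal is incomplete at precisely the step that determines the form of the bound.
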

\begin{remark}
In particular, if $f_0 \in \mathcal{P}_{m_{f_0}}$ for some $m_{f_0}<\infty$, then Lemma \ref{lem:local_entropy_estimate} entails that
\beqas
\log\mathcal{N}_{[\,]}(\epsilon,S(f_0,r),l_2)& \lesssim_{m_{f_0},k,d} \bigg(\frac{r}{\epsilon}\bigg)^{d/2}\times \textrm{poly-logarithmic terms}
\eeqas
so that $\epsilon$ and $r$ scale at the same rate. This property will play a crucial role in deriving nearly parametric rates for the BLSEs in low dimensions in Theorem \ref{thm:rates_conv_lse}.
\end{remark}

We will also need the following result.
\begin{lemma}\label{lem:entropy_cvx_func}[Theorems 1.1 \& 1.3 \cite{gao2015entropy}]
Let $\Omega$ be a convex body on $\R^d$ satisfying Smoothness Assumption \ref{smooth2}, and $\mathcal{C}_p(\Gamma)$ be the collection of convex functions on $\Omega$ with $l_p$-norm bounded by $\Gamma$, $2< p \leq \infty$.
\begin{enumerate}
\item If $\Omega$ can be triangulated into $k$ simplices, then 
\beqa\label{ineq:entropy_polytope_bounded_k}
\log \mathcal{N}(\epsilon,\mathcal{C}_p(\Gamma),l_2)\leq \mathfrak{C}_d k(\abs{\Omega}^{1/2-1/p}\Gamma\epsilon^{-1})^{d/2}.
\eeqa
\item Otherwise for a general smooth convex body $\Omega$, it holds that
\beqa
& \log \mathcal{N}(\epsilon,\mathcal{C}_p(\Gamma),l_2)\\
&\qquad\leq \mathfrak{C}_d
\begin{cases}
(\abs{\Omega}^{1/2-1/p}\Gamma \epsilon^{-1})^{(d-1)} & d\geq 3,\\
(\abs{\Omega}^{1/2-1/p}\Gamma\epsilon^{-1})\abs{\log(\abs{\Omega}^{1/2-1/p}\Gamma\epsilon^{-1})}^{3/2} & d=2.
\end{cases}
\eeqa
\end{enumerate}
When $p = \infty$, the above result can be strengthened to bracketing entropy bounds.
\end{lemma}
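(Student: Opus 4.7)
The plan is to prove each bound first at $p=\infty$ and then reduce the general $l_p$-bounded case to a suitable truncation. All the entropy bounds ultimately come from piecewise-affine approximation of convex functions, combined with decomposition of the domain into pieces whose number is dictated by the geometry of $\Omega$. A final step converts from metric to bracketing entropy.

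For the polytopal case (Part 1), I would first reduce to a single simplex: triangulate $\Omega = \cup_{i=1}^k \Omega_i$ with non-overlapping simplices of comparable volume, note that any $f \in \mathcal{C}_\infty(\Gamma)$ restricts to a function on $\Omega_i$ uniformly bounded by $\Gamma$, and observe that an $\epsilon$-cover of $\mathcal{C}_\infty(\Gamma;\Omega_i)$ in $l_2(\Omega_i)$ yields, by concatenation, a cover of $\mathcal{C}_\infty(\Gamma;\Omega)$ of size $\prod_{i=1}^k N_i$; to balance $l_2$ mass between simplices, use the standard trick of covering at resolution $\epsilon/\sqrt{k}$ on each piece. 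The core ingredient is then Bronshteyn's classical bound $\log \mathcal{N}(\epsilon,\mathcal{C}_\infty(\Gamma; \Delta_d),l_\infty) \lesssim_d (\Gamma/\epsilon)^{d/2}$, obtained by approximating a convex function by a piecewise-affine interpolant on a regular sub-triangulation of mesh size $\sqrt{\epsilon/\Gamma}$ and discretizing the values at the interpolation nodes. Combining and rescaling by $\abs{\Omega}^{d/4}$ (the $l_2$–$l_\infty$ conversion factor) yields the claimed $\epsilon^{-d/2}$ bound with a linear dependence on $k$.

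For smooth domains (Part 2), the fundamental new feature is that near the boundary a convex function may be arbitrarily steep, so the interior technique no longer applies. I would split $\Omega = \Omega_{\text{int}}(t) \cup \Omega_{\text{bdry}}(t)$, where $\Omega_{\text{int}}(t)$ is an inscribed polytope produced by Smoothness Assumption \ref{smooth2} with simplicial approximation number $S(t,\Omega) \lesssim t^{-(d-1)/2}$, and $\Omega_{\text{bdry}}(t) = \Omega \setminus \Omega_{\text{int}}(t)$ has volume $\lesssim t\abs{\Omega}$ and can be covered by disjoint caps via Smoothness Assumption \ref{smooth1}. On $\Omega_{\text{int}}(t)$ apply Part 1 with $k = S(t,\Omega)$, giving entropy $\lesssim t^{-(d-1)/2}(\Gamma/\epsilon)^{d/2}$. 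On $\Omega_{\text{bdry}}(t)$ no pointwise control is available, but any $f \in \mathcal{C}_\infty(\Gamma)$ contributes at most $\Gamma^2 \cdot t\abs{\Omega}$ to $\pnorm{\cdot}{2}^2$; after discretizing via cap-wise constant approximation, the entropy contribution is $\lesssim t^{-(d-1)/(d+1)} \log(\Gamma/\epsilon)$. Choosing $t \asymp \epsilon^{4/(d+3)}$ to balance the two pieces gives the advertised $\epsilon^{-(d-1)}$ rate for $d\ge 3$; the $d=2$ case requires a logarithmic refinement.

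To promote metric to bracketing entropy (for $p=\infty$), I would use a standard buffer argument: for each element $f$ of the $(\epsilon/2)$-net, set $u = f + \epsilon/\sqrt{\abs{\Omega}}$ and $l = f - \epsilon/\sqrt{\abs{\Omega}}$; uniform boundedness ensures these remain in a slightly enlarged class, and the resulting brackets have $l_2$-size $\lesssim \epsilon$. For the reduction from $l_p$ to $l_\infty$ ($2<p\le\infty$), I would exploit convexity: a convex function on $\Omega$ with $\pnorm{f}{l_p}\le \Gamma$ is automatically bounded by $\mathfrak{C}_d \abs{\Omega}^{-1/p}\Gamma$ on any fixed interior compact, by Jensen/mean-value arguments; one then applies the $l_\infty$ result on the interior polytope and absorbs the boundary layer exactly as above. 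The main obstacle is the boundary handling in Part 2: making the cap decomposition and the interior polytopal approximation geometrically compatible, and tracking the dependence of the constants on the smoothness parameters of $\Omega$ (the implicit constants in Assumptions \ref{smooth1}–\ref{smooth2}) carefully enough that the final $t$-optimization yields the stated exponent.
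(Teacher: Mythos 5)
You are attempting to prove a result the paper does not itself prove: Lemma~\ref{lem:entropy_cvx_func} is explicitly labeled as a restatement of Theorems~1.1 and~1.3 of \cite{gao2015entropy}, and the paper imports those bounds wholesale, so there is no internal proof to compare against. Your sketch is still worth assessing; the polytopal case and the bracketing promotion are plausible in outline, but the boundary handling in Part~2 has a genuine gap.

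You cover the wet part $\Omega\setminus\Omega_{\mathrm{int}}(t)$, of volume $\lesssim t\abs{\Omega}$, by $m\lesssim t^{-(d-1)/(d+1)}$ caps, discretize each cap to a constant, and claim entropy $\lesssim m\log(\Gamma/\epsilon)$. That count ignores the approximation error of the cap-constant representation: a $\Gamma$-bounded convex function varies by order $\Gamma$ across a cap, so the aggregate $l_2$-error of the discretization is of order $\Gamma\sqrt{t\abs{\Omega}}$, not $\epsilon$. Forcing this to be $O(\epsilon)$ requires $t\lesssim(\epsilon/\Gamma)^2$, and then the interior piece $t^{-(d-1)/2}(\Gamma/\epsilon)^{d/2}$ is of order $(\Gamma/\epsilon)^{(3d-2)/2}$, far worse than the target $(\Gamma/\epsilon)^{d-1}$. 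Your proposed balance $t\asymp\epsilon^{4/(d+3)}$ does not repair this: substituting it into the interior entropy gives exponent $2(d-1)/(d+3)+d/2$, which equals $d-1$ only if $d^2-3d-2=0$, hence never for an integer $d$. The deeper conceptual issue is that for $d\ge 3$ the exponent $d-1$ strictly exceeds $d/2$, so the boundary layer is the \emph{dominant} source of entropy, not a low-order correction. The cap-supported bump construction in the proof of Theorem~\ref{thm:minimax_lower_bound_cvxbody} already produces $2^m$ well-separated convex functions with $m\asymp(\Gamma/\epsilon)^{d-1}$, so any boundary discretization using fewer than $\sim(\Gamma/\epsilon)^{d-1}$ bits cannot yield an $\epsilon$-cover. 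What \cite{gao2015entropy} actually carries out is a multi-resolution piecewise-affine approximation on the caps that tracks how a bounded convex function can become arbitrarily steep near $\partial\Omega$; your cap-wise constant discretization discards precisely that structure.
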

When $d=1$, the entropy estimate follows from (\ref{ineq:entropy_polytope_bounded_k}) with $k=1$.

Now we are in position to establish global risk bounds for the BLSE.
\begin{theorem}\label{thm:rates_conv_lse}
	Assume $\pnorm{f_0}{\infty}\leq \Gamma$, and $\hat{f}_n^{\mathrm{LS}}$ be the  BLSE defined via (\ref{eqn:cvx_lse_def}). Then in (\ref{ineq:risk_bounds_generic}) of Theorem \ref{thm:risk_bounds_random_design},  $r_n^2$ is given by the following table:
	\begin{center}
		\begin{tabular}{ |c|c|c|c| } 
			\hline
			$(\Omega,f_0)$ & $(\mathscr{P}_k, \mathcal{P}_{m_{f_0}}(\Gamma))$ & $(\mathscr{P}_k,\mathcal{C}(\Gamma))$  & $(\mathscr{C}, \mathcal{C}(\Gamma))$\\
			\hline
			$d=1$ & $n^{-1}(\log n)^{5/4}$ & \multicolumn{2}{|c|}{$n^{-4/5}$}\\
			\hline
			$d=2$ & \multirow{2}{*}{$n^{-1}(\log n)^{d(d+4)/4}$} & \multirow{2}{*}{$n^{-4/(d+4)}$}           & $n^{-2/3}\log n$\\ 
			\cline{1-1}
			\cline{4-4}
			$d=3$ &                        &                                & $n^{-1/2}\log n$\\ 
			\hline
			$d=4$ & $n^{-1}(\log n)^{10}$                  & $n^{-1/2}\log n$                   & \multirow{2}{*}{$n^{-1/(d-1)}$}\\ 
			\cline{1-3}
			$d\geq 5$ & $n^{-4/d}(\log n)^{d+4}$           & $n^{-2/d}$                &                     \\ 
			\hline
		\end{tabular}
	\end{center}
	for $n$ large enough. Here we only state the dependence of the rates on $n$ (For complete results, see (\ref{eqn:rates_polytope_polyhedral}), (\ref{eqn:rates_polytope_general}) and (\ref{eqn:rates_cvxbody_general})).
\end{theorem}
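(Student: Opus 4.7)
The plan is to invoke the master risk bound Theorem~\ref{thm:risk_bounds_random_design} in each of the seven cells of the table, with the local bracketing entropy of $S(f_0,r)$ supplied by either Lemma~\ref{lem:local_entropy_estimate} (for the simple-class column) or Lemma~\ref{lem:entropy_cvx_func} (for the two right-hand columns). Since $\d{\nu}/\d{\lambda}$ is uniformly bounded above and below, $l_\nu$ and $l_2$ differ only by constants and it suffices to bound $\mathcal{N}_{[\,]}(\epsilon, S(f_0, r), l_2)$, and to verify that $J_{[\,]}(r, f_0, l_\nu)/r^2$ is non-increasing (which follows from monotonicity of the entropy bound in $\epsilon$). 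The rate $r_n$ is then obtained by solving $J_{[\,]}(r_n)/(\sqrt{n}\, r_n^2) \lesssim 1$.

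For the right two columns, where $S(f_0,r)$ is contained in the global class $\mathcal{C}(\Gamma)$, Lemma~\ref{lem:entropy_cvx_func} gives $\log\mathcal{N}(\epsilon, \mathcal{C}(\Gamma), l_2) \lesssim A\epsilon^{-\alpha}$ with $\alpha = d/2$ for polytopal $\Omega$ and $\alpha = d-1$ for smooth $\Omega$ (with an extra $\log$ factor when $d=2$). Computing the entropy integral $\sqrt{A}\int_{r^2/c}^{2r}\epsilon^{-\alpha/2}\,\d{\epsilon}$ splits into three regimes: for $\alpha<2$ the upper limit dominates, giving $J_{[\,]}(r)\asymp \sqrt{A}\, r^{1-\alpha/2}$ and hence $r_n^2\asymp n^{-2/(\alpha+2)}$; the critical case $\alpha=2$ produces a logarithm and $r_n^2\asymp n^{-1/2}\log n$; for $\alpha>2$ the lower limit dominates, giving $J_{[\,]}(r)\asymp \sqrt{A}\, r^{2-\alpha}$ and $r_n^2\asymp n^{-1/\alpha}$. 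Substituting the two values of $\alpha$ reproduces the four entries in the right two columns; the extra logarithmic factors appear precisely in the critical cases $d=3$ in the smooth column and $d=4$ in the polytope column, and in the $d=2$ smooth case through the polylog in the entropy itself.

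The first column exploits Lemma~\ref{lem:local_entropy_estimate} in an essential way: setting $g = f_0 \in \mathcal{P}_{m_{f_0}}$ kills the $l_2^2(f_0,g)$ term, leaving $\log\mathcal{N}_{[\,]}(\epsilon, S(f_0,r), l_2) \lesssim (r/\epsilon)^{d/2}$ up to polylogarithmic factors of order $d(d+4)/4$. Now the entropy coefficient $A \asymp r^{d/2}$ depends on $r$, and running the same three-regime computation for $J_{[\,]}$ produces a fixed-point equation in which the $r$-dependence in $A$ exactly cancels the $r$-power coming from the integral. For $d<4$ one obtains $r_n^2 \asymp n^{-1}(\log n)^{d(d+4)/4}$; at $d=4$ the boundary logarithm inflates the polylog exponent to $10$; and for $d\geq 5$ the lower-limit regime yields $r_n^2\asymp n^{-4/d}(\log n)^{d+4}$.

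The main technical obstacle is the bookkeeping of polylogarithmic factors in the simple-class column across the three regimes, particularly at the critical dimension $d=4$ where the entropy integral picks up an additional logarithm from the boundary of the first regime, and in carefully tracking how the $(\log(\cdot))^{d(d+4)/4}$ factor from Lemma~\ref{lem:local_entropy_estimate} propagates through the integral and the fixed-point inversion. The remaining hypotheses of Theorem~\ref{thm:risk_bounds_random_design}, notably (\ref{cond:risk_bounds_random_design}), are immediate for $n$ large since $\sqrt{n}\,r_n\to\infty$ in every cell, and the additive $\bar\mu_\Gamma^2/n$ in (\ref{ineq:risk_bounds_generic}) is dominated by $r_n^2$ since $r_n^2\gtrsim n^{-1}$ throughout the table.
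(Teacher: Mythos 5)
Your proposal is correct and follows essentially the same route as the paper: invoke Theorem~\ref{thm:risk_bounds_random_design}, feed in Lemma~\ref{lem:local_entropy_estimate} for the first column (with $g = f_0$) and Lemma~\ref{lem:entropy_cvx_func} for the other two, and solve the fixed-point equation $J_{[\,]}(r_n)/(\sqrt{n}r_n^2) \lesssim 1$ regime-by-regime depending on whether the entropy exponent is below, at, or above the critical value $2$ (equivalently $d/4 \lessgtr 1$ in the simple-class column). The only loose point is the throwaway justification that $J_{[\,]}/r^2$ is non-increasing "from monotonicity of the entropy bound in $\epsilon$"; what actually matters is that each explicit upper bound $J_{[\,]}(r) \asymp c\,r^{a}(\log(1/r))^{b}$ used satisfies $a \leq 2$, so the quotient is decreasing on the relevant range, which holds in every cell of the table.
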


\begin{remark}\label{rmk:log_factors}
The logarithmic factors in the above table appear for several very different reasons: those in the second column come from logarithmic factors in the local entropy bounds in Lemmas \ref{lem:local_entropy_estimate}. Those in the fourth row of the third column and the third row of the fourth colume come from convergence properties of the entropy integral (\ref{cond:risk_bounds_entropy_integral}) at $0$. It is not yet clear if this is an artifact of the proof techniques or the nature of the estimators. Interestingly, in another different but related setting of global rates of convergence for MLEs of log-concave densities, \cite{kim2014global} also obtained a rate coming with a logarithmic factor in dimension $d=2$. Some potential drawbacks in terms of logarithmic factors resulting from the local entropy method (cf. Lemma \ref{lem:local_entropy_estimate}) will be further discussed in Section \ref{section:discussion_log}.
\end{remark}

\begin{remark}
It is natural to wonder if adaptation happens when the support is smooth. We conjecture that the answer is negative. For further details see Section \ref{section: discussion_adaptive_smooth}.
\end{remark}

\subsection{On the uniform boundedness assumption}\label{section:discussion_uniform_bound}
We show below that a result stated in risk bounds without a uniform boundedness condition is impossible by the following example in dimension $d=1$ due to \cite{balazs2015near}: Let $\Omega=[0,1]$, the regression function $f_0\equiv 0$, and the design $X\sim \mathrm{unif}[0,1]$ and response $Y\sim\mathrm{unif}\{-1,1\}$. Hence the error is subexponential. Then consider the event
\beqas
A&=\{X_1\in[1/4,1/2],X_2\in(1/2,3/4],X_3,\cdots,X_n\in (3/4,1],\\
&\qquad \qquad Y_1=1,Y_2=\cdots=Y_n=-1\}\cap \{X\in[0,1/4]\}
\eeqas
Then the unconstrained least squares estimator is $\hat{f}_n(x)=\frac{2x-X_1-X_2}{X_1-X_2}$ on the interval $[0,3/4]$. Restricting our attention to a smaller interval $[0,1/4]$, we have $
\mathbb{E}\big[(\hat{f}_n(X)-f_0(X))^2\big]\geq \mathbb{E}\bigg[{(2X-X_1-X_2)^2}/{(X_1-X_2)^2}\bigg\lvert A\bigg]\mathbb{P}(A)$. 
Note that $X_1+X_2 \in [0.75,1.25]$ and $-2X\in[-0.5,0]$, hence $X_1+X_2-2X\geq 1/4$. This implies that the right hand side of the above display is bounded below by
$\mathbb{E}\big[{2^{-4}(X_1-X_2)^{-2}}\big\lvert A\big]\mathbb{P}(A)$. Since $\mathbb{P}(A)>0$ and 
$\mathbb{E}\big[{(X_1-X_2)^{-2}}\big\lvert A \big]=\infty$, we see that the risk is unbounded. This stands in sharp contrast to the fixed design setting considered in \cite{guntuboyina2013global} where no uniform boundedness constraint is required due to the fact the boundary effect in risk is killed off by the nature of the discrete $l_2$ norm. 

\section{Model selection and adaptive estimation}\label{section:sieved_lse}
In this section, we study a general  model selection approach that selects among highly non-linear low-dimensional models whose complexity is characterized by the notion of `pseudo-dimension' (as defined below). In the classical regression setup with fixed design and Gaussian errors regression setup, the estimator $\hat{f}_m$ obtained by minimizing the empirical loss typically has risk:
\beqa\label{ineq:generic_risk_bound_fixed_model_heuristic}
\mathbb{E}l_{\underline{X}^n}^2(\hat{f}_m,f_0)\lesssim \inf_{g \in \mathcal{P}_m}l_{\underline{X}^n}^2(f_0,g)+\frac{D_m}{n}.
\eeqa
The task for model selection is to design a data-driven choice of $\hat{m}$ so that approximately the resulting estimator $\hat{f}_{\hat{m}}$ simultaneously achieves the optimal rate for each true $f_0 \in \mathcal{F}$:
\beqa\label{ineq:generic_risk_bound_adaptive_heuristic}
\mathbb{E}l_{\underline{X}^n}^2(\hat{f}_{\hat{m}},f_0)\lesssim \inf_{m \in \N}\bigg(\inf_{g \in \mathcal{P}_m}l_{\underline{X}^n}^2(f_0,g)+\frac{D_m}{n}\bigg),
\eeqa
Here we show in Section \ref{section:general_theory_adaptive_estimation} that results analogous to (\ref{ineq:generic_risk_bound_fixed_model_heuristic}) and (\ref{ineq:generic_risk_bound_adaptive_heuristic}) hold (up to logarithmic factors) by use of the notion of  `pseudo-dimension' when the risks are measured both in discrete and continuous $l_2$ norms. The subtle differences between these two norms in terms of apriori uniform boundedness constraint on the parameters will become clear in the sequel.

To place our results in the context of the existing literature, it is worthwhile to note that when the low-dimensional models are of a certain non-linear type, certain Lipschitz condition has been imposed to increase linearity (cf. Section 3.2.2 \cite{barron1999risk}) and certain coupled entropy conditions under \emph{continuous} $l_2$ and $l_\infty$ norms are required (cf. page 372 \cite{barron1999risk}). On the other hand, combinatorial complexity such as VC dimension is often used in the context of learning theory (cf. Chapter 8 \cite{massart2007concentration}), where the constrast function is usually required to be bounded, which apparently fails in the general regression setup. We refer the readers to \cite{birge1997model}, \cite{barron1999risk}, \cite{massart2007concentration} and \cite{birge2008model} for more details in this direction. However in either case the subtleness between the discrete and continuous norms have not been systematically addressed. 

Our work here can be viewed as occupying the ground between the previous approaches: we provide adaptive procedures in a general regression setup with random design when the function class exhibits certain combinatorial low complexity. Two adaptive procedures are developed. The first procedure is inspired by the idea of bandwidth selection in the context of nonparametric kernel estimate as in \cite{lepskii1992asymptotically} and \cite{lepski1997optimal}; we call this method the L-adaptive procedure. The second method is based on penalized least squares in the spirit of \cite{barron1999risk} and \cite{massart2007concentration}; we call this method the P-adaptive procedure. This framework is particularly interesting in convexity-restricted nonparametric estimation problems. We show in Section \ref{section:best_achievable_rate_sieve} (resp.  Section \ref{section:sieved_set_estimation}) that the `pseudo-dimension' effectively captures the dimension of the class of polyhedral functions (resp. polytopes) within the class of convex functions (resp. convex bodies), and hence nearly rate-optimal estimators that simultaneouly adapt to polyhedral convex funtions (resp. polytopes) are obtained as simple corollaries of our general results. 

\subsection{General theory}\label{section:general_theory_adaptive_estimation}
Consider the regression model (\ref{model}) with $f_0 \in l_2(\nu)$. Assume that the errors $\epsilon_i$ in the regression model are i.i.d. sub-Gaussian with parameter $\sigma^2$, i.e. $\mathbb{E}\exp(t\epsilon_1)\leq \exp(\sigma^2t^2/2)$ unless otherwise specified.

Following \cite{pollard1990empirical} Section 4, a subset $V$ of $\R^d$ is called to have \emph{pseudo-dimension} $t$, denoted as $\mathrm{pdim}(V)=t$, if for every $x \in \R^{t+1}$ and indices $I=(i_1,\cdots,i_{t+1})\in\{1,\cdots,n\}^{t+1}$ with $i_{\alpha}\neq i_{\beta}$ for all $\alpha\neq \beta$, we can always find a sub-index set $J\subset I$ such that no $v \in V$ satisfies both (1) $
v_i> x_i  \textrm{ for all } i \in J$  and $
v_i< x_i  \textrm{ for all } i \in I\setminus J$.

The following lemma is due to \cite{mendelson2003entropy}; see their Theorem 1. The current statement is from \cite{guntuboyina2012optimal}, Theorem B.2.
\begin{lemma}\label{lem:entropy_pdim}
	Let $V$ be a subset of $\R^n$ with $\sup_{v \in V}\pnorm{v}{\infty}\leq B$ and pseudo-dimension at most $t$. Then, for every $\epsilon>0$, we have
	\[\mathcal{N}(\epsilon,A,\pnorm{\cdot}{2})\leq\bigg(4+\frac{2B\sqrt{n}}{\epsilon}\bigg)^{\kappa t},\]
	holds for some absolute constant $\kappa\geq 1$.
\end{lemma}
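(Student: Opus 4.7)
The plan is to reduce Lemma \ref{lem:entropy_pdim} to a classical polynomial $L^2$-covering bound for VC-subgraph function classes, by recognising pseudo-dimension as the VC dimension of the associated subgraph set system.

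First I would identify each $v \in V$ with the function $i \mapsto v_i$ from $\{1,\ldots,n\}$ to $[-B,B]$, and form its open subgraph $G_v := \{(i,y) \in \{1,\ldots,n\}\times[-B,B] : y < v_i\}$. Unpacking the shattering condition in the definition of pseudo-dimension, the collection $\mathcal{G} := \{G_v : v \in V\}$ is precisely a VC class of subsets of $\{1,\ldots,n\}\times \R$ of VC dimension at most $t$; equivalently, the function class $\mathcal{F} := \{i \mapsto v_i : v \in V\}$ is VC-subgraph of dimension at most $t$.

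Next I would rescale the metric. Let $\mu_n$ denote the uniform probability measure on $\{1,\ldots,n\}$; then $\pnorm{v-w}{2} = \sqrt{n}\,\pnorm{v-w}{L^2(\mu_n)}$, so any cover of $V$ at $\pnorm{\cdot}{2}$-scale $\epsilon$ corresponds to a cover of $\mathcal{F}$ at $L^2(\mu_n)$-scale $\delta := \epsilon/\sqrt{n}$. At this point I would invoke the classical Pollard/Haussler/Mendelson-Vershynin bound: for any probability measure $\mu$ and any VC-subgraph class $\mathcal{F} \subset [-B,B]^{\mathcal{X}}$ of dimension at most $t$, one has $\mathcal{N}(\delta,\mathcal{F},L^2(\mu)) \leq (CB/\delta)^{\kappa t}$ uniformly in $\mu$, for absolute constants $C,\kappa\geq 1$. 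Substituting $\mu = \mu_n$ and $\delta = \epsilon/\sqrt{n}$ yields $\mathcal{N}(\epsilon,V,\pnorm{\cdot}{2}) \leq (CB\sqrt{n}/\epsilon)^{\kappa t}$; absorbing $C$ into the exponent (at the cost of enlarging $\kappa$) and handling the trivial range $\epsilon \geq \sqrt{n}B$, where a single point covers $V$, produces the stated additive-in-$4$ form $(4 + 2B\sqrt{n}/\epsilon)^{\kappa t}$ that is valid uniformly in $\epsilon>0$.

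The main obstacle, were one to reprove the Pollard-Haussler-Mendelson-Vershynin $L^2$ covering bound from scratch rather than quote it, lies in the passage from the combinatorial Sauer-Shelah bound on the number of sign patterns the subgraphs realise on a finite sample to an actual $L^2$ metric covering bound. The standard device is a symmetrisation argument: if $\mathcal{F}$ admitted too many $\delta$-separated points in $L^2(\mu)$, then a random draw of moderate size from $\mu$ would, with positive probability, exhibit more distinct subgraph traces than allowed by Sauer-Shelah applied with VC dimension $\leq t$, a contradiction. Since here the lemma is quoted verbatim from \cite{mendelson2003entropy} (in the formulation of \cite{guntuboyina2012optimal}), the proof in the paper need only record the subgraph identification and the one-line rescaling outlined above.
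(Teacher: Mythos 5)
Your proposal is correct and takes essentially the same route as the paper, which does not prove this lemma at all but quotes it verbatim from Mendelson and Vershynin (their Theorem~1) via Guntuboyina (Theorem~B.2). The derivation you sketch --- identifying pseudo-dimension with VC-subgraph dimension, rescaling $\pnorm{\cdot}{2}=\sqrt{n}\,\pnorm{\cdot}{L^2(\mu_n)}$ for the uniform measure $\mu_n$ on $\{1,\ldots,n\}$, invoking the dimension-free $L^2(\mu)$ covering bound for VC-subgraph classes, and dispatching the trivial range of $\epsilon$ --- is exactly the chain of reasoning implicit in that citation.
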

In the sequel, we shall assume that the constant $\kappa$ is known. We shall also assume the knowledge of $\sigma^2$. \emph{No} effort has been made to obtain optimal constants.

Let $\mathcal{F}\subset l_2(\nu)$ be a function class, and $\{\mathcal{P}_m\subset l_2(\nu)\}_{m \in \N}$ be a sequence of (low-dimensional) models. Typically $\mathcal{P}_m$ is a submodel of $\mathcal{F}$, but this is not apriori required in our theory. Now the key descriptor for $\mathcal{P}_m$ that exhibits low dimensional structure is defined as follows:
\beqa\label{def:F_m}
F(\underline{X}^n;\mathcal{P}_m)\equiv F_m(\underline{X}^n)\equiv \{x \in \R^n: x=\big(g(X_1),\ldots,g(X_n)\big),g \in \mathcal{P}_m\}.
\eeqa

\subsubsection{Risk bounds for fixed models}
We first derive the analogous results in the spirit of (\ref{ineq:generic_risk_bound_fixed_model_heuristic}): We consider risk bounds for the least squares estimator on each model, i.e. for each $m \in \N$, consider the estimator defined by
\beqa
\hat{f}_{\mathcal{P}_m}\equiv \hat{f}_m \in \arg\min_{f \in \mathcal{P}_m}\frac{1}{n}\sum_{i=1}^n(Y_i-f(X_i))^2.
\eeqa
For simplicity we assume the existence of $\hat{f}_m$. The working assumptions are:
\begin{assumption}\label{assump:uniform_bound_cont}
When risk is measured in a continuous norm, we suppose that both $\mathcal{F}$ and $\mathcal{P}_m$ are uniformly bounded by $\Gamma$, and $f_0$ is also bounded by $\Gamma$.
\end{assumption}
\begin{assumption}\label{assump:pdim}
Let $D_m\geq 1$ be such that $D_m\geq \mathrm{pdim}(F(\underline{X}^n;\mathcal{P}_m))$ almost surely.
\end{assumption}
Now we are in position to state our risk bounds and large deviation bounds for fixed models:
\begin{theorem}\label{thm:risk_fixed_model}
Suppose Assumptions \ref{assump:uniform_bound_cont}-\ref{assump:pdim} hold. Let $n\geq 7$. Then,

\noindent \textbf{[Continuous norm.]}
\beqa\label{ineq:fixed_model_bound_deviation_ineq}
l_{\nu}^2(\hat{f}_m,f_0)\leq \mathfrak{c}^c\inf_{g \in \mathcal{P}_m}l_{\nu}^2(f_0,g)+\mathfrak{k}^c\frac{(\sigma^2\vee \Gamma^2) \kappa D_m\log n}{n}+\mathfrak{d}^c\frac{(\sigma^2\vee \Gamma^2) t}{n}
\eeqa
holds with probability at least $(1-4\sum_{j\geq 0}\exp(-2^{j}t/\mathfrak{v})-6\exp(-t))\vee 0$. Furthermore, it holds that
\beqa\label{ineq:fixed_model_bound}
\mathbb{E}\big[l_\nu^2(\hat{f}_m,f_0)\big]\leq \bar{\mathfrak{c}^c}\inf_{g \in \mathcal{P}_m}l_\nu^2(f_0,g)+\bar{\mathfrak{k}^c}\frac{(\sigma^2\vee \Gamma^2)\kappa D_m\log n}{n}.
\eeqa
\noindent \textbf{[Discrete norm.]} 
\beqa\label{ineq:fixed_model_bound_deviation_ineq_discrete}
l_{\underline{X}^n}^2(\hat{f}_m,f_0)\leq \mathfrak{c}^d\inf_{g \in \mathcal{P}_m}l_{\underline{X}^n}^2(f_0,g)+\mathfrak{k}^d\frac{\sigma^2 \kappa D_m\log n}{n}+\mathfrak{d}^d\frac{\sigma^2 t}{n}
\eeqa
holds with probability at least $(1-4\sum_{j\geq 0}\exp(-2^{j}t/\mathfrak{v}))\vee 0$, and 
\beqa\label{ineq:fixed_model_bound_discrete}
\mathbb{E}\big[l_{\underline{X}^n}^2(\hat{f}_m,f_0)\big]\leq \bar{\mathfrak{c}^d}\inf_{g \in \mathcal{P}_m}l_\nu^2(f_0,g)+\bar{\mathfrak{k}^d}\frac{\sigma^2\kappa D_m\log n}{n}.
\eeqa
The numerical constants $\mathfrak{c}^c,\mathfrak{k}^c,\mathfrak{d}^c,\mathfrak{c}^d,\mathfrak{k}^d,\mathfrak{d}^d,\bar{\mathfrak{c}^c},\bar{\mathfrak{k}^c},\bar{\mathfrak{c}^d},\bar{\mathfrak{k}^d}$ can be taken as given in (\ref{const:fixed_model_deviation_ineq_discrete}), (\ref{const:fixed_model_deviation_ineq_cont}), (\ref{const:fixed_model_bound_discrete}) and (\ref{const:fixed_model_bound_cont}), $\mathfrak{v}$ can be found in (\ref{const:fixed_model_prob}), and $\kappa$ is taken from Lemma \ref{lem:entropy_pdim}.
\end{theorem}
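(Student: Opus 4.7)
I would start from the defining basic inequality of least squares: for every $g \in \mathcal{P}_m$,
\beq
l_{\underline{X}^n}^2(\hat f_m, f_0) \leq l_{\underline{X}^n}^2(g, f_0) + \frac{2}{n}\sum_{i=1}^n \epsilon_i\,(\hat f_m - g)(X_i).
\eeq
Conditionally on $\underline X^n$, the right-most term is dominated by the supremum of a sub-Gaussian process indexed by the shifted set $F_m(\underline X^n) - g(\underline X^n) \subset \R^n$, which, by Assumption \ref{assump:pdim}, has pseudo-dimension at most $D_m$; Lemma \ref{lem:entropy_pdim} then controls its covering number in $\pnorm{\cdot}{2}$. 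Two further ingredients complete the proof: a peeling/chaining argument in the empirical metric, and, only for the continuous-norm statements, a uniform comparison between $l_{\underline X^n}$ and $l_\nu$ over $\mathcal{P}_m$.

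\textbf{Discrete-norm bounds.} I would control the modulus $v \mapsto n^{-1/2}\iprod{\epsilon}{v}$ on shells of discrete $l_2$-radius $r$ by Dudley chaining: feeding the entropy bound of Lemma \ref{lem:entropy_pdim} (the effective $\pnorm{\cdot}{\infty}$-radius inside a ball of $\pnorm{\cdot}{2}$-radius $r\sqrt n$ is at most $r\sqrt n$) into the Dudley integral yields a modulus of order $\sigma r\sqrt{\kappa D_m \log n / n}$, and a Borell-type concentration inequality upgrades this to sub-Gaussian deviations. A peeling argument over dyadic shells of $l_{\underline X^n}(\hat f_m, g)$, combined with the $(1+\eta)$-trick absorbing the linear-in-$r$ modulus into a quadratic, produces \eqref{ineq:fixed_model_bound_deviation_ineq_discrete}; the geometric tail $4\sum_{j\geq 0}\exp(-2^{j}t/\mathfrak v)$ arises from the union bound over the peeling shells. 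Infimizing over $g \in \mathcal{P}_m$, integrating the tail, and using $\mathbb{E}\,l_{\underline{X}^n}^2(f_0,g) = l_\nu^2(f_0,g)$ for fixed $g$ then gives \eqref{ineq:fixed_model_bound_discrete}. No uniform boundedness is needed here, because the relevant entropy is computed on the localized class and the chaining bound is translation invariant in $g(\underline X^n)$.

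\textbf{Continuous-norm bounds.} Under Assumption \ref{assump:uniform_bound_cont}, the squared-difference class $\{(f - f_0)^2 : f \in \mathcal{P}_m\}$ is uniformly bounded by $4\Gamma^2$, and its restriction to $\underline X^n$ inherits pseudo-dimension of the same order as $D_m$. A Talagrand--Bousquet concentration inequality then yields, with Bernstein-type deviations in $t$,
\beq
\bigl|\,l_{\underline X^n}^2(f,f_0) - l_\nu^2(f,f_0)\,\bigr| \leq \tfrac{1}{2}\,l_\nu^2(f,f_0) + C(\Gamma)\frac{\kappa D_m \log n + t}{n} \quad \text{for every } f \in \mathcal{P}_m,
\eeq
simultaneously with probability at least $1 - 6\exp(-t)$. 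Splicing this into the discrete deviation inequality of the previous paragraph and rebalancing via a further $(1+\eta)$-splitting produces \eqref{ineq:fixed_model_bound_deviation_ineq}; the two independent tail contributions combine into $4\sum_{j\geq 0}\exp(-2^{j}t/\mathfrak v) + 6\exp(-t)$. Integrating the resulting tail gives \eqref{ineq:fixed_model_bound}.

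\textbf{Main obstacle.} The delicate step is exactly this discrete-to-continuous transfer under random design: pseudo-dimension naturally lives in the \emph{empirical} $l_2$ metric, whereas the continuous-norm conclusions live in $l_\nu$. The uniform localization needed for the transfer is what forces Assumption \ref{assump:uniform_bound_cont}, and what generates a multiplicative constant $\mathfrak{c}^c > 1$ in front of the approximation term rather than the cleaner $1$ available in the fixed-design regime (cf.\ \cite{van2000empirical}). The logarithmic factor $\log n$ multiplying $D_m$ comes from evaluating Lemma \ref{lem:entropy_pdim} at the chaining resolution $\asymp 1/\sqrt n$, which enters inside $\log(4 + 2B\sqrt n/\epsilon)$; removing it would require a different entropy tool altogether. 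Tracking the explicit numerical constants through the peeling and the $(1+\eta)$-splittings is then routine bookkeeping.
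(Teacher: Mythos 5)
Your discrete-norm argument is essentially the paper's: the basic least-squares inequality, Lemma \ref{lem:entropy_pdim} to control covering numbers through the pseudo-dimension, Dudley-type chaining on dyadic shells (the paper's Lemma \ref{lem:sup_gaussian_process}, van de Geer's 3.2, which handles sub-Gaussian rather than Gaussian $\epsilon_i$, so ``Borell'' is not quite the right name), peeling to get the geometric tail, and integration followed by $\mathbb{E}\,l_{\underline{X}^n}^2(f_0,g)=l_\nu^2(f_0,g)$ for the expectation bound. All of this is packaged in the paper as Lemma \ref{lem:rate_convergence_generic_misspecification}.

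For the continuous-norm transfer, you take a genuinely different route. You propose a Talagrand--Bousquet concentration inequality on the squared-difference class, giving an additive comparison $|l_{\underline{X}^n}^2-l_\nu^2|\leq\frac{1}{2}l_\nu^2+C(\Gamma)(\kappa D_m\log n+t)/n$. The paper instead uses the classical ghost-sample symmetrization argument of Gy\"orfi--Kohler--Krzy\.zak--Walk Theorem 11.2 (its Lemma \ref{lem:relate_metrics_general}), producing the \emph{multiplicative} comparison $(l_\nu(f,g)-2l_{\underline{X}^n}(f,g))_+$ and its mirror, packaged in Lemma \ref{lem:relate_metrics} and then spliced in via \eqref{ineq:discrete_continuous_connect}. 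Both comparisons accomplish the same task and yield the same rate; the symmetrization route avoids two things that your sketch leaves implicit: (i) you need a Lipschitz-contraction step to control the entropy of the \emph{squared} class from that of $\mathcal{P}_m$ (pseudo-dimension is not directly inherited under $f\mapsto f^2$; one goes through covering numbers and the Lipschitz constant $4\Gamma$ on $[-2\Gamma,2\Gamma]$), and (ii) the Talagrand bound gives a fixed-radius statement, so getting the self-referential form $|l_{\underline{X}^n}^2-l_\nu^2|\leq\frac12 l_\nu^2+\cdots$ uniformly over $\mathcal{P}_m$ requires an extra peeling/localization step, which the ghost-sample argument delivers in one shot. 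With those two points filled in, your approach is correct and slightly more modern in flavor; the paper's choice keeps the constant-tracking elementary and bypasses any uniform-variance localization. The rest of your reasoning, including why $\Gamma$-boundedness is needed only on the continuous-norm side and where the $\log n$ enters, matches the paper.
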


\begin{remark}
In view of the minimax lower bound achieved in Theorem \ref{thm:minimax_lower_bounds_simple}, the logarithmic factors in Theorem \ref{thm:risk_fixed_model} cannot be removed. The dependence on $D_m$ is also optimal by considering $\mathcal{P}_m$ to be a linear space of dimension $D_m$.
\end{remark}

\subsubsection{L-adaptive procedure}
Next we establish the analogy for (\ref{ineq:generic_risk_bound_adaptive_heuristic}): For a given sample $\underline{X}^n$, we want to choose a suitable `tuning parameter' $\hat{m}$ so that the expected loss of $l_\nu^2(\hat{f}_{\hat{m}},f_0)$ is about the same magnitude as
\beqa\label{ineq:adaptive_desire_generic}
\inf_{m \in \N}\bigg(\inf_{g \in \mathcal{P}_m}l_\nu^2(f_0,g)+\frac{D_m\log n}{n}\bigg).
\eeqa

In this section we construct a data-dependent scheme for choosing such $\hat{m}$ based on the idea of \cite{lepskii1992asymptotically} and \cite{lepski1997optimal}: we first determine a benchmark choice of the tuning parameter $\mathfrak{M}$ that yields the most conservative risk as obtained in (\ref{ineq:fixed_model_bound}) and (\ref{ineq:fixed_model_bound_discrete}) for general $f_0$, while forcing the tuning parameter to be substantially smaller for $f_0 \in \mathcal{P}_{m_0}$ by comparing the risks of the resulting estimators. Importantly, the benchmark choice $\mathfrak{M}$ should be independent of the oracle information contained in (\ref{ineq:fixed_model_bound}) and (\ref{ineq:fixed_model_bound_discrete}). We consider two cases as follows:

\noindent \textbf{(Case 1).} The approximation error $\inf_{g \in \mathcal{P}_m}l_\nu^2(f_0,g)$ can be separated into knowledge concerning the unknown regression function $f_0$ and the complexity of the model indexed by $m$ via
\beqa\label{ineq:upper_bound_approximation_error}
\inf_{g \in \mathcal{P}_m}l_\nu^2(f_0,g)\leq \mathfrak{e}(f_0,\Omega)\mathfrak{G}(m).
\eeqa
\noindent \textbf{(Case 2).} Otherwise, we use a uniform upper bound:
\beqa\label{ineq:homo_class}
 \inf_{g \in \mathcal{P}_m}l_\nu^2(f_0,g)\leq \sup_{f_0 \in \mathcal{F}}\inf_{g \in \mathcal{P}_m}l_\nu^2(f_0,g).
\eeqa
As we shall see below in Theorem \ref{thm:lepski_generic}, the case (\ref{ineq:upper_bound_approximation_error}) allows the resulting estimator to be risk adaptive to \emph{each} regression function, while such localized information will be lost in the case (\ref{ineq:homo_class}). If $\mathfrak{e}(f_0,\Omega)$ can be controlled uniformly in $f_0 \in \mathcal{F}$, then Case 1 and Case 2 are essentially the same; this is indeed the case when the loss function is the continuous $l_2$ norm in our specific applications in Sections \ref{section:best_achievable_rate_sieve} and \ref{section:sieved_set_estimation}, since a uniform boundedness constraint on the parameter space entails a uniform control of $\mathfrak{e}(f_0,\Omega)$. However, when the loss function is the discrete $l_2$ norm where no uniform boundedness constraint is imposed, $\mathfrak{e}(f_0,\Omega)$ cannot be uniformly controlled, and hence only (\ref{ineq:upper_bound_approximation_error}) will be useful.

To fix notation, let $\ast \in \{up,un\}$ index the cases corresponding to (\ref{ineq:upper_bound_approximation_error}) and (\ref{ineq:homo_class}), and $\#\in \{c,d\}\equiv \{\mathrm{cont},\mathrm{disc}\}$ index continuous and discrete norms, respectively. Let 
\beqa\label{def:L}
\mathfrak{L}^{\#}:=\sigma^2\vee(\Gamma^2\bm{1}_{\#=c}).
\eeqa
We will also use the following simplified notation for norms in definitions and theorem statements in the sequel:
\beqa
l_c(f,g):=l_\nu(f,g),\quad l_d(f,g):=l_{\underline{X}^n}(f,g).
\eeqa
Now we define the benchmark choice
\beqa\label{def:cutoff_M_upper_bound}
\mathfrak{M}_n^{up,\#}&:=\arg\min\bigg\{m\in \N:\mathfrak{G}(m)+\frac{\mathfrak{L}^{\#}\kappa D_m\log n}{n}\bigg\};\\
\mathfrak{M}_n^{un,\#}&:=\arg\min\bigg\{m\in \N:\sup_{f_0\in \mathcal{F}}\inf_{g \in \mathcal{P}_m}l_\nu^2(f_0,g)+\frac{\mathfrak{L}^{\#}\kappa D_m\log n}{n}\bigg\}
\eeqa
in order to balance the approximation error and variance terms in (\ref{ineq:fixed_model_bound}) and (\ref{ineq:fixed_model_bound_discrete}). 

It should be noted here that in both cases for continuous and discrete norms, the definition of the $\mathfrak{M}^{\ast,\#}$ only involves a bias term measured in \emph{continuous} norm.  We make the following assumption on $\mathfrak{M}_n^{\ast,\#}$:
\begin{assumption}\label{assump:M_n}
$\mathfrak{M}_n^{\ast,\#}\leq n$ and $\mathfrak{M}_n^{\ast, \#} \to \infty$ as $n \to \infty$.
\end{assumption}

On the other hand, when $f_0 \in \mathcal{P}_{m_0}$, $\hat{m}^{\ast,\#}$ should not be too large compared with $m_0$. This can be accomplished by risk comparisons as follows:
\beqa\label{def:tuning_lepski}
\hat{m}^{\ast,\#}&:=\min\left\{1\leq m \leq \mathfrak{M}_n^{\#}: l_{\#}^2(\hat{f}_{m},\hat{f}_{m'})\leq \mathfrak{t}^{\ast,\#}\frac{\mathfrak{L}^{\#}\kappa D_{m'}\log n}{n},\forall m'\in\{m,\ldots,\mathfrak{M}_n^{\ast,\#}\}\right\}.
\eeqa
Note that here we use different norms in different cases. The numerical constants $\mathfrak{t}^{\ast,\#}$ are set to be
\beqa\label{def:t_lepski_generic}
\mathfrak{t}^{\#}\equiv  \mathfrak{t}^{\ast,\#}\equiv2(\mathfrak{k}^{\#}+4(\mathfrak{v}\vee 1)\mathfrak{d}^{\#}).
\eeqa
Now we have formally defined the L-adaptive procedures. Before we formally state our results, we will need some more notation:
\beqa
\mathfrak{Bias}^{\ast}(m):=\mathfrak{G}(m)\bm{1}_{\ast=up}+\sup_{f_0 \in \mathcal{F}}\inf_{g \in \mathcal{P}_m}l_\nu^2(f_0,g)\bm{1}_{\ast=un},
\eeqa
and 
\beqa
\mathfrak{e}^{\ast}:=\mathfrak{e}(f_0,\Omega)\bm{1}_{\ast=up}+\bm{1}_{\ast=un}.
\eeqa
\begin{theorem}\label{thm:lepski_generic}
Suppose Assumptions \ref{assump:uniform_bound_cont}-\ref{assump:M_n} hold and $\mathfrak{t}^{\#}$ is set according to (\ref{def:t_lepski_generic}). Suppose further that $\{\mathcal{P}_m\}_{m\in \N}$ is a nested family of submodels of $\mathcal{F}$ with $\mathcal{P}_\infty:=\mathcal{F}$. 

\noindent\textbf{[Continuous norm.]} For $f_0 \in \mathcal{P}_{m_0}(1\leq m_0\leq \infty)$, 
\beqa\label{ineq:lepski_risk_bound_cont}
\mathbb{E}\left[l_\nu^2(\hat{f}_{\hat{m}^{\ast,c}},f_0)\right]&\leq \min\bigg\{2(\mathfrak{t}^{c}+\bar{\mathfrak{k}^c}+44)\frac{(\sigma^2\vee \Gamma^2)\kappa D_{m_0}\log n}{n},\\
&\qquad 2(\bar{\mathfrak{c}^c} \mathfrak{e}^{\ast}+\bar{\mathfrak{k}^c}+\mathfrak{t}^{c})\inf_{m \in \N}\bigg(\mathfrak{Bias}^{\ast}(m)+\frac{(\sigma^2\vee \Gamma^2)\kappa D_{m}\log n}{n}\bigg)\bigg\}
\eeqa
holds for $n\geq \big(\bm{1}_{m_0<\infty}\inf\{n:\mathfrak{M}_n^{\ast,c}\geq m_0\}\big)\vee 7$.

\noindent\textbf{[Discrete norm.]} Suppose that $f_0 \in \mathcal{P}_{m_0}(1\leq m_0 \leq \infty)$ and is uniformly bounded. Then
\beqa\label{ineq:lepski_risk_bound_discrete}
\mathbb{E}\left[l_{\underline{X}^n}^2(\hat{f}_{\hat{m}^{\ast,d}},f_0)\right]&\leq \min\bigg\{2(\mathfrak{t}^{d}+\bar{\mathfrak{k}^d}+84)\frac{(\sigma^2\vee \pnorm{f_0}{\infty}^2)\kappa D_{m_0}\log n}{n},\\
&\qquad 2(\bar{\mathfrak{c}^d} \mathfrak{e}^{\ast}+\bar{\mathfrak{k}^d}+\mathfrak{t}^{d})\inf_{m \in \N}\bigg(\mathfrak{Bias}^{\ast}(m)+\frac{\sigma^2\kappa D_{m}\log n}{n}\bigg)\bigg\}
\eeqa
holds for $n\geq \big(\bm{1}_{m_0<\infty}\inf\{n:\mathfrak{M}_n^{\ast,d}\geq m_0\}\big)\vee 7$.
\end{theorem}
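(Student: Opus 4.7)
The plan is the standard Lepski case analysis. Fix a target index $m$ --- taken as $m_0$ for the parametric bound and as any $m \in \mathbb{N}$ for the oracle bound --- and decompose
$$\mathbb{E}[l_\#^2(\hat f_{\hat m^{\ast,\#}}, f_0)] = \mathbb{E}\bigl[l_\#^2(\hat f_{\hat m^{\ast,\#}}, f_0)\,\bm{1}_{\hat m^{\ast,\#} \le m}\bigr] + \mathbb{E}\bigl[l_\#^2(\hat f_{\hat m^{\ast,\#}}, f_0)\,\bm{1}_{\hat m^{\ast,\#} > m}\bigr].$$
On $\{\hat m^{\ast,\#} \le m\}$ the target index lies in the test set $\{\hat m^{\ast,\#}, \ldots, \mathfrak{M}_n^{\ast,\#}\}$ used to define $\hat m^{\ast,\#}$, so the selection rule (\ref{def:tuning_lepski}) immediately yields the deterministic bound $l_\#^2(\hat f_{\hat m^{\ast,\#}}, \hat f_m) \le \mathfrak{t}^\# \mathfrak{L}^\# \kappa D_m \log n/n$; combining this with the triangle inequality, the fixed-model risk bound (\ref{ineq:fixed_model_bound}) or (\ref{ineq:fixed_model_bound_discrete}) for $\hat f_m$, and the bias upper bound (\ref{ineq:upper_bound_approximation_error}) or (\ref{ineq:homo_class}) produces a bound of the advertised form $\lesssim \mathfrak{e}^\ast \mathfrak{Bias}^\ast(m) + \mathfrak{L}^\# \kappa D_m \log n/n$.

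The core work is controlling $\{\hat m^{\ast,\#} > m\}$. By the minimality in the definition (\ref{def:tuning_lepski}), this event forces the existence of some $m' \in \{m, \ldots, \mathfrak{M}_n^{\ast,\#}\}$ with $l_\#^2(\hat f_m, \hat f_{m'}) > \mathfrak{t}^\# \mathfrak{L}^\# \kappa D_{m'}\log n/n$. By nestedness one has $\inf_{g \in \mathcal{P}_{m'}} l_\nu^2(f_0,g) \le \inf_{g \in \mathcal{P}_m} l_\nu^2(f_0,g)$, and WLOG we may take $D_m \le D_{m'}$. Applying the deviation inequality (\ref{ineq:fixed_model_bound_deviation_ineq}) / (\ref{ineq:fixed_model_bound_deviation_ineq_discrete}) simultaneously to $\hat f_m$ and $\hat f_{m'}$, using the triangle inequality $l_\#^2(\hat f_m, \hat f_{m'}) \le 2 l_\#^2(\hat f_m, f_0) + 2 l_\#^2(\hat f_{m'}, f_0)$, and inserting $t \asymp \kappa D_{m'}\log n$, the numerical choice $\mathfrak{t}^\# = 2(\mathfrak{k}^\# + 4(\mathfrak{v} \vee 1)\mathfrak{d}^\#)$ in (\ref{def:t_lepski_generic}) is exactly tuned so that the resulting upper bound lies strictly below the selection threshold $\mathfrak{t}^\# \mathfrak{L}^\# \kappa D_{m'}\log n/n$, provided the bias $\mathfrak{e}^\ast \mathfrak{Bias}^\ast(m)$ is of no larger order than the variance $\mathfrak{L}^\# \kappa D_m \log n/n$. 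This proviso is vacuous in the parametric case (zero bias at $m_0$) and is satisfied at the balancing index $m^*$ achieving the infimum in the oracle bound. A union bound over $m' \le \mathfrak{M}_n^{\ast,\#} \le n$ then renders $\mathbb{P}(\hat m^{\ast,\#} > m)$ polynomially small in $n$.

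The exceptional-event contribution is dispatched by multiplying this small probability against an a priori bound on the loss: in the continuous case, by the uniform bound $l_\nu^2(\hat f_{\hat m^{\ast,\#}}, f_0) \le 4\Gamma^2$ granted by Assumption \ref{assump:uniform_bound_cont}; in the discrete case, where no such uniform bound is available, by a Cauchy--Schwarz argument against the second moment $\mathbb{E}[l_{\underline X^n}^4(\hat f_{\hat m^{\ast,\#}}, f_0)]$, which is finite in view of the LSE optimality (comparing $\hat f_m$ against a fixed element of $\mathcal{P}_m$) and the assumed uniform boundedness of $f_0$. The main technical obstacle is the bookkeeping of numerical constants: one must verify that $\mathfrak{t}^\#$ as set in (\ref{def:t_lepski_generic}) is large enough to absorb simultaneously the factor $2$ from the triangle inequality and the geometric summation $\sum_{j \ge 0} \exp(-2^j t/\mathfrak{v}) \lesssim \exp(-t/\mathfrak{v})$ arising from the fixed-model deviation bounds --- this is exactly why the factor $4(\mathfrak{v} \vee 1)\mathfrak{d}^\#$ appears in the definition of $\mathfrak{t}^\#$.
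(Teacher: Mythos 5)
Your decomposition and the treatment of the first term (on $\{\hat m^{\ast,\#}\le m\}$) match the paper exactly, and your argument for the parametric case (target $m_0$ with $f_0\in\mathcal{P}_{m_0}$) is essentially the paper's: there the bias terms in the deviation inequality vanish, the triangle inequality plus the choice of $\mathfrak{t}^\#$ kill the exceptional-event probability, and the a priori $4\Gamma^2$ bound (continuous case) closes the argument.

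The gap is in your plan for the oracle bound. You propose to also run the Lepski exceptional-event argument with the balancing index $m^\ast$ as the target, hedged by a proviso that the bias is of no larger order than the variance. That proviso is exactly what kills the argument: at the balancing index the bias $\mathfrak{Bias}^\ast(m^\ast)$ and the variance $\mathfrak{L}^\#\kappa D_{m^\ast}\log n/n$ are of the same order, so when you apply the deviation inequalities (\ref{ineq:fixed_model_bound_deviation_ineq}) / (\ref{ineq:fixed_model_bound_deviation_ineq_discrete}) to bound $l_\#^2(\hat f_{m^\ast},f_0)$, the leading term is $\mathfrak{c}^\#\inf_g l_\#^2(f_0,g)\approx\mathfrak{c}^\#\cdot\text{(variance)}$, and this must be absorbed into the selection threshold $\mathfrak{t}^\#\mathfrak{L}^\#\kappa D_{m'}\log n/n$. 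But $\mathfrak{t}^\#=2(\mathfrak{k}^\#+4(\mathfrak{v}\vee1)\mathfrak{d}^\#)$ in (\ref{def:t_lepski_generic}) contains no $\mathfrak{c}^\#$, so the exceptional event $\{\hat m^{\ast,\#}>m^\ast\}$ is not forced to have polynomially small probability. The paper sidesteps this entirely: for $m_0\ge\mathfrak{M}_n^{\ast,\#}$ (including $m_0=\infty$), the oracle bound follows from the deterministic fact $\hat m^{\ast,\#}\le\mathfrak{M}_n^{\ast,\#}$, so one simply writes $l_\#^2(\hat f_{\hat m^{\ast,\#}},f_0)\le 2\,l_\#^2(\hat f_{\mathfrak{M}_n},f_0)+2\,l_\#^2(\hat f_{\hat m},\hat f_{\mathfrak{M}_n})$, bounds the first term by (\ref{ineq:fixed_model_bound}) / (\ref{ineq:fixed_model_bound_discrete}), bounds the second term deterministically by the selection rule with $m'=\mathfrak{M}_n^{\ast,\#}$, and then uses the definition of $\mathfrak{M}_n^{\ast,\#}$ as the bias--variance balancing index. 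No exceptional event, no probability estimate. You need to replace your oracle-bound argument by this.

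A secondary remark: for the discrete-norm exceptional event you propose Cauchy--Schwarz against $\mathbb{E}[l_{\underline X^n}^4]$. The paper instead uses the pointwise LSE inequality $\|\hat f_m\|_n\le\|f_0\|_n+2\|\epsilon\|_n$ to get a pathwise bound $l_{\underline X^n}^2(\hat f_{\hat m},f_0)\le 8(\|f_0\|_n^2+\|\epsilon\|_n^2)$, and then a truncation at level $\mathfrak{u}=n^2$ together with Markov's inequality on $\|\epsilon\|_n^2$. Your Cauchy--Schwarz route could in principle be carried through, but it needs an explicit quantitative bound on the fourth moment, which you do not supply; the paper's truncation is more elementary and gives explicit constants directly.
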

\begin{remark}\label{rmk:set_M_n}
In principle, $\mathfrak{M}^{\ast,\#}$ is defined by (\ref{def:cutoff_M_upper_bound}). In application when logarithmic terms appear in $D_m$, we may simply drop these terms; other factors such as $\sigma^2,\Gamma^2,\kappa$ can also be dropped if our interest is only in the rate. The effect in the final bounds will be up to a constant depending on these dropped factors (usually the order of logarithms in $n$ is correct since $m$ scales at most polynomially in $n$).
\end{remark}

\begin{remark}
The difference between using a continuous norm or a discrete norm mainly lies in the uniform boundedness constraint on the function class $\mathcal{F}$. By our choice of tuning parameters (\ref{def:tuning_lepski}), when risk is assessed with discrete norm, apriori information concerning $\sup_{f \in \mathcal{F}}\pnorm{f}{\infty}$ is not necessary. Our final bound (\ref{ineq:lepski_risk_bound_discrete}) for the discrete norm requires the true regression function to be bounded. This is a condition for sake of simplicity; we can weaken this condition by assuming that $\mathbb{E}[\abs{f_0(X)}^{2+\eta}]<\infty$ for some $\eta>0$ and adjust the constants accordingly.\footnote{In fact we can proceed with H\"older's inequality for the first term in (\ref{ineq:second_term_discrete_1}) and require faster rates of convergence in $n$ in (\ref{ineq:second_term_discrete_2}) by boosting the numerical constant $4$ to larger constant in the definition of $\mathfrak{t}^{\#}$ in (\ref{def:t_lepski_generic}). }
\end{remark}

\subsubsection{P-adaptive procedure}
One main drawback for the L-adaptive procedure is that, the oracle information contained in the approximation error needs to be well separated in the sense of  (\ref{ineq:upper_bound_approximation_error}), or the model needs to be  homogeneous in $f_0\in \mathcal{F}$ in the sense of (\ref{ineq:homo_class}) so that the adaptive procedure can be useful. Below we develop a model selection scheme in the spirit of \cite{barron1999risk} and \cite{massart2007concentration} so that the resulting estimator is rate-adaptive to each $f_0\in \mathcal{F}$, i.e. exactly achieving  (\ref{ineq:adaptive_desire_generic}) up to numerical constants, at the cost of searching for the whole solution path (i.e. search for all $m \in \N$). 
The advantage of this approach will appear in some applications, see Appendix \ref{section:sieved_sobolev}. 

For notational convenience, we denote 
\beqa\label{def:gamma_n}
\gamma_n(g)=\pnorm{g}{n}^2-2\iprod{Y}{g}_n,
\eeqa
where $\pnorm{\cdot}{n}$ stands for $\pnorm{\cdot}{l_{\underline{X}^n}}$ and $\iprod{Y}{g}_n={1 \over n}\sum_{i=1}^nY_ig(X_i)$. 

\begin{theorem}\label{thm:adaptive_estimator_bound_discrete}
Suppose Assumption \ref{assump:pdim} holds and the errors $\epsilon_i$'s are i.i.d $\mathcal{N}(0,\sigma^2)$. Let $n\geq 7$ and $L_m$ be a sequence of numbers so that $\sum_{m \in \N}\exp(-L_mD_m)=\Sigma<\infty$.  Let the penalty function 
\beqas
\mathrm{pen}(m):=\frac{\mathfrak{c}_{p,1}\sigma^2D_{m}}{n}\big(\mathfrak{c}_{p,2} \kappa \log n+L_m).
\eeqas
where $\kappa$ is the constant in Lemma \ref{lem:entropy_pdim}, and $\mathfrak{c}_{p,i} (i=1,2)$ are absolute constants which can be found in (\ref{const:p_adaptive}). Let the model selection criteria be defined by
\beqas
\hat{m}^{ms}:=\argmin_{m \in \N} \bigg(\gamma_n(\hat{f}_m)+\mathrm{pen}(m)\bigg).
\eeqas
 Then
\beqa\label{ineq:adaptive_estimator_bound_discrete}
\mathbb{E}\big[l_{\underline{X}^n}^2(\hat{f}_{\hat{m}^{ms}},f_0)\big]\leq \inf_{m \in \N}\bigg(3\inf_{g \in \mathcal{P}_m}l_\nu^2(f_0,g)+\mathfrak{h}_{\kappa,\Sigma} \frac{\sigma^2L_m D_m\log n}{n}\bigg).
\eeqa
where $\mathfrak{h}_{\kappa,\Sigma}$ is a constant depending only on $\kappa,\Sigma$. For the explicit form of this constant, see (\ref{const:adaptive_generic_discrete}).
\end{theorem}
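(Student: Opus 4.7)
The plan is to follow the standard Barron--Birg\'e--Massart penalized model selection strategy, working conditionally on $\underline X^n$ so that the pseudo-dimension bound of Lemma \ref{lem:entropy_pdim} applies directly in the $l_{\underline X^n}$ geometry and the errors $\epsilon_i$ are Gaussian with known variance $\sigma^2$. \emph{Step 1 (basic inequality).} For any $m \in \N$ and any deterministic $g_m \in \mathcal{P}_m$, the defining property $\gamma_n(\hat f_{\hat m^{ms}}) + \mathrm{pen}(\hat m^{ms}) \le \gamma_n(g_m) + \mathrm{pen}(m)$ combined with the identity $\gamma_n(g) - \gamma_n(f_0) = l_{\underline X^n}^2(g,f_0) - 2\iprod{\epsilon}{g - f_0}_n$ yields
\begin{equation*}
l_{\underline X^n}^2(\hat f_{\hat m^{ms}}, f_0) \le l_{\underline X^n}^2(g_m, f_0) + 2\iprod{\epsilon}{\hat f_{\hat m^{ms}} - g_m}_n + \mathrm{pen}(m) - \mathrm{pen}(\hat m^{ms}).
\end{equation*}

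\emph{Step 2 (noise control via chaining).} The crux is to bound $\sup_{h \in \mathcal{P}_{m'}} \iprod{\epsilon}{h - g_m}_n$ by a small multiple of $l_{\underline X^n}^2(h - g_m)$ plus a $\mathcal{P}_{m'}$-complexity term, uniformly in $m'$. By Assumption \ref{assump:pdim}, conditional on $\underline X^n$ the translated set $F_{m'}(\underline X^n) - (g_m(X_i))_i$ has pseudo-dimension at most $D_{m'}$, so Lemma \ref{lem:entropy_pdim} bounds its $\epsilon$-covering number in $l_{\underline X^n}$, restricted to the $\rho$-ball, by $(4 + 2\rho\sqrt n/\epsilon)^{\kappa D_{m'}}$. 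Peeling in $\rho$ geometrically and feeding this estimate into Dudley's integral with Gaussian chaining and a sub-Gaussian concentration inequality for suprema yields, for every $t>0$ and any fixed $\beta \in (0,1)$,
\begin{equation*}
\sup_{h \in \mathcal{P}_{m'}} \bigl( 2\iprod{\epsilon}{h - g_m}_n - \beta\, l_{\underline X^n}^2(h - g_m) \bigr) \le \frac{C_\beta \sigma^2}{n}\bigl( \kappa D_{m'} \log n + L_{m'} D_{m'} + t \bigr)
\end{equation*}
on an event of probability at least $1 - \exp(-t - L_{m'} D_{m'})$. The $\log n$ arises because the peeling radius, hence the realized sup-norm of $h - g_m$, can grow polynomially in $n$ but enters the entropy estimate only logarithmically.

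\emph{Step 3 (absorption, union bound, completion).} Summing the deviation bounds over $m' \in \N$ via $\sum_{m'} e^{-L_{m'}D_{m'}} = \Sigma$ shows that on an event of probability $\ge 1 - \Sigma e^{-t}$ the bound of Step 2 holds simultaneously at $m' = \hat m^{ms}$. Taking $\beta = 1/4$ and using $l_{\underline X^n}^2(\hat f_{\hat m^{ms}} - g_m) \le 2 l_{\underline X^n}^2(\hat f_{\hat m^{ms}}, f_0) + 2 l_{\underline X^n}^2(g_m, f_0)$ absorbs the $l_{\underline X^n}^2(\hat f_{\hat m^{ms}}, f_0)$ contribution into the left-hand side of Step 1 (the $\frac{1+1/2}{1-1/2}=3$ bookkeeping produces the factor $3$ in (\ref{ineq:adaptive_estimator_bound_discrete})), while the residual complexity $C_\beta \sigma^2(\kappa D_{\hat m^{ms}} \log n + L_{\hat m^{ms}}D_{\hat m^{ms}})/n$ is dominated by $\mathrm{pen}(\hat m^{ms})$ once $\mathfrak{c}_{p,1}, \mathfrak{c}_{p,2}$ are taken larger than $C_\beta$, so it is cancelled by $-\mathrm{pen}(\hat m^{ms})$. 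Integrating the residual $C\sigma^2 t/n$ tail in $t$, using $\mathbb{E}\, l_{\underline X^n}^2(g_m, f_0) = l_\nu^2(g_m, f_0)$ for deterministic $g_m$, and finally taking $\inf_{g_m \in \mathcal{P}_m}$ inside the expectation and $\inf_m$ outside yields (\ref{ineq:adaptive_estimator_bound_discrete}). The main obstacle is Step 2: because $\mathcal{P}_m$ carries no apriori uniform sup-norm bound (this is precisely why the discrete-norm framework is essential), the peeling in $\rho$ and the union bound over $m' \in \N$ have to be orchestrated against the doubly-indexed weights $e^{-L_{m'}D_{m'}}$ so that the single penalty $\mathrm{pen}(m)$ can simultaneously absorb both the $\kappa D_m \log n$ and the $L_m D_m$ contributions, with absolute constants $\mathfrak{c}_{p,1}, \mathfrak{c}_{p,2}$ determined once and for all by $C_\beta$.
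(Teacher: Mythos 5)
Your proposal follows essentially the same route as the paper: the basic inequality from the penalized criterion, a deviation bound for the noise cross term via the pseudo-dimension covering estimate, peeling, and Gaussian chaining, a union bound over $m'$ with the weights $e^{-L_{m'}D_{m'}}$, absorption into the left-hand side producing the factor $3$, and final integration of the tail. The only cosmetic difference is that the paper implements the noise control via Borell's inequality for the supremum of the normalized Gaussian process $Z(g)=\iprod{\epsilon}{g-f_m}_n/\omega(m',g)$ with $\omega(m',g)=\pnorm{g-f_0}{n}^2+\pnorm{f_m-f_0}{n}^2+x_{m'}/(2n)$, which is the normalization version of your subtraction form $2\iprod{\epsilon}{h-g_m}_n-\beta\,l_{\underline{X}^n}^2(h-g_m)$ with $\beta=1/4$; the two are interchangeable and lead to the identical $(1+2\beta)/(1-2\beta)=3$ bookkeeping.
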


\begin{remark}
Gaussianity of the errors is assumed in Theorem \ref{thm:adaptive_estimator_bound_discrete} since we rely on Gaussian process techniques in the proof. We suspect that new tools like tail control of weighted empirical process connecting discrete and continuous norms, in the spirit of Theorem 5 in \cite{birge1998minimum}, or the more general Proposition 7 in \cite{barron1999risk} are needed to establish corresponding results in the  continuous norm. 
\end{remark}

\subsubsection{On the uniform boundedness assumption}
Finally we comment on the uniform boundedness assumption when risks are measured in continuous $l_2$ norm in a regression model under random design. Previous work and results (cf. \cite{juditsky2000functional}, \cite{wegkamp2003model}, \cite{yang1999aggregating}) all require some boundedness assumption on both the parameter and the estimators. It is shown in Proposition 4 in \cite{birge2006model} and Proposition 3 in \cite{birge2008model} that such apriori uniform boundedness constraint is actually \emph{necessary} for any \emph{universal} risk bounds measured in continuous $l_2$ loss in the density estimation setting. Notably, in the specific case for estimating Besov spaces $B^\alpha_{p,\infty}([0,1])$ with $\alpha>\alpha_l$ for some $\alpha_l\in(1/p-1/2,1/p)$ when $1\leq p<2$, it is shown in \cite{baraud2002model} that the usual rate $n^{-2\alpha/(2\alpha+1)}$ can be recovered in squared continuous $l_2$ norm without the uniform boundedness assumption, while the rate becomes $n^{-1+2(1/p-\alpha)}$ for $\alpha\in(1/2-1/p,\alpha_l)$ as shown in \cite{birge2004model}. This stands in sharp constrast with the results obtained in fixed design setting in the classical paper \cite{donoho1994ideal}, where the usual rate $n^{-2\alpha/(2\alpha+1)}$ is observed for all $\alpha>1/p-1/2$ in discrete norms. To remedy this problem, \cite{birge2004model} showed that in the above specific example, the usual rate can be recovered down to $\alpha>1/p-1/2$ without a uniform boundedness assumption by using Hellinger metric. Further results in this direction can be found in \cite{baraud2011estimator}. 

\subsection{Application in multivariate convex regression}\label{section:best_achievable_rate_sieve}
With the general methods developed in the previous section, we study adaptive estimation in the specific context of multivariate convex regression. To this end, we will need to (i) control the pseudo-dimension defined in (\ref{def:F_m}); (ii) control the approximation error $\inf_{g \in \mathcal{P}_m}l_\nu^2(f,g)$. This is accomplished in the following lemmas.
\begin{lemma}\label{lem:pdim_P_m}
$\mathrm{pdim}\big(F(\underline{X}^n;\mathcal{P}_m)\big)\leq 6md \log 3m$.
\end{lemma}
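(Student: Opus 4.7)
The plan is to identify the pseudo-dimension of the coordinate-projected set $F(\underline{X}^n;\mathcal{P}_m)\subset\R^n$ with an upper bound on the pseudo-dimension of the function class $\mathcal{P}_m$ viewed on $\R^d$, and then to reduce the latter to a standard VC-dimension computation in $\R^{d+1}$. By the pseudo-dimension definition recalled in the paper, if $F(\underline{X}^n;\mathcal{P}_m)$ pseudo-shatters $t+1$ indices $(i_1,\ldots,i_{t+1})$ with thresholds $(x_{i_1},\ldots,x_{i_{t+1}})$, then some collection of functions in $\mathcal{P}_m$ realizes every strict above/below sign pattern at the $(d+1)$-dimensional points $\{(X_{i_j},x_{i_j})\}_{j=1}^{t+1}$. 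Thus it suffices to bound the VC dimension of the subgraph class
\[
\mathrm{subgr}(\mathcal{P}_m):=\bigl\{\{(x,t)\in\R^{d+1}:\,t\leq f(x)\}:\,f\in\mathcal{P}_m\bigr\}.
\]

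The key structural observation is the defining max representation $f(x)=\max_{1\leq i\leq m}(a_i^\top x+b_i)$ of any $f\in\mathcal{P}_m$, from which
\[
\{(x,t):\,t\leq f(x)\}\;=\;\bigcup_{i=1}^m\bigl\{(x,t):\,t\leq a_i^\top x+b_i\bigr\}
\]
is an $m$-fold union of closed affine half-spaces in $\R^{d+1}$. Writing $\mathcal{H}_{d+1}$ for the class of all such half-spaces, this shows $\mathrm{subgr}(\mathcal{P}_m)\subseteq \mathcal{H}_{d+1}^{\cup m}$, the class of $m$-fold unions.

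I would then invoke two classical facts: (i) the VC dimension of affine half-spaces in $\R^{d+1}$ equals $d+2$; and (ii) if $\mathcal{C}$ has VC dimension $v$, the class $\mathcal{C}^{\cup m}$ of $m$-fold unions has VC dimension at most $2vm\log_2(3m)$ (the standard Blumer--Ehrenfeucht--Haussler--Warmuth argument: apply Sauer's lemma to the shatter function of $\mathcal{C}$ and solve the resulting numerical inequality). Plugging in $v=d+2$ and using $2(d+2)\leq 6d$ for $d\geq 1$ yields the stated bound $6md\log(3m)$.

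The only real obstacle is matching the stated constant cleanly; the conceptual content of the lemma is standard VC theory. If a tighter or more transparent constant were desired, one could alternatively apply a Goldberg--Jerrum/Warren-type parametric-count argument directly to the predicate $t-\max_i(a_i^\top x+b_i)\leq 0$, which depends on $m(d+1)$ real parameters in a piecewise affine fashion; this route produces a bound of the same order $O(md\log m)$ without the slack incurred by passing through $\mathcal{H}_{d+1}^{\cup m}$.
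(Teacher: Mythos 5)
Your proof is conceptually sound and is in essence the same argument the paper makes, just phrased in the language of VC dimension of subgraph classes rather than pseudo-dimension and with the final numerical inequality delegated to the BEHW black box. The paper's proof identifies $\mathrm{pdim}(F_1)\le d+1$ (affine functions form a $(d+1)$-dimensional linear space), observes the pointwise-max structure of $\mathcal{P}_m$, applies Pollard's Lemma~5.1 to obtain the inequality $\sum_{i\le d+1}\binom{l}{i}<2^{l/m}$, and then solves this by hand via a clever choice of $\alpha$; this is the same ``shatter function of a single affine class raised to the $m$-th power'' computation that underlies the BEHW bound you invoke.

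However, the constant does \emph{not} come out as you claim. You plug $v=d+2$ (the VC dimension of \emph{all} affine half-spaces in $\R^{d+1}$) into a BEHW-type bound of $2vm\log_2(3m)$, then use $2(d+2)\le 6d$ to arrive at $6md\log(3m)$. This loses the factor $1/\log 2\approx 1.44$ coming from $\log_2$ versus natural $\log$: your chain actually yields $2(d+2)m\log_2(3m)$, which for $d=1$ is $6m\log_2(3m)\approx 8.66\,m\log(3m)>6m\log(3m)$. You acknowledge this obstacle vaguely in your closing paragraph, but the middle paragraph asserts the constant works, which it does not as written. The fix is to notice that the half-spaces you actually use are not arbitrary: they are of the restricted form $\{(x,t):t\le a^\top x+b\}$, whose normal always has $-1$ in the $t$-coordinate. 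This class is the positivity class of the $(d+1)$-parameter linear family $(x,t)\mapsto a^\top x+b-t$, equivalently the subgraph class of the $(d+1)$-dimensional vector space of affine functions of $x$, so its VC dimension is at most $d+1$, not $d+2$. This is precisely the $d+1$ the paper uses. Substituting $v=d+1$ gives $2(d+1)m\log_2(3m)=\frac{2(d+1)}{\log 2}m\log(3m)\approx 2.89(d+1)m\log(3m)\le 6dm\log(3m)$ for $d\ge 1$, recovering the lemma. With that one repair your route is a valid alternative proof; the paper's hands-on solution of the inequality simply makes the constant tracking transparent rather than outsourcing it to BEHW.
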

\begin{lemma}\label{lem:approximation_error_P_m}
Suppose $f_0$ is Lipschitz continuous (and hence $f_0$ is necessarily bounded). Then
\beqas
\inf_{g \in \mathcal{P}_m}l_\nu^2(f_0,g)\leq \inf_{g \in \mathcal{P}_m(\pnorm{f_0}{\infty})}l_\nu^2(f_0,g)\leq \mathfrak{K}_{d,f_0,\Omega} m^{-4/d}.
\eeqas
For the explicit form of this constant, see (\ref{const:approximation_error}).
\end{lemma}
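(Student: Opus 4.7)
The plan is to recast approximation of the convex Lipschitz function $f_0$ by polyhedral convex functions with $m$ facets as polytopal approximation of the epigraph of $f_0$ (viewed as a bounded convex body in $\R^{d+1}$), and then invoke a Bronshtein-type theorem. The naive triangulation approach (take the max of $m$ tangent hyperplanes chosen at centers of simplices of diameter $m^{-1/d}$) only yields $\pnorm{f_0-g}{\infty}\lesssim L(f_0)m^{-1/d}$ for Lipschitz $f_0$, hence merely $m^{-2/d}$ in squared $l_\nu$; the improvement to $m^{-4/d}$ must come from convex geometry rather than from smoothness of $f_0$.

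First I would set $M := \pnorm{f_0}{\infty}$ (finite, since $f_0$ is Lipschitz on the bounded $\Omega$) and consider the truncated epigraph
\begin{equation*}
E := \{(x,t) \in \Omega \times [-M-1,M] : t \geq f_0(x)\} \subset \R^{d+1},
\end{equation*}
a bounded convex body with controlled diameter and positive inradius (bounded below via $|\Omega|$ and $M$). Polyhedral convex functions $g \leq f_0$ on $\Omega$ with at most $m$ facets arise from polytopes $P \supseteq E$ obtained by keeping $m$ ``lower-supporting'' halfspaces of the form $\{(x,t): t \geq a_i^T x + b_i\}$. By a Bronshtein-type facet-approximation theorem in $\R^{d+1}$ (cf.\ \cite{bronshteyn1975approximation}, or a successive application of the Economic Covering Theorem \ref{thm:economic_cap_covering} applied to the graph portion $\{(x,f_0(x)):x\in\Omega\}$ of $\partial E$), there is such a polytope $P$ with at most $m$ lower-supporting facets satisfying $d_H(E,P) \leq \mathfrak{C}\, m^{-2/d}$, where $\mathfrak{C}$ depends on $d,|\Omega|,M,L(f_0)$. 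The exponent $-2/d$ is the standard facet-approximation rate for convex bodies in dimension $d+1$, which is what delivers the $m^{-4/d}$ target in squared $l_\nu$.

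Setting $g(x) := \max_{i\leq m}(a_i^T x + b_i)$, one has $g \leq f_0$ by construction; conversely, for each $x \in \Omega$ the point $(x, f_0(x) - 2\mathfrak{C}m^{-2/d})$ lies outside $E$ and, by the Hausdorff bound, outside $P$, forcing some $i$ with $a_i^T x + b_i \geq f_0(x) - 2\mathfrak{C}m^{-2/d}$. Hence $\pnorm{f_0-g}{\infty} \lesssim m^{-2/d}$, giving
\begin{equation*}
l_\nu^2(f_0,g) \leq \nu_{\max}|\Omega|\pnorm{f_0-g}{\infty}^2 \leq \mathfrak{K}_{d,f_0,\Omega}\, m^{-4/d}.
\end{equation*}
Replacing $g$ by $g \vee (-M)$ (at most one extra facet, absorbed by re-indexing $m\mapsto m-1$ up to a constant) places $g$ in $\mathcal{P}_m(M)$, yielding the second inequality of the lemma; the first is immediate since $\mathcal{P}_m(M)\subseteq \mathcal{P}_m$. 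The main obstacle is the step above: ensuring the Bronshtein-type covering produces $m$ halfspaces all of which are lower-supporting hyperplanes of $E$ rather than generic supporting ones (some of which would describe the ``ceiling'' of $E$ at $t=M$ and contribute nothing to approximating $f_0$). This is handled by restricting the cap covering to the lower portion of $\partial E$ given by the graph, and using the Lipschitz bound $L(f_0)$ (which controls the surface area of this graph relative to $|\Omega|$) to pay only a constant factor in $m$; this cost is absorbed into $\mathfrak{K}_{d,f_0,\Omega}$, which ultimately depends on $d, |\Omega|, \nu_{\max}, M$, and $L(f_0)$.
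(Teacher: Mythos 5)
Your proposal follows essentially the same strategy as the paper's proof: recast the problem as circumscribed-polytope approximation of the truncated epigraph $U_\Gamma(f_0)\subset\R^{d+1}$ via Bronshtein's theorem (rate $d_H\lesssim m^{-2/d}$ since the ambient dimension is $d+1$), read off a polyhedral convex function as the ``floor'' of the polytope, translate the Hausdorff bound into a sup-norm bound using the Lipschitz constant, and square. Two remarks on where the details diverge. First, your final step $l_\nu^2(f_0,g)\leq \nu_{\max}\abs{\Omega}\pnorm{f_0-g}{\infty}^2$ is actually simpler and more direct than the paper's, which instead writes $l_\nu^2(f_0,g)\leq \nu_{\max}\pnorm{f_0-g}{\infty}\,l_1(f_0,g)$ and then bounds $l_1\leq 2\,d_H\cdot d_N$ via the Nikodym metric $d_N$ and the surface area of the epigraph; both routes deliver the same $m^{-4/d}$. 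Second, the ``main obstacle'' you identify --- that the Bronshtein covering might waste facets on the ceiling of $E$, so one must restrict the cap covering to the graph portion of $\partial E$ --- is not actually an obstacle, and your proposed remedy is neither necessary nor cleanly supported by the covering theorems you cite (which produce a circumscribing polytope for the whole body, not for a designated piece of boundary). The paper's resolution is cleaner: for \emph{any} circumscribing polytope $P\supseteq U_\Gamma(f_0)$ with at most $m$ facets, define $g_P(x):=\inf\{t:(x,t)\in P\}$; each affine piece of $g_P$ corresponds to a distinct facet of $P$, and ceiling or vertical facets simply fail to contribute to $g_P$, so $g_P\in\mathcal{P}_m$ automatically. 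The Lipschitz constant then enters only through the sharp inequality $\pnorm{f_0-g_P}{\infty}\leq\sqrt{1+L^2}\,d_H(U_\Gamma(f_0),U_\Gamma(g_P))$, which replaces the slightly handwavy outside-$E$-implies-outside-$P$ step in your sketch. Neither of these points is a genuine error in your proposal, but the second does mean you expend effort resolving a difficulty that the floor-function construction dissolves outright.
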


For simplicity of notation we assume that the true regression function $f_0$ is bounded by $\Gamma$, and we shall content ourselves by discussing estimators that adapt to \emph{each} regression function (i.e. the case (\ref{ineq:upper_bound_approximation_error}) in L-adaptive procedure and P-adaptive procedure), and only be interested in the dependence of the risk in terms of the sample size $n$.

\noindent\textbf{(L-adaptive procedure).} By Lemma \ref{lem:approximation_error_P_m}, define $\mathfrak{G}(\cdot)$ in (\ref{ineq:upper_bound_approximation_error}) in the multivariate convex regression setting to be $\mathfrak{G}(m):=m^{-4/d}$. Then we can set $\mathfrak{M}_n:=n^{d/(d+4)}$(see Remark \ref{rmk:set_M_n}), and define the data-driven tuning parameters according to (\ref{def:tuning_lepski}) as follows:
\beqa
\hat{m}^{\#}&:=\min \bigg\{1\leq m \leq \mathfrak{M}_n: l_r^2\big(\hat{f}_{\mathcal{P}_m(\Gamma^{\#})},\hat{f}_{\mathcal{P}_{m'}(\Gamma^{\#})}\big)\leq 6d\mathfrak{t}^{\#}\frac{\mathfrak{L}^{\#}\kappa m'\log(3m')\log n}{n},\\
&\qquad\qquad \forall m'\in\{m,\ldots,\mathfrak{M}_n\}\bigg\},\\
\eeqa
where $\mathfrak{L}^{\#}$ is defined in (\ref{def:L}) and 
\beqa\label{def:gamma}
\Gamma^{\#}:=\Gamma\bm{1}_{\#=c}+\infty\bm{1}_{\#=d}.
\eeqa
Now by Theorem \ref{thm:lepski_generic} we immediately get the following result.
\begin{corollary}\label{cor:adaptive_lepski_multivariate_convex_fcn}
	Let $t^{\#}$ be defined as in Theorem \ref{thm:lepski_generic}, and $f_0 \in \mathcal{P}_{m_0}$ be uniformly bounded by $\Gamma$. If $m_0=\infty$, further suppose $f_0$ is Lipschitz continuous. Then,
	\beqas
	\mathbb{E}\big[l_{\#}^2(\hat{f}_{\mathcal{P}_{\hat{m}^{\#}}(\Gamma^{\#})},f_0)\big]&\leq \mathfrak{C}_{d,\kappa,f_0,\Omega,\sigma,\Gamma} \min\bigg\{\frac{m_0\log(3m_0)\log n}{n},n^{-4/(d+4)}(\log n)^{8/(d+4)}\bigg\}
	\eeqas
	holds for $n\geq \max\{m_0^{(d+4)/d}\bm{1}_{m_0<\infty},7\}$. Here $\mathfrak{C}_{d,\kappa,f_0,\Omega}$ is a constant depending on $d,\kappa,f_0,\Omega,\sigma,\Gamma$.
\end{corollary}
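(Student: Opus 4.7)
The plan is to directly invoke Theorem \ref{thm:lepski_generic} in Case 1 of the L-adaptive procedure, instantiating its abstract ingredients with the explicit bounds supplied by Lemmas \ref{lem:pdim_P_m} and \ref{lem:approximation_error_P_m}. The key identifications are: the pseudo-dimension bound $D_m := 6md\log(3m)$ from Lemma \ref{lem:pdim_P_m} (verifying Assumption \ref{assump:pdim}); the separable approximation bound $\inf_{g\in \mathcal{P}_m}l_\nu^2(f_0,g) \leq \mathfrak{K}_{d,f_0,\Omega}\, m^{-4/d}$ from Lemma \ref{lem:approximation_error_P_m}, which fits the form (\ref{ineq:upper_bound_approximation_error}) with $\mathfrak{e}(f_0,\Omega)=\mathfrak{K}_{d,f_0,\Omega}$ and $\mathfrak{G}(m)=m^{-4/d}$; and the uniform boundedness $\mathcal{P}_m(\Gamma^c)\subset \mathcal{C}(\Gamma)$ together with $\|f_0\|_\infty \leq \Gamma$ (verifying Assumption \ref{assump:uniform_bound_cont} in the continuous case, while the discrete case only needs $\|f_0\|_\infty<\infty$). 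The nestedness of $\{\mathcal{P}_m\}$ is immediate from the definition of polyhedral functions with at most $m$ facets.

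Next I would verify Assumption \ref{assump:M_n} on the benchmark cutoff. Ignoring sub-polynomial factors as allowed by Remark \ref{rmk:set_M_n}, the minimizer of $m^{-4/d} + D_m/n$ over $m$ occurs at $m \asymp n^{d/(d+4)}$, so taking $\mathfrak{M}_n := n^{d/(d+4)}$ (which is $\leq n$ and diverges) is legitimate. The data-driven selector $\hat{m}^{\#}$ is then the one displayed in the setup preceding the corollary, with threshold constant $\mathfrak{t}^{\#}$ prescribed by (\ref{def:t_lepski_generic}), and with $\Gamma^{\#}$ chosen according to (\ref{def:gamma}) to turn off boundedness in the discrete case.

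With all hypotheses verified, the two bounds in the corollary come out of the two arguments of the minimum in (\ref{ineq:lepski_risk_bound_cont}) and (\ref{ineq:lepski_risk_bound_discrete}). For the first bound: when $f_0 \in \mathcal{P}_{m_0}$ with $m_0<\infty$, the first argument of the min equals a universal multiple of $(\sigma^2\vee\Gamma^2)\kappa D_{m_0}\log n/n$; substituting $D_{m_0}=6m_0 d \log(3m_0)$ gives $\lesssim_{d,\kappa,\sigma,\Gamma}\, m_0 \log(3m_0) \log n / n$. The sample-size hypothesis $n\geq \inf\{n:\mathfrak{M}_n \geq m_0\}$ reduces, modulo log factors absorbed into the generic constant, to $n\geq m_0^{(d+4)/d}$. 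For the second bound: plugging $\mathfrak{G}(m)=m^{-4/d}$ and $D_m=6md\log(3m)$ into the second argument of the min and optimizing over $m$, the balance $m^{-4/d} \asymp m \log m \log n / n$ is achieved at $m\asymp (n/\log^2 n)^{d/(d+4)}$, which yields the claimed rate $n^{-4/(d+4)}(\log n)^{8/(d+4)}$; here the exponent $8/(d+4)$ arises as $2(d+4)^{-1}\cdot 4$ from the two log factors (one from $D_m$'s intrinsic $\log(3m)$, one from the $\log n$ multiplier) propagating through the balance.

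The only mildly delicate step is absorbing the sub-polynomial $\log(3m)$ factor inside $D_m$ into the universal constant when defining $\mathfrak{M}_n$ and when checking that the balance computation still yields the exponent $8/(d+4)$ rather than a worse log power; this is exactly the situation covered by Remark \ref{rmk:set_M_n}, since $m$ scales polynomially in $n$, so $\log(3m)\asymp \log n$ on the relevant range, and the dropped factors cost only a multiplicative constant depending on $d,\kappa,\sigma,\Gamma$. Everything else is bookkeeping of the constants $\mathfrak{t}^\#,\bar{\mathfrak{c}^\#},\bar{\mathfrak{k}^\#}$ into the combined constant $\mathfrak{C}_{d,\kappa,f_0,\Omega,\sigma,\Gamma}$.
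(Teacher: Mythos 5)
Your proposal is correct and matches the paper's intended derivation exactly: the paper itself simply states that the corollary follows "immediately" from Theorem \ref{thm:lepski_generic} once the ingredients from Lemmas \ref{lem:pdim_P_m} and \ref{lem:approximation_error_P_m} are substituted, and you have spelled out precisely those substitutions—$D_m=6md\log(3m)$, $\mathfrak{G}(m)=m^{-4/d}$, $\mathfrak{M}_n=n^{d/(d+4)}$—together with the correct verifications of Assumptions \ref{assump:uniform_bound_cont}–\ref{assump:M_n}, the balance computation giving the exponent $8/(d+4)$, and the sample-size threshold $n\geq m_0^{(d+4)/d}$.
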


\noindent\textbf{(P-adaptive procedure).} We now apply Theorem \ref{thm:adaptive_estimator_bound_discrete} to obtain another adaptive estimator as follows.
\begin{corollary}\label{cor:adaptive_birge_multivariate_convex_fcn}
Define the model selection criteria to be 
\beqas
\hat{m}^{ms}:=\argmin_{m \in \N}\bigg\{{1 \over n}\sum_{i=1}^n \hat{f}_m(X_i)^2-{2 \over n}\sum_{i=1}^n Y_i\hat{f}_m(X_i)+\frac{6\mathfrak{c}_{p,1}\sigma^2md\log(3m)}{n}\big(\mathfrak{c}_{p,2}\kappa \log n+1\big)\bigg\}.
\eeqas
If $m_0=\infty$, further suppose $f_0$ is Lipschitz continuous. Then for $f_0 \in \mathcal{P}_{m_0}$, 
\beqas
\mathbb{E}\big[l_{\underline{X}^n}^2(\hat{f}_{\hat{m}^{ms}},f_0)\big]\leq \mathfrak{C}_{d,\kappa,\sigma,f_0}\min\bigg\{\frac{m_0\log(3m_0)\log n}{n}, n^{-4/(d+4)}(\log n)^{8/(d+4)}\bigg\}
\eeqas
where $\mathfrak{C}_{d,\kappa,\sigma,f_0}$ is a constant depending only on $d,\kappa,\sigma$.
\end{corollary}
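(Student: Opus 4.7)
The plan is to realize this corollary as a direct application of Theorem \ref{thm:adaptive_estimator_bound_discrete}, with the two model-class ingredients (pseudo-dimension control and approximation-error control) supplied by Lemmas \ref{lem:pdim_P_m} and \ref{lem:approximation_error_P_m} respectively. First, I would verify the hypotheses of Theorem \ref{thm:adaptive_estimator_bound_discrete}: by Lemma \ref{lem:pdim_P_m} one may take $D_m := 6md\log(3m)$, which dominates $\mathrm{pdim}(F(\underline{X}^n;\mathcal{P}_m))$ almost surely, so Assumption \ref{assump:pdim} is satisfied. To pick the weights $L_m$, I would choose the trivial constant $L_m\equiv 1$; then $\sum_m \exp(-L_mD_m)=\sum_m (3m)^{-6md}$ is a convergent series whose sum $\Sigma$ depends only on $d$, and with $L_m=1$ the penalty in Theorem \ref{thm:adaptive_estimator_bound_discrete} reduces exactly to the $\mathrm{pen}(m)=\frac{6\mathfrak{c}_{p,1}\sigma^2 md\log(3m)}{n}(\mathfrak{c}_{p,2}\kappa\log n + 1)$ appearing in the corollary, matching the definition of $\hat m^{ms}$.

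Plugging these ingredients into Theorem \ref{thm:adaptive_estimator_bound_discrete} yields
\begin{equation*}
\mathbb{E}\bigl[l_{\underline{X}^n}^2(\hat f_{\hat m^{ms}},f_0)\bigr] \leq \inf_{m\in\N}\bigg(3\inf_{g\in\mathcal{P}_m}l_\nu^2(f_0,g)+6\mathfrak{h}_{\kappa,\Sigma}\frac{\sigma^2 md\log(3m)\log n}{n}\bigg),
\end{equation*}
so everything reduces to two deterministic calculus-level bounds on the right-hand side, each obtained by a different choice of $m$ in the infimum.

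For the first bound, suppose $m_0<\infty$. Since $f_0\in\mathcal{P}_{m_0}$, taking $m=m_0$ annihilates the approximation term and leaves $\mathbb{E}[l_{\underline{X}^n}^2(\hat f_{\hat m^{ms}},f_0)]\lesssim \sigma^2 m_0 d\log(3m_0)\log n/n$, which gives the first argument of the minimum (up to a constant depending only on $\kappa,\sigma$). For the second bound, I would appeal to Lemma \ref{lem:approximation_error_P_m}, which gives $\inf_{g\in\mathcal{P}_m}l_\nu^2(f_0,g)\leq \mathfrak{K}_{d,f_0,\Omega}m^{-4/d}$ whenever $f_0$ is Lipschitz. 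Balancing the bias $m^{-4/d}$ against the variance proxy $m\log(3m)\log n/n$, I would choose $m^{\ast}\asymp \bigl(n/(\log n)^{2}\bigr)^{d/(d+4)}$; a short computation shows that both terms then evaluate to order $n^{-4/(d+4)}(\log n)^{8/(d+4)}$, yielding the second argument of the minimum. Taking the smaller of the two gives the claimed bound.

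There is no conceptual obstacle in this proof: all the heavy lifting was done in Theorem \ref{thm:adaptive_estimator_bound_discrete}, Lemma \ref{lem:pdim_P_m} and Lemma \ref{lem:approximation_error_P_m}. The only mildly delicate step is bookkeeping the absolute constants and verifying that $L_m\equiv 1$ really does keep $\Sigma$ finite (hence the constant $\mathfrak{h}_{\kappa,\Sigma}$ depends only on $d$ and $\kappa$); once that is handled, the remainder is a one-parameter bias--variance trade-off. Consequently, the final constant $\mathfrak{C}_{d,\kappa,\sigma,f_0}$ aggregates $\mathfrak{h}_{\kappa,\Sigma}$, $\mathfrak{K}_{d,f_0,\Omega}$ and an implicit numerical prefactor from the balancing step, all of which depend only on the stated parameters.
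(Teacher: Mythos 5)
Your proposal is correct and follows the natural route the paper implicitly intends for this corollary: take $D_m=6md\log(3m)$ from Lemma \ref{lem:pdim_P_m}, take $L_m\equiv 1$ so that $\Sigma=\sum_m(3m)^{-6md}<\infty$ depends only on $d$ and the penalty in Theorem \ref{thm:adaptive_estimator_bound_discrete} specializes to the one stated in the corollary, supply the approximation error via Lemma \ref{lem:approximation_error_P_m}, and then perform the bias--variance trade-off inside the infimum to obtain the two arguments of the minimum. The balancing step $m^\ast\asymp(n/(\log n)^2)^{d/(d+4)}$ gives exactly $n^{-4/(d+4)}(\log n)^{8/(d+4)}$, and taking $m=m_0$ when $m_0<\infty$ gives the first term; your observation that the Lipschitz hypothesis is only needed in the $m_0=\infty$ case (being automatic for a finite max of affine functions) is also the correct reading of the statement.
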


\subsection{Application in estimation of an unknown convex set from support function measurements}\label{section:sieved_set_estimation}
To further illustrate the applicability of our general framework derived in Section \ref{section:general_theory_adaptive_estimation}, we consider the problem of nonparametric estimation of a compact, convex set $K\subset \R^d$ from noisy support function measurements. Here the \emph{support function} $h_K$ of a compact convex set $K$ is a real-valued function defined on the unit sphere $\mathbb{S}^{d-1}:=\{u \in \R^d:\pnorm{u}{2}=1\}$ by $h_K(u):=\sup_{x \in K} \iprod{x}{u}_d$ for $u \in \mathbb{S}^{d-1}$. We observe $(U_i,Y_i)$ drawn according to the model
\beqa
Y_i=h_K(U_i)+\epsilon_i,\quad i=1,\ldots,n
\eeqa
where $U_1,\ldots,U_n$'s are i.i.d. generated from a probability measure $\nu$ on $\mathbb{S}^{d-1}$, and $\epsilon_i$ are i.i.d. sub-Gaussian errors with parameter $\sigma^2$. 
To put the problem into our general setup, for $\Gamma\leq \infty$, denote $\bar{\mathcal{F}}(\Gamma):=\{h_K:K\textrm{ convex body}, K\subset B_2(0,\Gamma)\}$, and $\bar{\mathcal{P}}_m(\Gamma):=\{h_P\in \bar{\mathcal{F}}(\Gamma): P \textrm{ is a polytope with }m\textrm{ vertices}\}$. For notational convenience, $\bar{\mathcal{P}}_\infty:=\bar{\mathcal{F}}$. Notational dependence on $\Gamma$ is often omitted when $\Gamma=\infty$. Now the loss functions for two convex bodies $K,K'$ in continuous and discrete norms become 
\beqa\label{def:loss_fun_set_estimation_cont}
l_c^2(K,K')\equiv l_h^2(K,K')\equiv \int_{\mathbb{S}^{d-1}}\big(h_K(u)-h_{K'}(u)\big)^2\ \d{\nu(u)},
\eeqa
and
\beqa\label{def:loss_fun_set_estimation_discrete}
l_d^2(K,K')\equiv l_{\underline{U}^n}^2(K,K')\equiv {1 \over n}\sum_{i=1}^n \big(h_K(U_i)-h_{K'}(U_i)\big)^2.
\eeqa
The least squares criteria over submodels is simply
\beqas
\hat{K}_m(\Gamma):=\argmin_{K \in \bar{\mathcal{P}}_m(\Gamma)}\sum_{i=1}^n(Y_i-h_K(U_i))^2.
\eeqas
Theoretical advances have been pioneered by \cite{gardner2006convergence}, who showed consistency of the least squares estimator and derived rates of convergence of the estimators under fixed design. 
\cite{guntuboyina2012optimal} studied minimax optimal rates of convergence under both fixed and random designs. In the special case for dimension $2$, \cite{cai2015adaptive} developed adaptive estimators based on point estimators $\{\widehat{h(u_i)}\}_{i=1^n}$ for a uniform grid $u_1,\ldots,u_n$ on unit circle under both loss functions (\ref{def:loss_fun_set_estimation_cont}) and (\ref{def:loss_fun_set_estimation_discrete}). Here using the general framework established in Section \ref{section:general_theory_adaptive_estimation}, we obtain another adaptive estimator in arbitrary dimensions.

Our key observations are given by the following two results.
\begin{lemma}\label{lem:pseudo_dim_set_estimation}
$\mathrm{pdim}\big(F(\underline{U}^n;\bar{\mathcal{P}}_m)\big)\leq 3md\log 3m.$
\end{lemma}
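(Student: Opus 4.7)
The plan is to reduce the pseudo-dimension bound to counting cells in a hyperplane arrangement on the natural parameter space of $\bar{\mathcal{P}}_m$. Every polytope $P\in\bar{\mathcal{P}}_m$ is the convex hull of (at most) $m$ points $v_1,\ldots,v_m\in\R^d$, so its support function satisfies
\[
h_P(u)\;=\;\max_{1\le j\le m}\langle v_j,u\rangle.
\]
Hence $F(\underline{U}^n;\bar{\mathcal{P}}_m)$ is the image in $\R^n$ of $\R^{md}$ under $\Phi(v_1,\ldots,v_m)=\bigl(\max_j\langle v_j,U_i\rangle\bigr)_{i=1}^n$, and the pseudo-dimension is at most the parameter-driven complexity of $\Phi$.

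By the definition of pseudo-dimension recalled earlier, it suffices to show: for any $t+1$ indices (WLOG $\{1,\ldots,n\}$ with $n=t+1$) and any thresholds $x_1,\ldots,x_n\in\R$, if $n>3md\log 3m$ then not every sign pattern $(s_1,\ldots,s_n)\in\{+,-\}^n$ can be realized as $s_k=\mathrm{sgn}(h_P(U_k)-x_k)$ as $P$ ranges over $\bar{\mathcal{P}}_m$. The key observation is that for each $(k,j)$ the function $\ell_{k,j}(v):=\langle v_j,U_k\rangle-x_k$ is affine on the parameter space $\R^{md}$, and
\[
h_P(U_k)>x_k\ \Longleftrightarrow\ \max_j \ell_{k,j}(v)>0\ \Longleftrightarrow\ \exists j:\ \ell_{k,j}(v)>0.
\]
Thus the sign pattern $(\mathrm{sgn}(h_P(U_k)-x_k))_{k=1}^n$ is determined by the refined sign pattern of the $nm$ affine functions $\{\ell_{k,j}\}$, so the number of achievable patterns is no larger than the number of cells in the arrangement of those $nm$ hyperplanes in $\R^{md}$.

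By the standard cell-counting bound for hyperplane arrangements (Sauer's inequality applied to halfspaces),
\[
\#\{\text{cells}\}\;\le\;\sum_{j=0}^{md}\binom{nm}{j}\;\le\;\Bigl(\tfrac{en}{d}\Bigr)^{md}
\]
whenever $nm\ge md$, i.e.\ $n\ge d$ (the case $n<d$ is trivial as $3md\log 3m\ge d$). Shattering $n$ indices requires $2^n\le(en/d)^{md}$, i.e.\ $n\log 2\le md\log(en/d)$. The plan is now to show this fails once $n>3md\log 3m$: plugging in this threshold, the left-hand side grows like $2md\log 3m$ while the right-hand side is of order $md\log(3m\log 3m)\sim md\log 3m$, contradicting the inequality. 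Hence no such $n$ can be shattered, proving $\mathrm{pdim}(F(\underline{U}^n;\bar{\mathcal{P}}_m))\le 3md\log 3m$.

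The argument is essentially a reduction to a well-known combinatorial fact, so no major obstacle is anticipated; the only non-trivial step is the final numerical comparison, which should be handled by a short elementary inequality (and by treating small $m$ and the degenerate case $n<d$ separately if needed). In particular, no geometric properties of $\mathbb{S}^{d-1}$ are used beyond the linearity of $u\mapsto\langle v_j,u\rangle$, so the same reasoning applies to any design points $U_1,\ldots,U_n$ in $\R^d$.
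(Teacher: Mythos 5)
Your argument is correct and reaches the same bound, but it takes a genuinely different route from the paper's. The paper (following Lemma B.1 of \cite{guntuboyina2012optimal}, in parallel with its own proof of Lemma \ref{lem:pdim_P_m}) views $h_P(u)=\max_{1\le j\le m}\langle v_j,u\rangle$ as a pointwise maximum of $m$ functions from a class of pseudo-dimension at most $d$ and invokes Pollard's Lemma 5.1 on the pseudo-dimension of a maximum of classes, reducing to the combinatorial inequality $\binom{l}{0}+\cdots+\binom{l}{d}<2^{l/m}$. You instead parametrize the whole class by $(v_1,\ldots,v_m)\in\R^{md}$, observe that the realized sign pattern $\big(\mathrm{sgn}(h_P(U_k)-x_k)\big)_k$ is a function of the sign vector of the $nm$ affine forms $\ell_{k,j}$ on the parameter space, and count cells of the resulting hyperplane arrangement via $\sum_{j=0}^{md}\binom{nm}{j}$. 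Both routes are shatter/sign-pattern counts, but Pollard's lemma works compositionally in function space, whereas your argument works directly in parameter space and so avoids importing a separate maximum-of-classes lemma; that is a modest gain in self-containedness at the cost of a slightly more delicate final numerical step. Two things are worth tightening before this is a complete proof: (i) the inequality you need, $3\log 2\cdot\log 3m>\log(3em\log 3m)$ for all $m\ge1$, is true but tight at $m=1$ (about $2.28$ vs.\ $2.19$), so it should be verified explicitly rather than by an asymptotic gesture; and (ii) you implicitly use that the set of parameters realizing a given strict $\{+,-\}^n$-pattern is open, hence meets a full-dimensional cell of the arrangement — that is what licenses counting only open cells, and it should be stated.
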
 
\begin{lemma}\label{lem:approximation_error_set_estimation}
Let $K$ be any convex body.
\beqas
\inf_{P \in \bar{\mathcal{P}}_m}\bigg(l_{h}^2(K,P)\vee l_{\underline{U}^n}^2(K,P)\bigg)\leq \mathfrak{h}_d \abs{K}^2 m^{-4/(d-1)},
\eeqas
where $\mathfrak{h}_d$ is a constant only depending on $d$.
\end{lemma}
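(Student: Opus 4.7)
The plan is to reduce the problem to a single uniform estimate by exploiting the elementary fact that $|h_K(u)-h_P(u)|\leq d_H(K,P)$ for every unit vector $u\in\mathbb{S}^{d-1}$. Since $\nu$ is a probability measure on $\mathbb{S}^{d-1}$ and the discrete norm $l_{\underline{U}^n}$ is an empirical average of exactly the same integrand, both $l_h^2(K,P)$ and $l_{\underline{U}^n}^2(K,P)$ are simultaneously dominated by $d_H(K,P)^2$. This explains why the two losses admit the same rate in the lemma, and it reduces the whole proof to constructing a polytope $P\in\bar{\mathcal{P}}_m$ with small Hausdorff distance from $K$.

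To construct $P$ I would invoke Bronshtein's classical polytopal approximation theorem: any convex body $K\subset\R^d$ contained in a ball of radius $R$ admits, for every $m$, an inscribed polytope with at most $m$ vertices within Hausdorff distance $\mathfrak{c}_d R m^{-2/(d-1)}$. This is the constructive analogue of the simplicial-approximation bound $S(t;\Omega)\asymp t^{-(d-1)/2}$ underlying Smoothness Assumption \ref{smooth2}, and is genuinely dimension-dependent. Combining this with the Hausdorff-to-support-function inequality from the previous paragraph gives
\[
l_h^2(K,P)\vee l_{\underline{U}^n}^2(K,P)\leq d_H(K,P)^2 \leq \mathfrak{c}_d^2\, R^2\, m^{-4/(d-1)}.
\]

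The remaining step is to replace $R^2$ by a constant multiple of $\abs{K}^2$ in the final bound. For bodies that are not too elongated this is trivial: if $K$ contains a ball of comparable radius to its circumradius, then $R$ is controlled by $\abs{K}^{1/d}$ via a volumetric argument, which is strictly stronger than $\abs{K}^2 m^{-4/(d-1)}$. The main delicacy, and the step I expect to be the actual obstacle, is handling highly anisotropic $K$ (thin slabs or needles) where the circumradius can dwarf $\abs{K}^{1/d}$. A clean way to deal with this is to first bring $K$ into approximately isotropic position by an affine transformation $T$ (via John's theorem), carry out Bronshtein's approximation in the normalised frame, and pull back by $T^{-1}$; since affine maps preserve both the vertex count and the relative Hausdorff scale, the resulting polytope $P$ satisfies $d_H(K,P)$ controlled by an affine-invariant size of $K$, which in turn is comparable to a dimensional constant times $\abs{K}$ in the right range. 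Once this normalisation is executed, matching the exponent $4/(d-1)$ in both $l_c$ and $l_d$ simultaneously is automatic from the reduction in the first paragraph.
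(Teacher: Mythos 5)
Your reduction is the same one the paper uses: bound both $l_h^2(K,P)$ and $l_{\underline{U}^n}^2(K,P)$ by $d_H(K,P)^2$ via the identity $\sup_{u\in\mathbb{S}^{d-1}}|h_K(u)-h_P(u)|=d_H(K,P)$ (Schneider, Thm.\ 1.8.11), and then call on Bron\v{s}te\u{\i}n--Ivanov to produce a polytope with $m$ vertices within Hausdorff distance $O(m^{-2/(d-1)})$. The paper's proof is exactly these two lines and stops; it does not say what the constant in Bron\v{s}te\u{\i}n's theorem depends on. You correctly notice that the classical form of that theorem controls $d_H$ by (circumradius)$\cdot m^{-2/(d-1)}$, so matching the stated $\mathfrak{h}_d\,|K|^2$ requires an extra argument.

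That extra argument is where your proposal has a genuine gap. Affine normalisation via John's theorem does not rescue the thin-body case: if $T$ brings $K$ to John position and you pull back a polytope $P'$ approximating $TK$, then $d_H(K,T^{-1}P')\leq \|T^{-1}\|\, d_H(TK,P')$, and for a slab of thickness $\varepsilon$ the factor $\|T^{-1}\|$ is of order the long semi-axis, which is not controlled by $|K|=O(\varepsilon)$. In fact no dimensionless constant $\mathfrak{h}_d$ can make the bound hold with $|K|^2$: under $K\mapsto\lambda K$, the left side scales as $\lambda^2$ while $|K|^2$ scales as $\lambda^{2d}$, so the displayed inequality is not affine-covariant for $d\geq 2$. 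The natural and provable form replaces $|K|^2$ by $(\mathrm{diam}\,K)^2$ or $(\text{circumradius of }K)^2$, which is also what is actually used downstream: Corollaries \ref{cor:adaptive_lepski_set_estimation}--\ref{cor:adaptive_birge_set_estimation} assume $K\subset B_2(0,\Gamma)$ and the constants there depend on $\Gamma$, not on $|K|$. So the correct fix is to drop the John-position step and state the bound with the circumradius; this matches the paper's one-line citation of Bron\v{s}te\u{\i}n and is all that the corollaries require.
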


Suppose $K$ is a convex body so that $K\subset B_2(0,\Gamma)$. 

\noindent\textbf{(L-adaptive procedure).} Now according to (\ref{def:cutoff_M_upper_bound}), define $\bar{\mathfrak{M}}_n:=n^{(d-1)/(d+3)}$ and set the tuning parameters according to (\ref{def:tuning_lepski}) as follows:
\beqa
\hat{m}^{\#}&:=\min \bigg\{1\leq m \leq \bar{\mathfrak{M}}_n: l_r^2(\hat{K}_m(\Gamma^{\#}),\hat{K}_{m'}(\Gamma^{\#}))\leq 3d\mathfrak{t}^{\#}\frac{\mathfrak{L}^{\#}\kappa m'\log(3m')\log n}{n},\\
&\qquad\qquad \forall m'\in\{m,\ldots,\bar{\mathfrak{M}}_n\}\bigg\},\\
\eeqa
where $\mathfrak{L}^{\#}$ is defined in (\ref{def:L}), and $\Gamma^{\#}$ defined in (\ref{def:gamma}).
\begin{corollary}\label{cor:adaptive_lepski_set_estimation}
Let $\mathfrak{t}^{\#}$ be defined as in Theorem \ref{thm:lepski_generic}, and $K \in \bar{\mathcal{P}}_{m_K}$ contained in a ball with radius $\Gamma$. Then for $\#\in \{c,d\}$,
\beqas
\mathbb{E}\big[l_{\#}^2(\hat{K}_{\hat{m}^{\#}}(\Gamma^{\#}),K)\big]\leq \mathfrak{C}_{d,\kappa,\sigma,\Gamma}\min\bigg\{\frac{m_K\log(3m_K)\log n}{n},n^{-4/(d+3)}(\log n)^{8/(d+3)}\bigg\}
\eeqas
holds for $n\geq \max\{m_K^{(d+3)/(d-1)}\bm{1}_{m_K<\infty},7\}$. 
\end{corollary}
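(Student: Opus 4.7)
The plan is to apply Theorem \ref{thm:lepski_generic} in the ``up'' case (separated approximation error) to the nested family of models $\{\bar{\mathcal{P}}_m(\Gamma^{\#})\}_{m \in \N}$ inside $\bar{\mathcal{F}}(\Gamma^{\#})$, verifying its hypotheses via Lemmas \ref{lem:pseudo_dim_set_estimation} and \ref{lem:approximation_error_set_estimation}, and then evaluating the two terms in the minimum of (\ref{ineq:lepski_risk_bound_cont}) or (\ref{ineq:lepski_risk_bound_discrete}) by bias-variance balancing. Since Lemma \ref{lem:approximation_error_set_estimation} gives identical approximation rates in continuous and discrete norms, both cases $\# \in \{c,d\}$ can be handled in parallel.

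First I would set up the hypotheses. Lemma \ref{lem:pseudo_dim_set_estimation} supplies $D_m := 3md\log 3m$ for Assumption \ref{assump:pdim}. Lemma \ref{lem:approximation_error_set_estimation}, combined with $\abs{K} \leq C_d \Gamma^d$ since $K\subset B_2(0,\Gamma)$, yields (\ref{ineq:upper_bound_approximation_error}) with $\mathfrak{G}(m):=m^{-4/(d-1)}$ and a prefactor $\mathfrak{e}(K,\mathbb{S}^{d-1}) \leq C_{d,\Gamma}$ to be absorbed into the final constant. Nestedness of $\{\bar{\mathcal{P}}_m\}$ is immediate by padding any $m$-vertex polytope to an $(m+1)$-vertex one via a repeated vertex. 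For $\#=c$, every $h_P$ with $P\subset B_2(0,\Gamma)$ is bounded on $\mathbb{S}^{d-1}$ by $\Gamma$, so Assumption \ref{assump:uniform_bound_cont} holds; for $\#=d$ only boundedness of $h_K$ is needed, which again follows from $K\subset B_2(0,\Gamma)$.

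Per Remark \ref{rmk:set_M_n}, the choice $\bar{\mathfrak{M}}_n := n^{(d-1)/(d+3)}$ realizes the minimizer of $\mathfrak{G}(m)+\kappa D_m\log n/n$ up to polylogarithmic and constant factors that the remark permits one to drop; hence Assumption \ref{assump:M_n} holds for $d\geq 2$, and the threshold $n\geq m_K^{(d+3)/(d-1)}$ stated in the corollary is exactly the requirement $\bar{\mathfrak{M}}_n \geq m_K$ from Theorem \ref{thm:lepski_generic}. Plugging everything in, the first term of the minimum in (\ref{ineq:lepski_risk_bound_cont})/(\ref{ineq:lepski_risk_bound_discrete}) becomes $(\sigma^2\vee\Gamma^2)\kappa D_{m_K}\log n/n \asymp_{d,\kappa,\sigma,\Gamma} m_K\log(3m_K)\log n/n$; for the second, $\inf_m\big(m^{-4/(d-1)}+\kappa m\log m\log n/n\big)$ is attained (up to constants) at $m\asymp (n/\log^2 n)^{(d-1)/(d+3)}$ with common value $\asymp n^{-4/(d+3)}(\log n)^{8/(d+3)}$, the exponent $8/(d+3)$ coming from $(\log m\log n)^{4/(d+3)}\asymp(\log n)^{8/(d+3)}$ at the optimum. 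The only delicate point is this logarithmic-factor tracking in the balancing; the remainder is bookkeeping on constants absorbed into $\mathfrak{C}_{d,\kappa,\sigma,\Gamma}$.
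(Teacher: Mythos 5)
Your proposal is correct and follows the same route the paper takes (which it presents only implicitly, by analogy with Corollary \ref{cor:adaptive_lepski_multivariate_convex_fcn}): verify Assumptions \ref{assump:uniform_bound_cont}--\ref{assump:M_n} via Lemmas \ref{lem:pseudo_dim_set_estimation} and \ref{lem:approximation_error_set_estimation}, take $\mathfrak{G}(m)=m^{-4/(d-1)}$ and $\bar{\mathfrak{M}}_n=n^{(d-1)/(d+3)}$ per Remark \ref{rmk:set_M_n}, and read off the two terms of the minimum in Theorem \ref{thm:lepski_generic}. Your bias-variance balance and the logarithmic bookkeeping giving $(\log n)^{8/(d+3)}$ are right, and the threshold $n\geq m_K^{(d+3)/(d-1)}$ is correctly identified as the condition $\bar{\mathfrak{M}}_n\geq m_K$.
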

\noindent\textbf{(P-adaptive procedure).} Similarly by Theorem \ref{thm:adaptive_estimator_bound_discrete},
\begin{corollary}\label{cor:adaptive_birge_set_estimation}
Define the model selection criteria to be 
\beqas
\hat{m}^{ms}:=\argmin_{m \in \N}\bigg\{{1 \over n}\sum_{i=1}^n h_{\hat{K}_m}(U_i)^2-{2 \over n}\sum_{i=1}^n Y_ih_{\hat{K}_m}(U_i)+\frac{3\mathfrak{c}_{p,1}\sigma^2md\log(3m)}{n}\big(\mathfrak{c}_{p,2}\kappa \log n+1\big)\bigg\}.
\eeqas
Then for $K \in \bar{\mathcal{P}}_{m_K}$ ,
\beqas
\mathbb{E}\big[l_d^2(\hat{K}_{\hat{m}^{ms}},K)\big]\leq \mathfrak{C}_{d,\kappa,\sigma}\min\bigg\{\frac{m_K\log(3m_K)\log n}{n}, n^{-4/(d+3)}(\log n)^{8/(d+3)}\bigg\}.
\eeqas
\end{corollary}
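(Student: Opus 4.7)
The plan is to apply Theorem \ref{thm:adaptive_estimator_bound_discrete} directly to the nested family of models $\{\bar{\mathcal{P}}_m\}_{m \in \N}$, viewing support function estimation as an instance of the general regression setup with design $U_i \in \mathbb{S}^{d-1}$ and responses $Y_i = h_K(U_i) + \epsilon_i$. Assumption \ref{assump:pdim} is immediately furnished by Lemma \ref{lem:pseudo_dim_set_estimation}: set $D_m := 3md\log 3m$, which is an almost-sure upper bound on $\mathrm{pdim}(F(\underline{U}^n; \bar{\mathcal{P}}_m))$ and satisfies $D_m \geq 1$ for every $m \geq 1$.

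The next step is to choose the sequence $L_m$ so that the penalty matches the one displayed in the corollary. Taking $L_m \equiv 1$ gives
\[
\sum_{m \in \N} \exp(-L_m D_m) = \sum_{m \in \N} (3m)^{-3md} =: \Sigma < \infty,
\]
which is finite (in fact very small) for any $d \geq 1$, and the resulting penalty from Theorem \ref{thm:adaptive_estimator_bound_discrete} is exactly
\[
\mathrm{pen}(m) = \frac{3\mathfrak{c}_{p,1}\sigma^2 md\log 3m}{n}\bigl(\mathfrak{c}_{p,2}\kappa \log n + 1\bigr),
\]
coinciding with the criterion defining $\hat{m}^{ms}$. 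Applying Theorem \ref{thm:adaptive_estimator_bound_discrete} then yields
\[
\mathbb{E}\bigl[l_d^2(\hat{K}_{\hat{m}^{ms}},K)\bigr] \leq \inf_{m \in \N}\left(3 \inf_{P \in \bar{\mathcal{P}}_m} l_h^2(K, P) + \mathfrak{h}_{\kappa,\Sigma}\frac{3\sigma^2 md\log(3m)\log n}{n}\right).
\]

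The two branches of the stated bound now emerge from two canonical choices of $m$. When $K \in \bar{\mathcal{P}}_{m_K}$, plugging in $m = m_K$ kills the approximation error and leaves $\mathfrak{h}_{\kappa,\Sigma}\cdot 3\sigma^2 d\cdot m_K\log(3m_K)\log n / n$, which is the first branch. For the second branch we invoke Lemma \ref{lem:approximation_error_set_estimation} to control $\inf_{P \in \bar{\mathcal{P}}_m} l_h^2(K,P) \leq \mathfrak{h}_d |K|^2 m^{-4/(d-1)}$ and balance $m^{-4/(d-1)} \asymp m\log(3m)\log n / n$. Using $\log m \asymp \log n$ in the relevant regime, the optimal scale is $m^\star \asymp (n/(\log n)^2)^{(d-1)/(d+3)}$, at which both terms equal $n^{-4/(d+3)}(\log n)^{8/(d+3)}$ up to constants. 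Absorbing all multiplicative constants (which depend only on $d,\kappa,\sigma$, together with the absolute quantities $\Sigma, \mathfrak{c}_{p,1},\mathfrak{c}_{p,2},\mathfrak{h}_d$) into $\mathfrak{C}_{d,\kappa,\sigma}$ gives the claim.

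The main obstacle is purely bookkeeping: verifying that the penalty of Theorem \ref{thm:adaptive_estimator_bound_discrete} under the specific choice $L_m \equiv 1$ matches the stated criterion, and tracking that $\mathfrak{h}_{\kappa,\Sigma}$ genuinely depends only on $\kappa$ and the \emph{absolute} constant $\Sigma$ determined by our choice of $L_m$ and $D_m$. No new empirical process or geometric argument is needed beyond what is already packaged in Theorem \ref{thm:adaptive_estimator_bound_discrete} and Lemmas \ref{lem:pseudo_dim_set_estimation}--\ref{lem:approximation_error_set_estimation}; one small caveat worth flagging is that Theorem \ref{thm:adaptive_estimator_bound_discrete} is stated under Gaussian errors, so the present corollary should be read under the same assumption (as is implicit in its reliance on the P-adaptive procedure).
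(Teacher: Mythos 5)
Your proof is correct and follows exactly the route the paper intends — the paper gives no separate argument for this corollary beyond the phrase ``Similarly by Theorem~\ref{thm:adaptive_estimator_bound_discrete}'', and your choice of $D_m = 3md\log 3m$ (from Lemma~\ref{lem:pseudo_dim_set_estimation}), $L_m \equiv 1$, and the balance $m^\star \asymp (n/(\log n)^2)^{(d-1)/(d+3)}$ via Lemma~\ref{lem:approximation_error_set_estimation} is the one intended. Two small remarks worth keeping in mind: the factor $\mathfrak{h}_d\abs{K}^2$ appearing in Lemma~\ref{lem:approximation_error_set_estimation} means the constant in the second branch actually depends on $\abs{K}$ as well (the statement's $\mathfrak{C}_{d,\kappa,\sigma}$ suppresses this, just as it is suppressed in the paper, whereas the analogous constant in Corollary~\ref{cor:adaptive_birge_multivariate_convex_fcn} carries an explicit $f_0$-subscript); and your caveat that Theorem~\ref{thm:adaptive_estimator_bound_discrete} is proved under Gaussian errors while Section~\ref{section:sieved_set_estimation} is phrased for sub-Gaussian errors is well-taken and should indeed be read as a standing assumption.
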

\begin{remark}
By Theorems 4.1 and 4.2 in \cite{guntuboyina2012optimal}, the minimax optimal rates of convergence for uniform probability measure on $\mathbb{S}^{d-1}$ is $n^{-4/(d+3)}$. The lower bounds hold for arbitrary measures. Here by Corollary \ref{cor:adaptive_lepski_set_estimation} and Corollary \ref{cor:adaptive_birge_set_estimation} we achieve the lower bound within a poly-logarithmic factor.
\end{remark}

\section{Discussion}\label{section:discussion}
In this section, we will discuss some related problems.
\subsection{Adaptation of the LSE when the support is smooth}\label{section: discussion_adaptive_smooth}
In Section \ref{section:lse}, it is shown that the least squares estimator (LSE) adapts to polyhedral functions when the support is polytopal, while the sieved least squares estimators proposed in Section \ref{section:sieved_lse} are rate-adaptive to regular subclasses whatever the shape of the support. Hence it is natural to ask: (1) Do the LSEs adapt to polyhedral functions when the support is smooth? (2) Do the LSEs adapt to regular subclasses when the support is smooth? We will discuss the above questions separately in Sections \ref{section:discussion_adapt_simple} and \ref{section:discussion_adapt_regular}.
\subsubsection{Adaptation to the class of simple convex functions}\label{section:discussion_adapt_simple}
We observe that at the technical level, the nearly parametric rate when the support is polytopal and the regression function is polyhedral is achieved via the nice local property of the entropy characterized in Lemma \ref{lem:local_entropy_estimate}.  Similarly, when the regression function is polyhedral and the support is smooth, in order that adaptation occurs we would like an estimate of the form
\beqa\label{ineq:key_adaptive_entropy_estimate}
\log \mathcal{N}_{[\,]}(\epsilon,\mathcal{C}_2(r)\cap \mathcal{C}(\Gamma),l_2)\lesssim \bigg(\frac{r}{\epsilon}\bigg)^{d-1}\times \textrm{poly-logarithmic terms}.
\eeqa
in view of Theorem \ref{lem:entropy_cvx_func} when $r$ is fixed. Recall that in the proof of Theorem \ref{thm:minimax_lower_bound_cvxbody}, a class of convex functions $\{f_\tau\}$ is constructed so that (1) the cardinality equals to $2^m$ where $m\approx \eta^{-(d-1)/(d+1)}$; (2) each function satisfies $\pnorm{f_\tau}{l_2}^2\lesssim m\eta $ and $\pnorm{f_\tau}{\infty}\leq \Gamma$; (2) the distance between any pair $(f_\tau,f_{\tau'})$ under squared $l_2$ norm is at least $\eta$. Now set $m\eta \approx \eta^{2/(d+1)}\approx \epsilon^2$, giving $\eta\approx \epsilon^{(d+1)}$, we see that $m\approx \epsilon^{-(d-1)}$, and hence the cardinality is $\exp(\epsilon^{-(d-1)})$. This means that for smooth support, 
\beqa\label{ineq:entropy_adapt}
\log \mathcal{N}_{[\,]}(\epsilon^{(d+1)/2},\mathcal{C}_2(\epsilon)\cap \mathcal{C}(\Gamma),l_2)\gtrsim \epsilon^{-(d-1)}.
\eeqa
On the other hand, (\ref{ineq:key_adaptive_entropy_estimate}) reduces to
\beqas
\log \mathcal{N}_{[\,]}(\epsilon^{(d+1)/2},\mathcal{C}_2(\epsilon)\cap \mathcal{C}(\Gamma),l_2)\lesssim \epsilon^{-(d-1)^2/2}\times \textrm{poly-logarithmic terms},
\eeqas
which violates (\ref{ineq:entropy_adapt}) when $d=2$.  This suggests that within the current local entropy method by searching for bounds of form (\ref{ineq:key_adaptive_entropy_estimate}) shall not work for the smooth support in $d=2$ at least. 
\subsubsection{Adaptation to the class of regular convex functions}\label{section:discussion_adapt_regular}
We restrict our attention to $d=1$ and assume without loss of generality that $\Omega=[0,1],\Gamma=1$ and $\nu\equiv \mathrm{Unif}[0,1]$. Let the true regression function be $f_0\equiv 0$. 
\begin{lemma}\label{lem:inconsist_blse_zero}
Suppose the errors are the same as in Section \ref{section:lse}. Then $\hat{f}_n$ converges uniformly to $f_0\equiv 0$ on any compact set within $(0,1)$. Moreover, $\hat{f}_n(0)\nrightarrow_p 0$.
\end{lemma}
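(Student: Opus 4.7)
\emph{Part (a).} My plan is to invoke Theorem~\ref{thm:rates_conv_lse} in dimension $d=1$, which gives $\mathbb{E}\, l_\nu^2(\hat{f}_n,f_0) \lesssim n^{-4/5}$, so $\hat{f}_n \to 0$ in $L^2([0,1])$ in probability. Since $\{\hat{f}_n\} \subset \mathcal{C}(1)$ is uniformly bounded and the limit $f_0\equiv 0$ is continuous, I then cite the classical fact that $L^1$-convergence of a uniformly bounded convex sequence to a continuous convex limit forces uniform convergence on compact subsets of the relative interior.

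\emph{Part (b), event construction.} For the second claim, I will exhibit an event $E_n$ with $\liminf_n \mathbb{P}(E_n) > 0$ on which $\hat{f}_n(0)$ is bounded below by a positive constant. Since the errors are nondegenerate and centered, fix $c_0 \in (0,1)$ with $q := \mathbb{P}(\epsilon_1 \geq c_0) > 0$. Writing $X_{[j]}$ for the $j$-th design order statistic and $Y_{[j]}$ for the associated response, the rescaled pair $n(X_{[1]}, X_{[2]}-X_{[1]})$ converges weakly to two i.i.d.\ $\mathrm{Exp}(1)$ variables and $Y_{[1]} \stackrel{d}{=} \epsilon_1$ is independent of the $X$-order statistics. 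A direct computation then shows
\[
E_n := \{\, nX_{[1]}\leq 1/4,\; nX_{[2]} \geq 1,\; Y_{[1]}\geq c_0 \,\} \quad \text{satisfies} \quad \liminf_n \mathbb{P}(E_n) \geq e^{-1}q/4 > 0.
\]

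\emph{Part (b), comparison argument.} The core step is to show $\hat{f}_n(X_{[1]})\geq c_0/2$ on $E_n$. Suppose for contradiction that $\hat{f}_n(X_{[1]})<c_0/2$, and construct $\varphi$ to be the convex function that is linear on $[0,X_{[2]}]$ through $(X_{[1]}, c_0/2)$ and $(X_{[2]}, -1)$, and identically $-1$ on $[X_{[2]},1]$. The gap $X_{[2]}/X_{[1]}\geq 4$ on $E_n$ ensures $\varphi(0)\leq 1$, and the slopes ($<0$ then $0$) are non-decreasing, so $\varphi \in \mathcal{C}(1)$. Set $\tilde{f}:=\max(\hat{f}_n,\varphi)\in\mathcal{C}(1)$. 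The crucial observation is that $\varphi\equiv -1\leq\hat{f}_n$ on $[X_{[2]},1]$, so $\tilde{f}$ agrees with $\hat{f}_n$ at every design point except $X_{[1]}$; the SSR comparison collapses to
\[
\mathrm{SSR}(\tilde f)-\mathrm{SSR}(\hat f_n)=\bigl(c_0/2-\hat f_n(X_{[1]})\bigr)\bigl(c_0/2+\hat f_n(X_{[1]})-2Y_{[1]}\bigr)<0,
\]
since the first factor is positive by assumption and the second is at most $c_0-2c_0=-c_0<0$ using $Y_{[1]}\geq c_0$, contradicting optimality of $\hat f_n$. Combining $\hat{f}_n(X_{[1]})\geq c_0/2$ with part (a) at a fixed interior point $x_0=1/2$ and the convex chord inequality,
\[
\hat{f}_n(0)\geq\tfrac{x_0}{x_0-X_{[1]}}\hat{f}_n(X_{[1]})-\tfrac{X_{[1]}}{x_0-X_{[1]}}\hat{f}_n(x_0)\geq c_0/2 - o_p(1),
\]
so $\limsup_n\mathbb{P}(\hat{f}_n(0)\geq c_0/4)\geq\liminf_n\mathbb{P}(E_n)>0$, proving $\hat{f}_n(0)\nrightarrow_p 0$.

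\emph{Main obstacle.} The hardest part is the design of $\varphi$: it must perturb $\hat{f}_n$ only at the single design point $X_{[1]}$, which requires both the boundedness constraint $\Gamma=1$ (to allow $\varphi$ to drop down to $-1$ at $X_{[2]}$) and the engineered gap $X_{[2]}/X_{[1]}\geq 4$ built into $E_n$ (so that $\varphi(0)\leq 1$ remains admissible). Without these two ingredients, any perturbation large enough to contradict optimality would also move $\hat{f}_n$ at further design points, and the SSR comparison would require controlling the resulting ``pollution'' (e.g.\ through a Cauchy--Schwarz coupling of the $L^2$ rate with the noise), which would be substantially more delicate.
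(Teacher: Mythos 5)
Your proof is correct, and it takes a genuinely different, more quantitative route than the paper's. Part~(a) matches the paper's approach (risk bound from Theorem~\ref{thm:rates_conv_lse} plus the standard convex-analysis fact on uniform interior convergence). For part~(b), the paper argues by contradiction: it assumes $\hat f_n(0)\to_p 0$, passes to a.s.-convergent subsequences (using the auxiliary Lemma~\ref{lem:uniform_conv_convexseq} to get uniform convergence up to the boundary), and invokes the first-order variational characterization (\ref{ineq:character_constrained_cvx_lse}) with a competitor $g$ that agrees with $\hat f_n$ on $[X_{(2)},1]$ and is linear on $[0,X_{(2)}]$ with $g(0)=1$; the surviving single term $(g(X_{(1)})-\hat f_n(X_{(1)}))(Y_{(1)}-\hat f_n(X_{(1)}))\le 0$ then forces $Y_{(1)}\le\hat f_n(X_{(1)})$ for all large $n$, contradicting the nondegenerate noise. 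You instead build an explicit positive-probability event $E_n$ and compare sums of squared residuals directly with an admissible competitor $\tilde f=\max(\hat f_n,\varphi)$, engineered (via the gap $X_{[2]}/X_{[1]}\ge 4$ and the $\Gamma=1$ cap) to perturb $\hat f_n$ only at $X_{[1]}$; this yields the quantitative bound $\hat f_n(X_{[1]})\ge c_0/2$ on $E_n$, transferred to $\hat f_n(0)$ by a chord inequality. The shared core is the perturbation-at-$X_{[1]}$ idea, but your implementation replaces the variational inequality by a direct SSR comparison and replaces the subsequence/a.s.\ machinery and Lemma~\ref{lem:uniform_conv_convexseq} by an explicit event, which buys a cleaner, fully quantitative conclusion $\liminf_n\mathbb P(\hat f_n(0)\ge c_0/4)>0$. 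One small remark: since $|\hat f_n(x_0)|\le 1$ and $X_{[1]}\le 1/(4n)$ on $E_n$, the remainder in your chord step is deterministically $o(1)$ on $E_n$, so you do not actually need part~(a) there; the argument is self-contained given the uniform bound.
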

The above lemma implies that the Lipschitz constant for $\hat{f}_n$ blows up with non-trivial probability. It suggests that, when the support is smooth, it is unlikely that the LSE can adapt locally to regular convex funtions when the underlying true regression function has bounded Lipschitz constant to achieve a faster rate $n^{-4/(d+4)}$ as observed in the sieved adaptive estimators in Corollaries \ref{cor:adaptive_lepski_multivariate_convex_fcn} and \ref{cor:adaptive_birge_multivariate_convex_fcn}.

\subsection{On global and local smoothness}
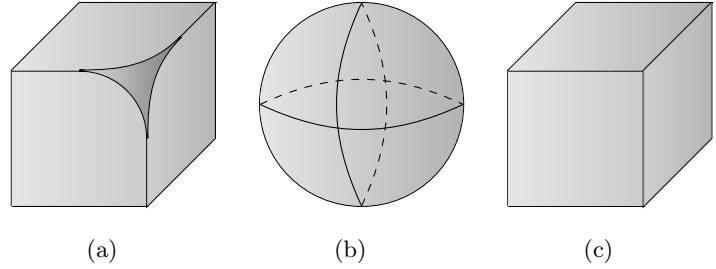
\begin{figure}
	\begin{tikzpicture}[xscale=0.6,yscale=0.6,>=latex]
	\path[thick,draw] (0,0)--(0,3)--(1.5,3) to [out=0, in=90] (3,1.5)--(3,0)--(0,0);
	\shade[left color=gray!20, right color=gray!40] (0,0)--(0,3)--(1.5,3) to [out=0, in=90] (3,1.5)--(3,0)--(0,0);
	\path[thick,draw] (0,3)--(1.5,4.5)--(4.5,4.5)--(3.75,3.75) to [out=-135, in=0](1.5,3)--(0,3);
	\shade[left color=gray!20, right color=gray!60] (0,3)--(1.5,4.5)--(4.5,4.5)--(3.75,3.75) to [out=-135, in=0](2,3)--(0,3);
	\path[thick,draw] (4.5,4.5)--(4.5,1.5)--(3,0)--(3,1.5) to [out=90, in=-135](3.75,3.75)--(4.5,4.5);
	\shade[left color=gray!40, right color=gray!60] (4.5,4.5)--(4.5,1.5)--(3,0)--(3,1.5) to [out=90, in=-135](3.75,3.75)--(4.5,4.5);
	\path[thick,draw](3.75,3.75) to [out=-135, in=0](1.5,3) to [out=0, in=90] (3,1.5) to [out=90, in=-135](3.75,3.75);
	\shade[left color=gray!30, right color=gray!100] (3.75,3.75) to [out=-135, in=0](1.5,3) to [out=0, in=90] (3,1.5) to [out=90, in=-135](3.75,3.75);
	
	\path[thick,draw](7.75,0) to [out=180, in =270] (5.5,2.25) to [out=90, in=180](7.75,4.5) to [out=0, in=90](10,2.25) to [out=-90,in=0](7.75,0);
	\shade[left color=gray!20, right color=gray!60] (7.75,0) to [out=180, in =270] (5.5,2.25) to [out=90, in=180](7.75,4.5) to [out=0, in=90](10,2.25) to [out=-90,in=0](7.75,0);
	\path[draw](7.75,0) to [out=115, in=-115] (7.75,4.5);
	\draw[dashed](7.75,0) to [out=65, in=-65] (7.75,4.5);
	\path[draw](5.5,2.25) to[out=-25, in =-155] (10,2.25);
	\draw[dashed] (5.5,2.25) to[out=25, in =155] (10,2.25);
	
	\path[thick,draw] (11,0)--(11,3)--(14,3)--(14,0)--(11,0);
	\shade[left color=gray!20, right color=gray!40] (11,0)--(11,3)--(14,3)--(14,0)--(11,0);
	\path[thick,draw] (11,3)--(12.5,4.5)--(15.5,4.5)--(14,3)--(11,3);
	\shade[left color=gray!20, right color=gray!60](11,3)--(12.5,4.5)--(15.5,4.5)--(14,3)--(11,3);
	\path[thick,draw] (15.5,4.5)--(15.5,1.5)--(14,0)--(14,3)--(15.5,4.5);
	\shade[left color=gray!40, right color=gray!60] (15.5,4.5)--(15.5,1.5)--(14,0)--(14,3)--(15.5,4.5);
	
	\node at (2,-1) {(a)};
	\node at (7.5,-1) {(b)};
	\node at (13,-1) {(c)};
	\end{tikzpicture}
	\caption{(a) polytope with partially smooth boundary; (b) smooth body; (c) polytope.}
	\label{pic:illustration_smooth}
\end{figure}
The Smoothness Assumptions \ref{smooth1} and \ref{smooth2} are imposed at a global scale, corresponding to the case (b) in Figure \ref{pic:illustration_smooth}. We assumed this for simplicity of statements; the slower minimax risks and risk bounds in squared $l_2$ loss of order $O(n^{-1/2})=O(n^{-2/(d+1)})(d=3)$ in Theorems \ref{thm:minimax_lower_bound_cvxbody} and \ref{thm:rates_conv_lse} also apply to the case (a), which is a polytope with one smooth corner locally satisfying Smoothness Assumptions \ref{smooth1} and \ref{smooth2}. On the other hand, case (c) (a polytope) corresponds to squared risks of order $O(n^{-4/7})=O(n^{-4/(d+4)})(d=3)$ as stated in Theorems \ref{thm:minimax_lower_bound_poly} and \ref{thm:rates_conv_lse}.

\subsection{Redundancy of logarithmic factors}\label{section:discussion_log}
It is discussed in Remark \ref{rmk:log_factors} that the logarithmic factors obtained in the risks of the BLSE come from very different reasons. While it is unclear whether these factors can be reduced or not, 
we will show that they are actually redundant in the fixed design setting considered by \cite{guntuboyina2013global}\footnote{During the preparation of the paper we become aware of the independent work of \cite{chatterjee2015improved} who derived essentially the same conclusion as our Theorem \ref{thm:lse_without_log}.}. We show this by considering a simplified fixed design $\{x_k:=\frac{k-1}{n-1}\}_{k=1}^n$. The noise level is assumed to be $\sigma^2=1$ for simplicity. Extension to almost equi-distributed design points is immediate.
\begin{theorem}\label{thm:lse_without_log}
	Let $f_0$ be the ground convex function, and $\hat{f}_n$ be the LSE of $f_0$ considered in \cite{guntuboyina2013global}. Then
	\beqas
	\mathbb{E}l_{\underline{X}^n}^2(\hat{f}_n,f_0)\leq \mathfrak{C}(1+\pnorm{f_0}{\infty})^{2/5}n^{-4/5},
	\eeqas
	when $n\geq \min \{n \in \N:(1+\pnorm{f_0}{\infty})^{1/10} \log n\leq \mathfrak{C}' n^{1/5}\}$. Here $\mathfrak{C},\mathfrak{C}'$ are absolute constants.
\end{theorem}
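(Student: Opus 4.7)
The plan is to prove Theorem~\ref{thm:lse_without_log} by adopting the Chatterjee-type variational characterization of the risk of shape-constrained least squares estimators, combined with the sharp (logarithm-free) Bronshtein entropy bound for univariate convex functions. The purpose of this route is to bypass the peeling/chaining arguments used in \cite{guntuboyina2013global}, which are where the extraneous logarithmic factors are introduced; in the modern approach the Dudley integral is assembled once, at a carefully localized scale, and the rate is read off directly from a fixed-point equation.

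Concretely, I would first invoke the LSE optimality to obtain the basic inequality $l_{\underline{X}^n}^2(\hat{f}_n,f_0)\leq 2\iprod{\epsilon}{\hat{f}_n-f_0}_n$, so that the risk is controlled by a local empirical process. Since $\mathcal{C}$ is unbounded, the second step is to obtain a preliminary sup-norm control on the fitted values: for the convex LSE on the equispaced grid one can show that $\max_{i}|\hat{f}_n(x_i)|\leq M$ with high probability for some $M\lesssim 1+\pnorm{f_0}{\infty}+\sqrt{\log n}$, using the finite-dimensional projection structure and standard control of $\max_i|\epsilon_i|$ under sub-Gaussian noise. Third, restricted to the class $\mathcal{C}(M)$ of convex functions bounded by $M$ on $[0,1]$, I would bound the localized Gaussian complexity $\phi_n(t):=\mathbb{E}\sup_{f\in\mathcal{C}(M),l_{\underline{X}^n}(f,f_0)\leq t}\iprod{\epsilon}{f-f_0}_n$ by Dudley's integral together with the Bronshtein bracketing estimate $\log\mathcal{N}_{[\,]}(\epsilon,\mathcal{C}(M),\pnorm{\cdot}{\infty})\lesssim (M/\epsilon)^{1/2}$; this yields $\phi_n(t)\lesssim M^{1/4}t^{3/4}/\sqrt{n}$ \emph{without any log factor}. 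The fixed point $\phi_n(t^\ast)\asymp (t^\ast)^2$ then produces $(t^\ast)^2\asymp M^{2/5}n^{-4/5}\asymp (1+\pnorm{f_0}{\infty})^{2/5}n^{-4/5}$, which is the desired rate. Finally, I would convert this deterministic inequality (in the fixed-point sense) into an expectation bound via a concentration inequality of Talagrand type on the localized class.

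The main obstacle, and the reason for the sample size hypothesis in the theorem, is the unboundedness issue. The preliminary bound on $\max_i|\hat{f}_n(x_i)|$ costs an additive $\sqrt{\log n}$ inflation of the effective sup-norm bound $M$, which must be absorbed into the $(1+\pnorm{f_0}{\infty})^{2/5}$ factor. Requiring that $M^{1/4}/\sqrt n$ remain comparable to $(1+\pnorm{f_0}{\infty})^{1/4}/\sqrt n$ forces exactly a constraint of the form $(1+\pnorm{f_0}{\infty})^{1/10}\log n\lesssim n^{1/5}$, which is the stated condition. A secondary technical point is that the localization is in $l_{\underline{X}^n}$ but the entropy bound is in $\pnorm{\cdot}{\infty}$; because the design is equispaced and the localized functions are convex with controlled sup-norm $M$, the two norms are comparable up to absolute constants on this class, so no additional loss is incurred. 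Modulo these two steps the argument is essentially the Chatterjee fixed-point strategy as developed independently in \cite{chatterjee2015improved}.
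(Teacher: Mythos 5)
Your high-level strategy — use Chatterjee's variational characterization of the LSE risk via $t_\mu$ together with a log-free entropy bound for univariate convex functions — coincides with the paper's. The divergence, and the gap, is in how the unboundedness of the cone $K$ is handled.

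The paper (Theorem~\ref{thm:convex_t_mu}) does \emph{not} establish a preliminary sup-norm bound on $\hat{f}_n$. Instead, for each competitor $\nu\in K$ with $\|\nu-\mu\|_2\le t$, it truncates $\nu$ at levels $\mu_{\min}-2^l$ and $\mu_{\max}+2^l$, splits $\epsilon\cdot(\nu-\mu)$ into the truncated part $\epsilon\cdot(\nu'(l)-\mu)$ and the excess $\epsilon\cdot(\nu-\nu'(l))$, and handles the excess directly via the localization $\|\nu-\mu\|_2\le t$: convexity implies the coordinates where $\nu$ exceeds level $2^k$ form at most three contiguous blocks of total size $\le t^2/2^{2k}$, so a dyadic peeling in $k\ge l$ yields $\mathbb{E}\sup_\nu\,\epsilon\cdot(\nu-\nu'(l))\le 12t^2/2^l$. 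For $l$ a fixed absolute constant ($l\ge\log_2 48$) this is absorbed by the $-t^2/2$ term in the fixed-point functional, with no logarithmic cost whatsoever. The sample-size condition in the theorem is \emph{not} an artifact of controlling unboundedness; it comes from the discrete entropy bound (Lemma~\ref{lem:autentropybound}), where the class $Q_{a,b}$ is decomposed into $\sim n^2$ subclasses $Q^{(j)}_{n-k}$ indexed by the truncation window, and the resulting $\log(n^2)$ must be dominated by $n^{1/4}/t^{1/2}$; substituting $t\sim C_1 = (1+\Delta\mu)^{1/5}n^{1/10}$ gives precisely the condition $(1+\Delta\mu)^{1/10}\log n\lesssim n^{1/5}$.

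Your route does not close. If you first establish $\max_i|\hat f_n(x_i)|\le M$ whp with $M\lesssim 1+\|f_0\|_\infty+\sqrt{\log n}$, then running the fixed-point argument inside $\mathcal{C}(M)$ produces a rate $M^{2/5}n^{-4/5}$, which for bounded $\|f_0\|_\infty$ and large $n$ is $(\log n)^{1/5}n^{-4/5}$ — the logarithmic factor is \emph{not} removed but merely relocated into $M$. Your claim that requiring $M^{1/4}\asymp(1+\|f_0\|_\infty)^{1/4}$ ``forces exactly'' the constraint $(1+\|f_0\|_\infty)^{1/10}\log n\lesssim n^{1/5}$ is incorrect: with $M\sim 1+\|f_0\|_\infty+\sqrt{\log n}$, that comparability would instead require $\log n\lesssim(1+\|f_0\|_\infty)^2$, a condition violated for all large $n$ when $\|f_0\|_\infty$ is fixed. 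So the stated condition in the theorem cannot be derived from your step, and your approach as described would only give a bound with an extraneous polylogarithmic factor. The essential idea you are missing is the paper's truncate-and-excess decomposition of the competitor $\nu$ (rather than of $\hat f_n$), which exploits the localization $\|\nu-\mu\|_2\le t$ to kill the unbounded part at a cost of $O(t^2)$ with a controllable constant — no probability argument on $\hat f_n$ and no logarithmic inflation.
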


The proof makes use of a recent result by \cite{chatterjee2014new}, which will be detailed in Section \ref{section:proof_withoutlog_lse}. This shows that LSE without boundedness constraints for univariate convex regression achieves exact optimal rates of convergence for general convex functions, in view of the lower bounds in \cite{guntuboyina2013global} under discrete $l_2$ norm. This also suggests that the local entropy method may have logarithmic losses in deriving the risk bounds.

\appendix

\section{Further application of the general framework in Section 4}\label{section:sieved_sobolev}
In this section, we shall further illustrate the applicability of our general framework developed in Section \ref{section:general_theory_adaptive_estimation} by considering classical Sobolev spaces. To start with, we shall identify the class $l_2(\nu)$ consisting of all square integrable functions under measure $\nu$ with the class of $\mathfrak{l}_2$ of square summable infinite sequences by selecting a suitable orthonormal basis $\{\varphi_j\}_{j \in \N}$, and mapping $f \in l_2(\nu)$ to $\beta_f \in \R^\N$ with $(\beta_f)_j = \int f\varphi_j \d{\nu}$. Then the Sobolev class $W^\alpha([0,1])$ can be identified as all infinite sequences $\beta \in \R^\N$ so that $\sum_{i \in \N} j^{2\alpha}\beta_j^2<\infty$. Let $\mathcal{F}:=\cup_{\alpha>0} W^\alpha([0,1])$, and we assume that $f_0 \in \mathcal{F}$. Let the low dimensional models $\mathcal{P}_m$ be the infinite sequences $\beta$'s for which $\beta_j=0$ if $j>m$. Then $F_m(\underline{X}^n)$ has Euclidean dimension at most $m$ and hence psuedo-dimension at most $m$ (cf. page 15 in \cite{pollard1990empirical}). Now since $f_0 \in \mathcal{F}$, by definition we see that $f \in W^\alpha([0,1])$ for some $\alpha>0$.  With $\pnorm{f}{\alpha}:=\sum_{j=1}^\infty j^{2\alpha}(\beta_f)_j^2$, the approximation error term can be bounded from above by
\beqa
\inf_{g \in \mathcal{P}_m}l_2^2(f_0,g)\leq \sum_{j\geq m+1} (\beta_{f_0})_j^2\leq m^{-2\alpha}\sum_{j\geq m+1} j^{2\alpha}(\beta_{f_0})_j^2\leq m^{-2\alpha}\pnorm{f_0}{\alpha}.
\eeqa
Note that here the smoothness information $\alpha$ and the complexity of approximation class $m$ cannot be decoupled as (\ref{ineq:upper_bound_approximation_error}), and the function class is not homogeneous in classes with different degrees of smoothness so (\ref{ineq:homo_class}) does not apply either. Hence we turn to Theorem  \ref{thm:adaptive_estimator_bound_discrete}. To this end, define
\beqa
\hat{\beta}_{m}\in \argmin_{\beta \in \R^m}\sum_{i=1}^n\bigg(Y_i-\sum_{j=1}^m\beta_j \varphi_j(X_i)\bigg)^2.
\eeqa
Then the estimator is $\hat{f}_m:=\sum_{j=1}^m (\hat{\beta}_m)_j \varphi_j$. Note even if $\hat{\beta}_m$ is not unique, $(\hat{\theta}_m)_i:=\hat{f}_m(X_i)=\sum_{j=1}^m (\hat{\beta}_m)_j \varphi_j(X_i)$ is unique since $\hat{\theta}_m$ is the projection of the vector $Y=(Y_1,\ldots,Y_n)$ onto the linear space  $\{\bm{\Lambda}\beta:\beta \in \R^m\}$ where $\bm{\Lambda}=(\Gamma_{ij})\in \R^{n\times m}$ is defined by $\Gamma_{ij}:=\varphi_j(X_i)$.
\begin{corollary}
	Let the model selection criteria be defined by
	\beqa
	\hat{m}^{ms}:=\argmin_{m \in \N}\bigg({1 \over n}\sum_{i=1}^n(\hat{\theta}_m)_i^2-\frac{2}{n}\sum_{i=1}^n Y_i(\hat{\theta}_m)_i+\frac{16\sigma^2 m}{n}(72\kappa \log n+1)\bigg).
	\eeqa
	Then
	\beqa
	\mathbb{E}\big[l_{\underline{X}^n}^2(\hat{f}_{\hat{m}^{ms}},f_0)\big]\leq \mathfrak{C}_{\kappa,\sigma,\pnorm{f_0}{\alpha}} \left(\frac{\log n}{n}\right)^{2\alpha/(2\alpha+1)}.
	\eeqa
\end{corollary}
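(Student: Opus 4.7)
The plan is to apply Theorem \ref{thm:adaptive_estimator_bound_discrete} directly, with the correct identification of ingredients, and then perform a straightforward bias-variance tradeoff over the index $m$.

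First I would verify the three hypotheses of Theorem \ref{thm:adaptive_estimator_bound_discrete}. The pseudo-dimension bound $D_m = \mathrm{pdim}(F(\underline{X}^n;\mathcal{P}_m)) \leq m$ is immediate from the remark in the paragraph preceding the corollary: $F_m(\underline{X}^n) = \{\bm{\Lambda}\beta: \beta \in \R^m\}$ is a linear subspace of $\R^n$ of dimension at most $m$, and linear spaces of dimension $m$ have pseudo-dimension at most $m$ (cf.\ \cite{pollard1990empirical}, p.~15). The Gaussian noise assumption is built into our setup. For the summability condition, I would take $L_m \equiv 1$, so that
\[
\sum_{m \in \N} \exp(-L_m D_m) \leq \sum_{m \geq 1} e^{-m} = \frac{1}{e-1} =: \Sigma < \infty.
\]
With these choices, the penalty prescribed by Theorem \ref{thm:adaptive_estimator_bound_discrete} becomes
\[
\mathrm{pen}(m) = \frac{\mathfrak{c}_{p,1}\sigma^2 m}{n}(\mathfrak{c}_{p,2}\kappa \log n + 1),
\]
which matches the penalty used in the definition of $\hat m^{ms}$ (with $\mathfrak{c}_{p,1} = 16$, $\mathfrak{c}_{p,2} = 72$ corresponding to the constants from Theorem \ref{thm:adaptive_estimator_bound_discrete}).

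Next I would invoke the conclusion (\ref{ineq:adaptive_estimator_bound_discrete}) of Theorem \ref{thm:adaptive_estimator_bound_discrete} and insert the Sobolev approximation estimate derived in the text, namely $\inf_{g \in \mathcal{P}_m} l_2^2(f_0,g) \leq \|f_0\|_\alpha\, m^{-2\alpha}$ (taking $\nu$ to be the reference measure of the basis). This yields
\[
\mathbb{E}\bigl[l_{\underline{X}^n}^2(\hat{f}_{\hat m^{ms}}, f_0)\bigr]
\leq \inf_{m \in \N}\left(3\|f_0\|_\alpha\, m^{-2\alpha} + \mathfrak{h}_{\kappa,\Sigma}\frac{\sigma^2 m \log n}{n}\right).
\]

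Finally I would perform the optimization. Balancing the two terms gives $m \asymp (n\|f_0\|_\alpha / (\sigma^2\log n))^{1/(2\alpha+1)}$, which when substituted back yields a bound of order $\|f_0\|_\alpha^{1/(2\alpha+1)}(\sigma^2)^{2\alpha/(2\alpha+1)} (\log n / n)^{2\alpha/(2\alpha+1)}$, matching the advertised rate up to a constant $\mathfrak{C}_{\kappa,\sigma,\|f_0\|_\alpha}$ absorbing $\mathfrak{h}_{\kappa,\Sigma}$, $\sigma$, and $\|f_0\|_\alpha$. One should round $m$ to an integer, which only costs a constant factor since $n \geq 7$ ensures the optimizer is at least 1. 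The only potential subtlety — and the step that most deserves care — is confirming that the ambient function space for which the oracle inequality of Theorem \ref{thm:adaptive_estimator_bound_discrete} is proven (discrete $l_2$ risk against a bias measured in $l_\nu$) is compatible with the Sobolev identification used to bound the bias; this is precisely why the basis was chosen orthonormal with respect to $\nu$, making $\inf_{g\in \mathcal{P}_m} l_\nu^2(f_0,g) = \sum_{j>m}(\beta_{f_0})_j^2$ and thus controlled by $\|f_0\|_\alpha m^{-2\alpha}$. No uniform boundedness hypothesis on $f_0$ is needed because the discrete-norm version of Theorem \ref{thm:adaptive_estimator_bound_discrete} does not require one.
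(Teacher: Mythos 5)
Your proposal is correct and follows exactly the paper's intended route: verify that $D_m \le m$ via the linear-space argument, take $L_m\equiv 1$ so that $\Sigma<\infty$, plug the Sobolev approximation bound $\inf_{g\in\mathcal{P}_m}l_\nu^2(f_0,g)\le\|f_0\|_\alpha m^{-2\alpha}$ into the oracle inequality (\ref{ineq:adaptive_estimator_bound_discrete}), and balance. The paper leaves the proof implicit precisely because these are the only ingredients, and you have identified them all, including the observation that the discrete-norm oracle inequality needs no uniform boundedness on $f_0$.
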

It is well-known that $n^{-2\alpha/(2\alpha+1)}$ is the minimax rates of convergence for estimating $f_0 \in W^\alpha([0,1])$ so our estimator achieves optimality up to some logarithmic factors. It is worthwhile to note that adaptation in Sobolev balls and more general Besov balls can be achieved without additional logarithmic factors. See Section 4.3.5 \cite{massart2007concentration} for more details.

\section{Proofs for Section 1}\label{section:proof_intro}
\begin{proof}[Proof of Lemma \ref{lemma:smooth}]
	The case for $d=1$ for trivial so we shall assume $d\geq 2$. We first prove the claim when $\Omega$ is the unit ball $B_d$. Fix some positive $\eta>0$. Consider spherical caps of $B_d$ of height $\eta$. Then the ($d-1$)-dimensional area of such caps are on the order of $\eta^{(d-1)/2}$ and hence we can find $m\lesssim_d\eta^{-(d-1)/2}$ many disjoint caps with center of the cap denoted as $\{x_1,\cdots,x_m\}$. Let $w(\eta)$ denote the $d$-dimensional volume of the spherical cap with height $\eta$. It is well known that $w(\eta)=\frac{\pi^{d/2}}{2\Gamma(1+d/2)}I_{2\eta-\eta^2}\big((d+1)/2,1/2\big)$ where $I_t$ is the regularized incomplete beta function. Thus we can take
	$w(\eta)\lesssim_d\eta^{(d+1)/2}$. Now let $\epsilon=\eta^{(d+1)/2}$ and thus $m\lesssim_d \epsilon^{-(d-1)/(d+1)}$. This shows that $B_d$ satisfies Smoothness Assumption \ref{smooth1}. That $B_d$ satisfies Smoothness Assumption $\ref{smooth2}$ follows by the same argument in Section 2.8 of \cite{gao2015entropy}. Now for any ellipsoid, we can find an affine transformation mapping it to the unit ball.
\end{proof}
\section{Proofs for Section 2}\label{section:proof_minimax}
\begin{proof}[Proof of Theorem \ref{thm:generic_minimax_upper_bound_exponential}]
	We will explicitly construct a theoretical sieved estimator based on metric entropy as follows. For any $\delta>0$, let $\mathcal{F}_\delta\equiv \{f_1,\cdots,f_N\}$ denote a $\delta$-net for $\mathcal{F}$ under the metric $l_\nu$ with $N\equiv N(\delta)\equiv \mathcal{N}(\epsilon,\mathcal{F},l_\nu)$. Now we define our estimator to be
	\beqas
	\hat{f}_{n,\delta}:=\argmin_{f \in \mathcal{F}_\delta}M_n(f)
	\eeqas
	where $M_n$ is the least squares criterion
	$M_n(f)\equiv \sum_{i=1}^n (Y_i-f(X_i))^2$. For the true regression function $f_0$, let $f^\ast =\arg\min_{f \in \mathcal{F}_\delta}l_\nu(f,f_0)$. Note that
	\beqas
	M_n(f)-M_n(f^\ast)
	&=\sum_{i=1}^n\bigg\{\big(f^2(X_i)-(f^\ast(X_i))^2\big)-2f_0(X_i)\big(f(X_i)-f^\ast(X_i)\big)\\
	&\qquad -2\epsilon_i\big(f(X_i)-f^\ast(X_i)\big)\bigg\}\equiv \sum_{i=1}^n Z_i
	\eeqas
	where 
	\beqa\label{def:minimax_exponential_bound_Z}
	Z_i&:=\big(f^2(X_i)-(f^\ast(X_i))^2\big)-2f_0(X_i)\big(f(X_i)-f^\ast(X_i)\big)-2\epsilon_i\big(f(X_i)-f^\ast(X_i)\big)\\
	&= \big(f(X_i)-f^\ast(X_i)\big)\big(f(X_i)+f^\ast(X_i)-2f_0(X_i)\big)-2\epsilon_i\big(f(X_i)-f^\ast(X_i)\big)\\
	&=\big(f(X_i)-f^\ast(X_i)\big)^2-2\epsilon_i\big(f(X_i)-f^\ast(X_i)\big)\\
	&\qquad\qquad +2\big(f(X_i)-f^\ast(X_i)\big)\big(f^\ast(X_i)-f_0(X_i)\big).
	\eeqa
	Now for any $\epsilon>0$,
	\beqa\label{ineq:minimax_exponential_bound_1}
	\mathbb{P}\big(l_\nu^2(\hat{f}_{n,\delta},f_0)\geq \epsilon^2|\underline{X}^n\big)&\leq \sum_{f \in \mathcal{F}_\delta,l_\nu^2(f,f_0)\geq \epsilon^2}\mathbb{P}\big(M_n(f)\leq M_n(f^\ast)|\underline{X}^n\big)\\
	&\leq N\mathbb{P}\big(\sum_{i=1}^n Z_i\leq 0|\underline{X}^n\big)\leq N\prod_{i=1}^n\mathbb{E}\big[\exp(-uZ_i)|X_i\big]
	\eeqa
	holds for any $u>0$. Here in the last inequality we have used Markov's inequality and independence of $Z_i$'s conditioned on $X_i$'s. Now we shall control $\mathbb{E}\big[\exp(-uZ_i)|X_i\big]$. By (\ref{def:minimax_exponential_bound_Z}) we see that
	\beqa\label{ineq:minimax_exponential_bound_7}
	\mathbb{E}\big[\exp(-uZ_i)|X_i\big]&=\exp\bigg[-u\big(f(X_i)-f^\ast(X_i)\big)^2\bigg]\mathbb{E}\exp\bigg[2u\epsilon_i\big(f(X_i)-f^\ast(X_i)\big)\bigg\lvert \underline{X}^n\bigg]\\
	&\qquad\qquad \times \exp\bigg[-2u\big(f(X_i)-f^\ast(X_i)\big)\big(f^\ast(X_i)-f_0(X_i)\big)\bigg]\\
	&\leq \exp\bigg[(-u+2\sigma^2u^2)\big(f(X_i)-f^\ast(X_i)\big)^2\\
	&\qquad -2u\big(f(X_i)-f^\ast(X_i)\big)\big(f^\ast(X_i)-f_0(X_i)\big)\bigg].
	\eeqa
	Now by taking $u = 1/4\sigma^2>0$, the above display can be further bounded by
	\beqa\label{ineq:minimax_exponential_bound_8}
	&\exp\bigg[-\frac{1}{8\sigma^2}\bigg(\big(f-f^\ast\big)^2(X_i)+4(f-f^\ast)(f^\ast-f_0)(X_i)\bigg)\bigg]\\
	&\leq  \exp\bigg[-\frac{1}{8\sigma^2}\bigg({1 \over 2}(f-f^\ast)^2(X_i)-8(f^\ast-f_0)^2(X_i)\bigg)\bigg].
	\eeqa
	The last inequality follows from the fact that for all $a,b \in \R$, it holds that $a^2+4ab\geq a^2/2-8b^2$.
	Now it follows from (\ref{ineq:minimax_exponential_bound_1}), (\ref{ineq:minimax_exponential_bound_7}) and (\ref{ineq:minimax_exponential_bound_8}) that
	\beqa\label{ineq:minimax_exponential_bound_9}
	&\mathbb{P}\big(l_\nu^2(\hat{f}_{n,\delta},f_0)\geq \epsilon^2\big)\\
	&\leq N\mathbb{E}\bigg[\prod_{i=1}^n \exp\bigg[-\frac{1}{8\sigma^2}\bigg({1 \over 2}(f-f^\ast)^2(X_i)-8(f^\ast-f_0)^2(X_i)\bigg)\bigg]\bigg\lvert\underline{X}^n\bigg]\\
	&=N\bigg(\mathbb{E}\exp\bigg[-\frac{1}{8\sigma^2}\bigg({1 \over 2}(f-f^\ast)^2(X_1)-8(f^\ast-f_0)^2(X_1)\bigg)\bigg]\bigg)^n.
	\eeqa
	The last equality holds since $X_i$'s are i.i.d. random variables. By convexity of $u \mapsto e^u$ on the interval $[-5\Gamma^2/\sigma^2,0]$, we see that the inequality
	\beqas
	e^{u}\leq 1+\frac{\sigma^2}{5\Gamma^2}\bigg(1-\exp\bigg(-\frac{5\Gamma^2}{\sigma^2}\bigg)\bigg)u
	\eeqas
	holds for all $u \in [-5\Gamma^2/\sigma^2,0]$. In particular, let 
	\beqas
	u= -\frac{1}{8\sigma^2}\bigg({1 \over 2}(f-f^\ast)^2(X_1)-8(f^\ast-f_0)^2(X_1)\bigg).
	\eeqas
	Then it follows from (\ref{ineq:minimax_exponential_bound_9}) that
	\beqa\label{ineq:minimax_exponential_bound_10}
	\mathbb{P}\big(l_\nu^2(\hat{f}_{n,\delta},f_0)\geq \epsilon^2\big)&\leq N\bigg(1-2\mathfrak{z}_0l_\nu^2(f,f^\ast)+32\mathfrak{z}_0l_\nu^2(f^\ast,f_0)\bigg)^2\\
	&\leq N\bigg(1-\mathfrak{z}_0l_\nu^2(f,f_0)+34\mathfrak{z}_0l_\nu^2(f^\ast,f_0)\bigg)^2
	\eeqa
	where in the last inequality we used the triangle inequality $l_\nu^2(f,f^\ast)\geq {1 \over 2}l_\nu^2(f,f_0)-l_\nu^2(f^\ast,f_0)$. Here
	\beqa\label{ineq:minimax_exponential_bound_zeta}
	\mathfrak{z}_0&:=\frac{1}{160\Gamma^2}\bigg(1-\exp\bigg(-\frac{5\Gamma^2}{\sigma^2}\bigg)\bigg).
	\eeqa
	Note that by (\ref{ineq:minimax_exponential_bound_1}), $f\in \mathcal{F}_\delta$ is chosen so that $l_\nu^2(f,f_0)\geq \epsilon^2$, and $l_\nu^2(f^\ast,f_0)\leq \delta^2$ by definition of $f^\ast$. Therefore (\ref{ineq:minimax_exponential_bound_10}) can be further bounded by
	\beqa\label{ineq:minimax_exponential_bound_11}
	\mathbb{P}\big(l_\nu^2(\hat{f}_{n,\delta},f_0)\geq \epsilon^2\big)& \leq  N\big(1-\mathfrak{z}_0\epsilon^2+34\mathfrak{z}_0\delta^2\big)^n\\
	&\leq \exp\bigg(\log N+n\log\big(1-\mathfrak{z}_0\epsilon^2+34\mathfrak{z}_0\delta^2\big)\bigg)\\
	&\leq \exp\bigg(\log N(\delta)-n\mathfrak{z}_0\epsilon^2+34n\mathfrak{z}_0\delta^2\bigg).
	\eeqa
	Here in the last inequality (\ref{ineq:minimax_exponential_bound_11}) we have used the inequality $\log(1+x)\leq x$ for all $x>-1$. Now we let 
	\[
	\delta_n:=\argmin_{\delta>0}\big(\log N(\delta)+34\mathfrak{z}_0 n\delta^2\big)
	\]
	and 
	\beqas
	r_n:=\inf_{\delta>0}\bigg(\frac{1}{\mathfrak{z}_0n}\log N(\delta)+34\delta^2\bigg)=\frac{1}{\mathfrak{z}_0n}\log N(\delta_n)+34\delta_n^2.
	\eeqas
	Then by (\ref{ineq:minimax_exponential_bound_11}) it follows that
	\beqas
	\mathbb{P}\big(l_\nu^2(\hat{f}_{n,\delta_n},f_0)\geq \epsilon^2\big)\leq \exp\big(-\mathfrak{z}_0n(\epsilon^2-r_n)\big).
	\eeqas
	Setting $\epsilon^2\equiv r_n+t/n$ yields the conclusion.
\end{proof}

\begin{proof}[Proof of Corollary \ref{cor:generic_minimax_upper_bound}]
	Note that $R_\nu(n;\mathcal{F})\leq \mathbb{E}\big[l_\nu^2(\hat{f}_n,f_0)\big]$ where $\hat{f}_n$ is the estimator constructed in Theorem \ref{thm:generic_minimax_upper_bound_exponential}. Then by Fubini's theorem it follows that
	\beqas
	\mathbb{E}\big[l_\nu^2(\hat{f}_n,f_0)\big]&=\int_0^\infty \mathbb{P}\big[l_\nu^2(\hat{f}_n,f_0)>u\big]\ \d{u}\\
	& \leq \int_0^{r_n}\ \d{u}+\int_{r_n}^\infty \mathbb{P}\big[l_\nu^2(\hat{f}_n,f_0)>u\big]\ \d{u}\\
	&= r_n +\int_0^\infty \mathbb{P}\big[n(l_\nu^2(\hat{f}_n,f_0)-r_n)>nv\big]\ \d{v}\leq r_n+\frac{1}{\mathfrak{z}_0 n}.
	\eeqas
	The proof is complete.
\end{proof}

\begin{proof}[Proof of Theorem \ref{thm:upper_bound_poly}]
	We first consider $\nu$ to be the canonical Lebesgue measure. By Lemma \ref{lem:entropy_cvx_func}, for a polytopal region $\Omega \in \mathscr{P}_k$,  solving $r_n$ as defined in Theorem \ref{thm:generic_minimax_upper_bound_exponential} we find that
	\beqa
	r_n=C_d\big(k(\sqrt{\abs{\Omega}}\Gamma)^{d/2}/\mathfrak{z}_0\big)^{4/(d+4)}n^{-4/(d+4)}.
	\eeqa
	Similarly for smooth region $\Omega$, we have
	\beqa
	r_n=C_d
	\begin{cases}
		\big(\sqrt{\abs{\Omega}}\Gamma/\mathfrak{z}_0\big)^{2/3}n^{-2/3} (\log n)& d=2;\\
		\big((\sqrt{\abs{\Omega}}\Gamma)^{(d-1)}/\mathfrak{z}_0\big)^{2/(d+1)}n^{-2/(d+1)} & d\geq 3.
	\end{cases}
	\eeqa
	as desired. Here for $d=2$ we require $n$ large enough. Now for general $\nu$, it can replaced by the Lebesgue measure with a price of an extra term $\nu_{\max}$ in the final bound. Hence by Corollary \ref{cor:generic_minimax_upper_bound}, for a polytopal domain  $\Omega \in \mathscr{P}_k$, we have
	\beqa\label{const:minimax_upper_bound_1}
	R_\nu(n;\mathcal{C}(\Gamma))\leq \bigg(C_d\big(k(\sqrt{\abs{\Omega}}\Gamma)^{d/2}/\mathfrak{z}_0\big)^{4/(d+4)}\vee \mathfrak{z}_0^{-1}\bigg)\nu_{\max}n^{-4/(d+4)}.
	\eeqa
	For smooth domain $\Omega$, we have
	\beqa\label{const:minimax_upper_bound_2}
	R_\nu(n;\mathcal{C}(\Gamma))\leq C_d 
	\begin{cases}
		\big(\big(\sqrt{\abs{\Omega}}\Gamma/\mathfrak{z}_0\big)^{2/3}\vee \mathfrak{z}_0^{-1}\big)\nu_{\max}n^{-2/3} (\log n) & d=2;\\
		\big(\big((\sqrt{\abs{\Omega}}\Gamma)^{(d-1)}/\mathfrak{z}_0\big)^{2/(d+1)}\vee \mathfrak{z}_0^{-1}\big)\nu_{\max}n^{-2/(d+1)} & d\geq 3.
	\end{cases}
	\eeqa
\end{proof}

The proofs for Theorems \ref{thm:minimax_lower_bound_poly} and \ref{thm:minimax_lower_bound_cvxbody} will make use of Assouad's lemma (cf. Lemma 24.3, \cite{van2000asymptotic}) so we briefly describe the machinary below. For two probability measures $\mathbb{P},\mathbb{Q}$, let $K(\mathbb{P},\mathbb{Q})$  and $\pnorm{\mathbb{P}-\mathbb{Q}}{TV}$ denote the Kullback-Leibler divergence and the total variation distance between $\mathbb{P}$ and $\mathbb{Q}$, respectively. Assuoad's lemma asserts that for each $m \in \N$, and any class of test functions $\{f_\tau\in \mathcal{F}\}_{\tau \in \{0,1\}^m}$, the following lower bound holds:
\beqas
R_\nu(n;\mathcal{F})\geq \frac{m}{8}\min_{\tau\neq \tau'}\frac{l_\nu^2(f_\tau,f_{\tau'})}{H(\tau,\tau')}\min_{H(\tau,\tau')=1}\big(1-\pnorm{\mathbb{P}_{f_\tau}-\mathbb{P}_{f_{\tau'}}}{TV}\big).
\eeqas
Here $H(\tau,\tau')$ denotes the Hamming distance between $\tau,\tau'\in \{0,1\}^m$. Note that
\beqas
\pnorm{\mathbb{P}_{f_\tau}-\mathbb{P}_{f_{\tau'}}}{TV}^2\leq {1 \over 2}K(\mathbb{P}_{f_\tau},\mathbb{P}_{f_{\tau'}})=\frac{n}{4\sigma^2}l_\nu^2(f_\tau,f_{\tau'}),
\eeqas
where the first inequality follows from Pinsker's inequality and the second follows by straightforward conditioning arguments. Morever, the test functions are usually constructed with `separate support' so that \newline
$\min_{\tau\neq \tau'}{l_\nu^2(f_\tau,f_{\tau'})}/{H(\tau,\tau')}=\min_{H(\tau,\tau')=1}l_\nu^2(f_\tau,f_{\tau'})$. Hence in this scenario
\beqa\label{ineq:generic_minimax_lower_bound}
R_\nu(n;\mathcal{F})\geq \frac{m}{8}\min_{H(\tau,\tau')=1}l_\nu^2(f_\tau,f_{\tau'})\bigg(1-\sqrt{\frac{n}{4\sigma^2}\max_{H(\tau,\tau')=1}l_\nu^2(f_\tau,f_{\tau'}})\bigg).
\eeqa
Thus to derive sharp lower bounds it is essential to obtain two-sided estimates of $l_\nu^2(f_\tau,f_{\tau'})$ with matching order in terms of the size of the cube $m$.

\begin{proof}[Proof of Theorem \ref{thm:minimax_lower_bound_poly}]
	We first assume that the domain $\Omega$ is $[0,1]^d$. The class of functions we construct is similar to the class constructed in Section 2.9 of  \cite{gao2015entropy}. Choose a fixed function $g_0$ on $[0,1]^d$ so that the following properties hold:
	\begin{enumerate}
		\item $l_\infty$ boundedness: $0\leq g_0\leq 1/20$;
		\item $l_1$ boundedness: $\pnorm{g_0}{l_1}\geq 1/80d$;
		\item For every $x \in [0,1]^d$, the Hessian matrix $\nabla^2 g_0(x)$ is diagonal with each entry bounded by $1$.
	\end{enumerate}
	Such a function exists; for example we can take
	\beqas
	g_0(x)=\frac{1}{20d}\sum_{i=1}^d \sin^3(\pi x_i)\bm{1}_{[0,1]^d}(x).
	\eeqas
	Now for $I=(i_1,\cdots,i_d)\in \mathbb{Z}^d$, let $B_I:=\prod_{j=1}^d [i_j\epsilon,(i_j+1)\epsilon]$, and $\mathcal{I}:=\{I:B_I\subset[0,1]^d\}$. For fixed $\epsilon>0$, define a local function $g_0^{(I)}$ supported on $B_I$ as follows:
	\beqas
	g_0^{(I)}(x):=\epsilon^2g_0\bigg(\frac{x_1-i_1\epsilon}{\epsilon},\cdots,\frac{x_d-i_d\epsilon}{\epsilon}\bigg).
	\eeqas
	Then it is easy to see that $0\leq g_0^{(I)}\leq \epsilon^2/20$ and $\pnorm{g_0^{(I)}}{l_1}\geq \epsilon^{d+2}/80d$. It follows by the  Cauchy-Schwarz inequality that 
	\[
	\frac{\epsilon^{d+4}}{6400d^2}\leq\pnorm{g_0^{(I)}}{l_2}^2\leq\frac{\epsilon^{d+4}}{400}.
	\]
	Now for a general polytope $\Omega \in \mathscr{P}_k$, write $\Omega=\cup_{i=1}^k \Omega_i$ where all the $\Omega_i$'s are simplices. Suppose $R_i\subset \Omega_i$'s are inscribed hypercubes, and $\psi_i$'s are the linear maps that take $R_i$'s to $[0,1]^d$. It is easy to see that $\det \psi_i=\abs{R_i}^{-1}$. Then for $I=(i_1,\cdots,i_d)\in \mathbb{Z}^d$, let $B_{I,i}:=\psi_i^{-1}(B_I)$, $\mathcal{I}_i:=\{I: B_{I,i}\subset R_i\}$ and 
	\beqas
	g_{0,i}^{(I)}(x):=\abs{R_i}^{2/d}g_0(\psi_i(x)).
	\eeqas
	Then the Hessian of $g_{0,i}^{(I)}$ is still a diagonal matrix with each entry bounded by $1$, and 
	\[
	\frac{\abs{R_i}^{(d+4)/d}\epsilon^{d+4}}{6400d^2}\leq \pnorm{g_{0,i}^{(I)}}{l_2}^2 \leq \frac{\abs{R_i}^{(d+4)/d}\epsilon^{d+4}}{400}.
	\]
	Consider the collection of indices $\mathcal{I}:=\cup_{i=1}^k \mathcal{I}_i$ and corresponding functions $\cup_{i=1}^k\{g_{0,i}^{(I_i)}\}$. 
	Since $\abs{\mathcal{I}_i}=\epsilon^{-d}$, we set $m=\abs{\mathcal{I}}=k\epsilon^{-d}$. Note that $\mathcal{I}$ can be identified with the coordinates of $\{0,1\}^m$ so we shall use this convention in the sequel. Let $w(\Omega):=\sup_{x,y\in \Omega}\pnorm{x-y}{2}$ be the width of $\Omega$. 
	By translation we may assume that $\Omega \subset B_2(0,w(\Omega))$ without loss of generality. For any $\tau \in \{0,1\}^m$, set $g_\tau:=\sum_{i=1}^m g_{0,\pi(i)}^{(\tau_i)}\bm{1}_{\tau_i=1}$ where $\tau_i\in \mathcal{I}_{\pi(i)}$ and
	\beqas
	f_\tau(x)=\frac{\Gamma}{(w(\Omega))^2}\big(\pnorm{x}{2}^2-g_\tau(x)\big).
	\eeqas
	Then clearly $f_\tau \in \mathcal{C}(\Gamma)$ by construction. For two indices $\tau,\tau' \in \{0,1\}^m$ with Hamming distance $1$, we see that
	\beqas
	\frac{\min_i\abs{R_i}^{(d+4)/d}\Gamma^2}{6400d^2(w(\Omega))^4}\nu_{\min}\epsilon^{d+4}\leq
	l_\nu^2(f_\tau,f_{\tau'})\leq \frac{\max_i\abs{R_i}^{(d+4)/d}\Gamma^2}{400(w(\Omega))^4}\nu_{\max}\epsilon^{d+4}.
	\eeqas
	Now apply Assouad's lemma (\ref{ineq:generic_minimax_lower_bound}) to see that
	\beqas
	R_\nu(n;\mathcal{C}(\Gamma))&\geq \frac{k\Gamma^2\min_i\abs{R_i}^{(d+4)/d}}{51200d^2(w(\Omega))^4}\nu_{\min}\epsilon^{4}\\
	&\qquad\qquad\times \bigg(1-\sqrt{\frac{n}{1600\sigma^2}\frac{\Gamma^2\max_i\abs{R_i}^{(d+4)/d}}{(w(\Omega))^4}\nu_{\max}k\epsilon^{d+4}}\bigg).
	\eeqas
	Choosing $\epsilon = (400\sigma^2(w(\Omega))^4/\Gamma^2\max_i\abs{R_i}^{(d+4)/d}\nu_{\max})^{1/(d+4)}n^{-1/(d+4)}$ we conclude that
	\beqa\label{const:minimax_lower_bound_poly}
	R_\nu(n;\mathcal{C}(\Gamma))\geq \frac{400^{4/(d+4)}}{102400d^2} k\pi(\Omega) \Gamma^{2d/(d+4)}\nu_{\min}\bigg(\frac{\sigma^2}{\nu_{\max}n}\bigg)^{4/(d+4)}.
	\eeqa
	Here 
	\beqas
	\pi(\Omega)\equiv (w(\Omega))^{-4d/(d+4)}\sup_{\{R_i\subset \Omega_i\}_{i=1}^k} \bigg(\frac{\min_i\abs{R_i}^{(d+4)/d}}{\max_i\abs{R_i}^{4/d}}\bigg)
	\eeqas
	where the supremum is taken over all inscribed hypercubes.
\end{proof}

\begin{proof}[Proof of Theorem \ref{thm:minimax_lower_bound_cvxbody}]
	Fix some positive $\epsilon>0$ small enough. By Smoothness Assumption \ref{smooth1}, we can find pairwise disjoint caps $\{C_i\}_{i=1}^m$ so that $\abs{C_i}\lesssim_d \epsilon\abs{\Omega}$ and $m\asymp_d (\epsilon\abs{\Omega})^{-(d-1)/(d+1)}$. Now write the caps $C_i=\{x \in \R^d : x\cdot x_i\geq a_i\}$ for some $x_i \in \R^d$ and $a_i \in \R$. For $1\leq i\leq m$, define
	\beqa
	h_i(x):=\frac{\Gamma \big(x\cdot x_i-a_i\big)\bm{1}_{C_i}(x)}{\sup_{y \in C_i}(y\cdot x_i-a_i)}.
	\eeqa
	Note that $h_i(\cdot)$ is a non-negative affine function supported only on the cap $C_i$ and is bounded by $\Gamma$. Now for any $\bm{\tau},\bm{\tau'}\in\{0,1\}^m$, we have
	\beqa\label{ineq:minimax_lower_bound_smooth}
	l_\nu^2(f_{\bm{\tau}},f_{\bm{\tau}'})&\lesssim_d \Gamma^2H(\bm{\tau},\bm{\tau}')\nu_{\max}\epsilon;\\
	l_\nu^2(f_{\bm{\tau}},f_{\bm{\tau}'})&\gtrsim_d \Gamma^2H(\bm{\tau},\bm{\tau}')\nu_{\min}\epsilon.
	\eeqa
	By an application of Assouad's lemma we conclude that
	\beqas
	R_\nu(n)\geq c_1\abs{\Omega}^{-(d-1)/(d+1)}\epsilon^{-(d-1)/(d+1)} \Gamma^2\nu_{\min}\epsilon\bigg(1-\sqrt{c_2\frac{n}{4\sigma^2}\Gamma^2\nu_{\max}\epsilon}\bigg)
	\eeqas
	where $c_1,c_2$ are constants only depending on $d$.
	By choosing 
	\beqas
	\epsilon:=c_2^{-1}(\sigma^2/\Gamma^2)\nu_{\max}^{-1}n^{-1},
	\eeqas
	we conclude that
	\beqa\label{const:minimax_lower_bound_smooth}
	R_\nu(n)\geq \mathfrak{C}_d \abs{\Omega}^{-(d-1)/(d+1)} \Gamma^{(2d-2)/(d+1)}\nu_{\min}\bigg(\frac{\sigma^2}{\nu_{\max}n}\bigg)^{2/(d+1)}.
	\eeqa
	The proof is complete.
\end{proof}

\begin{proof}[Proof of Theorem \ref{thm:minimax_lower_bounds_simple}]
	By Theorem 2.7 page 101 in \cite{tsybakov2008introduction}, we need to construct a family of $\{f_0,f_1,\cdots,f_M\}\subset \mathcal{P}_k(\Gamma)$ such that the following conditions hold:
	\begin{enumerate}
		\item[(1)] $l_\nu^2(f_i,f_j)\geq 2s$ holds for all $i\neq j$;
		\item[(2)] Let $\mathbb{P}_j$ be the probability measure for $(X,Y)$ when the regression function is $f_j$, then
		\beqas
		\frac{1}{M+1}\sum_{j=1}^M K(\mathbb{P}_j,\mathbb{P}_0)\leq \alpha \log M
		\eeqas
		holds for some $0<\alpha<1$.
	\end{enumerate}
	Note that $K(\mathbb{P}_j,\mathbb{P}_0)=\frac{n}{2\sigma^2}l_\nu^2(f_j,f_0)$. Then (2) can be replaced with (2'):
	\begin{enumerate}
		\item[(2')] The following holds for some $0<\alpha<1$:
		\beqas
		\frac{n}{2\sigma^2(M+1)}\sum_{j=1}^M l_\nu^2(f_j,f_0)\leq \alpha \log M. 
		\eeqas
	\end{enumerate}
	Now by the same construction as in the proof of Theorem \ref{thm:minimax_lower_bound_cvxbody} we get $\{h_i\}_{i=1}^m$ where $m\asymp_d\epsilon^{-(d-1)/(d+1)}$. Let $M=\lfloor m/k\rfloor$, and write
	\beqas
	c_1^{-1} k^{-1}\epsilon^{-(d-1)/(d+1)}\leq M\leq c_1 k^{-1}\epsilon^{-(d-1)/(d+1)}.
	\eeqas
	Now for $i=1,\cdots, M$, define
	\beqa
	f_i(x)=\sum_{j=(i-1)k+1}^{ik} h_j(x).
	\eeqa
	For $i\neq j$, 
	\beqas
	c_2^{-1}\Gamma^2 \nu_{\min}k\epsilon\leq  l_\nu^2(f_i,f_j)&\leq c_2 \Gamma^2 \nu_{\max} k\epsilon.
	\eeqas
	Now (2') will be satisfied if 
	\beqas
	\frac{M}{M+1}\frac{n}{2\sigma^2}c_2\Gamma^2\nu_{\max} k\epsilon \leq  \frac{\alpha(d-1)}{(d+1)}\log \bigg(\frac{c_3}{k^{(d-1)/(d+1)}\epsilon}\bigg)
	\eeqas
	where $c_3=c_1^{-(d-1)/(d+1)}$.  Since $M\geq 1$ we only have to ensure that
	\beqas
	\frac{n}{\sigma^2/\Gamma^2} \nu_{\max}k\epsilon\leq  c_4\alpha \log \bigg(\frac{c_3}{k^{(d-1)/(d+1)}\epsilon}\bigg).
	\eeqas
	Choose $\epsilon=c_5\frac{\sigma^2}{\Gamma^2\nu_{\max}} \frac{\log n }{n}$ for $c_5$ small enough. Then the above display holds for $n$ large enough depending on $k,d,\alpha$. Then with
	\beqas
	s \asymp_d   \sigma^2 \nu_{\max}^{-1}\nu_{\min}\frac{k\log n}{n},
	\eeqas
	it follows by Theorem 2.7 in \cite{tsybakov2008introduction} that
	\beqas
	\inf_{\hat{f}_n}\sup_{f \in \mathcal{P}_k(\Gamma)}\mathbb{P}_f\big(l_\nu^2(\hat{f}_n,f)\geq s\big)\geq \bigg(\frac{\log(M+1)-\log 2}{\log M}-\alpha\bigg)\geq 1-\alpha-\log 2/ \log M.
	\eeqas
	Choose $\alpha = 1/2-\log 2/\log 10$, then for $n$ large enough depending through $d,\sigma, \Gamma,\nu$, the value of $M$ exceeds $10$, and hence the right hand side of the above display $\geq 1/2$. This completes the proof.
\end{proof}

\section{Proofs for Section 3}

We shall first prove  Theorem \ref{thm:risk_bounds_random_design}. To this end, we will need some tools from empirical process theory. To fix notation, let $X_1,X_2,\ldots$ and $\epsilon_1,\epsilon_2,\ldots$ be coordinate projections of $(\Omega^\infty,\mathcal{B}^\infty,P_\nu^\infty)$ and $(\R^\infty,\mathcal{A}^\infty,P_\epsilon^\infty)$.  Now for a function $f:\Omega\subset \R^d \to \R$ and $g:\R \to \R$, let $\mathbb{P}_n (f\otimes g):={1 \over n}\sum_{i=1}^n f(X_i)g(\epsilon_i)$, $P(f\otimes g):=(P_\nu f)\cdot (P_\epsilon g)$ and 
\beqas
\mathbb{G}_n (f \otimes g):=\sqrt{n}\big(\mathbb{P}_n-P)(f\otimes g)={1 \over \sqrt{n}}\sum_{i=1}^n \big(f(X_i)g(\epsilon_i)-(P_\nu f)(P_\epsilon g)\big).
\eeqas
Let $e:\R\to \R$ denote the identity map, and $1:\R \to \R$ denote the map so that $1(x)\equiv 1$ for all $x \in \R$. We will use the abbreviations $\mathbb{P}_n f, \mathbb{G}_n f$ when $g\equiv 1$, i.e. $\mathbb{P}_n f\equiv \mathbb{P}_n (f\otimes 1) ={1 \over n}\sum_{i=1}^n f(X_i)$ and $\mathbb{G}_n f\equiv \mathbb{G}_n (f\otimes 1)={1 \over \sqrt{n}}\sum_{i=1}^n \big(f(X_i)-P_\nu f)$. For classes $\mathcal{F}, \mathcal{G}$ of measurable functions $f:\Omega\to \R$ and $g:\R \to \R$, let $\pnorm{\mathbb{G}_n}{\mathcal{F}\otimes \mathcal{G}}:=\sup_{f \in \mathcal{F}, g \in \mathcal{G}}\abs{\mathbb{G}_n (f\otimes g)}$. $\pnorm{\mathbb{G}_n}{\mathcal{F}}$ is used to denote $\pnorm{\mathbb{G}_n}{\mathcal{F}\otimes \{1\}}$. For any $f: \Omega \to \R$ and $g:\R \to \R$, define the Bernstein `norm' as follows:
\beqas
\pnorm{f\otimes g}{P,B}:=\big(2P(\exp(\abs{f\cdot g})-1-\abs{f\cdot g})\big)^{1/2}.
\eeqas
We first state a uniform inequality.
\begin{lemma}[Theorems 5.11 and 8.13, \cite{van2000empirical}]\label{lem:inequality_sup_empirical_process}
	Let $\mathcal{H}$ be a collection of functions defined on $(\mathcal{T},\mathcal{B}_{\mathcal{T}},P)$. Suppose $\pnorm{h}{P,B}\leq R$ holds for all $h \in \mathcal{H}$. For $t>0$ satisfying
	\beqa\label{ineq:uniform_inequality_1}
	t\leq 8\sqrt{n}(R^2\wedge R),
	\eeqa
	and
	\beqa\label{ineq:uniform_inequality_2}
	t\geq 3C\bigg(\int_{t/2^6\sqrt{n}}^R \sqrt{\log\mathcal{N}_{[\,]}(\epsilon,\mathcal{H},\pnorm{\cdot}{P,B})}\ \d{\epsilon} \vee R\bigg),
	\eeqa
	the following deviation bound holds:
	\beqas
	\mathbb{P}\left(\pnorm{\mathbb{G}_n}{\mathcal{H}}>t\right)\leq C\exp\left(-\frac{t^2}{9C^2R^2}\right).
	\eeqas
	Here $C$ is a universal constant.
\end{lemma}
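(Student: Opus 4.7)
The plan is to reconstruct the standard peeling/chaining argument over bracketing entropy, but adapted from the bounded or sub-Gaussian setting to the sub-exponential setting that is built into the Bernstein norm $\pnorm{\cdot}{P,B}$. First I would fix a geometrically decreasing sequence of resolutions $\delta_k := R \cdot 2^{-k}$ for $k=0,1,\ldots,K$, and for each $k$ take a minimal $\delta_k$-bracketing $\{[l_j^{(k)},u_j^{(k)}]\}_{j=1}^{N_k}$ of $\mathcal{H}$ in the Bernstein norm, where $N_k := \mathcal{N}_{[\,]}(\delta_k,\mathcal{H},\pnorm{\cdot}{P,B})$. For each $h \in \mathcal{H}$ I would pick an approximating function $h_k$ from the $k$-th bracketing, so $\pnorm{h-h_k}{P,B}\leq \delta_k$, and telescope $h = h_0 + \sum_{k=0}^{K-1}(h_{k+1}-h_k) + (h-h_K)$.

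The second step applies a Bernstein-type tail bound to each link $\mathbb{G}_n(h_{k+1}-h_k)$. The defining property of $\pnorm{\cdot}{P,B}$ is that $\pnorm{g}{P,B}\leq r$ yields an exponential moment bound of the form $P(e^{\lambda g}-1-\lambda g) \leq \lambda^2 r^2/2$ for $|\lambda|$ small enough, which in turn gives a Bernstein inequality $\mathbb{P}(|\mathbb{G}_n g|\geq x)\leq 2\exp(-x^2/(C(r^2 + xr/\sqrt n)))$. Applied to increments $g = h_{k+1}-h_k$ with $\pnorm{g}{P,B}\leq \delta_k+\delta_{k+1}\leq 2\delta_k$, combined with a union bound over the at most $N_k N_{k+1}\leq N_{k+1}^2$ possible increments, I get $\mathbb{P}(\sup_h |\mathbb{G}_n(h_{k+1}-h_k)|>\eta_k)\leq 2N_{k+1}^2\exp(-c\eta_k^2/\delta_k^2)$ whenever $\eta_k$ lies in the sub-Gaussian regime of this Bernstein inequality.

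The third step is to choose an allocation $\{\eta_k\}$ with $\sum_k\eta_k \leq t$ such that all these exponential bounds collapse to $C\exp(-t^2/(9C^2R^2))$. The classical choice $\eta_k \asymp \delta_k\sqrt{\log N_{k+1}}$ equalizes the exponents, so that the summed probability becomes a geometric series dominated by its first term, and the constraint $\sum_k\eta_k \leq t$ reduces to the bracketing entropy integral $\int_{\delta_K}^{R}\sqrt{\log \mathcal{N}_{[\,]}(\epsilon,\mathcal{H},\pnorm{\cdot}{P,B})}\,\d{\epsilon}\lesssim t$, which is exactly assumption \eqref{ineq:uniform_inequality_2} once one sets $\delta_K \asymp t/(2^6\sqrt n)$. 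The lower truncation is the natural stopping scale: once $\delta_K$ is this small, the tail residual $\sup_h|\mathbb{G}_n(h-h_K)|$ can be controlled directly by the Poissonian branch of Bernstein (where $x/\sqrt n$ dominates $r^2$), rather than by further chaining.

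The main obstacle is the careful bookkeeping at the interface between the sub-Gaussian and Poissonian tails of Bernstein's inequality. Assumption \eqref{ineq:uniform_inequality_1}, $t \leq 8\sqrt n(R^2\wedge R)$, is precisely the restriction keeping $t$ in the sub-Gaussian regime so that the final bound takes the clean form $C\exp(-t^2/(9C^2R^2))$ without a Poissonian contribution; tracking the absolute constants $C$, $64$, and $9$ requires one to re-examine the constant in each inequality (Bernstein, union bound, chain summation, and the geometric peeling of the entropy integral). The complete argument is carried out in Theorems~5.11 and 8.13 of \cite{van2000empirical}, from which I would import the exact constants, and the remainder is standard.
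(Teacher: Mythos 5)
The paper does not prove this lemma at all: it is stated as an import of Theorems 5.11 and 8.13 of van de Geer (2000), so there is no in-paper argument to compare against. Your sketch is a reasonable reconstruction of the strategy behind van de Geer's proof: adaptive truncation, chaining across geometrically decreasing Bernstein-norm bracketing scales, a Bernstein inequality for each increment (valid because the Bernstein norm bounds $P(\exp|g|-1-|g|)$, hence all moments), an allocation $\eta_k \asymp \delta_k\sqrt{\log N_{k+1}}$ turning the chain budget into the bracketing entropy integral, and a stopping scale at $\delta_K\asymp t/(2^6\sqrt n)$ where the Poissonian branch takes over. You have also correctly identified the roles of the two hypotheses: \eqref{ineq:uniform_inequality_2} is the entropy-integral budget and \eqref{ineq:uniform_inequality_1} keeps $t$ in the sub-Gaussian regime so the final bound has the clean Gaussian form.

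One genuine imprecision: your chaining is phrased as if one could ``pick an approximating function $h_k$'' from each scale, as in a covering argument. With \emph{bracketing} entropy that is not literally available; one has pairs $l_k \le h \le u_k$ with $\pnorm{u_k-l_k}{P,B}\le\delta_k$, and $\mathbb{G}_n$ is not monotone in its argument, so the decomposition must be done on the upper and lower envelopes separately, with bias terms of the form $\sqrt n\,P(u_k-l_k)$ explicitly controlled at each scale. This is exactly the extra bookkeeping that the cited Theorems 5.11 and 8.13 carry out, and it is where the specific constants (the $2^6$ in the lower truncation, the $3C$ and $9C^2$) are generated; deferring them to the reference is fine, but the bracket-vs.-cover distinction is not merely a constant-tracking issue — it changes the structure of the chain. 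With that caveat, the sketch is sound and matches the approach of the result the paper cites.
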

In our specific application, $\mathcal{H}$ has the form $\mathcal{H}=\mathcal{F}\otimes \mathcal{G}$.
\begin{proof}[Proof of Theorem \ref{thm:risk_bounds_random_design}]
	The proof is based on a peeling device. Note that the least squares estimator maximizes 
	\beqas
	\mathbb{M}_n (f) \equiv 2 \mathbb{P}_n\big[(f-f_0)\otimes e\big]-\mathbb{P}_n(f-f_0)^2.
	\eeqas
	The population version is $M(f):=-P_\nu(f-f_0)^2$ since the expectation of the first term vanishes. Now for given $r>0$, set 
	\beqas
	S_j(r):=\{f\in\mathcal{F}:2^{j-1}r<l_\nu(f,f_0)\leq 2^jr\}.
	\eeqas
	By the peeling argument we have
	\beqas
	&\mathbb{P}\left(l_\nu(\hat{f}_n^{\mathrm{LS}},f_0)>r\right)\\
	&\leq \sum_{j\geq 1} \mathbb{P}\bigg(\sup_{f \in S_j(r)}\big(\mathbb{M}_n(f)-\mathbb{M}_n(f_0)\big)\geq 0\bigg)\\
	&\leq \sum_{j\geq 1} \mathbb{P}\bigg(\sup_{f \in S_j(r)}\abs{\mathbb{M}_n(f)-\mathbb{M}_n(f_0)-(M(f)-M(f_0))}\geq 2^{2j-2}r^2\bigg).
	\eeqas
	Note that
	\beqas
	&\abs{\mathbb{M}_n(f)-\mathbb{M}_n(f_0)-(M(f)-M(f_0))}\\
	&\leq 2\abs{(\mathbb{P}_n-P)\big[(f-f_0)\otimes e\big]}+\abs{(\mathbb{P}_n-P_\nu)(f-f_0)^2}.
	\eeqas
	Hence the series can be further bounded by
	\beqas
	&\sum_{j\geq 1 }\bigg(\mathbb{P}\left(\pnorm{\mathbb{G}_n}{\mathcal{F}(2^jr)\otimes \mathcal{E}}\geq \sqrt{n}2^{2j-2}r^2/3\right)+\mathbb{P}\left(\pnorm{\mathbb{G}_n}{\mathcal{F}(2^jr)^2}\geq \sqrt{n}2^{2j-2}r^2/3\right)\bigg)\\
	&\qquad\qquad\quad \equiv \sum_{j\geq 1}(P_{1,j}+P_{2,j}).
	\eeqas
	Here $\mathcal{F}(r)\equiv S(f_0,r)\equiv \{f \in \mathcal{F}:l_\nu(f,f_0)\leq r\}$ and $\mathcal{E}:=\{e\}$. We first deal with $P_{1,j}$. We claim that 
	\beqa\label{ineq:entropy_l2_bernnorm}
	\log \mathcal{N}_{[\,]}(\epsilon,\mathcal{F}(2^jr)\otimes \mathcal{E},\pnorm{\cdot}{P,B})\leq \log \mathcal{N}_{[\,]}(\epsilon/\Phi_\Gamma,\mathcal{F}(2^jr),l_\nu).
	\eeqa
	To see this,  note that for any $f_1,f_2 \in \mathcal{F}(2^jr)$,
	\beqa\label{ineq:connect_bern_l2}
	P\left(\exp(\abs{f_1-f_2}\abs{e})-1-\abs{f_1-f_2}\abs{e}\right)&=\sum_{m=2}^\infty \frac{P_\nu \abs{f_1-f_2}^m P_\epsilon \abs{e}^m}{m!}\\
	&\leq P_\nu (f_1-f_2)^2\cdot \mathbb{E}\bigg[\sum_{m=2}^\infty \frac{(2\Gamma)^{m-2}\abs{\epsilon_1}^m}{m!}\bigg]\\
	&\leq P_\nu (f_1-f_2)^2 \Phi_\Gamma^2/2,
	\eeqa
	implying that $\pnorm{(f_1-f_2)\otimes e}{P,B}\leq \pnorm{f_1-f_2}{l_\nu}\Phi_\Gamma$. Note that we used the fact $\Gamma\geq 1/2$ in the last line. On the other hand, there is a one-to-one correspondence of brackets between $\mathcal{F}(2^jr)$ and $\mathcal{F}(2^jr)\otimes \mathcal{E}$: $[f_1,f_2]\leftrightarrow[f_1e_+-f_2e_-,f_2e_+-f_1e_-]$, where $e_+ := e \vee 0$ and $e_-:= -(e\wedge 0)$, and $0(x)=0$ for all $x \in \R$. This shows (\ref{ineq:entropy_l2_bernnorm}). Now we will apply Lemma \ref{lem:inequality_sup_empirical_process} to $\mathcal{F}(2^j r)\otimes \mathcal{E}$ for all $j\geq 1$. We choose $r_n$ such that 
	\beqas
	\frac{J_{[\,]}(r_n,f_0,l_\nu)}{\sqrt{n}r_n^2}\leq \frac{1}{13C(\Phi_\Gamma \vee 4\sqrt{2}\Gamma \exp(4\Gamma^2))},
	\eeqas
	where 
	\beqas
	J_{[\,]}(r,f_0,l_\nu)=\int_{r^2/3\cdot 2^6(\Phi_\Gamma\vee (4\sqrt{2}\Gamma \exp(4\Gamma^2)))}^{2r}\sqrt{\log \mathcal{N}_{[\,]}(\epsilon,S(f_0,r),l_\nu)}\ \d{\epsilon}.
	\eeqas
	With $R=2^{j}r \Phi_\Gamma$ and $t = 2^{2j-2}\sqrt{n}r^2/3$, the conditions (\ref{ineq:uniform_inequality_1}) and (\ref{ineq:uniform_inequality_2}) are satisfied for all $j\geq 1$ and $r\geq r_n$, if furthermore the following holds:
	\beqa\label{cond:risk_bound_random_design_1}
	\Phi_\Gamma \geq \sqrt{1/96}\vee (\Gamma/48);\quad \Phi_\Gamma \leq \sqrt{n}r_n/18C,
	\eeqa
	and $J_{[\,]}(r,f_0,l_\nu)/r^2$ is non-increasing. Here we used the fact that $j$ cannot be too large given the apriori uniform bound $\Gamma$ of the function class $\mathcal{F}$: $2^jr \leq 2\Gamma$. Now invoking Lemma \ref{lem:inequality_sup_empirical_process}, we see that
	\beqas
	P_{1,j}\leq C\exp\left(-\frac{2^{2j}nr^2}{1296C^2\Phi_\Gamma^2}\right).
	\eeqas
	Now we deal with $P_{2,j}$. First note that $\mathcal{F}^2=\mathcal{F}_+^2+\mathcal{F}_-^2$ where $\mathcal{F}_{\pm}:=\{f_{\pm}:f \in \mathcal{F}\}$. Here $f_+:=f \vee 0$ and $f_-:=-(f\wedge 0)$. Suppose $\{[u_{i},l_i]\}_{i=1}^N$ is an $\epsilon$-bracket of $\mathcal{F}$ under $l_\nu$. Then $\{[u_i \vee 0, l_i\vee 0]\}_{i=1}^N$ and $\{[-(l_i\wedge 0),-(u_i\wedge 0)]\}_{i=1}^N$ are $\epsilon$-brackets for $\mathcal{F}_+$ and $\mathcal{F}_-$, respectively. These bracketing functions are all non-negative. Hence their squares yield $2\Gamma \epsilon$-brackets for $\mathcal{F}_+^2$ and $\mathcal{F}_-^2$. Hence
	\beqa
	\log \mathcal{N}_{[\,]}(4\Gamma\epsilon, \mathcal{F}^2, l_\nu)&\leq 2\log \mathcal{N}_{[\,]}(\epsilon, \mathcal{F}, l_\nu).
	\eeqa
	By similar arguments as in (\ref{ineq:connect_bern_l2}), for $f_1,f_2 \in \mathcal{F}(2^j r)^2$ we have
	\beqas
	P_\nu \big(\exp(\abs{f_1-f_2})-1-\abs{f_1-f_2}\big)&\leq P_\nu (f_1-f_2)^2 \exp(8\Gamma^2)
	\eeqas
	by noting that $\abs{f_1-f_2}\leq 8\Gamma^2$. This implies that $\pnorm{f_1-f_2}{P,B}\leq l_\nu(f_1,f_2)\sqrt{2}\exp(4\Gamma^2)$. This means that
	\beqa
	\log \mathcal{N}_{[\,]}(\epsilon, \mathcal{F}(2^jr)^2,\pnorm{\cdot}{P,B})&\leq 
	\log \mathcal{N}_{[\,]}(\epsilon/(\sqrt{2}\exp(4\Gamma^2)),\mathcal{F}(2^jr)^2,l_\nu)\\
	&\leq 2 \log \mathcal{N}_{[\,]}(\epsilon/(4\sqrt{2}\Gamma \exp(4\Gamma^2)),\mathcal{F}(2^jr),l_\nu).
	\eeqa
	Now imposing conditions
	\beqa\label{cond:risk_bound_random_design_2}
	4\sqrt{2}\Gamma \exp(4\Gamma^2) \geq \sqrt{1/96}\vee (\Gamma/48);\quad 4\sqrt{2}\Gamma \exp(4\Gamma^2)  \leq \sqrt{n}r_n/18C,
	\eeqa
	we get the same estimate 
	\beqas
	P_{2,j}\leq C\exp\left(-\frac{2^{2j}nr^2}{1296C^2(4\sqrt{2}\Gamma \exp(4\Gamma^2) )^2}\right).
	\eeqas
	Combining conditions (\ref{cond:risk_bound_random_design_1}), (\ref{cond:risk_bound_random_design_2}) we have
	\beqa
	\Phi_\Gamma \wedge (4\sqrt{2}\Gamma \exp(4\Gamma^2)) \geq \sqrt{1/96} \vee (\Gamma/48);\quad \Phi_\Gamma \vee (4\sqrt{2}\Gamma \exp(4\Gamma^2))  \leq \sqrt{n}r_n/18C.
	\eeqa
	Thus
	\beqas
	\mathbb{P}\left(l_\nu(\hat{f}_n^{\mathrm{LS}},f_0)>r\right)\leq \sum_{j\geq 1}(P_{1,j}+P_{2,j})\leq 2C\sum_{j\geq 1}\exp\left(-\frac{2^{2j}nr^2}{1296C^2(\Phi_\Gamma^2 \vee 32\Gamma^2\exp(8\Gamma^2))}\right).
	\eeqas
	Consequently, 
	\beqas
	\mathbb{E}\big[l_\nu^2(\hat{f}_n^{\mathrm{LS}},f_0)\big]&\leq  r_n^2 +2C\sum_{j\geq 1}\int_0^\infty \exp\left(-\frac{2^{2j}nt}{1296C^2(\Phi_\Gamma^2 \vee 32\Gamma^2\exp(8\Gamma^2))}\right)\ \d{t}\\
	&=r_n^2 +\frac{864C^3(\Phi_\Gamma^2 \vee 32\Gamma^2\exp(8\Gamma^2))}{n}.
	\eeqas
	The proof is complete.
\end{proof}

Next we prove Lemma \ref{lem:local_entropy_estimate}. We will need the following result,  extending Theorem 2.6 in \cite{guntuboyina2014covering} to general polytopal domains.
\begin{lemma}\label{lem:entropy_boundary_control}
	Suppose $\Omega \in \mathscr{P}_k$. Then for $\epsilon\leq C_{d,0}\sqrt{\Omega}\Gamma$, 
	\beqas
	\log \mathcal{N}_{[\,]}(\epsilon,\mathcal{C}_2(r;\Omega)\cap \mathcal{C}(\Gamma;\Omega),l_2(\Omega))\leq C_d k^{(d+4)/4}\bigg(\frac{r}{\epsilon}\bigg)^{d/2}\bigg(\log \frac{C_dk\Gamma^2 \abs{\Omega}}{\epsilon^2}\bigg)^{d(d+4)/4}.
	\eeqas
\end{lemma}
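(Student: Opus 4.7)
The plan is to reduce to the single-simplex case, Theorem 2.6 in \cite{guntuboyina2014covering}, via the simplicial decomposition $\Omega=\bigcup_{i=1}^k \Omega_i$ and then glue local brackets into a global one.

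First, I would observe that for any $f\in \mathcal{C}_2(r;\Omega)\cap\mathcal{C}(\Gamma;\Omega)$, the restriction $f|_{\Omega_i}$ is convex, uniformly bounded by $\Gamma$, and satisfies $\int_{\Omega_i} f^2 \le r^2$ trivially since the full $l_2$-mass is bounded by $r^2$. Hence $f|_{\Omega_i} \in \mathcal{C}_2(r;\Omega_i)\cap\mathcal{C}(\Gamma;\Omega_i)$. I would then apply Theorem 2.6 in \cite{guntuboyina2014covering} to each simplex $\Omega_i$, transferring the result from the standard simplex via an affine change of coordinates that scales the $l_2$-norm only by a Jacobian factor absorbed into the dependence on $|\Omega_i|$, with the local tolerance set to $\epsilon_i:=\epsilon/\sqrt{k}$. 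This produces local brackets $\{[l_i^{(j)},u_i^{(j)}]\}_{j=1}^{N_i}$ with
\[
\log N_i \le C_d (r\sqrt{k}/\epsilon)^{d/2}\bigl(\log(C_d k\Gamma^2|\Omega_i|/\epsilon^2)\bigr)^{d(d+4)/4}.
\]

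Second, I would glue: for each multi-index $\mathbf{j}=(j_1,\dots,j_k)$, define $l^{(\mathbf{j})}:= l_i^{(j_i)}$ on $\Omega_i$ and likewise $u^{(\mathbf{j})}$. Every $f$ in the global class is trapped by exactly one such pair, and
\[
\pnorm{u^{(\mathbf{j})}-l^{(\mathbf{j})}}{l_2(\Omega)}^2 = \sum_{i=1}^k \pnorm{u_i^{(j_i)}-l_i^{(j_i)}}{l_2(\Omega_i)}^2 \le k\cdot(\epsilon/\sqrt{k})^2 = \epsilon^2,
\]
so these piecewise pairs form $\epsilon$-brackets in $l_2(\Omega)$; they need not themselves be convex, which is immaterial for bracketing entropy. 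Counting gives $\log \prod_{i=1}^k N_i \le k\cdot C_d(r\sqrt{k}/\epsilon)^{d/2}(\log(C_d k\Gamma^2|\Omega|/\epsilon^2))^{d(d+4)/4} = C_d k^{(d+4)/4}(r/\epsilon)^{d/2}(\log(C_d k\Gamma^2|\Omega|/\epsilon^2))^{d(d+4)/4}$ after using $k\cdot k^{d/4}=k^{(d+4)/4}$ and $|\Omega_i|\le|\Omega|$, which is exactly the stated bound.

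The main technical obstacle is verifying the transfer of Theorem 2.6 of \cite{guntuboyina2014covering} from its native domain (a standard cube or simplex) to an arbitrary simplex $\Omega_i$, tracking carefully how $|\Omega_i|$ enters both the prefactor and the argument of the logarithm under the rescaling $\epsilon\mapsto\epsilon/\sqrt{k}$. The hypothesis $\epsilon\le C_{d,0}\sqrt{|\Omega|}\,\Gamma$ keeps the argument of the logarithm above $1$ and ensures the single-simplex bound is invoked within its stated range of validity. One simplification worth emphasising is that no allocation-of-mass argument across simplices is needed: the crude bound $\int_{\Omega_i}f^2\le r^2$ used uniformly for each $i$ wastes mass but already produces precisely the $k^{(d+4)/4}$ factor after the sum over $k$ simplices.
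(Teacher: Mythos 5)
Your gluing step across simplices (the second paragraph) is correct and is essentially identical to the second half of the paper's proof, including the $k\cdot k^{d/4}=k^{(d+4)/4}$ bookkeeping with local tolerance $\epsilon/\sqrt{k}$ and the crude use of $\int_{\Omega_i}(\cdot)^2\le r^2$ on each piece. However, the first step contains a genuine gap: you invoke Theorem 2.6 of \cite{guntuboyina2014covering} as if it directly yields, for a single simplex $\Omega_i$ and the \emph{full} domain norm $l_2(\Omega_i)$, a bound whose logarithmic factor already involves $\Gamma^2|\Omega_i|/\epsilon^2$. That theorem does not say this. It is a bound for convex functions on a cube (equivalently, by affine invariance, a parallelotope), measured in $l_2$ only over an $\eta$-\emph{shrunken} subdomain, with the logarithmic factor being $\log(1/\eta)$. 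The uniform bound $\Gamma$ does not appear at all in that result, and cannot, because without a uniform bound one has no control over the $l_2$-mass that convex functions accumulate near the boundary.

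What is missing is precisely the boundary-control argument, which is the substantive content of the single-simplex case and cannot be absorbed into ``tracking constants under rescaling.'' Concretely, the paper covers the simplex by $\pi_d$ parallelotopes $A_1,\dots,A_{\pi_d}$, applies the shrunken-domain bound (Lemma \ref{lem:boundary_entropy_parallelotope}) on each $A_i(\eta)$, and then uses the assumed uniform bound $\Gamma$ to show that the leftover boundary strip $\Omega\setminus\cup_i A_i(\eta)$ has Lebesgue measure $O(d\pi_d|\Omega|\eta)$, so that for any two functions bounded by $\Gamma$ their $l_2$-difference over that strip is at most $\epsilon/2$ once $\eta \asymp \epsilon^2/(|\Omega|\Gamma^2)$. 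Substituting this choice of $\eta$ into $\log(1/\eta)$ is exactly what produces the factor $\bigl(\log(C_d\Gamma^2|\Omega|/\epsilon^2)\bigr)^{d(d+4)/4}$ in the conclusion, and it is also what makes the hypothesis $f\in\mathcal{C}(\Gamma;\Omega)$ indispensable. Your proposal asserts the endpoint of this chain but supplies none of it, and it also does not mention the parallelotope covering of the simplex, which is needed because Theorem 2.6's native domain is a cube, not a simplex. As written, the first step would amount to assuming the $k=1$ case of the very lemma you are trying to prove.
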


Before the proof of Lemma \ref{lem:entropy_boundary_control}, we will need some additional notation. A polytope $\Omega$ is of simplex type if $\Omega=\cap_{i=1}^{d+1}E_i$ where $E_i$ are half spaces. A polytope $\Omega$ is called a parallelotope if 
\beqas
\Omega=\cap_{i=1}^d \{x:p_{i,1}\leq \iprod{x}{v_i}\leq p_{i,2}\}\equiv  \cap_{i=1}^d E(p_{i,1},p_{i,2};v_i)
\eeqas
for some linearly independent $\{v_i\}$ and $\{(p_{i,1},p_{i,2})\}$. For fixed $\eta\in [0,1]$, and parallelotope $\Omega$, define
$\Omega_\eta\equiv \Omega(\eta)\equiv\cap_{i=1}^{d}E(\eta p_{i,1}+(1-\eta)p_{i,2},(1-\eta)p_{i,1}+\eta p_{i,2};v_i)$.

We will also need the following result for Lemma \ref{lem:entropy_boundary_control}.
\begin{lemma}\label{lem:boundary_entropy_parallelotope}
	Let $\Omega$ be a parallelotope. Then for $\eta<1/2$,
	\beqas
	\log \mathcal{N}_{[\,]}(\epsilon,\mathcal{C}_2(1;\Omega),l_2(\Omega(\eta)))\leq C_d\epsilon^{-d/2}\bigg(\log \frac{1}{\eta}\bigg)^{d(d+4)/4}.
	\eeqas
\end{lemma}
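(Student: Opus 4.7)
The plan begins with an affine reduction: by the affine invariance of convexity and the $l_2$ metric (up to a Jacobian factor), we may assume $\Omega=[0,1]^d$, so $\Omega(\eta)=[\eta,1-\eta]^d$ is itself a parallelotope triangulable into $O_d(1)$ simplices. Thus Lemma~\ref{lem:entropy_cvx_func} (in its bracketing form at $p=\infty$) applies to $\Omega(\eta)$ with dimensional constants, and the problem becomes one of bounding the bracketing entropy of $\mathcal{C}_2(1;[0,1]^d)$ restricted to the inner cube.

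Next, one establishes a pointwise estimate on the interior: for every $f\in\mathcal{C}_2(1;\Omega)$,
\[
\|f\|_{L_\infty(\Omega(\eta))}\leq C_d\,\eta^{-d/2}.
\]
This comes from a subgradient-Jensen argument: if $f(x_0)=M>0$ at $x_0\in\Omega(\eta)$, the support inequality $f(x)\geq M+g\cdot(x-x_0)$ for a subgradient $g$ forces $f\geq M/2$ on a half-space slab of volume $\gtrsim_d\eta^d$ inside $\Omega$, so that $M^2\eta^d\lesssim \int f^2\leq 1$; the negative part is handled analogously by Jensen applied on a ball around $x_0$. Plugging this crude bound into the bracketing form of Lemma~\ref{lem:entropy_cvx_func} only yields $C_d\eta^{-d^2/4}\epsilon^{-d/2}$, which is polynomial in $\eta^{-1}$ and too weak.

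To collapse the polynomial dependence into logarithms, I would perform a two-fold dyadic decomposition. Spatially, decompose $\Omega(\eta)$ into $K\asymp\log\eta^{-1}$ concentric parallelotopic shells $S_k=\Omega(2^k\eta)\setminus\Omega(2^{k+1}\eta)$, on which the local $L_\infty$-bound improves to $(2^k\eta)^{-d/2}$. In amplitude, stratify $\mathcal{C}_2(1;\Omega)$ into dyadic levels $\mathcal{A}_j=\{f:\|f\|_{L_\infty(\Omega(\eta))}\asymp 2^j\}$ for $0\leq j\leq J\asymp\log\eta^{-1}$. The key analytic ingredient is Chebyshev's inequality applied to $\int f^2\leq 1$, giving $|\{|f|\geq 2^{j-1}\}|\leq 4\cdot 2^{-2j}$; combined with the convexity of the sublevel sets of $f$, this forces the high-value regions to be geometrically regular. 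Apply Lemma~\ref{lem:entropy_cvx_func} to each shell-amplitude piece at a tailored width $\epsilon_{k,j}$ and glue the brackets. A geometric choice of the $\epsilon_{k,j}$ that equi-distributes the per-piece bracket cost $(\Gamma_{k,j}/\epsilon_{k,j})^{d/2}$ across the $O(\log^2\eta^{-1})$ pieces, while keeping the aggregate $l_2$ width $\leq \epsilon$, produces the claimed $C_d\epsilon^{-d/2}(\log\eta^{-1})^{d(d+4)/4}$; the exponent $d(d+4)/4$ arises from the combinatorial bookkeeping of Lemma~\ref{lem:entropy_cvx_func} summed across the double decomposition.

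The main obstacle is precisely this last balancing. The naive per-piece bracketing cost is polynomial in $\eta^{-1}$, and without the Chebyshev mass decay together with the rigidity of level sets of convex functions, any direct union argument reintroduces polynomial factors in $\eta^{-1}$. Achieving the sharp logarithmic exponent $d(d+4)/4$—rather than a looser or polynomial-in-$\eta^{-1}$ answer—requires tuning the bracket widths $\epsilon_{k,j}$ so that the cumulative cost across shells and amplitude levels is of the claimed polylogarithmic order with no residual power of $\eta^{-1}$.
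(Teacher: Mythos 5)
Your proposal takes a genuinely different route from the paper: the paper's proof is a two-line reduction, citing Theorem~2.6 of Guntuboyina (2014) for the metric-entropy version and noting that the bracketing version follows by substituting the bracketing entropy result of Gao--Wellner (2015) at the one point (Guntuboyina's Theorem~2.1) where entropy enters. You instead attempt a first-principles derivation via a double dyadic decomposition. The pointwise estimate $\|f\|_{L_\infty(\Omega(\eta))}\lesssim_d\eta^{-d/2}$ is correct and the subgradient/half-ball argument for the positive part is sound; note, however, that ``Jensen on a ball'' yields only the \emph{upper} bound $f(x_0)\leq |B|^{-1/2}$, not a lower bound, so the negative part actually needs the one-dimensional tangent-line integral argument (integrate $(M-|g|\,u_1)^2$ over a half-ball of radius $M/|g|$ and optimize in $|g|$), which again gives $-f(x_0)\lesssim_d\eta^{-d/2}$.

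The genuine gap is the unfinished balancing, which you yourself flag as the ``main obstacle.'' It is not merely a bookkeeping issue. A Lagrangian optimization over the shell decomposition $\{S_k\}$ alone, with per-shell $L_\infty$ bound $(2^k\eta)^{-d/2}$ and volume $\asymp 2^k\eta$, yields total cost $\asymp \eta^{-d(d-1)/4}\epsilon^{-d/2}$ --- an improvement over the naive $\eta^{-d^2/4}$ but still polynomial in $\eta^{-1}$. Conversely, the amplitude stratification alone, summed over $j\leq J\asymp\frac{d}{2}\log\eta^{-1}$ with per-level bound $(2^j/\epsilon)^{d/2}$, reproduces the same $\eta^{-d^2/4}\epsilon^{-d/2}$. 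Combining them naively (bound $\Gamma_{k,j}=\min(2^j,(2^k\eta)^{-d/2})$ on each of the $O(\log^2\eta^{-1})$ shell-amplitude pieces) still leaves an uncancelled polynomial in $\eta^{-1}$ from the outermost shells, because the Chebyshev estimate $|\{|f|\geq 2^{j-1}\}|\leq 4\cdot 2^{-2j}$ constrains only the measure of the high-value set, not the entropy of the restriction to the outer shell where the $L_\infty$ level is unavoidably $\asymp\eta^{-d/2}$. Exploiting that measure bound together with the convexity of the level sets requires a recursive partitioning of the kind in Guntuboyina's Theorem~3.1 (which is also where the specific exponent $d(d+4)/4$ originates), not a single pass of Lagrangian equidistribution. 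As written, your argument asserts rather than demonstrates that a choice of $\epsilon_{k,j}$ eliminates all powers of $\eta^{-1}$; the calculation above indicates that the most natural such choice does not, so some further convexity-specific idea is needed and the proof is incomplete.
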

\begin{proof}[Proof of Lemma \ref{lem:boundary_entropy_parallelotope}]
	By affine transformation we may assume without loss of generality that $\Omega$ is the hypercube $[0,1]^d$. Now the conclusion follows from Theorem 2.6 \cite{guntuboyina2014covering} by replacing metric entropy by bracketing entropy. This is valid since the only place in the proof of Theorem 2.6 there where entropy is involved lies in the partitioning scheme Theorem 3.1, which in turn only depends on Theorem 2.1 \cite{guntuboyina2014covering}. The corresponding analogous result of Theorem 2.1 there in terms of bracketing entropy is proved in Theorem 1.1 \cite{gao2015entropy}. We omit book-keeping details for simplicity.
\end{proof}
\begin{proof}[Proof of Lemma \ref{lem:entropy_boundary_control}]
	First we assume that $\Omega$ is of simplex type. Note that $\Omega$ can be covered by finitely many parallelotopes, i.e. there exists a sequence of parallelotopes $\{A_i\}_{i=1}^{\pi_d}$ such that $A_i\subset \Omega$ and $\Omega=\cup_{i=1}^{\pi_d} A_i$. Now we can apply Lemma \ref{lem:boundary_entropy_parallelotope} to conclude that 
	\beqas
	\log \mathcal{N}_{[\,]}(\epsilon,\mathcal{C}_2(r;A_i),l_2(A_i(\eta)))\leq C_d\bigg(\frac{r}{\epsilon}\bigg)^{d/2}\bigg(\log \frac{1}{\eta}\bigg)^{d(d+4)/4}.
	\eeqas
	On the other hand,
	\beqas
	\abs{\Omega\setminus\cup_{i}A_i(\eta)}\leq\sum_{i} \abs{A_i\setminus A_i(\eta)}=(1-(1-2\eta)^d)\sum_{i}\abs{A_i}\leq 2d\pi_d\abs{\Omega}\eta
	\eeqas
	holds for $\eta\leq 1/2$. Then choose $\eta = \frac{\epsilon^2}{32\pi_d d \abs{\Omega}\Gamma^2}$, for any $f,g \in \mathcal{C}(\Gamma;\Omega)$, we have $l_2(f-g;\abs{\Omega\setminus\cup_{i}A_i(\eta)})\leq \epsilon/2$, and thus
	\beqas
	\log \mathcal{N}_{[\,]}(\epsilon,\mathcal{C}_2(r;\Omega)\cap \mathcal{C}(\Gamma;\Omega),l_2(\Omega))&\leq \log \mathcal{N}_{[\,]}(\epsilon/2, \mathcal{C}_2(r;\cup_{i}A_i),l_2(\cup_{i}A_i(\eta)))\\
	&\leq \sum_{i}\log \mathcal{N}_{[\,]}(\epsilon/2\sqrt{\pi_d},\mathcal{C}_2(r;A_i),l_2(A_i(\eta)))\\
	&\leq C_d \bigg(\frac{r}{\epsilon}\bigg)^{d/2}\bigg(\log \frac{C_d\Gamma^2 \abs{\Omega}}{\epsilon^2}\bigg)^{d(d+4)/4}.
	\eeqas
	Now for a general polytope $\Omega \in \mathscr{P}_k$, it can be partitioned in to $k$ simplices, i.e. $\Omega = \cup_{i=1}^k \Omega_i$ where $\Omega_i$ is of simplex type. Then
	\beqas
	&\log \mathcal{N}_{[\,]}(\epsilon,\mathcal{C}_2(r;\Omega)\cap \mathcal{C}(\Gamma;\Omega),l_2(\Omega))\\
	&\leq \sum_{i=1}^k \log \mathcal{N}_{[\,]}(\epsilon/\sqrt{k},\mathcal{C}_2(r;\Omega_i)\cap \mathcal{C}(\Gamma;\Omega_i),l_2(\Omega_i))\\
	&\leq C_d k^{(d+4)/4}\bigg(\frac{r}{\epsilon}\bigg)^{d/2}\bigg(\log \frac{C_dk\Gamma^2 \abs{\Omega}}{\epsilon^2}\bigg)^{d(d+4)/4}.
	\eeqas
	The proof is complete.
\end{proof}

Now we are ready to prove Lemma \ref{lem:local_entropy_estimate}.

\begin{proof}[Proof of Lemma \ref{lem:local_entropy_estimate}]
	Let $g \in \mathcal{P}_m$ be a piecewise affine function with at most $m$ affine components $\Omega_1,\cdots,\Omega_m$. Then each of these $d$-dimensional polytopal regions are intersected by at most $k+m$ hyperplanes, and thus have at most $C_d(k+m)^d$ facets where the constant $C_d$ depends on $d$, see Proposition 6.1.1 and the following remarks in \cite{matouvsek2002lectures}. Note that
	\beqa\label{ineq:entropy_decom}
	\mathcal{N}_{[\,]}(\epsilon,S(g,r,\Gamma),l_2(\Omega)\leq \prod_{i=1}^m \mathcal{N}_{[\,]}(\epsilon/\sqrt{m},S(g_i,r,\Gamma;\Omega_i),l_{2}(\Omega_i)).
	\eeqa
	where $g_i = g|_{\Omega_i}$. Since the map $f\mapsto f-g_i$ gives an injection from $S(g_i,r,\Gamma;\Omega_i)$ to $S(0,r,\Gamma+w(\Omega)L(g);\Omega_i)$, it follows that
	\beqas \mathcal{N}_{[\,]}(\epsilon/\sqrt{m},S(g_i,r,\Gamma;\Omega_i),l_2(\Omega_i))\leq \mathcal{N}_{[\,]}(\epsilon/\sqrt{m},S(0,r,\Gamma+w(\Omega)L(g);\Omega_i),l_{2}(\Omega_i)).
	\eeqas
	Now apply Lemma \ref{lem:entropy_boundary_control} with $\Omega=\Omega_i$ and $l_2=l_{2}(\Omega_i)$, combined with (\ref{ineq:entropy_decom}) we see that
	\beqas
	&\log \mathcal{N}_{[\,]}(\epsilon,S(g,r,\Gamma),l_2(\Omega))\\
	&\leq \sum_{i=1}^m \log \mathcal{N}_{[\,]}(\epsilon/\sqrt{m},S(0,r,\Gamma+w(\Omega)L(g);\Omega_i),l_{2}(\Omega_i))\\
	&\leq C_dm(m\vee k)^{d(d+4)/4}\left(\frac{r}{\epsilon}\right)^{d/2}\\
	&\qquad\qquad \times\bigg(\log\frac{C(m\vee k)^{d}(\Gamma^2\vee w(\Omega)^2L^2(g))\abs{\Omega}}{\epsilon^2}\bigg)^{d(d+4)/4}.
	\eeqas
	Note in the first step we required $\epsilon/\sqrt{m}\leq C_{d,0}\min_{1\leq i\leq m}\sqrt{\abs{\Omega_i}}\Gamma$. Now since for any $f\in S(f_0,r,\Gamma)$, it follows that 
	\beqas
	l^2_2(f,g)\leq 2l_2^2(f,f_0)+2l_2^2(f_0,g)\leq 2r^2+2l_2^2(f_0,g),
	\eeqas
	we thus conclude that
	\beqas
	\mathcal{N}_{[\,]}(\epsilon,S(f_0,r,\Gamma),l_2)\leq \mathcal{N}_{[\,]}\big(\epsilon,S\big(g,\sqrt{2r^2+l_2^2(f_0,g)},\Gamma\big),l_2\big),
	\eeqas
	completing the proof by taking infimum over all $g \in \mathcal{P}_m$ and $m \in \N$. Note then it suffices to require $\epsilon\leq C_{d,0}\min_{1\leq i\leq m}\sqrt{\abs{\Omega_i}}\Gamma$. 
\end{proof}

Now we are in position to prove Theorem \ref{thm:rates_conv_lse}.
\begin{proof}[Proof of Theorem \ref{thm:rates_conv_lse}]
	We will separate the cases for which $\Omega$ is a polytope or general convex body. 
	
	\noindent\textbf{[Case I.]} First consider the case when $\Omega$ is a polytope with at most $k$ simplices. If in addition $f_0 \in \mathcal{P}_{m_{f_0}}$ for some $m_{f_0} \in \N$, then the bound in Lemma \ref{lem:local_entropy_estimate} becomes
	\beqas
	&\quad \log \mathcal{N}_{[\,]}(\epsilon,S(f_0,r),l_2)\\
	&\leq c_1m_{f_0}(m_{f_0}\vee k)^{d(d+4)/4}r^{d/2}\epsilon^{-d/2}\bigg(\log\frac{c_1(m_{f_0}\vee k)^{d}(\Gamma^2\vee w(\Omega)^2L^2(f_0))\abs{\Omega}}{\epsilon^2}\bigg)^{d(d+4)/4}\\
	&\equiv c_2 r^{d/2}\epsilon^{-d/2}\bigg(\log \frac{c_3}{\epsilon^2}\bigg)^{\gamma}.
	\eeqas
	Hence in this case,
	\beqas
	& \int_{r^2/196\bar{\mu}_\Gamma}^r \sqrt{\log \mathcal{N}_{[\,]}(\epsilon,S(f_0,r),l_2)}\ \d{\epsilon}\\
	&\leq  J_{[\,]}(r)\equiv c_dc_2^{1/2}
	\begin{cases}
		r \big(\log\frac{14c_3^{1/4}\bar{\mu}_\Gamma^{1/2}}{r}\big)^{\gamma/2},& d\leq 3,\\
		r\big(\log\frac{(14c_3^{1/4}\bar{\mu}_\Gamma^{1/2})\vee (392\bar{\mu}_\Gamma)}{r}\big)^{\gamma/2+1}.& d= 4,\\
		\bar{\mu}_\Gamma^{(d-4)/4}r^{2-d/4} \big(\log\frac{14c_3^{1/4}\bar{\mu}_\Gamma^{1/2}}{r}\big)^{\gamma/2},& d\geq 5. 
	\end{cases}
	\eeqas
	By Theorem \ref{thm:risk_bounds_random_design}, $r_n$ is characterized by $\frac{J_{[\,]}(r_n)}{\sqrt{n}r_n^2}\leq \frac{1}{13C\bar{\mu}_\Gamma}$. Hence by the above calculation we can take
	\beqa\label{eqn:rates_polytope_polyhedral}
	r_n=c_d
	\begin{cases}
		\big(c_3^{1/4}\bar{\mu}_\Gamma ^{1/2}\vee c_2^{1/2}\bar{\mu}_\Gamma\big)n^{-1/2}(\log n)^{d(d+4)/8}, & d\leq 3,\\
		\big(c_3^{1/4}\bar{\mu}_\Gamma ^{1/2}\vee c_2^{1/2}\bar{\mu}_\Gamma\vee \bar{\mu}_\Gamma \big)n^{-1/2}(\log n)^{5},& d=4,\\
		\big(c_3^{1/4}\bar{\mu}_\Gamma^{1/2}\vee c_2^{2/d}\bar{\mu}_\Gamma \big)n^{-2/d}(\log n)^{(d+4)/2}, & d\geq 5.
	\end{cases}
	\eeqa
	For a general convex function $f_0$, by Lemma \ref{lem:entropy_cvx_func} the entropy estimate holds with 
	\beqas
	&\int_{r^2/196\bar{\mu}_\Gamma}^r \sqrt{\log \mathcal{N}_{[\,]}(\epsilon,S(f_0,r),l_2)}\ \d{\epsilon}\\
	&\leq \int_{r^2/196\bar{\mu}_\Gamma}^r \sqrt{\log \mathcal{N}_{[\,]}(\epsilon,\mathcal{C}(\Gamma),l_2)}\ \d{\epsilon}\\
	&\leq J_{[\,]}(r)\equiv c_d k^{1/2}\abs{\Omega}^{d/8}\Gamma^{d/4}
	\begin{cases}
		r^{1-d/4}, & d\leq 3,\\
		\log(196\bar{\mu}_\Gamma/r), & d=4,\\
		\bar{\mu}_\Gamma^{(d-4)/4}r^{2-d/2}, & d\geq 5.
	\end{cases}
	\eeqas
	This yields the following rates
	\beqa\label{eqn:rates_polytope_general}
	r_n=c_d
	\begin{cases}
		\bar{\mu}_\Gamma^{4/(d+4)}k^{2/(d+4)}(\abs{\Omega}^{1/2}\Gamma)^{d/(d+4)} n^{-2/(d+4)}, & d\leq 3,\\
		\bar{\mu}_\Gamma^{1/2}\big(k^{1/4}(\abs{\Omega}^{1/2}\Gamma)^{1/2}\vee 196\bar{\mu}_\Gamma\big) n^{-1/4}(\log n)^{1/2}, & d=4,\\
		\bar{\mu}_\Gamma^{1/2}k^{1/d}(\abs{\Omega}^{1/2}\Gamma)^{1/2} n^{-1/d}, & d\geq 5.
	\end{cases}
	\eeqa
	\noindent\textbf{[Case II.]} Suppose $\Omega$ is a smooth convex body. Then for any convex function $f_0$,
	\beqas
	J_{[\,]}(r)=c_d
	\begin{cases}
		(\abs{\Omega}^{1/2}\Gamma)^{1/2} r^{1/2}\left(\log \frac{14\bar{\mu}_\Gamma^{1/2}\abs{\Omega}^{1/4}\Gamma^{1/2}}{r}\right)^{3/4}, & d= 2,\\
		(\abs{\Omega}^{1/2}\Gamma)\log{(196\bar{\mu}_\Gamma/r)}, & d=3,\\
		(\abs{\Omega}^{1/2}\Gamma)^{(d-1)/2}r^{2-(d-1)}, & d\geq 4,
	\end{cases}
	\eeqas
	which yield the rates of convergence
	\beqa\label{eqn:rates_cvxbody_general}
	r_n=c_d
	\begin{cases}
		\big(\bar{\mu}_\Gamma^{2/3}(\abs{\Omega}^{1/2}\Gamma)^{1/3}\vee \bar{\mu}_\Gamma^{1/2}\abs{\Omega}^{1/4}\Gamma^{1/2}\big) n^{-1/3}(\log n)^{1/2}, & d= 2,\\
		\big(\bar{\mu}_\Gamma^{1/2}(\abs{\Omega}^{1/2}\Gamma)^{1/2}\vee \bar{\mu}_\Gamma\big) n^{-1/4}(\log n)^{1/2}, & d=3,\\
		\bar{\mu}_\Gamma^{1/2}(\abs{\Omega}^{1/2}\Gamma)^{1/2}n^{-1/(2(d-1))}, & d\geq 4.
	\end{cases}
	\eeqa
	This completes the proof.
\end{proof}

\section{Proofs for Section 4}

\begin{proof}[Proof of Theorem \ref{thm:risk_fixed_model}]
	Fix $m\geq 1$, $g \in \mathcal{P}_m$, and covariate vectors $\underline{X}^n=(X_1,\cdots,X_n)$. We first prove (\ref{ineq:fixed_model_bound_deviation_ineq}) and (\ref{ineq:fixed_model_bound_deviation_ineq_discrete}). Let $S_m(g,r):=\{h\in \mathcal{P}_m: l_{\underline{X}^n}(g,h)\leq r\}$. Then it is easy to check that
	\beqa\label{ineq:risk_fixed_model_1}
	\mathcal{N}(\epsilon,S_m(g,r),l_{\underline{X}^n})\leq \mathcal{N}(\sqrt{n}\epsilon,\{F_m(\underline{X}^n)-\bm{g}\}\cap B_n(0,\sqrt{n}r), \pnorm{\cdot}{2}),
	\eeqa
	where $\bm{g}:=(g(X_1),\ldots,g(X_n))$. Since translation does not change the pseudo-dimension of a set, by assumption we see that $\{F_m(\underline{X}^n)-\bm{g}\}\cap B_n(0,\sqrt{n}r)$ has pseudo-dimension at most $m$ and is uniformly bounded by $\sqrt{n}r$. Now an application of Lemma \ref{lem:entropy_pdim} yields that
	\beqa\label{ineq:risk_fixed_model_2}
	\mathcal{N}(\epsilon,S_m(g,r),l_{\underline{X}^n})\leq \bigg(4+\frac{2\sqrt{n}r}{\epsilon}\bigg)^{\kappa D_m}.
	\eeqa
	This implies that
	\beqa\label{ineq:risk_fixed_model_3}
	\int_0^r \sqrt{\log \mathcal{N}(\epsilon,S_m(g,r),l_{\underline{X}^n})}\ \d{\epsilon}&\leq \sqrt{\kappa D_m}\int_0^r \sqrt{\log\bigg(4+\frac{2\sqrt{n}r}{\epsilon}\bigg)}\ \d{\epsilon}\\
	&= r\sqrt{\kappa D_m}\int_1^\infty \frac{\sqrt{\log(4+2\sqrt{n}x)}}{x^2}\ \d{x}\\
	&\leq \sqrt{\kappa}r \sqrt{2D_m\log n}.
	\eeqa
	Here the last inequality follows by noting
	\beqas
	\log(4+2\sqrt{n}x)\leq \log(6\sqrt{n}x)= \log \sqrt{n}+\log(6x),
	\eeqas
	and hence with $n\geq 7$, we have
	\beqas
	\int_1^\infty \frac{\sqrt{\log(4+2\sqrt{n}x)}}{x^2}\ \d{x}&\leq \sqrt{\log n/2} \int_1^\infty \frac{\sqrt{1+2\log(6x)/\log 7}}{x^2}\ \d{x}\leq \sqrt{2\log n}.
	\eeqas
	By Lemma \ref{lem:rate_convergence_generic_misspecification}, for the choice
	\beqa\label{delta_n:risk_bounds_fixed_model}
	\delta_n=l_{\underline{X}^n}^2(f_0,g)+\frac{102400\sigma^2 \kappa D_m\log n}{n},
	\eeqa
	the deviation inequality holds for $\delta:=\delta_n+\frac{\sigma^2t}{n}$:
	\beqa\label{const:fixed_model_prob}
	&\quad \mathbb{P}\left[l_{\underline{X}^n}^2(\hat{f}_m,g)>8l_{\underline{X}^n}^2(f_0,g)+(409600)\frac{\sigma^2 \kappa D_m\log n}{n}+\frac{4\sigma^2 t}{n}\bigg\lvert \underline{X}^n\right]\\
	&\leq 4\sum_{j\geq 0}\exp\left(-\frac{2^{j}t}{73728}\right)\wedge 1:=\mathfrak{p}(t).
	\eeqa
	Now taking total expectation and using the triangle inequality we see that with probability at least $1-\mathfrak{p}(t)$, it holds that
	\beqa\label{const:fixed_model_deviation_ineq_discrete}
	l_{\underline{X}^n}^2(\hat{f}_m,f_0)\leq 18l_{\underline{X}^n}^2(f_0,g)+(819200)\frac{\sigma^2 \kappa D_m\log n}{n}+\frac{8\sigma^2 t}{n}.
	\eeqa
	Now with the constraint that $\mathcal{F}$ is uniformly bounded by $\Gamma$ and $f_0$ is bounded by $\Gamma$, we invoke Lemma \ref{lem:relate_metrics}, together with the fact that
	\beqa\label{ineq:discrete_continuous_connect}
	l_\nu^2(g,f_0)&\leq 2\bigg(l_\nu(g,f_0)-2l_{\underline{X}^n}(g,f_0)\bigg)_+^2+8l_{\underline{X}^n}^2(g,f_0),\\
	l_{\underline{X}^n}^2(g,f_0)&\leq 2\bigg(l_{\underline{X}^n}(g,f_0)-2l_\nu(g,f_0)\bigg)_+^2+8l_{\nu}^2(g,f_0),\\
	\eeqa
	we see with probability at least $1-\mathfrak{p}(t)-6\exp(-t)$ that
	\beqa\label{const:fixed_model_deviation_ineq_cont}
	l_{\nu}^2(\hat{f}_m,f_0)&\leq 2\bigg(l_\nu(\hat{f}_m,f_0)-2l_{\underline{X}^n}(\hat{f}_m,f_0)\bigg)_+^2+8\bigg((819200)\frac{\sigma^2 \kappa D_m\log n}{n}+\frac{8\sigma^2 t}{n}\bigg)\\
	&\quad +144l_{\underline{X}^n}^2(f_0,g)\\
	&\leq 2\bigg(l_\nu(\hat{f}_m,f_0)-2l_{\underline{X}^n}(\hat{f}_m,f_0)\bigg)_+^2+8\bigg((819200)\frac{\sigma^2 \kappa D_m\log n}{n}+\frac{8\sigma^2 t}{n}\bigg)\\
	&\quad\quad +288\bigg(l_{\underline{X}^n}(g,f_0)-2l_\nu(g,f_0)\bigg)_+^2+1152l_{\nu}^2(g,f_0)\\
	& \leq 1152l_\nu^2(f_0,g)+(7.56\times 10^6) \frac{(\sigma^2\vee \Gamma^2)D_m\log n}{n}+(3.35\times 10^5) \frac{(\sigma^2\vee \Gamma^2)t}{n},
	\eeqa
	where in the last inequality we used the fact that $\log(4+24\sqrt{2}n)\leq 3\log n$ for $n\geq 7$. The conclusion follows since $g$ is taken arbitrarily in $\mathcal{P}_m$ and the probability statement is uniform in $g$. 
	Now we prove (\ref{ineq:fixed_model_bound}) and (\ref{ineq:fixed_model_bound_discrete}). By Lemma \ref{lem:rate_convergence_generic_misspecification} and (\ref{delta_n:risk_bounds_fixed_model}), we find that
	\beqa\label{const:fixed_model_bound_discrete}
	&\mathbb{E}\big[l^2_{\underline{X}^n}(\hat{f}_m,f_0)|\underline{X}^n\big]\\
	&\leq 10l_{\underline{X}^n}^2(f_0,g)+8\bigg(l_{\underline{X}^n}^2(f_0,g)+\frac{102400\sigma^2 \kappa D_m\log n}{n}\bigg)+(4.8\times 10^6)\frac{\sigma^2}{n}\\
	&\leq 18l_{\underline{X}^n}^2(f_0,g)+(5.62\times 10^6)\frac{\sigma^2 \kappa D_m\log n}{n}
	\eeqa
	holds for all $g \in \mathcal{P}_m$. Here in the last line we used that $\kappa D_m\log n\geq 1$. Now (\ref{ineq:fixed_model_bound_discrete}) follows by taking total expectation. For  (\ref{lem:rate_convergence_generic_misspecification}),  by (\ref{ineq:discrete_continuous_connect}), we
	take conditional expectation to get
	\beqas
	\mathbb{E}\big[l_\nu^2(\hat{f}_m,f_0)|\underline{X}^n\big]&\leq 2\sup_{g \in \mathcal{P}_m(\Gamma)}\bigg(l_\nu(g,f_0)-2l_{\underline{X}^n}(g,f_0)\bigg)_+^2\\
	&\qquad\qquad+8\bigg(18l_{\underline{X}^n}^2(f_0,g)+(5.62\times 10^6)\frac{\sigma^2 \kappa D_m\log n}{n}\bigg).
	\eeqas
	Now we take expectation with respect to $\nu$ followed by infimum over $g \in \mathcal{P}_m$, by Lemma \ref{lem:relate_metrics} we see that
	\beqa\label{const:fixed_model_bound_cont}
	\mathbb{E}\big[l_\nu^2(\hat{f}_m,f)\big]
	&\leq 144\inf_{g \in \mathcal{P}_m(\Gamma)}l^2_\nu(f_0,g)+(4.5\times 10^7)\frac{(\sigma^2\vee \Gamma^2)\kappa D_m\log n}{n},
	\eeqa
	as desired.
\end{proof}

\begin{proof}[Proof of Theorem \ref{thm:lepski_generic}]
	We first consider (\ref{ineq:lepski_risk_bound_cont}) in the case $\ast=up$ and we start with the case for continuous norm. Suppose the regression function $f_0 \in \mathcal{P}_{m_0}$ and $m_0 \leq \mathfrak{M}_n^{up,c}$. For any $m\in\{1,\ldots,\mathfrak{M}_n^{up,c}\}$, define the event $A_{m,c}$ by
	\beqa\label{def:A_m_cont}
	A_{m,c}:=\bigg\{l_\nu^2(\hat{f}_m,\hat{f}_{m'})\leq \mathfrak{t}^c\frac{(\sigma^2\vee \Gamma^2)\kappa D_{m'}\log n}{n},\forall m'\in\{m,\ldots,\mathfrak{M}_n^{up,c}\}\bigg\}.
	\eeqa
	We write
	\beqa\label{ineq:lepski_bound_separation}
	\mathbb{E}\left[l_\nu^2(\hat{f}_{\hat{m}^{up,c}},f_0)\right]&= \mathbb{E}\left[l_\nu^2(\hat{f}_{\hat{m}^{up,c}},f_0)\bm{1}_{\hat{m}^{up,c}\leq m_0}\right]+\mathbb{E}\left[l_\nu^2(\hat{f}_{\hat{m}^{up,c}},f_0)\bm{1}_{\hat{m}^{up,c}> m_0}\right]\\
	&:=(I)+(II).
	\eeqa
	Then for $(I)$, we have
	\beqa\label{ineq:lepski_bound_1}
	(I)&\leq 2\mathbb{E}\left[l_\nu^2(\hat{f}_{\hat{m}^{up,c}},\hat{f}_{m_0})\bm{1}_{\hat{m}^{up,c}\leq m_0}\right]+2\mathbb{E}\left[l_\nu^2(\hat{f}_{m_0},f_0)\right]\\
	&\leq 2(\mathfrak{t}^c+\bar{\mathfrak{k}^c})\frac{(\sigma^2\vee \Gamma^2)\kappa D_{m_0}\log n}{n},
	\eeqa
	where the last inequality follows from the definition of $\hat{m}^{up,c}$ in (\ref{def:tuning_lepski}) and the risk bound (\ref{ineq:fixed_model_bound}). Now we consider $(II)$. Note that if $A_{m_0,c}$ holds, then $\hat{m}^{up,c}\leq m_0$. Thus
	\beqas
	(II) &\leq 4\Gamma^2\mathbb{P}(\hat{m}^{up,c}>m_0)\leq 4\Gamma^2 \mathbb{P}(A_{m_0}^c)\\
	&\leq 4\Gamma^2\sum_{m=m_0}^{\mathfrak{M}_n^{up,c}}\mathbb{P}\left[l_\nu^2(\hat{f}_{m_0},\hat{f}_m)>\mathfrak{t}^c\frac{(\sigma^2\vee\Gamma^2)\kappa D_m\log n}{n}\right]\\
	&\leq 4\Gamma^2\sum_{m=m_0}^{\mathfrak{M}_n^{up,c}}\bigg(\mathbb{P}\left[l_\nu^2(\hat{f}_{m_0},f_0)>\frac{\mathfrak{t}^c}{2}\frac{(\sigma^2\vee\Gamma^2)\kappa D_m\log n}{n}\right]\\
	&\qquad\qquad\qquad +\mathbb{P}\left[l_\nu^2(\hat{f}_{m},f_0)>\frac{\mathfrak{t}^c}{2}\frac{(\sigma^2\vee\Gamma^2)\kappa D_m\log n}{n}\right]\bigg)\\
	&:=4\Gamma^2\sum_{m=m_0}^{\mathfrak{M}_n^{up,c}}\big(\mathfrak{P}_m^{(1)}+\mathfrak{P}_m^{(2)}\big).
	\eeqas
	Note that
	\beqas
	\mathfrak{P}_m^{(1)}&\leq \mathbb{P}\left[l_\nu^2(\hat{f}_{m_0},f_0)>\frac{\mathfrak{t}^c}{2}\frac{(\sigma^2\vee\Gamma^2)\kappa D_{m_0}\log n}{n}\right].
	\eeqas
	Now by taking $t=4(\mathfrak{v}\vee 1)\kappa D_{m_0}\log n$ in (\ref{ineq:fixed_model_bound_deviation_ineq}), we see that with $\mathfrak{t}^c = 2(\mathfrak{k}^c+4(\mathfrak{v}\vee 1)\mathfrak{d}^c)$, the above display is further bounded by
	\beqas
	\mathfrak{P}_m^{(1)}&\leq 4\sum_{j=0}^\infty \exp\bigg(-\frac{2^j4(\mathfrak{v}\vee 1)\kappa D_{m_0}\log n}{\mathfrak{v}}\bigg)+6\exp(-4(\mathfrak{v}\vee 1)\kappa D_{m_0}\log n)\\
	&\leq \frac{5}{n^2}+\frac{6}{n^4}\leq \frac{11}{n^2}
	\eeqas
	since 
	\beqas
	\sum_{j=0}^\infty \exp\big(-2^{j}4\log n\big)\leq {1 \over n^4}+{1 \over n^4}\sum_{j=1}^\infty n^{-2^j\cdot 4+4}<\frac{1.07}{n^4}.
	\eeqas
	For $\mathfrak{P}_m^{(2)}$, note that $f_0\in \mathcal{P}_{m_0}\subset \mathcal{P}_m$ by our assumption that $\{\mathcal{P}_m\}_{m\in \N}$ is nested and that $m\geq m_0$. Now by setting $t=4(\mathfrak{v}\vee 1)\kappa D_{m}\log n$ in (\ref{ineq:fixed_model_bound_deviation_ineq}) again and repeating the above argument we see that $\mathfrak{P}_m^{(2)}\leq 11/n^2$. Hence
	\beqa\label{ineq:lepski_bound_2}
	(II)\leq 88\Gamma^2\frac{\mathfrak{M}_n^{up,c}}{n^4}\leq 88\frac{(\sigma^2\vee \Gamma^2)\kappa D_{m_0}\log n}{n}.
	\eeqa
	Combining (\ref{ineq:lepski_bound_separation}), (\ref{ineq:lepski_bound_1}) and (\ref{ineq:lepski_bound_2}) we see that
	\beqas
	\mathbb{E}\left[l_\nu^2(\hat{f}_{\hat{m}^{up,c}},f_0)\right]&\leq 2(\mathfrak{t}^c+\bar{\mathfrak{k}^c}+44)\frac{(\sigma^2\vee \Gamma^2)\kappa D_{m_0}\log n}{n}.
	\eeqas
	Now if $m_0\geq \mathfrak{M}_n^{up,c}$, possibly $m_0 =\infty$ where $\mathcal{P}_\infty:=\mathcal{F}$. Then by definition (\ref{def:tuning_lepski}), $\hat{m}^{up,c}\leq \mathfrak{M}_n^{up,c}$, and hence 
	\beqas
	\mathbb{E}\left[l_\nu^2(\hat{f}_{\hat{m}^{up,c}},f_0)\right]&\leq 2\mathbb{E}\left[l_\nu^2(\hat{f}_{\mathfrak{M}_n^{up,c}},f_0)\right]+2\mathbb{E}\left[l_\nu^2(\hat{f}_{\hat{m}^{up,c}},\hat{f}_{\mathfrak{M}_n^{up,c}})\right]\\
	&\leq 2\left(\bar{\mathfrak{c}^c}\inf_{g \in \mathcal{P}_{\mathfrak{M}_n^{up,c}}}l_\nu^2(f_0,g)+\bar{\mathfrak{k}^c}\frac{(\sigma^2\vee \Gamma^2)\kappa D_{\mathfrak{M}_n^{up,c}}\log n}{n}\right)\\
	&\qquad\qquad +2\mathfrak{t}^c \frac{(\sigma^2\vee \Gamma^2)\kappa D_{\mathfrak{M}_n^{up,c}}\log n}{n}\\
	&\leq 2\bar{\mathfrak{c}^c} \mathfrak{e}(f_0,\Omega)\mathfrak{G}(\mathfrak{M}_n^{up,c})+2(\bar{\mathfrak{k}^c}+\mathfrak{t}^c)\frac{(\sigma^2\vee \Gamma^2)\kappa D_{\mathfrak{M}_n^{up,c}}\log n}{n}\\
	&\leq 2(\bar{\mathfrak{c}^c} \mathfrak{e}(f_0,\Omega)+\bar{\mathfrak{k}^c}+\mathfrak{t}^c)\inf_{m \in \N}\bigg(\mathfrak{G}(m)+\frac{(\sigma^2\vee \Gamma^2)\kappa D_{m}\log n}{n}\bigg).
	\eeqas
	Here the second inquality follows from (\ref{ineq:fixed_model_bound}) and the definition of $\hat{m}^{up,c}$, while the last one follows from the definition of $\mathfrak{M}_n^{up,c}$. For the case of discrete norm case (\ref{ineq:lepski_risk_bound_discrete}), define the event $A_{m,d}$ instead of (\ref{def:A_m_cont}) as follows:
	\beqa
	A_{m,d}:=\bigg\{l_{\underline{X}^n}^2(\hat{f}_m,\hat{f}_{m'})\leq \mathfrak{t}^d\frac{\sigma^2\kappa D_{m'}\log n}{n},\forall m'\in\{m,\ldots,\mathfrak{M}_n^{up,d}\}\bigg\}.
	\eeqa
	Then for $f_0\in \mathcal{P}_{m_0}$, similarly we separate the risk into two terms:
	\beqa\label{ineq:lepski_bound_separation_discrete}
	\mathbb{E}\left[l_{\underline{X}^n}^2(\hat{f}_{\hat{m}^{up,d}},f_0)\right]&= \mathbb{E}\left[l_{\underline{X}^n}^2(\hat{f}_{\hat{m}^{up,d}},f_0)\bm{1}_{\hat{m}^{up,d}\leq m_0}\right]+\mathbb{E}\left[l_{\underline{X}^n}^2(\hat{f}_{\hat{m}^{up,d}},f_0)\bm{1}_{\hat{m}^{up,d}> m_0}\right]\\
	&:=(I)+(II).
	\eeqa
	The first term $(I)$ can be bounded along the same lines as in  (\ref{ineq:lepski_bound_1}), and we get :
	\beqa\label{ineq:lepski_bound_1_discrete}
	(I)&\leq 2(\mathfrak{t}^d+\bar{\mathfrak{k}^d})\frac{\sigma^2\kappa D_{m_0}\log n}{n}.
	\eeqa
	Now we handle $(II)$. Note that for any $m > m_0$ (possibly random),  it holds that $
	\pnorm{Y-\hat{f}_m}{n}^2\leq \pnorm{Y-f_0}{n}^2$. Plugging in $Y=f_0+\epsilon$ we see that $\pnorm{\hat{f}_m-f_0-\epsilon}{n}^2\leq \pnorm{\epsilon}{n}^2$. Thus
	\beqas
	\pnorm{\hat{f}_m}{n}\leq \pnorm{\hat{f}_m-f_0-\epsilon}{n}+\pnorm{f_0+\epsilon}{n}\leq \pnorm{f_0}{n}+2\pnorm{\epsilon}{n}.
	\eeqas
	It follows that $l_{\underline{X}^n}^2(\hat{f}_{\hat{m}^{up,d}},f_0)\leq 8(\pnorm{f_0}{n}^2+\pnorm{\epsilon}{n}^2)$. Now the second term $(II)$ can be further bounded by
	\beqa\label{ineq:second_term_discrete_1}
	(II) &\leq 8\mathbb{E}\big[(\pnorm{f_0}{n}^2+\pnorm{\epsilon}{n}^2)\bm{1}_{\hat{m}^{up,d}>m_0}\big]\\
	&\leq 8\pnorm{f_0}{\infty}^2 \mathbb{P}\big(\hat{m}^{up,d}>m_0\big)+8\mathfrak{u} \mathbb{P}\big(\hat{m}^{up,d}>m_0\big)+8\mathbb{P}\big(\pnorm{\epsilon}{n}^2>\mathfrak{u}\big)\\
	&\leq 8\pnorm{f_0}{\infty}^2 \mathbb{P}\big(\hat{m}^{up,d}>m_0\big)+8\mathfrak{u} \mathbb{P}\big(\hat{m}^{up,d}>m_0\big)+8\mathfrak{u}^{-1}\sigma^2.
	\eeqa
	Here in the last inequality we used Markov's inequality for the last term and the sub-Gaussianity of $\epsilon$: $
	\mathbb{P}\big(\pnorm{\epsilon}{n}^2>\mathfrak{u}\big)\leq \mathfrak{u}^{-1}\mathbb{E} \epsilon_1^2\leq \mathfrak{u}^{-1} \sigma^2$. 
	By essentially the same argument as for the case for continuous norm, we set $t=4(\mathfrak{v}\vee 1)\kappa D_{m_0}\log n$ in (\ref{ineq:fixed_model_bound_deviation_ineq_discrete}), we see that with $\mathfrak{t}^d = 2(\mathfrak{k}^d+4(\mathfrak{v}\vee 1)\mathfrak{d}^d)$, the following bound holds:
	\beqa\label{ineq:second_term_discrete_2}
	\mathbb{P}\big(\hat{m}^{up,d}>m_0\big)\leq \frac{10\mathfrak{M}_n^{up,d}}{n^4}.
	\eeqa
	Combining (\ref{ineq:second_term_discrete_1}) and (\ref{ineq:second_term_discrete_2}), with $\mathfrak{u}=n^{2}$, we see that
	\beqa
	(II)&\leq 168\frac{(\sigma^2\vee \pnorm{f}{\infty}^2)\mathfrak{M}_n^{up,d}}{n^2}\leq 168\frac{(\sigma^2\vee \pnorm{f}{\infty}^2)\kappa D_{m_0}\log n}{n}.
	\eeqa
	The rest of the proofs for the discrete are the same as to the continuous norm, and the proofs for the case $\ast =un$ are completely analogous so we shall omit the details.
\end{proof}

\begin{proof}[Proof of Theorem \ref{thm:adaptive_estimator_bound_discrete}]
	We first observe that for any $f \in \mathcal{F}$, it holds that
	\beqas
	\pnorm{f-f_0}{n}^2=\gamma_n(f)+2\iprod{\epsilon}{f}_n+\pnorm{f_0}{n}^2.
	\eeqas
	Thus it holds for any $m' \in \N$, and any $f_m \in \mathcal{P}_m$ that
	\beqa\label{ineq:basic_inequality_adaptive_estimation}
	\pnorm{\hat{f}_{m'}-f_0}{n}^2= \pnorm{{f}_m-f_0}{n}^2+\gamma_n(\hat{f}_{m'})-\gamma_n(f_m)+2\iprod{\epsilon}{\hat{f}_{m'}-{f}_m}_n.
	\eeqa
	By definition of $\hat{m}$, we have
	\beqa\label{ineq:basic_inequality_adaptive_estimation_discrete}
	\pnorm{\hat{f}_{\hat{m}}-f_0}{n}^2\leq \pnorm{{f}_m-f_0}{n}^2+\mathrm{pen}(m)-\mathrm{pen}(\hat{m})+2\iprod{\epsilon}{\hat{f}_{\hat{m}}-{f}_m}_n.
	\eeqa
	Now our goal is to control the random term $\iprod{\epsilon}{\hat{f}_{\hat{m}}-{f}_m}_n$. Fix $m \in \N$ and $f_m\in \mathcal{P}_m$. We proceed by conditioning on $\underline{X}^n$.  For any $m' \in \N$, we define for $g \in \mathcal{P}_{m'}$ the Gaussian process
	\beqas
	Z(g):=\frac{\iprod{\epsilon}{g-{f}_m}_n}{\omega(m',g)}
	\eeqas
	where $\omega(m',g):=\pnorm{g-f_0}{n}^2+\pnorm{{f}_m-f_0}{n}^2+\frac{x_{m'}}{2n}$ where $x_{m'}$ is a constant to be specified later. By Borell's inequality (cf. Proposition A.2.1 \cite{van1996weak}) we have
	\beqa\label{ineq:Borell_inequality}
	\mathbb{P}\bigg[\sup_{g \in \mathcal{P}_{m'}}Z(g)\geq \mathscr{E}+t\bigg\lvert \underline{X}^n\bigg]\leq \exp\left(-\frac{t^2}{2v^2}\right)
	\eeqa
	where $\mathscr{E}\geq \mathbb{E}\big[\sup_{g \in \mathcal{P}_{m'}}Z(g)\big\lvert \underline{X}^n\big]$ and $v^2\geq \sup_{g \in \mathcal{P}_{m'}}\mathrm{Var}[Z(g)\lvert \underline{X}^n]$. Note that
	\beqa\label{ineq:lower_bound_omega}
	\omega(m',g)\geq \frac{1}{2}\bigg(\pnorm{g-{f}_m}{n}^2+\frac{x_{m'}}{n}\bigg)\geq \pnorm{g-{f}_m}{n}\sqrt{\frac{x_{m'}}{n}}.
	\eeqa
	We now establish a bound for $\mathscr{E}$. To this end, note that
	\beqa\label{ineq:birge_adaptive_1}
	&\mathbb{P}\big(\sup_{g \in \mathcal{P}_{m'}} \abs{Z(g)}\geq t\big\lvert \underline{X}^n\big)\\
	&\leq \mathbb{P}\bigg(\sup_{g \in \mathcal{P}_{m'}}\frac{2 }{\pnorm{g-f_m}{n}^2+x_{m'}/n}\abs{{1 \over n}\sum_{i=1}^n\epsilon_i(g-f_m)(X_i)}>t\bigg\lvert \underline{X}^n\bigg)\\
	&\leq \mathbb{P}\bigg(\sup_{g \in \mathcal{P}_{m'}:\pnorm{g-f_m}{n}<\sqrt{x_{m'}/n}}\abs{{1 \over n}\sum_{i=1}^n\epsilon_i(g-f_m)(X_i)}>t\frac{x_{m'}}{2n}\bigg\lvert \underline{X}^n\bigg)\\
	&\quad +\sum_{j=1}^\infty \mathbb{P}\bigg(\sup_{g \in \mathcal{P}_{m'}: 2^{j-1}\sqrt{x_{m'}/n}\leq \pnorm{g-f_m}{n}< 2^j\sqrt{x_{m'}/n}}\abs{{1 \over n}\sum_{i=1}^n\epsilon_i(g-f_m)(X_i)}>t\frac{2^{2j-3} x_{m'}}{n}\bigg\lvert \underline{X}^n\bigg)\\
	&\leq \sum_{j=0}^\infty \mathbb{P}\bigg(\sup_{g \in \mathcal{P}_{m'}:\pnorm{g-f_m}{n}<2^j\sqrt{x_{m'}/n}}\abs{{1 \over n}\sum_{i=1}^n\epsilon_i(g-f_m)(X_i)}>t\frac{2^{2j-3}x_{m'}}{n}\bigg\lvert \underline{X}^n\bigg)\\
	&\equiv \sum_{j=0}^\infty P_j.
	\eeqa
	For each $j\geq 0$, denote $\mathcal{G}_j:=\{g \in \mathcal{P}_{m'}:\pnorm{g-f_m}{n}<2^j \sqrt{x_m'/n}\}$. It is easy to check that, by the same arguments as in (\ref{ineq:risk_fixed_model_1}) and (\ref{ineq:risk_fixed_model_2}), with $\omega_j=2^j\sqrt{x_m'/n}$ we have
	\beqas
	\mathcal{N}(\epsilon,\mathcal{G}_j,\pnorm{\cdot}{n})&\leq  \mathcal{N}(\sqrt{n}\epsilon,\{F_{m'}-\bm{f}_m\}\cap B_n(0,\sqrt{n}\omega_j),\pnorm{\cdot}{2})\leq \bigg(4+\frac{2\sqrt{n}\omega_j}{\epsilon}\bigg)^{\kappa D_{m'}},
	\eeqas
	where $\bm{f}_m:=(f_m(X_1),\ldots,f_m(X_n))$. Then by the same calculation as in (\ref{ineq:risk_fixed_model_3}) (but now using $\sqrt{2}<2$), we see that
	\beqas
	\int_0^{\omega_j}\sqrt{\log\mathcal{N}(\epsilon,\mathcal{G}_j,\pnorm{\cdot}{n})}\ \d{\epsilon}&\leq 2\sqrt{\kappa}\omega_j \sqrt{D_{m'}\log n}\\
	&= 2^{j+1}\sqrt{\kappa D_{m'}}\sqrt{\frac{x_{m'}\log n}{n}}.
	\eeqas
	Now  by Lemma \ref{lem:sup_gaussian_process}, choose $\delta_n$ so that 
	\beqas
	\sqrt{n}\delta_n&= \left(24C_2  2^{j+1}\sqrt{\kappa D_{m'}}\sqrt{\frac{x_{m'}\log n}{n}}\right) \vee \left(\sqrt{1152\log 2}C_2 2^j\sqrt{\frac{x_{m'}}{n}}\right) \\
	& = 48 C_2 2^j \sqrt{\kappa D_{m'}}\sqrt{\frac{x_{m'}\log n}{n}}.
	\eeqas
	Then for all $t>0$ such that
	\beqas
	t\frac{2^{2j-3}x_{m'}}{n}\geq \delta_n= 48 C_2 2^j \sqrt{\kappa D_{m'}}\sqrt{\frac{x_{m'}\log n}{n^2}},
	\eeqas
	or, equivalently 
	\beqa\label{ineq:birge_adaptive_2}
	t\geq 6\cdot 2^{-j+6} C_2\sqrt{\frac{\kappa D_{m'}\log n}{x_{m'}}}
	\eeqa
	we have that
	\beqa\label{ineq:birge_adaptive_3}
	P_j\leq 2C_1\exp\left[-\frac{2^{2j}x_{m'}t^2}{1152\cdot 2^6 C_2^2}\right].
	\eeqa
	It therefore follows from (\ref{ineq:birge_adaptive_1}), (\ref{ineq:birge_adaptive_2}) and (\ref{ineq:birge_adaptive_3}) that for all $t\geq 6\cdot 2^6C_2\sqrt{\frac{\kappa D_{m'}\log n}{x_{m'}}}$,
	\beqas
	\mathbb{P}\left(\sup_{g \in \mathcal{P}_{m'}} \abs{Z(g)}\geq t\bigg\lvert\underline{X}^n\right)\leq \sum_{j=0}^\infty P_j\leq 2C_1\sum_{j=0}^\infty \exp\left[-\frac{2^{2j}x_{m'}t^2}{1152\cdot 2^6 C_2^2}\right]
	\eeqas
	for $C_1,C_2>0$ taken from Lemma \ref{lem:sup_gaussian_process}. Since the $\epsilon_i$'s are sub-Gaussian with parameter $\sigma^2$, we can take $C_1=2,C_2=\sqrt{2}\sigma$. Thus
	\beqas
	\mathbb{E}\bigg[\sup_{g \in \mathcal{P}_{m'}} \abs{Z(g)}\bigg\lvert \underline{X}^n\bigg]&\leq 6\cdot 2^6 C_2\sqrt{\frac{\kappa D_{m'}\log n}{x_{m'}}}+2C_1\sum_{j=0}^\infty \int_0^\infty \exp\left[-\frac{2^{2j}x_{m'}t^2}{1152\cdot 2^6 C_2^2}\right]\ \d{t}\\
	&\leq 6\cdot 2^{6}\sqrt{2}\sigma \sqrt{\frac{\kappa D_{m'}\log n}{x_{m'}}}+\frac{3\cdot 2^{10}\sigma}{\sqrt{x_{m'}}}\\
	&\leq 3\cdot 2^8 \sigma \sqrt{\frac{\kappa D_{m'}\log n+32}{x_{m'}}}
	\eeqas
	where in the last step we used the inequality $\sqrt{a}+\sqrt{b}\leq \sqrt{2}\sqrt{a+b}$ holds for all $a,b\geq 0$. Hence we can take 
	\beqa\label{ineq:birge_adaptive_4}
	\mathscr{E}=3\cdot 2^8\sigma \sqrt{\frac{\kappa D_{m'}\log n+32}{x_{m'}}}.
	\eeqa
	Note that 
	\beqa
	\mathrm{Var}[Z(g)|\underline{X}^n]=\frac{\sigma^2\pnorm{g-{f}_m}{n}^2}{n\omega(m',g)^2}\leq \frac{\sigma^2}{x_{m'}},
	\eeqa
	where the inequality follows from (\ref{ineq:lower_bound_omega}). Thus we can take $v^2=\sigma^2/x_{m'}$ in (\ref{ineq:Borell_inequality}). Now set 
	\beqa\label{ineq:birge_adaptive_5}
	t:=\sqrt{2\sigma^2(u+L_{m'}D_{m'})/x_{m'}},
	\eeqa
	and 
	\beqa\label{ineq:birge_adaptive_6}
	x_{m'}:=9\cdot 2^{21}\big(\sigma^2\kappa D_{m'}\log n+32\sigma^2\big)+64\big(\sigma^2u+\sigma^2L_{m'}D_{m'}\big).
	\eeqa
	It therefore follows from (\ref{ineq:birge_adaptive_4}), (\ref{ineq:birge_adaptive_5}) and (\ref{ineq:birge_adaptive_6}) that
	\beqas
	\mathscr{E}+t
	&\leq \sqrt{\frac{9\cdot 2^{17}\sigma^2(\kappa D_{m'}\log n+32)+4(\sigma^2 u+\sigma^2 L_{m'}D_{m'})}{x_{m'}}}\leq \frac{1}{4}
	\eeqas
	we obtain by  (\ref{ineq:Borell_inequality}) that
	\beqa\label{ineq:birge_adaptive_7}
	\mathbb{P}\big[Z(\hat{f}_{m'})\geq 1/4\big\lvert \underline{X}^n\big]\leq \mathbb{P}\left[\sup_{g \in \mathcal{P}_{m'}}Z(g)\geq 1/4\bigg\lvert \underline{X}^n\right]\leq \exp(-u-L_{m'}D_{m'}).
	\eeqa
	Summing the inequalities (\ref{ineq:birge_adaptive_7}) with respect to $m'$, it follows that
	\beqas
	\mathbb{P}\bigg[\sup_{m'\in \N} \frac{\iprod{\epsilon}{\hat{f}_{m'}-f_m}_n}{\omega(m',\hat{f}_{m'})}\geq \frac{1}{4}\bigg\lvert \underline{X}^n\bigg]\leq \exp(-u)\sum_{m' \in \N}\exp(-L_{m'}D_{m'})=\Sigma \exp(-u).
	\eeqas
	Thus, conditional on $\underline{X}^n$, with probability at least $1-\Sigma\exp(-u)$,
	\beqas
	4\iprod{\epsilon}{\hat{f}_{\hat{m}}-f_m}_n& \leq \pnorm{\hat{f}_{\hat{m}}-f_0}{n}^2+\pnorm{f_m-f_0}{n}^2\\
	&\quad +\frac{9\cdot 2^{20}\big(\sigma^2\kappa D_{\hat{m}}\log n+32\sigma^2\big)+32\big(\sigma^2u+\sigma^2L_{\hat{m}}D_{\hat{m}}\big)}{n}.
	\eeqas
	Combined with (\ref{ineq:basic_inequality_adaptive_estimation_discrete}), it follows that
	\beqa\label{ineq:birge_adaptive_8}
	\pnorm{\hat{f}_{\hat{m}}-f_0}{n}^2&\leq 3\pnorm{{f}_m-f_0}{n}^2+2\mathrm{pen}(m)-2\mathrm{pen}(\hat{m})\\
	&\quad +\frac{9\cdot 2^{20}\big(\sigma^2\kappa D_{\hat{m}}\log n+32\sigma^2\big)+32\big(\sigma^2u+\sigma^2L_{\hat{m}}D_{\hat{m}}\big)}{n}.\\
	&=3\pnorm{{f}_m-f_0}{n}^2+2\mathrm{pen}(m)+\frac{9\cdot 2^{25}\sigma^2}{n}+\frac{32\sigma^2u}{n}
	\eeqa
	with conditional probability at least $1-\Sigma \exp(-u)$. Here the penalty is defined by
	\beqa\label{const:p_adaptive}
	\mathrm{pen}(m)=\frac{16\sigma^2D_{m}}{n}\big(9\cdot 2^{15} \kappa \log n+L_m).
	\eeqa
	Let
	\beqa
	V:=\bigg(\pnorm{\hat{f}_{\hat{m}}-f_0}{n}^2-3\pnorm{{f}_m-f_0}{n}^2-2\mathrm{pen}(m)-\frac{9\cdot 2^{25}\sigma^2}{n}\bigg)\vee 0.
	\eeqa
	Then it follows from (\ref{ineq:birge_adaptive_8}) that $\mathbb{P}[V>32\sigma^2 u/n|\underline{X}^n]\leq \Sigma \exp(-u)$, and thus $\mathbb{E}[V|\underline{X}^n]\leq 32\sigma^2\Sigma/n$. Taking conditional expectations yields
	\beqas
	\mathbb{E}\big[\pnorm{\hat{f}_{\hat{m}}-f_0}{n}^2\big\lvert\underline{X}^n\big]&\leq 3\pnorm{{f}_m-f_0}{n}^2+2\mathrm{pen}(m)+\frac{9\cdot 2^{25}\sigma^2}{n}+\mathbb{E}[V|\underline{X}^n]\\
	& \leq 3\pnorm{{f}_m-f_0}{n}^2+2\mathrm{pen}(m)+32(9\cdot 2^{20}+\Sigma)\frac{\sigma^2}{n}.
	\eeqas
	Finally by taking expectations across the last display we see that
	\beqa\label{const:adaptive_generic_discrete}
	\mathbb{E}\big[l_{\underline{X}^n}^2(\hat{f}_{\hat{m}},f_0)\big]
	&\leq 3l_\nu^2(f_m,f_0)+\big(3.02\times 10^8+9\cdot 2^{20}\kappa+32\Sigma)\frac{\sigma^2D_mL_m \log n}{n}.
	\eeqa
	Now the conclusion follows since $m$ and $f_m\in \mathcal{P}_m$ are arbitrarily chosen.
\end{proof}

\begin{proof}[Proof of Lemma \ref{lem:pdim_P_m}]
	Note that each $f \in \mathcal{P}_1$ is an affine function on $\Omega$ so $F_1$ is a linear space with dimension at most $d+1$, hence the pseudo-dimension of $F_1$ is at most $d+1$ (cf. pp. 15 \cite{pollard1990empirical}). Each $f \in \mathcal{P}_m$ where $m>1$ corresponds to a triangulation of $\Omega$ with no more than $m$ many pieces of $d$-dimensional convex bodies on which $f$ is affine. Since $f$ is the pointwise maximum over all these affine functions extended to the whole region $\Omega$, we see by the argument of Lemma 5.1 \cite{pollard1990empirical} that the pseudo-dimension of $F_m$ can be bounded by the smallest integer $l$ for which
	\begin{equation}\label{ineq:pseudo-dim}
	\binom{l}{0}+\cdots+\binom{l}{d+1}<2^{l/m}.
	\end{equation}
	Following arguments as in the proof of Lemma B.1 of  \cite{guntuboyina2012optimal}, the left hand side of the above display is bounded from above by $\left(\frac{1+\alpha}{\alpha}\right)^l\alpha^{d}$ for any $\alpha>0$. Choose $\alpha=\left(2^{1/2m}-1\right)^{-1}$, and by using the inequality $(x-1)^{-1}<(\log x)^{-1}$ to $x=2^{1/2m}$ we see that $\alpha<2m/\log 2$. Now in order that (\ref{ineq:pseudo-dim}) holds, we only need to consider $l \in \N$ for which
	\begin{equation}
	2^{l/{2m}}>\bigg(\frac{2m}{\log 2}\bigg)^{d+1}.
	\end{equation}
	Taking logarithms on both side of the above display we arrive at $l>6md\log 3m$. This completes the proof.	
\end{proof}
\begin{proof}[Proof of Lemma \ref{lem:approximation_error_P_m}]
	For a general convex function $f_0$ supported on a convex body $\Omega$ with bounded Lipschitz constant $L$, i.e. $\pnorm{f_0}{L}\leq L$, we may assume without loss of generality that $\pnorm{f}{\infty}\leq w(\Omega)L:=\Gamma$, where $w(\Omega)$ is the width of $\Omega$. We now consider the  circumscribed polytope $U_\Gamma(f_0)$ with at most $m$ facets.  For any such polytope $P$, define $g_P(x):=\inf\{t:(x,t)\in P\}$, then $g_P \in \mathcal{P}_m$. Note that $\pnorm{g_P}{L}\leq L$. Now we claim that $\pnorm{f_0-g_P}{\infty}\leq \sqrt{1+L^2}d_H(U_\Gamma(f_0),U_\Gamma(g_P))$ where $d_H$ denotes the Hausdorff distance between two sets and $U_\Gamma(f_0)$ the epigraph of $f_0$ truncated at the level $\Gamma$. To see this, for any $x,y \in \Omega$, 
	\beqas
	\abs{f_0(x)-g_P(x)}&\leq \abs{f_0(x)-f_0(y)}+\abs{f_0(y)-g_P(x)}\\
	&\leq L \pnorm{x-y}{2}+\abs{f_0(y)-g_P(x)}\\
	&\leq \sqrt{1+L^2}\pnorm{(x,g_P(x))-(y,f_0(y))}{2}.
	\eeqas
	By taking the infimum over $y$ followed by supremum over $x$ we get one direction. The other direction follows similarly, and thus the claim follows. Now we see that for any $g_P$,
	\beqas
	\inf_{g\in \mathcal{P}_m(\Gamma)}l_\nu^2(f_0,g)&\leq l_\nu^2(f_0,g_P)\leq\nu_{\max} \pnorm{f_0-g_P}{\infty}l_1(f_0,g_P)\\
	&\leq 2\nu_{\max}(1\vee L)d_H(U_\Gamma(f_0),U_\Gamma(g_P))d_N(U_\Gamma(f_0),U_\Gamma(g_P))
	\eeqas
	where $d_N$ denotes the Nikodym metric defined by $d_N(U,V)=\abs{U\Delta V}$. Since $d_N(U,V)\leq d_H(U,V)\sigma(\partial(U))$ where $\sigma(\partial(U))$ denotes the surface area (cf. \cite{bronstein2008approximation} page 732), we see that the above display is bounded by
	\beqas
	2\nu_{\max}(1\vee L)\sigma(\partial(U))d_H^2(U_\Gamma(f_0),U_\Gamma(g_P)).
	\eeqas
	Now by well-known facts in convex geometry (cf. \cite{dudley1974metric}, \cite{bronshteyn1975approximation}, page 324 in \cite{gruber1991handbook}, or page 729 \cite{bronstein2008approximation}),  for $U_\Gamma(f_0)$, we can find a circumscribing polytope $P_0$ with at most $m$ facets so that $d_H(U_\Gamma,P_0)\leq c_d{\abs{U_\Gamma(f_0)}}/{m^{2/d}}$. Then $U_\Gamma(f_0)\subset U_\Gamma(g_{P_0})\subset P_0$, $U_\Gamma{g_{P_0}}\in \mathcal{P}_m$, and hence
	\beqas
	d_H(U_\Gamma(f_0),U_\Gamma(g_{P_0}))\leq c_d \frac{\abs{U_\Gamma(f_0)}}{m^{2/d}}\leq c_d\Gamma \abs{\Omega}m^{-2/d}.
	\eeqas
	This implies that
	\beqa\label{const:approximation_error}
	\inf_{g \in \mathcal{P}_m}l_\nu^2(f_0,g)\leq \inf_{g \in \mathcal{P}_m(\Gamma)}l_\nu^2(f_0,g)\leq c_d\sigma(\partial(U_\Gamma(f_0))) \nu_{\max}(1\vee L)\Gamma^2\abs{\Omega}^2 m^{-4/d},
	\eeqa	
	as desired.
\end{proof}

\begin{proof}[Proof of Lemma \ref{lem:pseudo_dim_set_estimation}]
	The proof is essentially the same as Lemma B.1 \cite{guntuboyina2012optimal} by noting that the uniform boundedness assumption is not necessary in the proof there.
\end{proof}
\begin{proof}[Proof of Lemma \ref{lem:approximation_error_set_estimation}]
	By Theorem 1.8.11 \cite{schneider1993convex}, it follows that for two convex bodies $K,K'$,
	\beqas
	\sup_{u \in \mathbb{S}^{d-1}}\abs{h_K(u)-h_{K'}(u)}=d_H(K,K'),
	\eeqas
	where $d_H(\cdot,\cdot)$ denotes the Hausdorff distance. Now the conclusion follows by noting 
	\beqas
	l_r^2(K,K')\vee l_f^2(K,K')\leq \sup_{u \in \mathbb{S}^{d-1}}\abs{h_K(u)-h_{K'}(u)}^2=d_H^2(K,K'),
	\eeqas
	and the result from \cite{bronshteyn1975approximation}, or see results in Section 4.1 in \cite{bronstein2008approximation}.
\end{proof}

\section{Proofs  for Section 5}
\subsection{Proof of Lemma \ref{lem:inconsist_blse_zero}}

\begin{proof}[Proof of Lemma \ref{lem:inconsist_blse_zero}]
	Note that $\hat{f}_n$ is the least squares estimator if and only if
	\beqa\label{ineq:character_constrained_cvx_lse}
	\sum_{i=1}^n (g(X_i)-\hat{f}_n(X_i))(Y_i-\hat{f}_n(X_i))\leq 0
	\eeqa
	holds for all $g \in \mathcal{C}(1)$. This is a direct result of Moreau's decomposition theorem (cf. \cite{moreau1962decomposition}; and see \cite{seijo2011nonparametric} Lemma 2.4). Now suppose $\hat{f}_n(0)\to_p 0$. By passing to a subsequence we may strenghthen the convergence to almost surely convergence. Thus for $n$ large enough we may assume $\hat{f}_n(0)\leq 0.99$. Since $\mathbb{E}[l_2^2(\hat{f}_n,f_0)]\to 0$ by Theorem \ref{thm:rates_conv_lse}, by passing to a further subsequence we have $l_2(\hat{f}_n,f_0)\to 0$ almost surely. This means there is a further subsequence so that $\hat{f}_n \to f_0$ almost everywhere on $[0,1]$ almost surely, and hence uniformly within the interior of $[0,1]$ by Theorem 10.8 in  \cite{rockafellar1997convex}. Thus the convergence is uniform near $0$ almost surely by assumed convergence of $\hat{f}_n$ at $0$ via Lemma \ref{lem:uniform_conv_convexseq}. We will work with this subsequence in the sequel. Since $X_{(1)}\to 0$ almost surely, by uniform convergence it follows that $\hat{f}_n(X_{(1)})\to 0$ almost surely.  Now we choose a test function $g$ that agrees with $\hat{f}_n$ on $[X_{(2)},1]$ and is linear on $[0,X_{(2)}]$ with $g(0)=1$. This is well defined almost surely for $n$ large enough. Since $\hat{f}_n(X_{(i)})\to 0$ for $i=2,3$ and hence the convexity is guaranteed for $n$ large. Then it can be seen by the characterization (\ref{ineq:character_constrained_cvx_lse}) that
	\beqas
	(g(X_{(1)})-\hat{f}_n(X_{(1)}))(Y_{(1)}-\hat{f}_n(X_{(1)}))\leq 0,
	\eeqas
	where $Y_{(j)}$ is the value of $Y_j$ corresponding to $X_{(j)}$. Since $g(X_1)\geq \hat{f}_n(X_1)$ by construction and the fact $\hat{f}_n(0)\leq 0.99$, we have necessarily $Y_{(1)}\leq \hat{f}_n(X_{(1)})$ holds for all $n \in \N$ almost surely. Taking $n \to \infty$ we find $0\geq Y_1=_d\epsilon_1$ almost surely, a contradiction. 
\end{proof}
\begin{lemma}\label{lem:uniform_conv_convexseq}
	Let $f_n$ be a sequence of convex functions converging pointwise to a continuous convex function $f$ on $[0,1]$. Then the convergence is uniform over $[0,1]$.
\end{lemma}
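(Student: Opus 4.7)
The plan is to combine Rockafellar's theorem on uniform convergence of convex functions with a direct convexity estimate to handle the two endpoints.

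By Theorem 10.8 of \cite{rockafellar1997convex}, already invoked in the preceding proof, pointwise convergence of a sequence of convex functions on the open interval $(0,1)$ automatically upgrades to uniform convergence on every compact subset of $(0,1)$. So the only thing left to establish is uniform control of $f_n - f$ on a small interval $[0,\delta']$ near the left endpoint; the situation at $x=1$ is completely symmetric. Given $\epsilon>0$, I would first use continuity of $f$ at $0$ to choose $\delta>0$ with $|f(y)-f(0)|<\epsilon$ for $y\in[0,\delta]$, then fix auxiliary points $a=\delta/2$ and $b=\delta$.

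On the interval $[0,a]$, convexity of each $f_n$ yields the two-sided sandwich
\[
f_n(a)-\tfrac{a-x}{b-a}\bigl(f_n(b)-f_n(a)\bigr)\ \leq\ f_n(x)\ \leq\ \tfrac{a-x}{a}f_n(0)+\tfrac{x}{a}f_n(a).
\]
The upper inequality is just writing $x\in[0,a]$ as a convex combination of $0$ and $a$. The lower inequality comes from the familiar three-slope monotonicity: for $0\le x<a<b$,
\[
\frac{f_n(a)-f_n(x)}{a-x}\ \leq\ \frac{f_n(b)-f_n(a)}{b-a},
\]
which rearranges to the stated lower bound. Using pointwise convergence $f_n(0)\to f(0)$, $f_n(a)\to f(a)$, $f_n(b)\to f(b)$, and the fact that each of $f(0),f(a),f(b)$ lies within $\epsilon$ of $f(0)$, both bounds collapse uniformly in $x\in[0,a]$ to $f(0)\pm C\epsilon$ for $n$ large enough. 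Continuity of $f$ then converts this into $\sup_{x\in[0,a]}|f_n(x)-f(x)|\le C'\epsilon$ for all sufficiently large $n$. Combining with the analogous statement at $x=1$ and with Rockafellar's result on $[a,1-a]$ finishes the proof.

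The only mildly subtle point is the lower bound near the endpoint: pure convexity on $[0,1]$ never supplies a one-sided lower bound at a boundary point in the way it does at interior points, so one must anchor the bound using two distinct auxiliary points $a<b$ in the interior and exploit slope monotonicity. Everything else is bookkeeping around an $\epsilon/\delta$ argument.
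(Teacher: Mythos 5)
Your proof is correct, and it takes a genuinely different route from the paper's. The paper's argument picks a minimizer $x_n\in\argmin_{[0,1]}f_n$, passes to a subsequence so that $x_n\to x^\ast$, and then splits into two cases: if $x^\ast\neq 0$, each $f_n$ is eventually monotone on a fixed interval $[0,\delta]$, and monotone-plus-convexity yields the two-sided control; if $x^\ast=0$, the paper first shows $f_n(x_n)\to f(0)$ (by a contradiction argument involving $f_n(1)$ blowing up otherwise) and uses $f_n(x)\geq f_n(x_n)$ for the lower bound. Your approach replaces all of this with a single unified estimate: fix two interior anchor points $a<b$ in $[0,\delta]$ and invoke the three-slope inequality to get a lower bound for $f_n$ on $[0,a]$, pairing it with the trivial convexity upper bound that interpolates between $f_n(0)$ and $f_n(a)$. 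You then let pointwise convergence at the three fixed points $0,a,b$ do the rest. This avoids the subsequence/contradiction machinery entirely and does not need to track the location of minimizers. Your remark that a one-sided lower bound at a boundary point cannot come from convexity on $[0,1]$ alone and requires anchoring at two interior points is exactly the right diagnosis of the obstacle the paper's case analysis is circumventing; the slope-monotonicity argument is the more direct way around it. Both proofs rely on Rockafellar's Theorem~10.8 for compacta in $(0,1)$, so the only nontrivial content in either case is the endpoint analysis, and there your argument is the more elementary and self-contained of the two.
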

\begin{proof}[Proof of Lemma \ref{lem:uniform_conv_convexseq}]
	We only need to prove uniform convergence near the boundary $0$ since uniform convergence within the interior of $[0,1]$ is guaranteed by Theorem 10.8 in \cite{rockafellar1997convex}. Let $x_n\in \arg\min_{x \in [0,1]}f_n(x)$. By passing to a subsequence we assume $x_n \to x^\ast \in [0,1]$. If $x^\ast \neq 0$, then choose $\delta \in (0,x^\ast)$ so that $x_n>\delta$ for $n$ large enough. In this case, $f_n$'s and $f_0$ are all decreasing on $[0,\delta]$ for $n$ large enough, and thus $f_n$ converges uniformly to $f_0$ on $[0,\delta]$. To see this, for fixed $\epsilon>0$, since $f$ is uniformly continuous, we can find a sequence  $\{t_i\}_{i=1}^m \subset [0,\delta]$ with $t_1=0,t_m=\delta$ so that $\abs{f(t_i)-f(t_{i-1})}\leq \epsilon$ for all $i$. For $n$ large enough we have $\abs{f_n(t_i)-f(t_i)}\leq \epsilon$ for all $i$. Now note that for any $x \in [0,\delta]=\cup_{i=1}^{m-1}[t_i,t_{i+1}]$ where, for $x \in [t_i,t_{i+1}]$, by \emph{convexity} of $f_n$,
	\beqa\label{ineq:uniform_conv_convexseq_1}
	f_n(x)\leq \max\{f_n(t_i),f_n(t_{i+1})\} \leq \max\{f(t_i),f(t_{i+1})\}+\epsilon\leq f(x)+2\epsilon.
	\eeqa
	On the other hand, by \emph{monotonicity} of $f_n$, 
	\beqa
	f_n(x)-f(x)\geq f_n(t_{i+1})-f(t_{i})=\big(f_n(t_{i+1})-f(t_{i+1})\big)+\big(f(t_{i+1})-f(t_{i})\big)\geq -2\epsilon.
	\eeqa
	This establishes our claim that $f_n$ converges uniformly to $f_0$ on $[0,\delta]$. Now we consider the case $x^\ast =0$. First note that $f$ must be non-decreasing. By $f_n(0)\geq f_n(x_n)$ we see that $\limsup_{n \to \infty}f_n(x_n)\leq f(0)$. Suppose there is a subsequence so that $f_n(x_n)<f(0)-\eta$ for some $\eta>0$ for $n$ large enough. Note for any fixed $\zeta>0$, $f_n(\zeta)>f(\zeta)-\eta/2\geq f(0)-\eta/2$ for $n$ large. Thus by convexity of $f_n$, for $n$ so large that $x_n<\zeta$, we have
	\beqas
	f_n(1)&\geq \frac{(1-x_n)f_n(\zeta)-(1-\zeta)f_n(x_n)}{\zeta-x_n}\\
	&\geq \frac{(1-x_n)\big(f(0)-\eta/2\big)-(1-\zeta)\big(f(0)-\eta\big)}{\zeta-x_n}.
	\eeqas
	Now taking $n \to \infty$ followed by $\zeta \searrow 0$ we see that $\liminf_{n \to \infty}f_n(1)=+\infty$, a contradiction. Thus $f_n(x_n)\to f(0)$. By uniform continuity of $f$, for fixed $\epsilon$, $f(0)\geq f(t)-\epsilon$ for small enough $t>0$. Hence
	\beqas
	f_n(t)\geq f_n(x_n)\geq f(0)-\epsilon\geq f(t)-2\epsilon
	\eeqas
	holds uniformly in $t>0$ small enough and $n$ large enough. Finally note that (\ref{ineq:uniform_conv_convexseq_1}) holds regardless the value of $x^\ast$, completing the proof.
\end{proof}

\subsection{Proof of Theorem \ref{thm:lse_without_log}}\label{section:proof_withoutlog_lse}
The proof of Theorem \ref{thm:lse_without_log} makes use of a recent result by \cite{chatterjee2014new}. He showed that in the model $Y=\mu+\epsilon$ where $\epsilon \sim_d \mathcal{N}(0,I_n)$ and $\mu \in K$ for a closed convex set $K$,  the risk in discrete $l_2$ norm for least squares estimation of the mean vector $\mu \in \R^n$ can be characterized by the maxima of the map
\beqa\label{map:lse_risk}
t \mapsto \mathbb{E}\bigg(\sup_{\nu \in K:\pnorm{\nu-\mu}{2}\leq t} \iprod{\epsilon}{\nu-\mu}\bigg)-\frac{t^2}{2}.
\eeqa
In the setup of univariate convex regression on $[0,1]$ in \cite{guntuboyina2013global}, we can take 
\beqa\label{eqn:cone_convex}
K:=\{\mu \in \R^n: \mu_i=f(x_i), \forall i=1,\ldots,n, f\textrm{ convex}\}.
\eeqa
The supremum in (\ref{map:lse_risk}) for $K$ as in (\ref{eqn:cone_convex}) is computed in Theorem \ref{thm:convex_t_mu}, which becomes the key ingredient to derive the risk bounds without logarithmic factors in Theorem \ref{thm:lse_without_log}. 

We follow the convention that when the supremum is taken over the empty set, the value is set to be $-\infty$. Let $t_c:=\inf_{\nu \in K}\pnorm{\nu-\mu}{}$.
\begin{theorem}[Theorem 1.1 \cite{chatterjee2014new}]
	$f_\mu(t)=-\infty$ when $t<t_c$, is finite and strictly concave when $t \in [t_c,\infty)$, and decays to $-\infty$ as $t \to \infty$. Hence the maximizer
	\beqas
	t_\mu:=\argmax_{t\geq 0} f_\mu(t)
	\eeqas
	exists and is unique. Moreover, for any $x\geq 0$, we have
	\beqas
	\mathbb{P}\bigg(\abs{\pnorm{\mu-\hat{\mu}}{2}-t_\mu}\geq x\sqrt{t_\mu}\bigg)\leq 3\exp\bigg(-\frac{x^4}{32(1+x/\sqrt{t_\mu})^2}\bigg).
	\eeqas
\end{theorem}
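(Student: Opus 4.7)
The two assertions are essentially independent and I would handle them separately. Write $g(t):=\mathbb{E}\sup_{\nu\in K,\ \pnorm{\nu-\mu}{2}\leq t}\iprod{\epsilon}{\nu-\mu}$ so that $f_\mu(t)=g(t)-t^2/2$. For $t<t_c$ the feasible set is empty, forcing $f_\mu(t)=-\infty$ by convention. For $t\geq t_c$ the feasible set is nonempty and compact, and Cauchy--Schwarz together with $\mathbb{E}\pnorm{\epsilon}{2}\leq \sqrt n$ gives $g(t)\leq t\sqrt n$, hence $f_\mu(t)\in\R$ and $f_\mu(t)\leq t\sqrt n - t^2/2\to-\infty$. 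The set inclusion
\begin{equation*}
\lambda(K\cap\bar B(\mu,t_1))+(1-\lambda)(K\cap\bar B(\mu,t_2))\subset K\cap\bar B(\mu,\lambda t_1+(1-\lambda)t_2),
\end{equation*}
valid by convexity of $K$ and of Euclidean balls, applied to the linear functional $\iprod{\epsilon}{\cdot-\mu}$, shows the pathwise sup is concave in $t$; expectation preserves concavity of $g$. Adding the strictly concave $-t^2/2$ makes $f_\mu$ strictly concave on $[t_c,\infty)$, and together with decay at infinity this yields existence and uniqueness of $t_\mu$.

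For the concentration, set $S_\epsilon(t):=\sup_{\nu\in K,\ \pnorm{\nu-\mu}{2}\leq t}\iprod{\epsilon}{\nu-\mu}$, $h_\epsilon(t):=2S_\epsilon(t)-t^2$, and the centered fluctuation $Z_\epsilon(t):=S_\epsilon(t)-g(t)$, so that $\mathbb{E}h_\epsilon=2f_\mu$. Expanding $\pnorm{Y-\nu}{2}^2=\pnorm{\epsilon}{2}^2-2\iprod{\epsilon}{\nu-\mu}+\pnorm{\nu-\mu}{2}^2$ and reindexing the projection problem by the radius $t=\pnorm{\nu-\mu}{2}$ identifies $T:=\pnorm{\mu-\hat\mu}{2}$ as the almost surely unique maximizer of the strictly concave $h_\epsilon$, with $\hat\mu$ attaining the sup defining $S_\epsilon(T)$. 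The optimality $0\in\partial f_\mu(t_\mu)$ forces $t_\mu\in\partial g(t_\mu)$; combined with concavity of $g$ and the quadratic $-t^2/2$, this delivers the deterministic sharpness bound $2f_\mu(t_\mu)-2f_\mu(t)\geq(t-t_\mu)^2$ for all $t\geq t_c$. Substituting $t=T$ and using $h_\epsilon(T)\geq h_\epsilon(t_\mu)$ yields the master inequality
\begin{equation*}
(T-t_\mu)^2\leq 2[Z_\epsilon(T)-Z_\epsilon(t_\mu)].
\end{equation*}

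The remaining task --- converting the master inequality into the stated tail --- is the technical core. For each \emph{fixed} $t$, the map $\epsilon\mapsto S_\epsilon(t)$ is $t$-Lipschitz on $\R^n$ as a supremum of the linear functionals $\iprod{\cdot}{\nu-\mu}$ over $\pnorm{\nu-\mu}{2}\leq t$, so Borell--TIS gives $\mathbb{P}(|Z_\epsilon(t)|\geq s)\leq 2\exp(-s^2/(2t^2))$. The main obstacle is that $T$ in the master inequality is random and strongly coupled with the process $Z_\epsilon$, so Borell--TIS cannot be applied directly to $Z_\epsilon(T)$. I would circumvent this by one-sided slicing at deterministic levels: on $\{T\geq t_\mu+u\}$, monotonicity of the concave $h_\epsilon$ on $[t_c,T]$ forces $h_\epsilon(t_\mu+u)\geq h_\epsilon(t_\mu)$, and subtracting expectations converts this into the deterministic event $\{Z_\epsilon(t_\mu+u)-Z_\epsilon(t_\mu)\geq u^2/2\}$ via $\mathbb{E}h_\epsilon(t_\mu)-\mathbb{E}h_\epsilon(t_\mu+u)\geq u^2$; the mirror argument handles $\{T\leq t_\mu-u\}$. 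Borell--TIS applied to $\epsilon\mapsto S_\epsilon(t_\mu+u)-S_\epsilon(t_\mu)$, whose Lipschitz constant is controlled by the radius of the larger ball after a careful argmax-swap argument, then yields a Gaussian tail of order $\exp(-u^4/[\text{const}\cdot (t_\mu+u)^2])$. Setting $u=x\sqrt{t_\mu}$ produces the stated $\exp(-x^4/(32(1+x/\sqrt{t_\mu})^2))$ bound, and a union bound over the two sides absorbs into the leading constant $3$.
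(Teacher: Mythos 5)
The paper cites this as Theorem 1.1 of \cite{chatterjee2014new} and does not reproduce a proof, so there is no proof of this statement in the paper itself to compare against. Your proposal does, however, recover Chatterjee's original argument essentially verbatim: write $f_\mu = g - t^2/2$, derive concavity of $g$ from the Minkowski-type inclusion, identify $T=\pnorm{\mu-\hat\mu}{2}$ with the almost-surely unique argmax of the strictly concave pathwise functional $h_\epsilon(t)=2S_\epsilon(t)-t^2$, pass to the deterministic sharpness bound $2f_\mu(t_\mu)-2f_\mu(t)\geq(t-t_\mu)^2$ via the supporting line of the concave $g$ at $t_\mu$, and then decouple the random time $T$ from the fluctuations by slicing at deterministic levels $t_\mu\pm u$ and invoking Gaussian concentration of the Lipschitz map $\epsilon\mapsto S_\epsilon(t_\mu\pm u)-S_\epsilon(t_\mu)$. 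All of this is correct. Two small remarks. First, the ``careful argmax-swap argument'' you flag for the Lipschitz constant is not needed: the triangle inequality directly gives a Lipschitz constant at most $t_\mu+(t_\mu+u)\leq 2(t_\mu+u)$, and $\exp\big(-(u^2/2)^2/(2\cdot 4(t_\mu+u)^2)\big)=\exp\big(-u^4/(32(t_\mu+u)^2)\big)$ already matches the stated exponent under $u=x\sqrt{t_\mu}$. Second, when the argmax sits at the boundary $t_\mu=t_c$ the first-order condition is one-sided (some $a\in\partial g(t_c)$ with $a\leq t_c$, not $t_c\in\partial g(t_c)$), but since $a\leq t_c$ the supporting-line inequality $g(t)\leq g(t_c)+a(t-t_c)$ still yields $f_\mu(t)-f_\mu(t_c)\leq -(t-t_c)^2/2$ for all $t\geq t_c$, so the sharpness bound you rely on is unaffected. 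Finally, your two-sided union bound yields the constant $2$, which is in fact marginally stronger than the stated $3$.
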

This immediately entails the following result.
\begin{corollary}
	There exists an absolute constant $C$ such that
	\begin{enumerate}
		\item $t_\mu^2-Ct_\mu^{3/2}\leq \mathbb{E}\pnorm{\mu-\hat{\mu}}{}^2\leq t_\mu^2+Ct_\mu^{3/2}$ if $t_\mu\geq 1$, and
		\item $\mathbb{E}\pnorm{\mu-\hat{\mu}}{}^2\leq C$ if $t_\mu <1$.
	\end{enumerate}
\end{corollary}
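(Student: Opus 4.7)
The plan is to derive the corollary as a direct moment computation from the concentration inequality in the preceding theorem, namely
\[
\mathbb{P}\bigl(\abs{Z-t_\mu}\geq x\sqrt{t_\mu}\bigr)\leq 3\exp\Bigl(-\tfrac{x^4}{32(1+x/\sqrt{t_\mu})^2}\Bigr),
\]
where $Z:=\pnorm{\mu-\hat{\mu}}{}$. The key algebraic identity is
\[
Z^2 \;=\; t_\mu^2 + 2t_\mu(Z-t_\mu) + (Z-t_\mu)^2,
\]
so that $\mathbb{E}Z^2 = t_\mu^2 + 2t_\mu\,\mathbb{E}(Z-t_\mu) + \mathbb{E}(Z-t_\mu)^2$. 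Both correction terms will be controlled by $Ct_\mu^{3/2}$ in the regime $t_\mu\geq 1$, and the whole expression by an absolute constant in the regime $t_\mu<1$, which gives both items at once.

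First I would bound $\mathbb{E}(Z-t_\mu)^2 = \int_0^\infty 2y\,\mathbb{P}(\abs{Z-t_\mu}>y)\,\d{y}$. After the change of variables $y=x\sqrt{t_\mu}$ this equals $t_\mu\int_0^\infty 2x\,\mathbb{P}(\abs{Z-t_\mu}>x\sqrt{t_\mu})\,\d{x}$. When $t_\mu\geq 1$ I would split at $x=\sqrt{t_\mu}$: on $[0,\sqrt{t_\mu}]$ one has $x/\sqrt{t_\mu}\leq 1$, so the denominator in the tail exponent is at most $128$ and the integrand decays like $x\exp(-x^4/128)$; on $[\sqrt{t_\mu},\infty)$ one has $(1+x/\sqrt{t_\mu})^2\leq 4x^2/t_\mu$, so the integrand decays like $x\exp(-x^2 t_\mu/128)$, whose integral is $\leq 128/t_\mu$. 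Both contributions are bounded by an absolute constant, yielding $\mathbb{E}(Z-t_\mu)^2\leq C_1 t_\mu$. Cauchy--Schwarz then gives $\abs{\mathbb{E}(Z-t_\mu)}\leq \sqrt{C_1 t_\mu}$, and plugging back,
\[
\abs{\mathbb{E}Z^2-t_\mu^2}\leq 2\sqrt{C_1}\,t_\mu^{3/2} + C_1 t_\mu \leq (2\sqrt{C_1}+C_1)\,t_\mu^{3/2},
\]
since $t_\mu\geq 1$ forces $t_\mu\leq t_\mu^{3/2}$. This proves item (1) with $C:=2\sqrt{C_1}+C_1$.

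For item (2), when $t_\mu<1$ I would avoid factoring out $t_\mu$ and estimate directly. Substituting $y=x\sqrt{t_\mu}$ in the tail inequality gives $\mathbb{P}(\abs{Z-t_\mu}>y)\leq 3\exp\bigl(-y^4/(32(t_\mu+y)^2)\bigr)$. For $y\geq 1>t_\mu$ this is at most $3\exp(-y^2/128)$, which integrates to a constant; for $y\in[0,1]$ I would use the trivial bound $\mathbb{P}\leq 1$. Hence $\mathbb{E}(Z-t_\mu)^2\leq 1+C_2$, and since $t_\mu^2<1$, the inequality $\mathbb{E}Z^2\leq 2t_\mu^2+2\mathbb{E}(Z-t_\mu)^2$ gives item (2) with an absolute constant.

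The main (and only mildly delicate) point is the case split at $x=\sqrt{t_\mu}$ in the tail integral: one must verify that the two regimes of the denominator $32(1+x/\sqrt{t_\mu})^2$ are handled uniformly in $t_\mu\geq 1$, so the resulting constant in item (1) does not depend on $t_\mu$. Everything else is a routine sub-Gaussian/sub-exponential moment bookkeeping.
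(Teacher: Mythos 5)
The paper states this corollary without a proof, calling it an immediate consequence of the Chatterjee concentration inequality, so there is no ``paper proof'' to compare against in detail; your derivation supplies exactly the standard tail--to--moment argument that makes ``immediately entails'' precise, and it is correct. The decomposition $Z^2 = t_\mu^2 + 2t_\mu(Z-t_\mu) + (Z - t_\mu)^2$ with Cauchy--Schwarz to control the cross term, the change of variables $y = x\sqrt{t_\mu}$, and the split of the tail integral at $x = \sqrt{t_\mu}$ (using $(1+x/\sqrt{t_\mu})^2 \leq 4$ below and $(1+x/\sqrt{t_\mu})^2 \leq 4x^2/t_\mu$ above) are the natural steps, and they do deliver $\mathbb{E}(Z-t_\mu)^2 \leq C_1 t_\mu$ uniformly in $t_\mu\geq 1$, hence $|\mathbb{E}Z^2 - t_\mu^2| \leq (2\sqrt{C_1}+C_1)t_\mu^{3/2}$. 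For $t_\mu < 1$, rewriting the tail as $3\exp\bigl(-y^4/(32(t_\mu + y)^2)\bigr)$ and splitting at $y=1$ is clean. Two trivial points you elide but which do not affect validity: for small $x$ the exhibited tail bound exceeds $1$, so one should cap it by $1$ (this only shifts the absolute constant), and your ``$\leq 128/t_\mu$'' is the value before multiplying by the prefactor $3$; neither changes the conclusion.
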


\begin{proposition}
	Let $0\leq r_1\leq r_2$.
	\begin{enumerate}
		\item If $f_\mu(r_1)\leq f_\mu(r_2)$, then $t_\mu\geq r_1$.
		\item If $f_\mu(r_1)\geq f_\mu(r_2)$, then $t_\mu\leq r_2$.
	\end{enumerate}
	In particular, if $f_\mu(r)\leq 0$, then $t_\mu \leq r$.
\end{proposition}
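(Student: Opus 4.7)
The plan is to combine two pieces of information already supplied by the preceding Theorem 1.1 of \cite{chatterjee2014new}: strict concavity of $f_\mu$ on $[t_c,\infty)$ together with uniqueness of the maximizer $t_\mu$. These two facts force strict \emph{unimodality}, i.e.\ $f_\mu$ is strictly increasing on $[t_c,t_\mu]$, strictly decreasing on $[t_\mu,\infty)$, and identically $-\infty$ on $[0,t_c)$; in particular $t_\mu\ge t_c$. Both parts of the proposition then reduce to routine contrapositive arguments on the appropriate monotone branch.

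For (1), I would suppose $t_\mu<r_1$, so that both $r_1,r_2\in(t_\mu,\infty)$. Strict decrease on this half-line gives $f_\mu(r_1)\ge f_\mu(r_2)$, with strict inequality once $r_1<r_2$, which contradicts the hypothesis $f_\mu(r_1)\le f_\mu(r_2)$. Part (2) is symmetric: assume $t_\mu>r_2$ and consider separately the subcase $r_2\ge t_c$, where $r_1,r_2$ both lie in the increasing branch $[t_c,t_\mu)$ and strict increase contradicts $f_\mu(r_1)\ge f_\mu(r_2)$, and the subcase $r_2<t_c$, where $f_\mu(r_2)=-\infty$ so the hypothesis is either vacuous or forces $f_\mu(r_1)=-\infty$ as well, a degenerate configuration in which the claim becomes content-free (and can be excluded by noting $r_2<t_c\le t_\mu$ would already make the hypothesis pathological).

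The ``in particular'' clause follows from (2) with $r_1=0$, $r_2=r$. In the convex regression setting of Section \ref{section:proof_withoutlog_lse} the cone $K$ of (\ref{eqn:cone_convex}) contains the true mean vector $\mu$, so $t_c=0$ and $f_\mu(0)=\mathbb{E}\langle\epsilon,0\rangle-0=0$; thus $f_\mu(0)=0\ge f_\mu(r)$, and (2) yields $t_\mu\le r$. The only real obstacle is pedantic bookkeeping at the seam $t=t_c$ (where $f_\mu$ jumps from $-\infty$ to a finite value); no empirical process or convex geometric input is needed beyond what is already packaged into Chatterjee's theorem.
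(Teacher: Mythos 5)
Your proof is correct in substance and follows exactly the route the paper (and Chatterjee's original) intends: strict concavity on $[t_c,\infty)$ together with uniqueness of the maximizer forces strict increase on $[t_c,t_\mu]$ and strict decrease on $[t_\mu,\infty)$, and each implication is then a one‑line contrapositive on the relevant monotone branch. The paper itself offers no proof for this proposition (it is quoted from \cite{chatterjee2014new} and treated as evident from the preceding theorem), so there is no competing argument to compare; yours fills the gap in the natural way, and the ``in particular'' clause is handled correctly by observing $\mu\in K$ in the convex‑regression application so that $t_c=0$ and $f_\mu(0)=0$.

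Two small points of precision, neither fatal. First, the strict‑inequality step in (1) requires $r_1<r_2$; when $r_1=r_2$ both displayed hypotheses are satisfied vacuously yet the conclusion ``$t_\mu\geq r_1$'' need not hold, so the proposition really needs the strict inequality $r_1<r_2$ (as Chatterjee's original statement indeed has). You flag this implicitly with the qualifier ``once $r_1<r_2$'' but should make it explicit rather than leaving it as an aside. Second, in the subcase $r_2\geq t_c$ of part (2), it is not automatic that $r_1$ also lies in $[t_c,t_\mu)$; you should note that if $r_1<t_c$ then $f_\mu(r_1)=-\infty$ while $f_\mu(r_2)$ is finite, so the hypothesis $f_\mu(r_1)\geq f_\mu(r_2)$ cannot hold, and hence the hypothesis itself forces $r_1\geq t_c$, after which the strict‑increase argument applies cleanly.
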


Recall that the fixed design is $\{x_k:=\frac{k-1}{n-1}\}_{k=1}^n$. Our goal is to prove the following theorem.
\begin{theorem}\label{thm:convex_t_mu}
	Consider the convex set $K$ defined by
	\beqas
	K:=\left\{\mu \in \R^n : \mu_i=f\left(x_i\right),\forall i=1,\cdots,n, f\textrm{ convex}\right\}.
	\eeqas
	Then for fixed $\mu \in K$, $t_\mu\leq C(1+\Delta \mu)^{1/5}n^{1/10}$ when $n\geq \inf \{n \in \N:(1+\Delta \mu)^{1/10} \log n\leq C' n^{1/5}\}$. Here $C,C'$ are absolute constants, and $\Delta \mu:=\mu_{\max}-\mu_{\min}$.
\end{theorem}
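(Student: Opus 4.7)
The plan is to invoke the proposition recalled above: to show $t_\mu \leq r$ it suffices to verify $f_\mu(r) \leq 0$, i.e.,
\[
W(r) := \mathbb{E}\sup_{\nu \in K,\ \pnorm{\nu-\mu}{2} \leq r} \iprod{\epsilon}{\nu-\mu} \;\leq\; r^2/2.
\]
Thus I will show that $r^\ast := C(1+\Delta\mu)^{1/5} n^{1/10}$ (under the stated regime on $n$) satisfies $W(r^\ast) \leq (r^\ast)^2/2$.

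I would first exploit that $K$ is invariant under shifts by constant vectors (adding a constant to a convex function preserves convexity), so both the feasible set and the centered Gaussian supremum are invariant under the simultaneous shift $\nu,\mu \mapsto \nu-c\bm{1},\,\mu-c\bm{1}$. Shifting by $c=\mu_{\min}$ reduces to $\mu_{\min}=0$, so $\pnorm{\mu}{\infty}=\Delta\mu$. Then for any feasible $\nu$, the coordinate-wise bound $|(\nu-\mu)_i|\leq \pnorm{\nu-\mu}{2}\leq r$ forces $\pnorm{\nu}{\infty}\leq \Delta\mu+r =: \Gamma$. In particular $\nu$ belongs to the class $\mathcal{K}_\Gamma$ of convex sequences on the equispaced grid $\{x_k\}$ uniformly bounded by $\Gamma$.

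Next I would bound $W(r)$ by a Dudley-type entropy integral. The $d=1$, $p=\infty$, polytopal case of Lemma \ref{lem:entropy_cvx_func} yields the bracketing entropy bound $\log \mathcal{N}_{[\,]}(\epsilon,\mathcal{C}(\Gamma),l_2([0,1]))\lesssim (\Gamma/\epsilon)^{1/2}$. Since $\{x_k\}$ is equispaced on $[0,1]$, one has $\pnorm{v}{2}^2=\sum_i f(x_i)^2 \asymp n\int_0^1 f^2$ on $\mathcal{K}_\Gamma$, transferring this to $\log\mathcal{N}(\epsilon,\mathcal{K}_\Gamma,\pnorm{\cdot}{2})\lesssim (\Gamma\sqrt{n}/\epsilon)^{1/2}$. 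Dudley's inequality then gives
\[
W(r) \;\lesssim\; \int_0^r (\Gamma\sqrt{n}/\epsilon)^{1/4}\,\d{\epsilon} \;\lesssim\; (\Gamma\sqrt{n})^{1/4} r^{3/4},
\]
and enforcing $W(r)\leq r^2/2$ reduces to the self-consistency inequality $r^5 \gtrsim (\Delta\mu+r)\sqrt{n}$.

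The main obstacle is precisely this self-referential character: the target $n^{1/10}$ scaling is sharp only when $\Gamma=\Delta\mu+r$ is effectively controlled by $1+\Delta\mu$, not blown up by $r$ itself. Closing the inequality requires a case split on whether $r\lesssim \Delta\mu$ or $r\gtrsim \Delta\mu$, together with the hypothesis $(1+\Delta\mu)^{1/10}\log n\leq C' n^{1/5}$, which absorbs the logarithmic slack arising from the continuous-to-discrete entropy translation and from handling the regime where $r$ is comparable to or exceeds $\Delta\mu$. Substituting $r^\ast=C(1+\Delta\mu)^{1/5}n^{1/10}$ and verifying both regimes then completes the argument.
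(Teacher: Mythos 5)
There is a genuine gap, and it is precisely at the point you flag as the ``main obstacle.'' The bound $\|\nu\|_\infty \leq \Delta\mu + r$ is correct, but using it as the boundedness parameter $\Gamma$ in the entropy integral does \emph{not} close under the self-consistency inequality $r^5 \gtrsim (\Delta\mu + r)\sqrt{n}$ in the regime the theorem actually needs. Take $\Delta\mu$ fixed (say $\Delta\mu = 0$) and try $r = r^\ast \asymp n^{1/10}$: then $\Gamma \asymp n^{1/10}$, the Dudley bound gives $W(r^\ast)\lesssim (\Gamma\sqrt n)^{1/4}(r^\ast)^{3/4} \asymp n^{3/20 + 3/40} = n^{9/40}$, while $(r^\ast)^2 \asymp n^{8/40}$. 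The inequality $W(r^\ast)\leq (r^\ast)^2/2$ fails for large $n$. A case split does not rescue this: if $r\lesssim\Delta\mu$, closing requires $r\gtrsim (\Delta\mu)^{1/5}n^{1/10}$, which contradicts $r\lesssim\Delta\mu$ unless $\Delta\mu\gtrsim n^{1/8}$ --- a condition that fails as $n\to\infty$ with $\mu$ fixed. The hypothesis $(1+\Delta\mu)^{1/10}\log n\leq C'n^{1/5}$ genuinely only absorbs the logarithmic loss in Lemma~\ref{lem:autentropybound} (the continuous-to-discrete step); it does not help with the polynomial divergence of $\Gamma = \Delta\mu + r$.

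The missing idea is the truncation device used in the paper. One writes $\nu = \nu'(l) + (\nu - \nu'(l))$ where $\nu'(l) := (\nu \wedge (\mu_{\max}+2^l)) \vee (\mu_{\min}-2^l)$ is a truncation at a \emph{fixed, constant} level $2^l$ (independent of $n$ and $r$). The truncated part $\nu'(l)$ lives in a set with $l_\infty$-range $\Delta\mu + 2^{l+1}$, so the Dudley bound there has a controlled $\Gamma \asymp 1+\Delta\mu$ and contributes $\lesssim (1+\Delta\mu)^{1/4}n^{1/8}t^{3/4}$. The residual $\nu - \nu'(l)$ is supported on coordinates where $|\nu_i - \mu_i|$ exceeds $2^l$, and the $\ell_2$-ball constraint $\|\nu-\mu\|_2\leq t$ forces these to be sparse (at most $t^2/2^{2k}$ indices exceed level $2^k$); summing the geometric series over shells yields the contribution $\mathbb{E}\sup_\nu\langle\epsilon,\nu-\nu'(l)\rangle\leq 12t^2/2^l$, which for $l\geq\log_2(48)$ is absorbed into the quadratic $-t^2/2$. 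This two-term decomposition is what eliminates the self-referential blow-up in $\Gamma$ and gives the claimed $n^{1/10}$ rate. Without it, a pure Dudley argument on the $l_\infty$-bounded class yields a weaker exponent.
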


To prove Theorem \ref{thm:convex_t_mu}, we will need the following lemma.

\begin{lemma}\label{lem:autentropybound}
	Let $a<b$ be two real numbers. Let
	\beqas
	Q_{a,b}:=\left\{\bigg(\big(f(x_1),\cdots,f(x_n)\big)\wedge b\bm{1}\bigg)\vee a\bm{1}: f \textrm{ is convex}\right\}.
	\eeqas
	Then for any $0<t<C_1$, the following holds for $n\geq C_2$:
	\beqas
	\log \mathcal{N}\big(t,Q_{a,b},\pnorm{\cdot}{2}\big)\leq C\frac{n^{1/4}(b-a)^{1/2}}{t^{1/2}},
	\eeqas
	where $C$ is an absolute constant, $C_2=\inf\{n\in \N:2\sqrt{C_1}\log n\leq Cn^{1/4}\}$.
\end{lemma}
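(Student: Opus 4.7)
The plan is to identify $Q_{a,b}$ with the discretization of bounded convex functions on $[0,1]$, invoke the continuous $L^2$ bracketing entropy bound for this class from Lemma \ref{lem:entropy_cvx_func}, and then convert to a Euclidean cover on $\mathbb{R}^n$ by a Koksma-type Riemann-sum estimate that exploits the bounded variation of bounded convex functions. After the harmless translation $a\mapsto 0$, set $\Gamma := b-a$ and $\mathcal{C}_\Gamma := \{f:[0,1]\to[0,\Gamma]\;\text{convex}\}$; the first thing I would establish is the identification $Q_{0,\Gamma} = \mathcal{D}(\mathcal{C}_\Gamma)$, where $\mathcal{D}(f) := (f(x_i))_{i=1}^n$. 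The nontrivial direction asserts that for any $v\in Q_{0,\Gamma}$ with $v_i = \mathcal{T}_{0,\Gamma}(f(x_i))$ for some (possibly unbounded) convex $f$, one can produce a convex $\bar f\in\mathcal{C}_\Gamma$ with $\mathcal{D}(\bar f)=v$. This follows by two successive cappings: unimodality of convex functions forces $\{f\geq\Gamma\}$ to consist of at most two intervals anchored at the endpoints of $[0,1]$, so $f\wedge\Gamma$ is again convex; then $\{f\wedge\Gamma<0\}$ is a single interval around the minimizer, so $(f\wedge\Gamma)\vee 0$ is also convex, and one checks it agrees with $v$ at the grid.

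Next I would invoke Lemma \ref{lem:entropy_cvx_func} with $d=1$, $k=1$, $p=\infty$ (strengthened to bracketing, as noted at the end of its statement) to obtain
\[
\log\mathcal{N}_{[\,]}(\epsilon, \mathcal{C}_\Gamma, l_2) \leq C'\sqrt{\Gamma/\epsilon}
\]
for all $\epsilon$ below a universal threshold, with bracketing endpoints $l_j,u_j$ that may be taken convex and range-constrained to $[0,\Gamma]$ (projecting to $[0,\Gamma]$ only shrinks brackets). To transport these brackets to Euclidean brackets on $\mathbb{R}^n$, note that any convex function with range in $[0,\Gamma]$ has total variation at most $2\Gamma$ (descent then ascent in a V-shape), hence $g:=(u-l)^2$ satisfies $V(g)\leq 2\Gamma(V(u)+V(l))\leq 8\Gamma^2$. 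Applying Koksma's inequality on the equispaced grid yields
\[
\pnorm{\mathcal{D}(u)-\mathcal{D}(l)}{2}^2 = \sum_{i=1}^n (u-l)^2(x_i) \leq n\int_0^1 (u-l)^2 + V(g) \leq n\epsilon^2 + 8\Gamma^2.
\]
Choosing $\epsilon$ so that $\sqrt{n}\epsilon \asymp t$ (specifically $\epsilon := (t^2/2n)^{1/2}$) and substituting into the continuous entropy bound gives the desired
\[
\log\mathcal{N}(t,Q_{a,b},\pnorm{\cdot}{2}) \leq C'\sqrt{\Gamma/\epsilon} \leq C n^{1/4}(b-a)^{1/2} t^{-1/2}.
\]

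The main obstacle is the boundary term $8\Gamma^2$: for the substitution $\sqrt{n}\epsilon\asymp t$ to make the combined Euclidean radius $\sqrt{n\epsilon^2+8\Gamma^2}$ genuinely $\leq t$, one needs $t$ not too small relative to $\Gamma$. The conditions $t<C_1$ and $n\geq C_2=\inf\{n:2\sqrt{C_1}\log n\leq Cn^{1/4}\}$ are calibrated precisely so that, in the regime of interest, the Riemann-sum correction is dominated by $n\epsilon^2$ and the covering number matches the stated bound with the absolute constant $C$. A potential refinement that would eliminate the boundary caveat altogether is to choose bracketing endpoints that agree with the function values at the grid points (killing the Riemann-sum error), but I do not expect this to be needed for the range of $t$ stipulated in the statement.
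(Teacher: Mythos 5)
Your proposal breaks down at the very first step, and the gap cannot be patched within your framework.

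The claimed identification $Q_{0,\Gamma} = \mathcal{D}(\mathcal{C}_\Gamma)$ is false, and the argument for it is wrong. You assert that because $\{f\geq\Gamma\}$ consists of at most two intervals anchored at the endpoints, $f\wedge\Gamma$ is again convex; but the pointwise minimum of two convex functions is not convex, and $f\wedge\Gamma$ in fact develops a concave kink at each level-crossing (going into the crossing from the interior, $f'<0$, while on the clamped boundary piece the derivative is $0$). More to the point, the set identity itself fails. Take $n=5$, $x_i=(i-1)/4$, $\Gamma=1$, $f(x)=10(x-\tfrac12)^2$. The grid values of $f$ are $(2.5,\,0.625,\,0,\,0.625,\,2.5)$, so the clamped vector is $v=(1,\,0.625,\,0,\,0.625,\,1)\in Q_{0,1}$. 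But no convex $g$ with $g(0)=1$ and $g(\tfrac12)=0$ can have $g(\tfrac14)=0.625$, since convexity forces $g(\tfrac14)\leq\tfrac12\bigl(g(0)+g(\tfrac12)\bigr)=\tfrac12$. So $v\notin\mathcal{D}(\mathcal{C}_1)$, and a cover of $\mathcal{D}(\mathcal{C}_\Gamma)$ does not cover $Q_{0,\Gamma}$. The paper is aware of exactly this obstruction: the whole point of the decomposition $Q_{0,1}=\cup_{k,j}Q_{n-k}^{(j)}$ is to isolate the grid subinterval on which $f$ genuinely lies in $[0,1]$, treat that piece via the bounded-convex entropy bound, and handle the complementary grid points (where the clamped value is forced to $0$ or $1$ by monotonicity of $f$ outside the subinterval) combinatorially, at the cost of a union over at most $n^2$ pieces; the condition $n\geq C_2$ is there to absorb the resulting factor $\log(n^2)$, not to do anything about a Riemann-sum error.

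There is a second, independent obstacle. Even granting the (false) identification, the Koksma step produces an irreducible boundary error: with $V\bigl((u-l)^2\bigr)\lesssim\Gamma^2$ you obtain $\|\mathcal{D}(u)-\mathcal{D}(l)\|_2^2\leq n\epsilon^2 + C\Gamma^2$, and for this to be $\leq t^2$ you need $t\gtrsim\Gamma$. That is a lower bound on $t$; the lemma's hypothesis $0<t<C_1$ caps $t$ from above only, and the small-$t$ regime is precisely where the covering number is interesting. The paper sidesteps this by working in the opposite direction: for $\mu,\nu\in Q_0$ it forms the convex piecewise-linear interpolants $f^\mu,f^\nu$ (these really are convex because $\mu,\nu$ record grid values of convex functions), covers these in continuous $l_2$, and then transports the cover back to $\mathbb{R}^n$ via the exact, boundary-term-free lower bound $\|f^\mu-f^\nu\|_{l_2}^2\geq\frac{1}{6n}\|\mu-\nu\|_2^2$. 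This comparison has no Riemann-sum error because it is an algebraic identity for piecewise-linear functions on the grid, not a discretization of an integral of a bracket width.
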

\begin{proof}[Proof of Lemma \ref{lem:autentropybound}]
	First note that we can require $f \geq a$ in the definition of $Q_{a,b}$. We only have to show that
	\beqa\label{ineq:canonicalentbound}
	\log \mathcal{N}\big(t,Q_{0,1},\pnorm{\cdot}{2}\big)\leq C\frac{n^{1/4}}{t^{1/2}}.
	\eeqa
	This can be seen by the following simple rescaling argument: Let $L$ be the linear transformation mapping $a$ to $0$ and $b$ to $1$, i.e. $L(x)=(x-a)/(b-a)$. Let $\mathcal{Q}$ be a $t/(b-a)$-cover of $Q_{0,1}$. If we can show (\ref{ineq:canonicalentbound}), then we can choose $\mathcal{Q}$ of cardinality bounded by $\exp(C\frac{n^{1/4}(b-a)^{1/2}}{t^{1/2}})$ and
	\beqas
	\sup_{\mu \in Q_{0,1}}\inf_{\nu \in \mathcal{Q}} \pnorm{\mu-\nu}{2}\leq {t \over {b-a}}.
	\eeqas
	However, by the bijection $L:Q_{a,b} \to Q_{0,1}$ we know that
	\beqas
	\sup_{\mu \in Q_{a,b}}\inf_{\nu \in \mathcal{Q}} \pnorm{\mu-L^{-1}\nu}{2}\leq t.
	\eeqas
	This implies that $L^{-1}(\mathcal{Q})$ is a $t$-cover of $Q_{a,b}$. Hence we only have to show (\ref{ineq:canonicalentbound}). For simplicity of notation we denote $Q_{0,1}$ by $Q$. For $0\leq k \leq n-1$ and $1\leq j \leq k+1$, we define
	\beqas
	Q_{n-k}^{(j)}:=\left\{\bigg(\big(f(x_1),\cdots,f(x_n)\big)\wedge\bm{1}\bigg)\vee \bm{0}: 0\leq f\vert_{[x_j,x_{j+n-k-1}]}\leq 1, f \textrm{ convex}\right\}.
	\eeqas
	Then $Q=\cup_{k=0}^{n-1}\cup_{j=1}^{k+1} Q_{n-k}^{(j)}$. Consider the following claim:
	\beqa\label{ineq:canonicalentbound2}
	\log \mathcal{N}\big(t,Q_0,\pnorm{\cdot}{2}\big)\leq C\frac{n^{1/4}}{t^{1/2}},
	\eeqa
	where $Q_0:=\{\big(f(x_1),\cdots,f(x_n)\big):f\in [0,1]\textrm{ and convex}\}$, and $C$ an absolute constant which may change from line to line. Note for each $Q_{n-k}^{(j)}$ we only have to consider covers on the subinterval $[x_j,x_{j+n-k-1}]$ since we can set the value of each covering point to be $1$ or $0$ elsewhere. If we can show (\ref{ineq:canonicalentbound2}), by rescaling we find the covering problem for $Q_{n-k}^{(j)}$ is equivalent to that of finding a $t$-covering with $n-k$ data points interpolating $[0,1]$, i.e.
	\beqas
	\log \mathcal{N}\big(t,Q_{n-k}^{(j)},\pnorm{\cdot}{2}\big)\leq C\frac{{(n-k)}^{1/4}}{t^{1/2}},
	\eeqas
	holds for all $1\leq j \leq k+1$. Now since $\{Q_{n-k}^{(j)}\}_{j,k}$ gives a partition of $Q$, we have
	\beqas
	\mathcal{N}\big(t,Q,\pnorm{\cdot}{2}\big) &\leq \sum_{k=0}^{n-1}\sum_{j=1}^{k+1}\mathcal{N}\big(t,Q_{n-k}^{(j)},\pnorm{\cdot}{2}\big)\\
	&\leq \sum_{k=1}^{n-1}(k+1)\exp\bigg(C\frac{{(n-k)}^{1/4}}{t^{1/2}}\bigg)\\
	&\leq n^2 \exp\bigg(C\frac{{n}^{1/4}}{t^{1/2}}\bigg)\leq \exp\bigg(\frac{2Cn^{1/4}}{t^{1/2}}\bigg),\\
	\eeqas
	where the last inequality follows when $2\sqrt{C_1}\log n \leq C n^{1/4}$ and $0<t<C_1$. This implies we only have to show (\ref{ineq:canonicalentbound2}). Let $\mathcal{F}$ be all convex functions on $[0,1]$ with values in $[0,1]$. Then by entropy estimate of one-dimensional bounded convex functions on $[0,1]$ (cf. Lemma \ref{lem:entropy_cvx_func}), for any $\epsilon>0$ we have
	\beqas
	\log \mathcal{N}\big(\epsilon,\mathcal{F},l_2\big)\leq C\epsilon^{-1/2}.
	\eeqas
	Now for fixed $t>0$, let $\epsilon:={t}/{\sqrt{24n}}$, we can choose a finite subset $\mathcal{G}\subset \mathcal{F}$ such that $\log \abs{\mathcal{G}}\leq C\epsilon^{-1/2}$ and
	$\sup_{f \in \mathcal{F}}\inf_{g \in \mathcal{G}}\pnorm{f-g}{l_2}\leq \epsilon$. This gives the map $\mathfrak{g}:\mathcal{F}\to \mathcal{G}$ by assigning each $f \in \mathcal{F}$ to an element $\mathfrak{g}(f) \in \mathcal{G}$ so that $\pnorm{f-\mathfrak{g}(f)}{l_2}\leq \epsilon$. For any $\mu \in Q_0$, let $f^\mu:[0,1]\to [0,1]$ be the linear interpolation on $\{(x_k,\mu_k)\}_{k=1}^n$. Denote this map $\mathfrak{f}:Q_0\to \mathcal{F}$, and the composite map $\mathfrak{G}:=\mathfrak{g}\circ \mathfrak{f}:Q_0 \to \mathcal{G}$. Conversely, for any element $g \in \mathcal{G}$, the map $\mathfrak{Q}:\mathcal{G}\to Q_0$ is defined by assigning $\mathfrak{Q}(g)$ to an element in $Q_0$ so that $\mathfrak{G}\big[\mathfrak{Q}(g)]=g$. Now consider a subset $\mathcal{Q}_0\subset Q_0$ defined by the image of $\mathfrak{Q}$, i.e. $\mathcal{Q}_0=\mathfrak{Q}(\mathcal{G})$. Then clearly $\abs{\mathcal{Q}_0}\leq \abs{\mathcal{G}}\leq \exp(C\epsilon^{-1/2})$. For each $\mu \in Q_0$, we have
	\beqas
	\pnorm{f^\mu-f^{\mathfrak{Q}\circ\mathfrak{G}(\mu)}}{l_2}\leq \pnorm{f^\mu-\mathfrak{g}(f^\mu)}{l_2}+\pnorm{\mathfrak{g}(f^\mu)-f^{\mathfrak{Q}\circ\mathfrak{G}(\mu)}}{l_2}\leq 2\epsilon,
	\eeqas
	where the last inequality follows from the definition of the map $\mathfrak{g}$ and the observation that
	\beqas
	\mathfrak{g}(f^\mu)=\mathfrak{g}\circ \mathfrak{f}(\mu)=\mathfrak{G}(\mu)=\big(\mathfrak{G}\circ \mathfrak{Q}\big)\circ \mathfrak{G}(\mu)=\mathfrak{G}\big(\mathfrak{Q}\circ \mathfrak{G}(\mu)\big)=\mathfrak{g}(f^{\mathfrak{Q}\circ\mathfrak{G}(\mu)}).
	\eeqas
	Here we used the fact that $\mathfrak{G}\circ \mathfrak{Q}=\mathrm{id}_{\mathcal{G}}$ by definition of $\mathfrak{Q}$.

	On the other hand, for $\mu,\nu \in Q_0$, we have
	\beqas
	\pnorm{f^\mu-f^\nu}{l_2}^2&=\sum_{i=1}^{n-1}\int_{x_i}^{x_{i+1}}\big(f^\mu(x)-f^\nu(x)\big)^2\ \d{x}\\
	&=\sum_{i=1}^{n-1}{1 \over {n-1}}\int_0^1\big(y(\mu_{i+1}-\nu_{i+1})+(1-y)(\mu_i-\nu_i)\big)^2\ \d{y}\\
	&= \sum_{i=1}^{n-1} \frac{1}{n-1} \bigg\{c_{i+1}^2 \int_0^1 y^2\ \d{y}+2c_{i+1}c_i\int_0^1 y(1-y)\ \d{y}+c_i^2\int_0^1 (1-y)^2\ \d{y}\bigg\}\\
	&=\frac{1}{3(n-1)}\sum_{i=1}^{n-1}\big(c_{i+1}^2+c_{i+1}c_i+c_i^2\big)\\
	&\geq {1 \over {6n}}\sum_{i=1}^{n-1} \big(c_{i+1}^2+c_{i}^2\big)\geq \frac{1}{6n}\sum_{i=1}^n c_i^2=\frac{1}{6n}\sum_{i=1}^n (\mu_i-\nu_i)^2,
	\eeqas
	where $c_i\equiv \mu_i-\nu_i$. Here we used the inequality that $a^2+ab+b^2\geq(a^2+b^2)/2$ for all $a,b \in \R$. This implies
	\beqas
	\pnorm{\mu-\mathfrak{Q}\circ\mathfrak{G}(\mu)}{2}\leq \sqrt{6n}\pnorm{f^\mu-f^{\mathfrak{Q}\circ\mathfrak{G}(\mu)}}{l_2}\leq \sqrt{24n}\epsilon\leq t.
	\eeqas
	Thus $\mathcal{Q}_0$ is a $t$-cover for $Q_0$, and the cardinality 
	\beqas
	\abs{\mathcal{Q}_0}\leq \exp\left(C\epsilon^{-1/2}\right)=\exp\left(C\frac{n^{1/4}}{t^{1/2}}\right),
	\eeqas
	as desired.
\end{proof}

Now we are ready to prove Theorem \ref{thm:convex_t_mu}.

\begin{proof}[Proof of Theorem \ref{thm:convex_t_mu}]
	Fix $\mu \in K$. Let $l$ be an integer to be determined later. Let $K'$ be the `truncation' set $Q_{L,R}$ defined in Lemma \ref{lem:autentropybound} where
	\beqas
	L=\mu_{\textrm{min}}-2^l,R=\mu_{\textrm{max}}+2^l.
	\eeqas
	For fixed $0<t<C_1$, let $K'':=\{\nu \in K':\pnorm{\nu-\mu}{2}\leq t\}$. Then by Dudley's entropy bound (cf. Lemma \ref{lem:dudleyentbound}) and Lemma \ref{lem:autentropybound}, we find that
	\beqa\label{estimate1}
	\mathbb{E}\sup_{\nu \in K''} \epsilon\cdot(\nu-\mu)&\leq \int_0^t \sqrt{C(2^{l+1}+\Delta \mu)^{1/2}n^{1/4}s^{-1/2}}\ \d{s}\\
	&= C(2^{l+1}+\Delta\mu)^{1/4} n^{1/8}t^{3/4},
	\eeqa
	when $n\geq C_2$ where $C_2=\inf\{n\in \N:2\sqrt{C_1}\log n\leq Cn^{1/4}\}$. Here $\Delta \mu=\mu_{\textrm{max}}-\mu_{\textrm{min}}$. Take $\nu \in K$ such that $\pnorm{\nu-\mu}{2}\leq t$. Let $\nu'(l):=\big(\nu \wedge (\mu_{\textrm{max}}+2^l)\big)\vee (\mu_{\textrm{min}}-2^l)$. It is easy to see that $\nu' \in K''$. By convexity of $\nu$, the set $\{i:\nu_i \neq \nu_i'(l)\}$ can be partitioned into three sets (possibly empty)
	\beqas
	I_1(l)=[1,\pi_L(l)]\cap \N,\quad I_2(l)=[m_L(l),m_R(l)]\cap \N,\quad I_3(l)=[\pi_R(l),n]\cap \N,
	\eeqas
	where $\nu_i-\mu_{\textrm{max}}\geq 2^l$ for $i \in I_1\cup I_3$ and $\nu_i-\mu_{\textrm{min}} \leq -2^l$ for $i \in I_2$. Note that for $L>0$,
	\beqas
	\abs{\{i: \nu_i-\mu_{\textrm{max}}\geq L\}}\vee \abs{\{i: \nu_i-\mu_{\textrm{min}}\leq -L\}}\leq \frac{t^2}{L^2},
	\eeqas
	otherwise $\pnorm{\nu-\mu}{2}>t$. This implies that $\max_{i=1,2,3}\abs{I_i(l)}\leq \frac{t^2}{2^{2l}}$. Furthermore we have
	\beqas
	\epsilon\cdot (\nu-\nu'(l))&\leq \sum_{i \in I_1(l)} \abs{\epsilon_i}\abs{\nu_i-\nu_i'(l)}\\
	&\qquad+\sum_{i \in I_2(l)} \abs{\epsilon_i}\abs{\nu_i-\nu_i'(l)}+\sum_{i \in I_3(l)} \abs{\epsilon_i}\abs{\nu_i-\nu_i'(l)}\\
	&=\sum_{k\geq l}\bigg(\sum_{i \in I_1(k)\setminus I_1(k+1)}\abs{\epsilon_i}\abs{\nu_i-\nu_i'(l)}\\
	&\qquad +\sum_{i \in I_2(k)\setminus I_2(k+1)}\abs{\epsilon_i}\abs{\nu_i-\nu_i'(l)}+\sum_{i \in I_3(k)\setminus I_3(k+1)}\abs{\epsilon_i}\abs{\nu_i-\nu_i'(l)}\bigg)\\
	&\leq \sum_{k\geq l}2^{k+1} \big(\sum_{i \in I_1(k)\setminus I_1(k+1)}\abs{\epsilon_i}+\sum_{i \in I_2(k)\setminus I_2(k+1)}\abs{\epsilon_i}+\sum_{i \in I_3(k)\setminus I_3(k+1)}\abs{\epsilon_i}\big).\\
	\eeqas
	Hence
	\beqa\label{estimate2}
	\mathbb{E}\sup_{\nu:\pnorm{\nu-\mu}{2}\leq t}\epsilon \cdot (\nu-\nu'(l))\leq\sum_{k\geq l}2^{k+1}\cdot\big(3\frac{t^2}{2^{2k}}\big)=\frac{12t^2}{2^{l}}.
	\eeqa
	Combining (\ref{estimate1}) and (\ref{estimate2}), we obtain
	\beqas
	\mathbb{E}\sup_{\nu \in K:\pnorm{\nu-\mu}{2}\leq t}\epsilon\cdot(\nu-\mu)&\leq \mathbb{E}\sup_{\nu \in K:\pnorm{\nu-\mu}{2}\leq t}\epsilon\cdot(\nu-\nu'(l))\\
	&\qquad+\mathbb{E}\sup_{\nu \in K:\pnorm{\nu-\mu}{2}\leq t}\epsilon\cdot(\nu'(l)-\mu)\\
	& \leq \mathbb{E}\sup_{\nu \in K:\pnorm{\nu-\mu}{2}\leq t}\epsilon\cdot(\nu-\nu'(l))+\mathbb{E}\sup_{\nu \in K''}\epsilon\cdot(\nu-\mu)\\
	& \leq \frac{12t^2}{2^{l}}+C(2^{l+1}+\Delta\mu)^{1/4} n^{1/8}t^{3/4}
	\eeqas
	Choose $l \in \N$ such that $l\geq \log_2(48)$. Then
	\beqas
	f_\mu(t)\leq C'(1+\Delta\mu)^{1/4}n^{1/8}t^{3/4}-{1 \over 4}t^2,
	\eeqas
	holds when $0<t\leq C_1$ and $n\geq C_2$ where $C_2=\inf\{n \in \N: 2\sqrt{C_1}\log n\leq Cn^{1/4}\}$. Here $C,C'$ are absolute constants. Let $C_1=r=\big(4C'(1+\Delta \mu)^{1/4}\big)^{4/5}n^{1/10}$. Then $f_\mu(r)=0$ and hence $t_\mu\leq r=\big(4C'(1+\Delta \mu)^{1/4}\big)^{4/5}n^{1/10}$ when $n\geq \inf \{n \in \N:2(4C'(1+\Delta \mu)^{1/4})^{2/5} \log n\leq C n^{1/5}\}$.
\end{proof}

\section{Technical lemmas}\label{section:technical_lemma}
 
Our goal is to prove the following result.
\begin{lemma}\label{lem:relate_metrics}
	Let $\mathcal{P}_m$ be a function class uniformly bounded by $\Gamma$. Suppose $F_m(\underline{X}^n)\leq D_m$ holds where $F_m$ is defined in (\ref{def:F_m}). Then for any $f \in \mathcal{F}$ uniformly bounded by $\Gamma$, and any probability measure $\nu$ on $\Omega$ and $u>0$,
	\beqa\label{ineq:exp_bound_cont_disc}
	&\quad \mathbb{P}\bigg[\sup_{g \in \mathcal{P}_m}\left(l_\nu(f,g)-2l_{\underline{X}^n}(f,g)\right)_{+}^2>\frac{1152\Gamma^2}{n}\left(\kappa D_m\log (4+24\sqrt{2} n)+u\right)\bigg] \\
	&\vee \mathbb{P}\bigg[\sup_{g \in \mathcal{P}_m}\left(l_{\underline{X}^n}(f,g)-2l_{\nu}(f,g)\right)_{+}^2>\frac{1152\Gamma^2}{n}\left(\kappa D_m\log (4+24\sqrt{2} n)+u\right)\bigg] \\
	&\leq 3\exp(-u).
	\eeqa
	Consequently,
	\beqa\label{ineq:E_bound_cont_disc}
	& \mathbb{E}\sup_{g \in \mathcal{P}_m}\left(l_\nu(f,g)-2l_{\underline{X}^n}(f,g)\right)_{+}^2 \vee \mathbb{E}\sup_{g \in \mathcal{P}_m}\left(l_{\underline{X}^n}(f,g)-2l_\nu(f,g)\right)_{+}^2 \\
	&\leq 6912\kappa \frac{\Gamma^2}{n}D_m\log (4+24\sqrt{2}n).
	\eeqa
\end{lemma}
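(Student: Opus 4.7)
The plan follows a standard empirical-process template that turns a comparison between continuous and discrete $L_2$ deviations into a one-sided concentration bound for a quadratic empirical process indexed by $\mathcal{P}_m$.

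First I would use an elementary ``squaring trick.'' For $a,b\geq 0$ with $a>2b$, one has $a+b\leq \tfrac{3}{2}a$ and $a-b\geq \tfrac{1}{2}a$, so $a^2-b^2=(a-b)(a+b)\geq \tfrac{1}{2}a^2$, hence $(a-2b)_+^2\leq a^2\leq 2(a^2-b^2)_+$. Applying this with $a=l_\nu(f,g)$, $b=l_{\underline{X}^n}(f,g)$ (and symmetrically) gives
\beqas
\bigl(l_\nu(f,g)-2l_{\underline{X}^n}(f,g)\bigr)_+^2\;\vee\;\bigl(l_{\underline{X}^n}(f,g)-2l_\nu(f,g)\bigr)_+^2\;\leq\;2\bigl|(P-\mathbb{P}_n)(f-g)^2\bigr|.
\eeqas
It therefore suffices to establish the tail bound for $\sup_{g\in \mathcal{P}_m}|(P-\mathbb{P}_n)(f-g)^2|$.

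Next, I would apply the Talagrand/Bousquet concentration inequality to the class $\mathcal{H}:=\{(f-g)^2:g\in\mathcal{P}_m\}$, which has envelope $4\Gamma^2$ and satisfies the weak variance bound $\mathrm{Var}(h_g)\leq 4\Gamma^2 P h_g\leq 16\Gamma^4$. This reduces the problem, up to the terms $\Gamma^2\sqrt{u/n}$ and $\Gamma^2 u/n$, to bounding the expected supremum $\mathbb{E}\sup_g|(\mathbb{P}_n-P)(f-g)^2|$. Symmetrization introduces Rademacher signs, after which the Ledoux--Talagrand contraction principle (applied to the $4\Gamma$-Lipschitz map $t\mapsto (f(x)-t)^2$ on $[-\Gamma,\Gamma]$) reduces this to a constant multiple of $\Gamma\cdot\mathbb{E}\sup_{g\in\mathcal{P}_m}|\mathbb{P}_n^\sigma g|$.

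The expected Rademacher complexity is then controlled by Dudley's chaining using the pseudo-dimension covering estimate from Lemma~\ref{lem:entropy_pdim}: conditional on $\underline{X}^n$,
\beqas
\mathcal{N}(\epsilon,\mathcal{P}_m,l_{\underline{X}^n})\;\leq\;\Bigl(4+\tfrac{2\Gamma}{\epsilon}\Bigr)^{\kappa D_m},
\eeqas
so that, after the substitution $\epsilon=\Gamma/x$ used already in the proof of Theorem~\ref{thm:risk_fixed_model} (equation~(\ref{ineq:risk_fixed_model_3})), the chaining integral is of order $\Gamma\sqrt{\kappa D_m\log(4+24\sqrt{2}n)/n}$; the specific constant $24\sqrt{2}$ arises from truncating the chaining at scale $\Gamma/(\sqrt{2}\cdot 12\sqrt{n})$ and is exactly what appears in the claimed bound. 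Combining these ingredients produces the tail inequality \eqref{ineq:exp_bound_cont_disc} with some explicit numerical constant; optimizing the bookkeeping to obtain the $1152$ constant and $3e^{-u}$ probability factor is the delicate step. The expectation bound \eqref{ineq:E_bound_cont_disc} then follows from the identity $\mathbb{E}X\leq \int_0^\infty \mathbb{P}(X>t)\,dt$ applied to each of the two suprema.

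The main obstacle is purely bookkeeping of numerical constants through the chain symmetrization $\to$ contraction $\to$ Dudley $\to$ Talagrand. The exact form of the logarithmic factor $\log(4+24\sqrt{2}n)$ suggests that the Dudley integral is computed in the same spirit as in \eqref{ineq:risk_fixed_model_3}, with the lower cut-off tuned so that the integrand's contribution and the truncation error balance; this is where most of the work lies, though no essentially new empirical-process idea is required.
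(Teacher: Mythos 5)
Your reduction via the squaring trick is algebraically correct: for $a,b\geq 0$ one indeed has $(a-2b)_+^2\leq 2(a^2-b^2)_+$, and hence the two sided quantities of interest are each bounded by $2\lvert (P-\mathbb{P}_n)(f-g)^2\rvert$. But this step throws away exactly the structure that makes the lemma true, and the remainder of your argument cannot recover it.

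The issue is one of \emph{rate}, not bookkeeping. You propose to bound $\sup_{g}\lvert(P-\mathbb{P}_n)(f-g)^2\rvert$ using Talagrand/Bousquet with envelope $4\Gamma^2$ and worst-case variance $\leq 16\Gamma^4$, plus symmetrization, contraction, and Dudley. That pipeline produces a bound of order $\Gamma^2\sqrt{\kappa D_m\log n/n}+\Gamma^2\sqrt{u/n}+\Gamma^2 u/n$ with probability $1-e^{-u}$ --- the $\sqrt{\cdot}$ scale is unavoidable once the variance proxy is of order $\Gamma^4$. The lemma, however, asserts a bound at the much smaller scale $\Gamma^2\big(\kappa D_m\log n+u\big)/n$. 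Already for $\mathcal{P}_m$ a $D_m$-dimensional linear space and $f\equiv 0$, the quantity $\sup_{g}\lvert(P-\mathbb{P}_n)g^2\rvert$ genuinely fluctuates at order $\Gamma^2\sqrt{D_m/n}$ (it is essentially the error in estimating a $D_m\times D_m$ covariance), so your reduced quantity is strictly too large and the target tail bound is false for it.

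What makes the fast rate possible is the \emph{self-normalizing} form of the original event. If $l_\nu(f,g)-2l_{\underline{X}^n}(f,g)>\sqrt{t}$, then both $P(f-g)^2\geq t$ and $\mathbb{P}_n(f-g)^2<\tfrac14 P(f-g)^2$, so the empirical average must deviate downward by at least $\tfrac34 P(f-g)^2$ --- a deviation that scales \emph{linearly} with the variance proxy $\Gamma^2 P(f-g)^2/n$ of $\mathbb{P}_n(f-g)^2$. Bernstein's inequality then gives a single-function tail $\exp(-c\,nt/\Gamma^2)$ \emph{uniformly over the magnitude of} $P(f-g)^2$; a union bound over a net of size $\exp(\kappa D_m\log n)$ yields the $\Gamma^2 D_m\log n/n$ threshold. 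This is precisely what Lemma~\ref{lem:relate_metrics_general}, which the paper invokes, encodes via ghost-sample symmetrization in the style of Gy\"orfi et al.\ (their Theorem 11.2); it is a ratio/VC-type inequality, not a plain centered supremum bound. To repair your argument you would need to retain the normalization --- for instance by controlling a weighted process $\sup_g \lvert(P-\mathbb{P}_n)(f-g)^2\rvert/\big(P(f-g)^2+\lambda\big)$ with a suitable $\lambda\asymp\Gamma^2 D_m\log n/n$, or by peeling over $P(f-g)^2$ --- but that is essentially reproving the ghost-sample lemma, not a consequence of the Talagrand-plus-Dudley route as stated.
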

The key ingredient to prove the above lemma is the following result:
\begin{lemma}\label{lem:relate_metrics_general}
	Let $\mathcal{F}$ be a class of uniformly bounded functions on $\R^d$ with $B:=\sup_{f \in \mathcal{F}}\pnorm{f}{\infty}$. Let $\nu$ be a probability measure on $\R^d$. Then
	\beqas
	&\quad \mathbb{P}_{\nu^{\otimes n}}\left[\pnorm{f}{l_\nu}-2\pnorm{f}{l_{\underline{X}^n}}>\epsilon \textrm{ for some }f \in \mathcal{F}\right]\\
	&\vee \mathbb{P}_{\nu^{\otimes n}}\left[\pnorm{f}{l_{\underline{X}^n}}-2\pnorm{f}{l_\nu}>{\epsilon} \textrm{ for some }f \in \mathcal{F} \right]\\
	&\leq 3\mathbb{E}_{\nu^{\otimes 2n}}\mathcal{N}\left(\frac{\sqrt{2}}{24}\epsilon,\mathcal{F},l_{\underline{X}^{2n}}\right)\exp\left(-\frac{n\epsilon^2}{288B^2}\right).
	\eeqas
\end{lemma}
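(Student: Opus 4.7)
The plan is to follow the classical symmetrization-and-chaining pipeline of Pollard and van der Vaart--Wellner, suitably adapted to bound the ratio-type deviation $\pnorm{f}{l_\nu}-2\pnorm{f}{l_{\underline{X}^n}}$. I will only discuss the first inequality in detail; the second follows by swapping the roles of the original and ghost samples throughout. A useful elementary reduction comes first: for $a,b\geq 0$, $a>2b+\epsilon$ forces $a^2\geq 4b^2+4b\epsilon+\epsilon^2\geq b^2+\epsilon^2$, and so
\[
\{\exists f\in\mathcal{F}:\pnorm{f}{l_\nu}-2\pnorm{f}{l_{\underline{X}^n}}>\epsilon\}\subset\{\exists f:\pnorm{f}{l_\nu}^2-\pnorm{f}{l_{\underline{X}^n}}^2>\epsilon^2\},
\]
which reduces the problem to a one-sided deviation of the empirical mean of $f^2$.

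Second I would introduce a ghost sample $X_1',\ldots,X_n'\sim\nu$ iid, independent of $\underline{X}^n$, with associated norm $\pnorm{\cdot}{l_{\underline{X}'^n}}$. For any $f$ with $\pnorm{f}{l_\nu}\geq\epsilon$ (which is automatic on our event), Chebyshev combined with $\mathrm{Var}(\pnorm{f}{l_{\underline{X}'^n}}^2)\leq B^2\pnorm{f}{l_\nu}^2/n$ gives $\mathbb{P}(\pnorm{f}{l_{\underline{X}'^n}}^2\geq\pnorm{f}{l_\nu}^2/2)\geq 1/2$, valid in the regime $n\epsilon^2\gtrsim B^2$ (outside of which the claimed bound is vacuous). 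The standard symmetrization lemma then gives
\[
\mathbb{P}(\exists f:\pnorm{f}{l_\nu}^2-\pnorm{f}{l_{\underline{X}^n}}^2>\epsilon^2)\leq 2\mathbb{P}(\exists f:\pnorm{f}{l_{\underline{X}'^n}}^2-\pnorm{f}{l_{\underline{X}^n}}^2>\epsilon^2/2).
\]
Third, conditioning on the combined multiset $\underline{X}^{2n}$, the pair assignment $(X_i,X_i')$ is exchangeable, so I may insert iid Rademacher signs $\sigma_i$ to rewrite the right side as $\mathbb{E}\mathbb{P}_\sigma(\exists f:n^{-1}\sum_i\sigma_i(f^2(X_i')-f^2(X_i))>\epsilon^2/2\mid\underline{X}^{2n})$.

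Fourth, conditionally on $\underline{X}^{2n}$, I would take a minimal $\eta$-cover $\mathcal{G}$ of $\mathcal{F}$ in $l_{\underline{X}^{2n}}$ with $\eta=\sqrt{2}\epsilon/24$, and for each $f\in\mathcal{F}$ choose $g\in\mathcal{G}$ with $l_{\underline{X}^{2n}}(f,g)\leq\eta$. Since $|f^2-g^2|\leq 2B|f-g|$, Cauchy--Schwarz bounds the discretization error by
\[
\Bigl|n^{-1}\sum_i\sigma_i\bigl((f^2-g^2)(X_i')-(f^2-g^2)(X_i)\bigr)\Bigr|\leq 4\sqrt{2}B\eta=B\epsilon/3,
\]
which is absorbed by reducing the threshold. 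Then a union bound over $|\mathcal{G}|=\mathcal{N}(\sqrt{2}\epsilon/24,\mathcal{F},l_{\underline{X}^{2n}})$ together with a Hoeffding-type tail on the Rademacher sum for each fixed $g$, calibrated so that the final exponent is $n\epsilon^2/(288B^2)$, yields the claim after taking expectation over $\underline{X}^{2n}$; the prefactor $3$ absorbs the $2$ from Step 2 and the slack introduced by discretization.

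The main obstacle is the tail bound in the last step: a naive Hoeffding using only $|g^2(X_i')-g^2(X_i)|\leq 2B^2$ produces the suboptimal exponent $n\epsilon^4/B^4$, whereas the stated exponent $n\epsilon^2/(288B^2)$ requires exploiting the sup-norm bound $B$ together with the linearization $|g^2-h^2|\leq 2B|g-h|$ to bring the effective variance down to the scale $B^2\cdot\pnorm{g}{l_{\underline{X}^{2n}}}^2$, so that a Bernstein-type refinement (or, equivalently, a peeling restricted to cover elements whose empirical norm is comparable to $\epsilon$) delivers the claimed scale. Once this estimate is in place, the calibration of the numerical constants $\sqrt{2}/24$ and $288$ is a matter of routine but careful bookkeeping.
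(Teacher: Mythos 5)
Your overall plan---ghost-sample symmetrization, then a conditional covering argument and a per-element tail bound---is the same general strategy the paper uses (the paper defers most of the work to Theorem~11.2 of Gy\"orfi et al.\ and only spells out the symmetrization step for the second inequality). But the reduction in your first step creates a gap that propagates through everything after it, and you have mislocated the ``main obstacle'': it is not the tail bound in Step~4 but the loss of structure in Step~1.

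Dropping $a^2\geq 4b^2+4b\epsilon+\epsilon^2$ to the weaker $a^2\geq b^2+\epsilon^2$ discards precisely the multiplicative inequality $\pnorm{f}{l_{\underline{X}^n}}^2<\tfrac14\pnorm{f}{l_\nu}^2$ that the proof needs. Two concrete consequences follow. First, your Step~2 does not hold as written: from $\pnorm{f^*}{l_\nu}^2-\pnorm{f^*}{l_{\underline{X}^n}}^2>\epsilon^2$ together with the ghost-sample bound $\pnorm{f^*}{l_{\underline{X}'^n}}^2\geq\tfrac12\pnorm{f^*}{l_\nu}^2$ you only obtain $\pnorm{f^*}{l_{\underline{X}'^n}}^2-\pnorm{f^*}{l_{\underline{X}^n}}^2\geq\tfrac12\pnorm{f^*}{l_\nu}^2-\pnorm{f^*}{l_{\underline{X}^n}}^2>\tfrac12\epsilon^2-\tfrac12\pnorm{f^*}{l_{\underline{X}^n}}^2$, which can be negative, so the displayed symmetrization inequality does not follow. (Retaining the multiplicative fact gives $\pnorm{f^*}{l_{\underline{X}^n}}^2<\tfrac14\pnorm{f^*}{l_\nu}^2$ and the computation closes.) Alternatively, if you invoke the standard symmetrization lemma with a two-sided Chebyshev for the ghost sample at absolute level $\epsilon^2/2$, you need $n\epsilon^4\gtrsim B^4$ rather than $n\epsilon^2\gtrsim B^2$, and the bound is vacuous in exactly the regime of interest.

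Second, and more fundamentally, nothing after Step~1 can recover the exponent $n\epsilon^2/B^2$. A function $f$ with $\pnorm{f}{l_\nu}$ of order $B$ still only has to clear the fixed threshold $\epsilon^2$ in your reduced event, while the conditional variance you correctly identify scales like $B^2\pnorm{f}{l_{\underline{X}^{2n}}}^2\asymp B^4$, so Hoeffding or Bernstein yields at best $\exp(-cn\epsilon^4/B^4)$. The fix you propose (``peeling restricted to cover elements whose empirical norm is comparable to $\epsilon$'') is unavailable because on your reduced event nothing confines the surviving functions to that scale. What actually produces the exponent $n\epsilon^2/B^2$ in the Gy\"orfi argument is that the constraint $\pnorm{f}{l_\nu}>2\pnorm{f}{l_{\underline{X}^n}}$ forces the deviation itself to grow like $\pnorm{f}{l_\nu}^2$, which cancels the growth of the variance with $\pnorm{f}{l_{\underline{X}^{2n}}}^2$ and makes the Hoeffding exponent scale-free. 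You must carry this relative-deviation structure through symmetrization and into the per-cover-element tail bound rather than converting to an absolute deviation at the outset.
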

\begin{proof}[Proof of Lemma \ref{lem:relate_metrics_general}]
	The proof for the first statement readily follows from Theorem 11.2 \cite{gyorfi2002distribution}. For the second, we indicate the key step in the proof of Theorem 11.2 \cite{gyorfi2002distribution}. Let $\underline{X}^{n'}:=(X_{n+1},\ldots,X_{2n})$ be i.i.d. ghost samples distributed according to the probability law $\nu$. For simplicity of notation, we denote $\pnorm{f}{n}\equiv \frac{1}{n}\sum_{i=1}^n f(X_i)$, $\pnorm{f}{n}'\equiv \frac{1}{n}\sum_{i=n+1}^{2n}f(X_i)$ and $\pnorm{f}{}\equiv\pnorm{f}{l_\nu}$. Let $f^\ast$ be any function in $\mathcal{F}$ such that $\pnorm{f^\ast}{n}-2\pnorm{f^\ast}{}>\epsilon/2$. Note that $f^\ast$ depends only on $\underline{X}^n$. Then we argue by bounding from below as follows:
	\beqa\label{ineq:flutuation_emp_1}
	& \mathbb{P}\left( \pnorm{f}{n}-\pnorm{f}{n}'>\epsilon/4\textrm{ for some }f \in \mathcal{F}\right)\\
	&\geq \mathbb{P}\left(\frac{1}{2}\pnorm{f^\ast}{n}-\frac{1}{2}\pnorm{f^\ast}{n}'>\frac{\epsilon}{8}\right)\\
	&\geq \mathbb{P}\left(\frac{1}{2}\pnorm{f^\ast}{n}-\frac{1}{2}\pnorm{f^\ast}{n}'+\frac{\epsilon}{8}>\frac{\epsilon}{4},\frac{1}{2}\pnorm{f^\ast}{n}'-\frac{\epsilon}{8}<\pnorm{f^\ast}{}\right)\\
	&\geq \mathbb{P}\bigg(\frac{1}{2}\pnorm{f^\ast}{n}-\pnorm{f^\ast}{}>\frac{\epsilon}{4}, \frac{1}{2}\pnorm{f^\ast}{n}'-\frac{\epsilon}{8}<\pnorm{f^\ast}{}\bigg)\\
	&=\mathbb{E}\left[\bm{1}_{\left\{\frac{1}{2}\pnorm{f^\ast}{n}-\pnorm{f^\ast}{}>\frac{\epsilon}{4}\right\}}\mathbb{P}\left(\frac{1}{2}\pnorm{f^\ast}{n}'-\frac{\epsilon}{8}<\pnorm{f^\ast}{}\bigg\lvert \underline{X}^n\right)\right].
	\eeqa
	Since
	\beqa\label{ineq:flutuation_emp_2}
	& \mathbb{P}\left(\frac{1}{2}\pnorm{f^\ast}{n}'-\frac{\epsilon}{8}<\pnorm{f^\ast}{}\bigg\lvert \underline{X}^n\right)\\
	&\geq \mathbb{P}\left(\frac{1}{4}(\pnorm{f^\ast}{n}')^2-\frac{\epsilon^2}{64}<\pnorm{f^\ast}{}^2\bigg\lvert \underline{X}^n\right)\\
	&=1-\mathbb{P}\bigg(\frac{1}{4}(\pnorm{f^\ast}{n}')^2\geq \frac{\epsilon^2}{64}+\pnorm{f^\ast}{}^2\bigg\lvert \underline{X}^n\bigg)\\
	&=1-\mathbb{P}\bigg((\pnorm{f^\ast}{n}')^2-\pnorm{f^\ast}{}^2\geq 3\pnorm{f^\ast}{}^2+\frac{\epsilon^2}{16}\bigg\lvert \underline{X}^n\bigg)\\
	&\geq 1-\frac{\mathrm{Var}\big[{1 \over n}\sum_{i=n+1}^{2n}\abs{f^\ast(X_i)}^2\big\lvert \underline{X}^n\big]}{\left(3\pnorm{f^\ast}{}^2+\frac{\epsilon^2}{16}\right)^2}\\
	&\geq 1-\frac{B^2\pnorm{f^\ast}{}^2}{n\left(3\pnorm{f^\ast}{}^2+\frac{\epsilon^2}{16}\right)^2}\geq 1-\frac{4B^2}{3n\epsilon^2}\geq 1-\frac{64B^2}{3n\epsilon^2}.
	\eeqa
	where in the last line we used the inequality $(a+b)^2\geq 4ab$ with $a=3\pnorm{f^\ast}{}^2$ and $b=\epsilon^2/16$. Here we boost the constant from $4$ to $64$ to match the corresponding results in page 188 of  \cite{gyorfi2002distribution}. Now  we see that for $n\geq 64B^2/\epsilon^2$, it follows that 
	\[
	\mathbb{P}\left(\frac{1}{2}\pnorm{f^\ast}{n}'-\frac{\epsilon}{8}<\pnorm{f^\ast}{}\bigg\lvert \underline{X}^n\right)\geq \frac{2}{3},
	\]
	and hence by (\ref{ineq:flutuation_emp_1}) and (\ref{ineq:flutuation_emp_2}),
	\beqas
	\mathbb{P}&\left( \pnorm{f}{n}-2\pnorm{f}{}>{\epsilon} \textrm{ for some }f \in \mathcal{F}\right)\\
	&\leq \mathbb{P}\left( \frac{1}{2}\pnorm{f}{n}-\pnorm{f}{}>\frac{\epsilon}{4} \textrm{ for some }f \in \mathcal{F}\right)\\
	&=\mathbb{P}\left(\frac{1}{2}\pnorm{f^\ast}{n}-\pnorm{f^\ast}{}>\frac{\epsilon}{4}\right)\\
	&\leq \frac{3}{2} \mathbb{P}\left(\pnorm{f}{n}-\pnorm{f}{n}'>\epsilon/4 \textrm{ for some }f \in \mathcal{F}\right).
	\eeqas
	By symmetry we see that
	\beqas
	\mathbb{P}\left( \pnorm{f}{n}-\pnorm{f}{n}'>\epsilon/4\textrm{ for some }f \in \mathcal{F}\right)=\mathbb{P}\left( \pnorm{f}{n}'-\pnorm{f}{n}>\epsilon/4 \textrm{ for some }f \in \mathcal{F}\right).
	\eeqas
	This gives the same estimate as in page 189 \cite{gyorfi2002distribution}, and hence we are done.
\end{proof}
\begin{proof}[Proof of Lemma \ref{lem:relate_metrics}]
	Let
	\[\rho(t):=\mathbb{P}\bigg[\sup_{g \in \mathcal{P}_m}\left(l_\nu(f,g)-2l_{\underline{X}^n}(f,g)\right)_{+}^2>t\bigg].\]
	Then if we denote $\mathcal{H}:=\mathcal{P}_m-f$, it is easy to see that
	\beqas
	\rho(t)&\leq \mathbb{P}\bigg[\sup_{h \in \mathcal{H}}\pnorm{h}{l_\nu}-2\pnorm{h}{l_{\underline{X}^n}}>\sqrt{t}\bigg]\\
	&\leq 3\mathbb{E}\mathcal{N}\bigg({\sqrt{2} \over 24}\sqrt{t},\mathcal{H},l_{\underline{X}^{2n}}\bigg)\exp\bigg(-\frac{nt}{1152\Gamma^2}\bigg)\wedge 1.
	\eeqas
	Here the second inequality makes use of Lemma \ref{lem:relate_metrics_general}. Note that 
	\beqas
	\mathcal{N}(\epsilon,\mathcal{H},l_{\underline{X}^{n}})\leq \mathcal{N}(\epsilon,\{x \in \R^n: (h(X_1),\ldots,h(X_n)), h \in \mathcal{H}\},\pnorm{\cdot}{2}),
	\eeqas
	and that the set $\{x \in \R^n: (h(X_1),\ldots,h(X_n)), h \in \mathcal{H}\}$ is a translation of $F_m(\underline{X}^n)$ by a given vector $\bm{f}=(f(X_1),\ldots,f(X_n))$. Furthermore $\mathrm{pdim}(\mathcal{H})=\mathrm{pdim}(F_m(\underline{X}^n))\leq D_m$. Hence by Lemma \ref{lem:entropy_pdim}, we can further bound the above display by
	\beqa\label{ineq:cont_disc_1}
	\rho(t)\leq 3\bigg(4+\frac{48\sqrt{n}\Gamma}{\sqrt{2t}}\bigg)^{\kappa D_m}\exp\bigg(-\frac{nt}{1152\Gamma^2}\bigg)\wedge 1.
	\eeqa
	Now (\ref{ineq:exp_bound_cont_disc}) follows by taking $t=\frac{1152\Gamma^2}{n}\big(\kappa D_m\log(4+24\sqrt{2}n)+u\big)$.
	For (\ref{ineq:E_bound_cont_disc}), integrating $\rho$ in (\ref{ineq:cont_disc_1}) from $0$ to $\infty$, and splitting the integral into two parts with partitioning point $\tau>0$ yields
	\beqas
	\int_0^\infty \rho(t)\ \d{t}&\leq \tau+3\int_\tau^\infty \bigg(4+\frac{24\sqrt{2}\sqrt{n}\Gamma}{\sqrt{t}}\bigg)^{\kappa D_m}\exp\bigg(-\frac{nt}{1152\Gamma^2}\bigg)\ \d{t}\\
	&\leq \tau+3\bigg(4+\frac{24\sqrt{2}\sqrt{n}\Gamma}{\sqrt{\tau}}\bigg)^{\kappa D_m}\int_\tau^\infty\exp\bigg(-\frac{nt}{1152\Gamma^2}\bigg)\ \d{t}\\
	&=\tau+\frac{3456\Gamma^2}{n}\bigg(4+\frac{24\sqrt{2}\sqrt{n}\Gamma}{\sqrt{\tau}}\bigg)^{\kappa D_m}\exp\bigg(-\frac{n\tau}{1152\Gamma^2}\bigg).
	\eeqas
	By choosing $\tau=\frac{1152\Gamma^2}{n}\kappa D_m\log(4+24\sqrt{2}n)$, the second term in the above display becomes
	\beqas
	&\frac{3456\Gamma^2}{n}\bigg(4+\frac{24\sqrt{2}n}{\sqrt{1152\kappa D_m\log (4+24\sqrt{2}n)}}\bigg)^{\kappa D_m} (4+24\sqrt{2}n)^{-\kappa D_m}
	\eeqas
	which is bounded by $\frac{3456\Gamma^2}{n}$. This completes the proof.
\end{proof}

\section{Auxiliary results}\label{section:auxiliary_results}
\subsection{Auxiliary results from empirical process theory}
\begin{lemma}[Dudley's entropy bound]\label{lem:dudleyentbound}
	Suppose that $\{X_f\}_{f \in \mathcal{F}}$ is a centered Gaussian process indexed by $\mathcal{F}$. For $f,g \in \mathcal{F}$ define $d_X^2(f,g):= \mathbb{E}(X_f-X_g)^2$ for each $f,g \in \mathcal{F}$. Then there is an absolute constant $C>0$ such that
	\beqas
	\mathbb{E}\sup_{f \in \mathcal{F}}X_f \leq C\int_0^{\textrm{diam}(\mathcal{F})}\sqrt{\log \mathcal{N}(\epsilon,\mathcal{F},d_X)}\ \d{\epsilon}.
	\eeqas
\end{lemma}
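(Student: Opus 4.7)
The plan is to prove Dudley's bound by the classical chaining argument. First, I would reduce to the case where $\mathcal{F}$ is finite: since $\mathbb{E}\sup_{f\in\mathcal{F}}X_f$ equals the supremum of $\mathbb{E}\sup_{f\in\mathcal{F}_0}X_f$ over finite subsets $\mathcal{F}_0\subset\mathcal{F}$ (by monotone convergence together with separability, which one can assume without loss of generality when $\mathcal{F}$ is totally bounded under $d_X$; otherwise the right-hand side is infinite and there is nothing to prove), it suffices to establish the inequality with constants independent of $|\mathcal{F}|$. Fix an arbitrary basepoint $f_0\in\mathcal{F}$ and note that $\mathbb{E}\sup_{f\in\mathcal{F}}X_f=\mathbb{E}\sup_{f\in\mathcal{F}}(X_f-X_{f_0})$.

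Next, set $\Delta:=\mathrm{diam}(\mathcal{F})$ and, for each integer $k\ge 0$, choose a minimal $2^{-k}\Delta$-net $\mathcal{F}_k\subset\mathcal{F}$ of cardinality $N_k:=\mathcal{N}(2^{-k}\Delta,\mathcal{F},d_X)$, with $\mathcal{F}_0=\{f_0\}$. Let $\pi_k:\mathcal{F}\to\mathcal{F}_k$ be a nearest-point projection under $d_X$. For any $f\in\mathcal{F}$ one has the telescoping identity
\[
X_f-X_{f_0}=\lim_{K\to\infty}\sum_{k=1}^{K}\bigl(X_{\pi_k(f)}-X_{\pi_{k-1}(f)}\bigr),
\]
where the limit holds in $L^2$ because $d_X(\pi_k(f),f)\to 0$. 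The key point is that each increment satisfies $d_X(\pi_k(f),\pi_{k-1}(f))\le d_X(\pi_k(f),f)+d_X(f,\pi_{k-1}(f))\le 3\cdot 2^{-k}\Delta$, and, as $f$ ranges over $\mathcal{F}$, the pair $(\pi_k(f),\pi_{k-1}(f))$ takes at most $N_kN_{k-1}\le N_k^2$ distinct values.

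Taking supremum over $f$ and then expectation, and invoking the standard Gaussian maximal inequality $\mathbb{E}\max_{j\le N}Z_j\le c\,\sigma\sqrt{\log N}$ for centered Gaussians $Z_j$ with $\sup_j(\mathbb{E}Z_j^2)^{1/2}\le\sigma$ (applied to the finite family of Gaussian increments at each level $k$), I obtain
\[
\mathbb{E}\sup_{f\in\mathcal{F}}X_f\;\le\;\sum_{k\ge 1}c\cdot 3\cdot 2^{-k}\Delta\sqrt{2\log N_k}\;\le\;c'\sum_{k\ge 1}2^{-k}\Delta\sqrt{\log N_k}.
\]
The final step is to compare this geometric sum with the entropy integral: since $\epsilon\mapsto\sqrt{\log\mathcal{N}(\epsilon,\mathcal{F},d_X)}$ is nonincreasing, one has
\[
2^{-k}\Delta\sqrt{\log N_k}\;\le\;2\int_{2^{-k-1}\Delta}^{2^{-k}\Delta}\sqrt{\log\mathcal{N}(\epsilon,\mathcal{F},d_X)}\ \d{\epsilon},
\]
and summing over $k\ge 1$ yields the stated bound (with $C=4c'$, say), the integral being over $(0,\Delta/2]\subset(0,\mathrm{diam}(\mathcal{F}))$.

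The one genuinely nonroutine ingredient is the Gaussian maximal inequality used at each scale. I would prove it separately via the standard sub-Gaussian Laplace transform argument: for $Z\sim\mathcal{N}(0,\sigma^2)$ and any $\lambda>0$, $\mathbb{E}e^{\lambda Z}=e^{\lambda^2\sigma^2/2}$, so $\mathbb{E}\max_{j\le N}Z_j\le\lambda^{-1}\log\bigl(N\exp(\lambda^2\sigma^2/2)\bigr)$, and optimizing in $\lambda$ gives the $\sigma\sqrt{2\log N}$ bound. No other step should pose any real obstacle; the main care is simply keeping track of the factors $3$ and $2$ so that the geometric series is dominated cleanly by the entropy integral.
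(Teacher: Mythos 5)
Your chaining argument is correct and is the standard proof of Dudley's bound. Note that the paper itself does not prove Lemma \ref{lem:dudleyentbound}: it is stated in Appendix \ref{section:auxiliary_results} as a classical auxiliary result and cited without proof, so there is no ``paper's route'' to compare against. That said, a few small points worth flagging: (i) you need the absolute-value version of the Gaussian maximal inequality $\mathbb{E}\max_{j\le N}\abs{Z_j}\le c\,\sigma\sqrt{\log(2N)}$ at each level, since passing $\sup_f$ inside the sum over $k$ requires bounding $\sup_f\bigl(X_{\pi_k(f)}-X_{\pi_{k-1}(f)}\bigr)$ by the two-sided max over the finitely many achievable pairs---this only changes $c$; (ii) your choice $\mathcal{F}_0=\{f_0\}$ implicitly uses that a single point is a $\Delta$-net at scale $\Delta=\mathrm{diam}(\mathcal{F})$, which is true and is exactly what kills the $k=0$ term of the telescope; and (iii) the final integral comparison is clean because $\sqrt{\log\mathcal{N}(\cdot)}$ is nonincreasing, giving the sum over $k\ge 1$ an upper bound of $2\int_0^{\Delta/2}$, which is dominated by $\int_0^{\Delta}$. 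All of this is correctly accounted for in your write-up, so the proof stands as written.
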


\begin{lemma}[Lemma 3.2 \cite{van2000empirical}]\label{lem:sup_gaussian_process}
	For fixed $x_1,\ldots,x_n$, let $\pnorm{g}{Q_n}^2:={1 \over n}\sum_{i=1}^n g(x_i)^2$. Suppose for some constants $C_1,C_2>0$ and all $a>0$, each $\gamma_1,\ldots,\gamma_n$, the random variables $\epsilon_1,\ldots,\epsilon_n$ satisfy
	\beqa\label{ineq:cond_peeling_emp1}
	\mathbb{P}\left(\abs{\sum_{i=1}^n \epsilon_i\gamma_i}\geq a\right)\leq C_1\exp\left[-\frac{a^2}{C_2^2\sum_{i=1}^n\gamma_i^2}\right].
	\eeqa
	Assume $\sup_{g \in \mathcal{G}} \pnorm{g}{Q_n}\leq R$. Then for all $\delta>0$ satisfying
	\beqa\label{ineq:cond_peeling_emp2}
	\sqrt{n}\delta \geq \left(24C_2\int_0^R \sqrt{\log \mathcal{N}(\epsilon,\mathcal{G},\pnorm{\cdot}{Q_n})}\ \d{\epsilon}\right)\vee\left(\sqrt{1152\log 2}C_2 R\right),
	\eeqa
	it holds that
	\beqas
	\mathbb{P}\left(\sup_{g \in \mathcal{G}}\abs{{1 \over n}\sum_{i=1}^n\epsilon_ig(x_i)}\geq \delta\right)\leq 2C_1\exp\left[-\frac{n\delta^2}{1152C_2^2R^2}\right].
	\eeqas
\end{lemma}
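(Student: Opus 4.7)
The plan is to use a standard generic-chaining (Dudley-type) argument adapted to the sub-Gaussian tail condition \eqref{ineq:cond_peeling_emp1}. First I would observe that the hypothesis, specialized to $\gamma_i = g(x_i)$, yields
\beqas
\mathbb{P}\Bigl(\big|\tfrac{1}{n}\sum_{i=1}^n \epsilon_i g(x_i)\big|\geq t\Bigr)\leq C_1\exp\Bigl[-\frac{nt^2}{C_2^2 \pnorm{g}{Q_n}^2}\Bigr],
\eeqas
so the empirical process indexed by $\mathcal{G}$ has sub-Gaussian increments with respect to the metric induced by $\pnorm{\cdot}{Q_n}$. Then the program is to build a multiresolution approximation of $\mathcal{G}$ via minimal $2^{-k}R$-covers $\mathcal{G}_k$ and projection maps $\pi_k:\mathcal{G}\to \mathcal{G}_k$ satisfying $\pnorm{g-\pi_k(g)}{Q_n}\leq 2^{-k}R$, and to control $\sup_{g\in\mathcal{G}}|n^{-1}\sum \epsilon_i g(x_i)|$ by summing the fluctuations along the chain.

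The key steps are as follows. Fix a base point $g_0\in\mathcal{G}$ and write the telescoping identity $g = g_0 + \sum_{k\geq 1}(\pi_k(g)-\pi_{k-1}(g))$; the tail converges in $\pnorm{\cdot}{Q_n}$ since $2^{-k}R\to 0$, and the stochastic tail converges absolutely almost surely by the sub-Gaussian bound above. By the triangle inequality each increment $\pi_k(g)-\pi_{k-1}(g)$ has $\pnorm{\cdot}{Q_n}$-norm at most $3\cdot 2^{-k}R$. Setting $N_k:=\mathcal{N}(2^{-k}R,\mathcal{G},\pnorm{\cdot}{Q_n})$, at level $k$ there are at most $N_k N_{k-1}\leq N_k^2$ distinct increments. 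I would then apply the sub-Gaussian tail to each increment with threshold $t_k = 3\cdot 2^{-k}R\cdot C_2\sqrt{c(\log N_k + u_k)/n}$ (for a constant $c$ to be calibrated) and take a union bound to obtain
\beqas
\mathbb{P}\Bigl(\sup_g\big|\tfrac{1}{n}\sum_i \epsilon_i(\pi_k(g)-\pi_{k-1}(g))(x_i)\big|>t_k\Bigr)\leq C_1 N_k^2 \exp\bigl[-9c(\log N_k + u_k)\bigr].
\eeqas
A summable choice $u_k := 2^k$ (or similar) produces a total probability bound of the right form, while the sum $\sum_{k\geq 1} t_k$ collapses to a multiple of $\frac{C_2}{\sqrt{n}}\int_0^R \sqrt{\log \mathcal{N}(\epsilon,\mathcal{G},\pnorm{\cdot}{Q_n})}\ \mathrm{d}\epsilon$ via the standard comparison $2^{-k}R\sqrt{\log N_k}\lesssim \int_{2^{-k-1}R}^{2^{-k}R}\sqrt{\log \mathcal{N}(\epsilon,\mathcal{G},\pnorm{\cdot}{Q_n})}\ \mathrm{d}\epsilon$.

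To finish, I would combine the chain bound with the tail bound for the base point $g_0$, which contributes $C_1\exp[-n\delta^2/(C_2^2 R^2)]$ (using $\pnorm{g_0}{Q_n}\leq R$), and peel: the event $\{\sup_g|n^{-1}\sum \epsilon_i g(x_i)|\geq \delta\}$ is contained in the union over $k\geq 0$ of the events that the $k$-th chain term exceeds $t_k$, where $\{t_k\}$ are scaled so that $\sum t_k = \delta$. The condition \eqref{ineq:cond_peeling_emp2} is exactly what guarantees that the entropy-integral budget $24C_2\int_0^R \sqrt{\log \mathcal{N}(\epsilon,\mathcal{G},\pnorm{\cdot}{Q_n})}\mathrm{d}\epsilon$ together with the base-point term $\sqrt{1152\log 2}C_2 R$ fits inside $\sqrt{n}\delta$, which calibrates the exponent to $-n\delta^2/(1152C_2^2 R^2)$ and the leading constant to $2C_1$.

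The main technical obstacle is bookkeeping the numerical constants so that the final prefactors $24$, $\sqrt{1152\log 2}$, $1152$, and $2C_1$ emerge exactly. This requires allocating the chain-budget $\delta$ across levels with geometric weights, handling the $N_k^2$ (versus $N_k$) cardinality arising from consecutive projections in the union bound, and checking that the $\max$ of the two terms in \eqref{ineq:cond_peeling_emp2} (entropy-integral budget and base-point budget) really does dominate both contributions after explicit constants are tracked. Everything else is routine given the sub-Gaussian increment inequality; the chaining itself presents no conceptual difficulty.
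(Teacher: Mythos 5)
The paper does not prove this lemma; it is cited as Lemma 3.2 of \cite{van2000empirical}, whose proof is precisely the dyadic chaining argument you sketch (sub-Gaussian increments in the $\pnorm{\cdot}{Q_n}$ metric, nested dyadic nets, union bound per level with a cardinality correction, comparison of the dyadic sum with the entropy integral, plus a separate base-point term). So the route is the right one, and the overall structure is correct.

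There is, however, a concrete step that fails as written. With the $\delta$-independent slack $u_k := 2^k$, the level-$k$ failure probability is at most $C_1 N_k^{2-9c}\,e^{-9c\cdot 2^k}$, and summing over $k$ gives a fixed universal constant times $C_1$. That constant cannot be dominated by $2C_1\exp\!\bigl[-n\delta^2/(1152 C_2^2 R^2)\bigr]$ once $\delta$ is large, so the target tail bound is not recoverable from this allocation. To obtain a $\delta$-dependent tail you must make each level's slack carry a piece proportional to $n\delta^2/(C_2^2 R^2)$, e.g.\ $u_k = c_0\,n\delta^2/(C_2^2 R^2) + k\log 2$. Because the level-$k$ increments have $\pnorm{\cdot}{Q_n}$-diameter $\lesssim 2^{-k}R$, this feeds back into $t_k$ a term of order $2^{-k}\delta$; consequently $\sum_{k\geq 1}t_k$ is not just a multiple of the entropy integral, as your proposal asserts, but also contains a self-referential contribution of order $\sqrt{c_0}\,\delta\sum_k 2^{-k}$, which must be held below a fraction of $\delta$ by taking $c_0$ small. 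Condition (\ref{ineq:cond_peeling_emp2}) is then spent jointly: the entropy-integral term, the $R\sqrt{k}/\sqrt{n}$ term arising from $k\log 2$, and the base-point allocation must all fit into the remaining fraction of $\delta$, and this is exactly where the constants $24$, $\sqrt{1152\log 2}$, and $1152$ come from. Once you swap your fixed slack for this $\delta$-scaled one, the rest of your plan goes through and coincides with the standard proof.
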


\begin{lemma}\label{lem:rate_convergence_generic_misspecification}
	Suppose the errors $\epsilon_i$'s are i.i.d. sub-Gaussian with parameter $\sigma^2$. Fix any polyhedral convex function $g \in \mathcal{P}_m$. Suppose $J$ is a function on $(0,\infty)$ such that
	\beqa
	J(r)\geq \int_0^r \sqrt{\log \mathcal{N}(\epsilon,S_m(g,r),l_{\underline{X}^n})}\ \d{\epsilon}
	\eeqa
	and that $J(r)/r^2$ is decreasing on $(0,\infty)$. Then 
	\beqa
	\mathbb{P}\big(l_{\underline{X}^n}^2(\hat{f}_m,g)>4l_{\underline{X}^n}^2(f_0,g)+4\delta|\underline{X}^n\big)\leq 4\sum_{j\geq 0}\exp\bigg(-\frac{2^{j}n\delta}{73728\sigma^2}\bigg)
	\eeqa
	holds for all $\delta>\delta_n$ with $\delta_n$ satisfying
	\begin{enumerate}
		\item $\delta_n\geq l_{\underline{X}^n}^2(f_0,g)$;
		\item $\sqrt{n}\delta_n\geq 40\sigma \big(J(\sqrt{32\delta_n})\vee \sqrt{32\delta_n}\big)$.
	\end{enumerate}
	In particular,
	\beqa
	\mathbb{E}\left[l_{\underline{X}^n}^2(\hat{f}_m,f_0)\bigg\lvert \underline{X}^n\right]\leq 10l_{\underline{X}^n}^2(f_0,g)+8\delta_n+(4.8\times 10^6)\frac{\sigma^2}{n}.
	\eeqa
\end{lemma}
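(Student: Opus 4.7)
The plan is to combine the LSE basic inequality with a dyadic peeling argument and the maximal inequality Lemma~\ref{lem:sup_gaussian_process}, then integrate the resulting tail bound. First, since $\hat f_m$ minimizes the empirical squared loss over $\mathcal{P}_m \ni g$, expanding $\sum_i (Y_i - \hat f_m(X_i))^2 \le \sum_i (Y_i - g(X_i))^2$ with $Y_i = f_0(X_i)+\epsilon_i$ and applying $2ab \le \tfrac12 a^2 + 2b^2$ yields the deterministic inequality
\[
l_{\underline X^n}^2(\hat f_m, g) \;\le\; 4\, l_{\underline X^n}^2(f_0, g) \;+\; \frac{4}{n}\sum_{i=1}^n \epsilon_i\big(\hat f_m(X_i) - g(X_i)\big).
\]
All randomness is now isolated into the empirical-process term on the right.

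For $\delta > \delta_n$, assumption (1) gives $\delta \ge l_{\underline X^n}^2(f_0,g)$, so on the event $\{l_{\underline X^n}^2(\hat f_m, g) > 4\, l_{\underline X^n}^2(f_0, g) + 4\delta\}$ one has simultaneously $l_{\underline X^n}(\hat f_m, g) > 2\sqrt\delta$ and $\tfrac{1}{n}\sum_i \epsilon_i(\hat f_m - g)(X_i) > \delta$. I would peel the radius $l_{\underline X^n}(\hat f_m, g)$ into dyadic shells $[2^{j+1}\sqrt\delta,\, 2^{j+2}\sqrt\delta)$, $j \ge 0$, and union-bound to get
\[
\mathbb P\!\left(l_{\underline X^n}^2(\hat f_m, g) > 4\,l_{\underline X^n}^2(f_0, g)+4\delta \,\big|\, \underline X^n\right)
\;\le\; \sum_{j\ge 0} \mathbb P\!\left(\sup_{f\in\mathcal P_m,\, l_{\underline X^n}(f,g)\le 2^{j+2}\sqrt\delta}\,\tfrac{1}{n}\sum_i \epsilon_i(f-g)(X_i) > \delta\;\Big|\,\underline X^n\right).
\]

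For each shell I would apply Lemma~\ref{lem:sup_gaussian_process} to the class $\mathcal G_j := \{f - g : f \in \mathcal P_m,\, l_{\underline X^n}(f,g) \le u_j\}$ with $u_j := 2^{j+2}\sqrt\delta$, where sub-Gaussianity supplies \eqref{ineq:cond_peeling_emp1} with $C_1 = 2$, $C_2 = \sqrt 2\,\sigma$. To check the entropy condition \eqref{ineq:cond_peeling_emp2}, note that the entropy integral over $\mathcal G_j$ is bounded by $J(u_j)$, and the monotonicity of $r \mapsto J(r)/r^2$ combined with $\delta \ge \delta_n$ transfers the single hypothesis $\sqrt n\, \delta_n \ge 40\sigma(J(\sqrt{32\delta_n}) \vee \sqrt{32\delta_n})$ into the dyadic condition $\sqrt n\,\delta \ge 24 C_2 (J(u_j) \vee \sqrt{1152\log 2}\,C_2 u_j/24)$ at every $j$ (the choice $\sqrt{32\delta_n}$ is tuned to sit between the first two dyadic scales). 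The lemma then provides a shell-wise bound of the form $4\exp\!\big(-c\, 4^j n\delta/\sigma^2\big)$, which I would weaken to $4\exp(-2^j n\delta/(73728\sigma^2))$ to obtain the form stated in the lemma.

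For the expectation bound, I would integrate the tail via $\mathbb E[l_{\underline X^n}^2(\hat f_m, g)\,|\,\underline X^n] \le 4 l_{\underline X^n}^2(f_0,g) + 4\delta_n + \int_{4\delta_n}^\infty \mathbb P(l_{\underline X^n}^2(\hat f_m, g) > s\,|\,\underline X^n)\, ds$, bounding the tail integral by $\sum_j 4\int_0^\infty \exp(-2^j nt/(4\cdot 73728\sigma^2))\,dt = O(\sigma^2/n)$ with the explicit numerical constant yielding the stated $4.8\times 10^6$. The conclusion for $l_{\underline X^n}^2(\hat f_m, f_0)$ follows by the triangle inequality $l_{\underline X^n}^2(\hat f_m, f_0) \le 2 l_{\underline X^n}^2(\hat f_m, g) + 2 l_{\underline X^n}^2(f_0, g)$, which multiplies the $4 l_{\underline X^n}^2(f_0,g)+4\delta_n$ term and adds $2 l_{\underline X^n}^2(f_0,g)$, giving exactly $10\, l_{\underline X^n}^2(f_0,g) + 8\delta_n$. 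The main obstacle is the third step: the dimensional/constant bookkeeping that ensures Lemma~\ref{lem:sup_gaussian_process}'s entropy-integral hypothesis is verified \emph{simultaneously} at every dyadic level $j$ from the single-scale assumption at $\sqrt{32\delta_n}$, and that the numerical constants $40$, $73728$, $4.8\times 10^6$ come out of the chaining and integration exactly as stated.
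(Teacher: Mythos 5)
Your high-level architecture --- basic inequality, dyadic peeling, the maximal inequality Lemma~\ref{lem:sup_gaussian_process}, the monotonicity of $J(r)/r^2$, tail integration, and a final triangle inequality to pass from $g$ to $f_0$ --- matches the paper's proof, and the arithmetic at the very end producing $10\,l_{\underline{X}^n}^2(f_0,g)+8\delta_n+(4.8\times 10^6)\sigma^2/n$ is right. But there is a genuine gap in your peeling step. You hold the exceedance threshold fixed at $\delta$ across all shells while letting the radius grow like $R_j=2^{j+2}\sqrt\delta$. Plugging these into Lemma~\ref{lem:sup_gaussian_process} gives the shell-wise bound $4\exp\big(-n\delta^2/(1152\,C_2^2R_j^2)\big)=4\exp\big(-n\delta/(36864\,\sigma^2\,4^{j})\big)$, whose exponent \emph{vanishes} as $j\to\infty$; your claimed shell bound $4\exp(-c\,4^jn\delta/\sigma^2)$ has the $4^j$ in the wrong place, and the union-bound series as you wrote it actually diverges. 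The entropy hypothesis (\ref{ineq:cond_peeling_emp2}) has the same problem: with a fixed threshold $\delta$ it demands $\sqrt n\,\delta\gtrsim J(R_j)\vee R_j$, which must eventually fail as $R_j\to\infty$.

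The fix is to let the threshold grow with the shell, which is what the paper does: peel on the value of $\|\hat f_m-g\|_n^2-4\|f_0-g\|_n^2$ itself, using the events $\{4\cdot 2^j\delta<\|\hat f_m-g\|_n^2-4\|f_0-g\|_n^2\le 4\cdot 2^{j+1}\delta\}$. On the $j$-th shell the basic inequality forces $\iprod{\epsilon}{\hat f_m-g}_n>2^j\delta$, while assumption (1) ($\delta\ge\|f_0-g\|_n^2$) gives $\|\hat f_m-g\|_n^2\le 2^{j+5}\delta$. Now Lemma~\ref{lem:sup_gaussian_process} is applied with threshold $t_j=2^j\delta$ and radius squared $R_j^2=2^{j+5}\delta$, so $t_j^2/R_j^2=2^{j-5}\delta$ \emph{grows} geometrically in $j$, giving the summable bound $4\exp(-2^jn\delta/(73728\sigma^2))$ (here $73728=1152\cdot 2\cdot 2^5$). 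The same geometric growth is what lets the single base condition $\sqrt n\,\delta_n\ge 40\sigma\big(J(\sqrt{32\delta_n})\vee\sqrt{32\delta_n}\big)$, together with the decreasingness of $J(r)/r^2$, verify (\ref{ineq:cond_peeling_emp2}) simultaneously for all $j\ge 0$ and all $\delta\ge\delta_n$. With a fixed threshold that transfer argument collapses, which is exactly the obstacle you flagged at the end of your proposal.
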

\begin{proof}[Proof of Lemma \ref{lem:rate_convergence_generic_misspecification}]
	We give some details of the discussion in pages 184-185 \cite{van2000empirical} for the reader's convenience. By definition of $\hat{f}_m$, we have $\pnorm{Y-\hat{f}_m}{n}^2\leq \pnorm{Y-g}{n}^2$. Plugging in $Y=f_0+\epsilon$ we see that
	\beqas
	\pnorm{\hat{f}_m-f_0}{n}^2\leq \pnorm{f_0-g}{n}^2+2\iprod{\epsilon}{\hat{f}_m-g}_n.
	\eeqas
	Since we need to connect the empirical process part with a quadratic lower bound in $\hat{f}_m-g$, we have
	\beqas
	\pnorm{\hat{f}_m-g}{n}^2&\leq 2\pnorm{\hat{f}_m-f_0}{n}^2+2\pnorm{f_0-g}{n}^2\\
	&\leq 4\pnorm{f_0-g}{n}^2+4\iprod{\epsilon}{\hat{f}_m-g}_n.
	\eeqas
	Fix $\delta\geq\pnorm{f_0-g}{n}^2$, let $u=4\pnorm{f_0-g}{n}^2+4\delta$. Then it follows that
	\beqas
	\mathbb{P}\left(\pnorm{\hat{f}_m-g}{n}^2>u\big\lvert \underline{X}^n\right)&\leq \sum_{j=0}^\infty \mathbb{P}\bigg(4\cdot2^j\delta<\pnorm{\hat{f}_m-g}{n}^2-4\pnorm{f_0-g}{n}^2\leq 4\cdot 2^{j+1}\delta\bigg\lvert\underline{X}^n\bigg)\\
	&\leq \sum_{j=0}^\infty  \mathbb{P}\left(\sup_{f\in\mathcal{P}_m: \pnorm{f-g}{n}^2\leq 4\pnorm{f_0-g}{n}^2+2^{j+3}\delta}\iprod{\epsilon}{f-g}_n>2^j\delta\big\lvert\underline{X}^n\right)\\
	&\leq \sum_{j=0}^\infty  \mathbb{P}\left(\sup_{f\in\mathcal{P}_m: \pnorm{f-g}{n}^2\leq 2^{j+5}\delta}\iprod{\epsilon}{f-g}_n>2^j\delta\big\lvert\underline{X}^n\right).
	\eeqas
	In our framework with $\epsilon_1,\ldots,\epsilon_n$ sub-Gaussian with parameter $\sigma^2$, (\ref{ineq:cond_peeling_emp1}) is satisfied with $C_1=2,C_2=\sqrt{2}\sigma$. We now take $\delta_n>0$ such that
	\beqas
	\sqrt{n}\delta_n\geq 40\sigma \bigg(\int_0^{\sqrt{32\delta_n}}\sqrt{\log \mathcal{N}(\epsilon,S_m(g,\omega),l_{\underline{X}^n})}\ \d{\epsilon}\vee \sqrt{32\delta_n}\bigg).
	\eeqas
	By requiring $\delta\geq \delta_n$, for any $j\geq 0$, (\ref{ineq:cond_peeling_emp2}) is satisfied with $R\equiv \sqrt{2^{j+5} \delta}$. Hence the series of probabilities can be bounded further by
	\beqas
	4\sum_{j=0}^\infty \exp\bigg(-\frac{n2^{2j}\delta^2}{2304\sigma^2 2^{j+5}\delta}\bigg)=4\sum_{j=0}^\infty \exp\bigg(-\frac{2^jn\delta}{73728\sigma^2}\bigg),
	\eeqas
	as long as $\delta\geq \delta_n \vee \pnorm{f_0-g}{n}^2$. Thus we have
	\beqas
	\mathbb{E}\left[\big(l_{\underline{X}^n}^2(\hat{f}_m,g)-4l_{\underline{X}^n}^2(f_0,g)\big)/4\bigg\lvert\underline{X}^n\right]&\leq \delta_n+4\sum_{j\geq 0}\int_0^\infty \exp\bigg(-\frac{2^jn\delta}{73728\sigma^2}\bigg)\ \d{\delta}\\
	&\leq \delta_n+(6\times 10^5)\frac{\sigma^2}{n}.
	\eeqas
	Now the conclusion follows since $l_{\underline{X}^n}^2(\hat{f}_m,f_0)\leq 2l_{\underline{X}^n}^2(\hat{f}_m,g)+2l_{\underline{X}^n}^2(f_0,g)$.
\end{proof}

\subsection{Auxiliary results from convex geometry}\label{section:auxiliary_results_convex_geometry}
In this section, we will largely follow \cite{baddeley2007random} and   \cite{barany2008random}. Recall that　 $\mathscr{C}$ is the set of all convex bodies in $\R^d$. A halfspace $H$ has the form 
\beqa
H\equiv H(a\leq t)\equiv\{x \in \R^d: a\cdot x\leq t\}
\eeqa for some $a\in \R^d, t\in \R$. A \emph{cap} of a convex body $\Omega \in \mathscr{C}$ is a set of the form $C=\Omega\cap H$ where $H$ is a closed halfspace. The \emph{width} of a cap $C=\Omega\cap H(a\leq t)$ is the minimum value of $w$ so that $H(a=t-w)$ is a supporting hyperplane to $\Omega$. The \emph{center} of the cap $C$ is now defined as the center of gravity of the set $\Omega \cap H(a=t-w)$. The blown-up copy of $C$ from its center by a factor $\lambda>0$ is denoted by $C^\lambda$. It is clear that $C^\lambda$ lies between $H(a=t-w)$ and $H(a=t-w+\lambda w)$. For $\lambda\geq 1$, it follows from convexity that $\Omega \cap H(a\leq t-w+\lambda w)\subset C^\lambda$, 
and thus 
\beqa\label{ineq:blown_up_volume_estimate}
\abs{\Omega\cap H(a\leq t-w+\lambda w)}\leq \lambda^d \abs{C}.
\eeqa
Define the function $v:\Omega \to \R$ by
\beqa
v_\Omega (x):=\min\{\abs{\Omega \cap H}: x\in H, H\textrm{ halfspace}\}.
\eeqa
The \emph{minimal cap} belonging to $x \in \Omega$ is a cap $C(x)$ with $x \in C(x)$ and $\abs{C(x)}=v(x)$. The minimal cap $C(x)$ may not be unique, so we choose one of the minimzers; this will not cause any trouble in the sequel. Now define the level sets by
\beqa
\Omega(v\ast t):= \{x \in \Omega: v(x)\ast t\},
\eeqa
where $\ast\in \{<,\leq, =,>,\geq \}$. The \emph{wet part} of $\Omega$ with parameter $t>0$ is defined by $\Omega(t):=\Omega(v\leq t)=\{x \in \Omega:v(x)\leq t\}$.
Note that the function $v:\Omega \to \R$ is invariant under non-degenerate linear transformations $A:\R^d\to \R^d$, i.e.
\beqas
v_{A\Omega}(Ax)=\abs{\det A}v_{\Omega}(x).
\eeqas
This shows that the quantity ${\abs{\Omega(t\abs{\Omega})}}/{\abs{\Omega}}$
is invariant under non-degenerate linear transformation. Thus we only need to consider all convex bodies admiting unit volume; we shall denote this class $\mathscr{C}_1$. It is known (cf. \cite{schutt1990convex}) that for any convex body $\Omega$, the limit
\beqa\label{eqn:floating_body_limit}
\lim_{t \to 0} t^{-2/(d+1)}\abs{\Omega(t)}
\eeqa
exists and is finite. If further the convex body is $C^2$ smooth, then the limit is 
\beqa\label{eqn:floating_body_limit_2}
c_d^{-1}\int_{\partial \Omega}\big(\kappa(x)\big)^{1/(d+1)}\ \d{\sigma(x)}
\eeqa
where $\kappa$ is the Gaussian curvature and $\sigma(\cdot)$ is the surface measure, and $c_d=2\left(\frac{\abs{B_{d-1}}}{d+1}\right)^{2/(d+1)}$.  The limiting value (\ref{eqn:floating_body_limit_2}) is maximized in the class $\mathscr{C}_1$ by ellipsoids. On the other hand, when $\Omega\equiv P$ is a polytope, the following holds (cf. Theorem 4.2, page 82 in \cite{baddeley2007random}):
\beqa\label{eqn:wet_part_poly}
\abs{P(t)}=\frac{\mathfrak{T}(P)}{d^{d-1}d!} t\bigg(\log \frac{1}{t}\bigg)^{d-1}\big(1+o(1)\big),\quad \textrm{as } t \to 0.
\eeqa
Here $\mathfrak{T}(P)$ is the number of towers of $P$ where a \emph{tower} of $P$ is a chain of faces $F_0\subset F_1\subset \cdots \subset F_{d-1}$ where $F_i$ is $i$-dimensional. In this case, the limit in (\ref{eqn:floating_body_limit}) is $0$. It can also be shown that the order of growth in (\ref{eqn:wet_part_poly}) provides a lower bound for $\Omega(t)$. As a result (cf. Theorem 6.3, page 344 in  \cite{barany2008random}),
\begin{corollary}\label{cor:wet_part_estimate}
	For any convex bodies $\Omega \in \mathscr{C}_1$, and for all $t>0$ small enough, the following holds:
	\beqa\label{ineq:estimate_wet_part}
	t\bigg(\log\frac{1}{t}\bigg)^{d-1}\lesssim_{d,\Omega} \abs{\Omega(t)}\lesssim_{d,\Omega} t^{2/(d+1)}.
	\eeqa
\end{corollary}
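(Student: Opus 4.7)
My plan is to establish the two inequalities in (\ref{ineq:estimate_wet_part}) separately, with the upper bound being immediate from (\ref{eqn:floating_body_limit}) and the lower bound requiring a comparison argument tied to the polytope asymptotic (\ref{eqn:wet_part_poly}).

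For the upper bound, (\ref{eqn:floating_body_limit}) states that $L_\Omega := \lim_{t \to 0^+} t^{-2/(d+1)}|\Omega(t)|$ exists and is finite for every $\Omega \in \mathscr{C}_1$. Hence for all $t$ smaller than some $\Omega$-dependent threshold $t_0 > 0$, one has $|\Omega(t)| \leq (L_\Omega + 1)\, t^{2/(d+1)}$, which gives the right-hand inequality with $C_{d,\Omega} := L_\Omega + 1$. No further work is needed for this direction.

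For the lower bound, observe first that when $\Omega$ has $C^2$ smooth boundary, (\ref{eqn:floating_body_limit_2}) yields $|\Omega(t)| \asymp_{d,\Omega} t^{2/(d+1)}$, and since the ratio $t^{2/(d+1)}/\bigl(t(\log 1/t)^{d-1}\bigr) = t^{(1-d)/(d+1)}(\log 1/t)^{-(d-1)}$ diverges as $t \to 0$, the claimed inequality holds trivially in this case. In the general case the strategy is: (i) choose a polytope $P \subset \Omega$ inscribed with $|P| \geq c_d |\Omega|$ and with vertices on $\partial \Omega$ (available from e.g.\ John's theorem combined with a maximal-volume inscribed simplex construction); (ii) pass to a polytope $P' \supset \Omega$ circumscribing $\Omega$ whose facets are supporting hyperplanes of $\Omega$; (iii) use the immediate inclusion $v_{P'}(x) \geq v_\Omega(x)$ on $\Omega$ (which follows from $P' \supset \Omega$ and the definition of the minimal cap function) to deduce $P'(t) \cap \Omega \subset \Omega(t)$, and hence $|\Omega(t)| \geq |P'(t) \cap \Omega|$. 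Finally, invoke the sharp polytope asymptotic (\ref{eqn:wet_part_poly}) to lower bound $|P'(t)|$ by $c_{P'}\, t(\log 1/t)^{d-1}$, and argue that a fixed positive fraction of each tower-contribution to $|P'(t)|$ survives the intersection with $\Omega$.

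The main obstacle is precisely this last fraction-survival step. Since the vertices of $P'$ lie outside $\Omega$ (generically, as $P'$ strictly circumscribes $\Omega$ with tangent facets), and since the dominant $t(\log 1/t)^{d-1}$ contribution to $|P'(t)|$ in (\ref{eqn:wet_part_poly}) peaks at those vertices and decays along towers $F_0 \subset F_1 \subset \cdots \subset F_{d-1}$ of faces, one must verify that the portion of each tower at distances from the vertex comparable to or larger than the vertex-to-$\Omega$ gap still carries a constant fraction of the total tower mass. This is a careful cap-geometry / Macbeath-region book-keeping argument carried out in the proof of Theorem 6.3 of \cite{barany2008random}; the resulting constant $c_{d,\Omega}$ depends on $\Omega$ through the number of towers of $P'$ and the local widths of its facet--$\Omega$ contact sets.
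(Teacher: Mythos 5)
Your upper bound argument is correct: (\ref{eqn:floating_body_limit}) gives $|\Omega(t)|\leq (L_\Omega+1)t^{2/(d+1)}$ for $t$ small, and this matches what the paper does (it simply cites the Sch\"utt--Werner limit). The observation that the smooth case is trivial for the lower bound is also correct. Note that the paper itself does not give a proof of this corollary — it cites Theorem 6.3 of B\'ar\'any's survey directly — so the real question is whether your proposed lower-bound argument is sound. It is not.

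The problem is step (iii) and the ``fraction-survival'' claim. You correctly note that $P'\supset\Omega$ gives $v_{P'}\geq v_\Omega$ on $\Omega$, hence $P'(t)\cap\Omega\subset\Omega(t)$. But the dominant $t(\log 1/t)^{d-1}$ contribution to $|P'(t)|$ in (\ref{eqn:wet_part_poly}) is carried by regions that recede to the vertices and low-dimensional faces of $P'$ as $t\to0$, and a properly circumscribing $P'$ has all its faces of dimension $<d-1$ at a \emph{fixed} positive distance from $\Omega$. It is therefore false that a constant fraction of the tower mass survives intersection with $\Omega$; that fraction tends to zero like $1/\log(1/t)$ or faster. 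Concretely, take $d=2$, $\Omega$ a disk, $P'$ a circumscribing square. In local coordinates $(u,v)$ at a vertex of $P'$, the wet part of $P'$ is essentially $\{uv\lesssim t\}$, which has area $\sim t\log(1/t)$, but the disk satisfies $u+v\geq 1/2+O(u^2+v^2)$ near that vertex, and one checks that $|\{uv\lesssim t\}\cap\Omega|=O(t^{3/2})$ — not even of order $t$, let alone $t\log(1/t)$. So the bound you would actually obtain is far weaker than the claimed one, and the logarithmic factors are irreversibly lost. (Your inscribed polytope $P$ from step (i) is never used, and the inclusion goes the wrong way for $P$: $P\subset\Omega$ gives $v_P\leq v_\Omega$, hence $\Omega(t)\cap P\subset P(t)$, which is an upper bound on $|\Omega(t)\cap P|$, not a lower bound.)

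The genuine lower bound $|\Omega(t)|\gtrsim_{d,\Omega} t(\log 1/t)^{d-1}$ is a theorem of B\'ar\'any--Larman (the wet part of any convex body is, up to constants, at least that of a simplex of the same volume), and its proof works intrinsically with caps and flag-like structures of $\Omega$ itself rather than by comparison with a circumscribing polytope; it is not a corollary of (\ref{eqn:wet_part_poly}) in the way your sketch suggests. If you want to prove it rather than cite it, you would need to build, from any boundary point of $\Omega$, a nested family of caps of $\Omega$ of volume $\leq t$ whose union has volume $\gtrsim t(\log 1/t)^{d-1}$; the recursion that produces the $(d-1)$-fold logarithm must take place inside $\Omega$.
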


One may wonder what happens in between the two extreme cases in the estimate (\ref{ineq:estimate_wet_part}). Actually we have the following result (cf. Theorem 4.7, page 83 in \cite{baddeley2007random}):
\begin{theorem}\label{thm:unpredictable_wet_part}
	Suppose $\omega(t)\to 0$ and $\gamma(t)\to \infty$ as $t \to 0$. Then the set consisting of convex bodies $\Omega$ that satisfy the following properties
	\begin{enumerate}
		\item for an infinite sequence of $t \to 0$: $\abs{\Omega(t)}\geq \omega(t)t^{2/(d+1)}$;
		\item for another infinite sequence of $t \to 0$: $\abs{\Omega(t)}\leq \gamma(t)t\big(\log\frac{1}{t}\big)^{d-1}$
	\end{enumerate}
	is comeagre(i.e. the complement is of first category) in the Baire space $(\mathscr{C}_1,d_H)$ where $d_H$ is the Hausdorff distance.
\end{theorem}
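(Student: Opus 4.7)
The plan is a standard Baire category argument in the complete metric space $(\mathscr{C}_1,d_H)$. For each $n\in \N$ set
\beqas
V_n^{(1)}&:=\bigcup_{0<t<1/n}\{\Omega\in\mathscr{C}_1:\abs{\Omega(t)}>\omega(t)t^{2/(d+1)}\},\\
V_n^{(2)}&:=\bigcup_{0<t<1/n}\bigg\{\Omega\in\mathscr{C}_1:\abs{\Omega(t)}<\gamma(t)t\bigg(\log\frac{1}{t}\bigg)^{d-1}\bigg\}.
\eeqas
Any $\Omega\in\bigcap_{n\geq 1}(V_n^{(1)}\cap V_n^{(2)})$ automatically satisfies both (1) and (2), since for each $n$ one gets a witness $t_n^{(i)}\in(0,1/n)$ realizing the corresponding inequality. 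By the Baire category theorem it therefore suffices to prove that each $V_n^{(i)}$ is open and dense in $(\mathscr{C}_1,d_H)$, after which the intersection is a dense $G_\delta$, i.e. comeagre.

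Openness of each $V_n^{(i)}$ would be reduced to the following key lemma: for each fixed $t>0$, the map $\Omega\mapsto\abs{\Omega(t)}$ is continuous on $(\mathscr{C}_1,d_H)$. This in turn can be deduced from Hausdorff-continuity of the convex floating body $\Omega\setminus\Omega(t)$, a classical fact from the theory of floating bodies. Granted the lemma, each slice $\{\Omega:\abs{\Omega(t)}>c\}$ and $\{\Omega:\abs{\Omega(t)}<c\}$ is open, and $V_n^{(i)}$ is a union of such slices over $t\in(0,1/n)$.

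For density of $V_n^{(1)}$, I would fix $\Omega_0\in\mathscr{C}_1$ and $\varepsilon>0$ and, using density of $C^2$-smooth bodies in $(\mathscr{C}_1,d_H)$, choose a smooth $\Omega\in\mathscr{C}_1$ with $d_H(\Omega,\Omega_0)<\varepsilon$. By the smooth-body asymptotic (\ref{eqn:floating_body_limit_2}) there is a constant $c(\Omega)>0$ with $\abs{\Omega(t)}\sim c(\Omega)t^{2/(d+1)}$ as $t\to 0$. Since $\omega(t)\to 0$, for all sufficiently small $t$ (in particular some $t<1/n$) one has $\omega(t)<c(\Omega)/2$, whence $\abs{\Omega(t)}>\omega(t)t^{2/(d+1)}$ and $\Omega\in V_n^{(1)}$. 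Density of $V_n^{(2)}$ is handled symmetrically: approximate $\Omega_0$ by a polytope (rescaled to unit volume), invoke the polytope asymptotic (\ref{eqn:wet_part_poly}) which provides a positive constant depending on the number of towers, and exploit $\gamma(t)\to\infty$.

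The main technical obstacle is the continuity lemma. Although $\Omega\mapsto v_\Omega(x)$ is only upper semicontinuous pointwise and is defined via an infimum over supporting halfspaces, the integrated quantity $\abs{\Omega(t)}$ behaves well: the cleanest route is to combine $d_H$-continuity of the boundary of the floating body with a dominated convergence argument on a uniformly bounded neighborhood of $\Omega_0$. Once this lemma is secured, the remainder of the argument is a routine assembly of the Baire theorem with the two asymptotic estimates already recorded in (\ref{eqn:floating_body_limit_2}) and (\ref{eqn:wet_part_poly}).
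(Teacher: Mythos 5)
The paper itself does not prove this theorem; it is an auxiliary result quoted from Theorem~4.7 of \cite{baddeley2007random}, so there is no internal proof to compare against. Your Baire-category skeleton --- writing the target set as a countable intersection of sets $V_n^{(i)}$, showing each is open and dense, using $C^2$ approximants together with (\ref{eqn:floating_body_limit_2}) for $V_n^{(1)}$ and polytopal approximants together with (\ref{eqn:wet_part_poly}) for $V_n^{(2)}$ --- is the natural route, and the reduction of comeagreness to the open-dense slices is organized correctly (including the observation that one only needs the intersection to be \emph{contained} in the target set, not equal to it).

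The place where this is not yet a proof is exactly the step you flag and then wave away. You invoke continuity of $\Omega\mapsto\abs{\Omega(t)}$ at each fixed $t$ as ``a classical fact from the theory of floating bodies,'' but that is not an off-the-shelf citation, and as stated the openness of the slices is unjustified. To close it you still need: (i) pointwise convergence $v_{\Omega_k}(x)\to v_\Omega(x)$ for $x$ in the common interior as $\Omega_k\to\Omega$ in $d_H$, which requires a compactness argument over the minimizing halfspaces (not merely the upper semicontinuity you mention --- the lower bound is the nontrivial direction); (ii) the observation that $\{v_\Omega>t\}$ is a convex open set (because if a halfspace contains a point of the segment $[x_1,x_2]$ it must contain $x_1$ or $x_2$), hence $\{v_\Omega=t\}$ lies in the boundary of a convex body and is Lebesgue-null, which is what licenses the dominated-convergence step; and (iii) control of the symmetric difference $\Omega_k\Delta\Omega$ near the boundary. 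A second, smaller omission in the density argument for $V_n^{(1)}$: you assume the constant $c(\Omega)$ from (\ref{eqn:floating_body_limit_2}) is strictly positive, but it vanishes if the Gauss curvature is zero a.e.\ on $\partial\Omega$; you should approximate by $C^2$ bodies of everywhere positive curvature (still dense in $(\mathscr{C}_1,d_H)$ after renormalizing to unit volume) and say why $c(\Omega)>0$ for these.
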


This says that for `most' convex bodies, the volume behavior near the boundary is unpredictable.

Another useful result in studying the boundary behavior of convex bodies is the following \emph{Economic Covering Cap Theorem} (cf. Theorem 7.1, page 345 in \cite{barany2008random}).
\begin{theorem}\label{thm:economic_cap_covering}
	Assume $\Omega \in \mathscr{C}_1$ and $0<\epsilon<d^{-1}3^{-d}$. Then there are caps $C_1,\ldots,C_m$ and pairwise disjoint convex sets $C_1',\ldots,C_m'$ such that $C_i'\subset C_i$ for each $i=1,\ldots,m$ and 
	\begin{enumerate}
		\item $\cup_{i=1}^m C_i' \subset \Omega(\epsilon) \subset \cup_{i=1}^m C_i$;
		\item $(4d)^{-d}\epsilon/2 \leq \abs{C_i'}\leq \abs{C_i}\leq (10d+1)^d\epsilon$ for each $i=1,\ldots,m$;
		\item for each cap $C$ with $C\cap K(v>\epsilon)=\emptyset$, we can find some $C_i$ so that $C_i\supset C$.
	\end{enumerate}
\end{theorem}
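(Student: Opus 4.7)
The plan is to establish the theorem via a greedy packing-and-covering construction on the wet part $\Omega(\epsilon) = \{x \in \Omega : v_\Omega(x) \leq \epsilon\}$, with the blow-up volume estimate (\ref{ineq:blown_up_volume_estimate}) as essentially the only quantitative tool. The smallness condition $\epsilon < d^{-1}3^{-d}$ is used at the outset to ensure that $\Omega(\epsilon)$ is a proper subset of $\Omega$ sitting strictly inside $\Omega$, so that minimal caps $C(x)$ and their centers are well-defined at every relevant point.

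First, for each $x$ on the ``floating boundary'' $\Omega(v=\epsilon)$, I would fix a minimal cap $C(x)$ realizing the infimum, so $\abs{C(x)}=\epsilon$. Shrinking $C(x)$ from its center by the reciprocal factor $1/(4d)$ produces a convex set $C^\circ(x)\subset C(x)$ with $\abs{C^\circ(x)}\geq (4d)^{-d}\epsilon$ by the homothety estimate underlying (\ref{ineq:blown_up_volume_estimate}). I would then run a maximal greedy selection: iteratively pick $x_1,x_2,\ldots$ so that each newly chosen $C^\circ(x_{j+1})$ is disjoint from all previously chosen $C^\circ(x_i)$, continuing until no further choice is possible. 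Since all these sets lie in $\Omega(\epsilon)$, whose total volume is finite by Corollary \ref{cor:wet_part_estimate}, the process terminates after finitely many steps, yielding $C_i':=C^\circ(x_i)$ satisfying the pairwise disjointness and the lower bound $\abs{C_i'}\geq (4d)^{-d}\epsilon/2$ (the $1/2$ absorbing slack from a slight shrinking needed to make the selection work cleanly).

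Next, I would define $C_i$ as the blow-up of $C_i'$ by a factor $10d+1$ (or, equivalently, the blow-up of $C(x_i)$ by a small factor $>1$), chosen precisely so that (\ref{ineq:blown_up_volume_estimate}) gives $\abs{C_i}\leq (10d+1)^d\epsilon$. Coverage $\Omega(\epsilon)\subset\cup_i C_i$ follows from a contradiction argument typical of Vitali-style selection: if some $y\in\Omega(\epsilon)$ escaped every $C_i$, then the minimal cap at $y$ would have been an admissible choice at the greedy step it appeared, contradicting maximality; the one-sided geometry of caps (each cut off by a halfspace) lets the blow-up factor guarantee that the previously selected cap hitting $C^\circ(y)$ in fact swallows $y$.

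The delicate step, and the main obstacle, is item (3): for every cap $C$ with $C\cap\Omega(v>\epsilon)=\emptyset$, some $C_i$ contains $C$. Such a cap is cut off from $\Omega$ by a halfspace whose bounding hyperplane must touch $\Omega(v=\epsilon)$ at some point $y$; by maximality $y$ lies in the $C_i'$-region of some selected $x_i$, and a convexity argument then shows that the entire $C$ fits inside the blow-up $C_i$ provided the blow-up factor exceeds $10d+1$. Keeping the two constants coherent — the downward factor $4d$ controlling the lower bound and the upward factor $10d+1$ controlling both the upper bound and the absorption in (3) — together with verifying that the greedy procedure is measurable and terminates, is where the proof becomes technical and where the precise constants $(4d)^{-d}/2$ and $(10d+1)^d$ in the statement are forced.
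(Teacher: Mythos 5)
This theorem is not proved in the paper: it is stated verbatim as an auxiliary result with a citation to Theorem~7.1 of B\'ar\'any's survey \cite{barany2008random}, so there is no ``paper proof'' to compare against. Evaluating your sketch on its own terms, there is a genuine gap at its center.

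You run a Vitali-type greedy selection directly on shrunk minimal caps $C^{\circ}(x)$ and then lean on the line ``the one-sided geometry of caps \ldots lets the blow-up factor guarantee that the previously selected cap hitting $C^{\circ}(y)$ in fact swallows $y$.'' This is precisely the step that fails. A Vitali-style ``if they touch, one engulfs the other after a bounded blow-up'' argument requires the selected sets to be centrally symmetric about the selection points (or to satisfy an equivalent doubling property). Caps are not symmetric about $x$; near the boundary of a general convex body $\Omega$, two minimal caps of volume $\epsilon$ whose shrunk versions overlap can nevertheless be cut by hyperplanes pointing in very different directions, and no dimension-dependent blow-up of $C(x_i)$ need contain $y$, let alone the whole cap in item~(3). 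The same asymmetry undermines your coverage-by-contradiction step and your justification of the numerical constants $(4d)^{-d}/2$ and $(10d+1)^d$, which you assert are ``forced'' without a mechanism that actually forces them.

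The device that the actual proof uses and that your sketch omits is the \emph{Macbeath region} $M(x,\lambda):=x+\lambda\big[(\Omega-x)\cap(x-\Omega)\big]$, which is by construction symmetric about $x$. The engine of the argument is Macbeath's lemma: if $M(x,1/2)\cap M(y,1/2)\neq\emptyset$, then $M(y,1/2)\subset M(x,5/2)$. One selects a maximal family of points on the floating boundary $\Omega(v=\epsilon)$ whose half-Macbeath regions are pairwise disjoint, takes $C_i'=M(x_i,1/2)$, and then converts between Macbeath regions and caps via two-sided comparison lemmas of the form $M(x,1/2)\subset C(x)\subset M(x,c_d)$; these comparisons are where the constants and the restriction $\epsilon<d^{-1}3^{-d}$ really enter (not merely to keep $\Omega(\epsilon)$ strictly inside $\Omega$). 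Your sketch never introduces this symmetric intermediary, so the selection, the coverage, and item~(3) all lack the mechanism that makes them true; to repair the argument you would essentially be rebuilding the Macbeath-region proof, at which point it is cleaner to follow \cite{barany2008random} directly.
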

\begin{remark}\label{rmk:num_cover_boundary}
	By Corollary \ref{cor:wet_part_estimate} we know that $\abs{\Omega(\epsilon)}\lesssim _{d,\Omega} \epsilon^{2/(d+1)}$. Hence by (2) of the above theorem, it follows that $m$ is on the order of $\epsilon^{-(d-1)/(d+1)}$.
\end{remark}

\section*{Acknowledgements}
The authors would like to thank T. Tony Cai,  Aditya Guntuboyina, Johannes Lederer, Richard Samworth and Bohdi Sen for very helpful and stimulating conversations at various stages of this work.


\end{document}